\setlist{nosep}
\setlist[enumerate]{label=\emph{(\alph*)}}
\newlength{\tinyskipamount}
\newcommand\tinyskip{\vspace\tinyskipamount}
\newcommand\tinybreak{%
\par\ifdim\lastskip <\tinyskipamount\removelastskip\penalty -500\tinyskip\fi
}
\let\tb\tinybreak
\let\ts\tinyskip
\theoremstyle{plain} 
\newtheorem{thm}{Th\'eor\`eme}[section]
\newtheorem{prop}[thm]{Proposition}
\newtheorem{lemme}[thm]{Lemme}
\newtheorem{cor}[thm]{Corollaire}
\theoremstyle{remark}                                             
\newtheorem{rem}[thm]{Remarque}
\newtheorem{exemple}[thm]{Exemple}
\newtheorem{sch}[thm]{Scholie}
\theoremstyle{definition}                                         
\newtheorem{definition}[thm]{D\'efinition}
\newtheorem{paragr}[thm]{}
\newtheorem*{notations}{Notations et terminologie}
\newtheorem*{remerciements}{Remerciements}
\newtheorem*{organisation}{Organisation de l'article}
\theoremstyle{plain} 
\newtheorem{argptitob}[thm]{L'argument du petit objet}
\newtheorem{lemtransf}[thm]{Lemme du transfert}
\numberwithin{equation}{thm}
\theoremstyle{plain} 
\newtheorem{dprop}{Proposition}[subsection]
\theoremstyle{definition}
\newtheorem{dparagr}[dprop]{}
\theoremstyle{plain}
\newtheorem{sprop}{Proposition}[thm]
\theoremstyle{definition}
\newtheorem{sparagr}[sprop]{}
\def\@begintheorem#1#2[#3]{%
  \def\@theoremhead{\normalfont\the\thm@headfont
    \@ifempty{#1}{\let\thmname\@gobble}{\let\thmname\@iden}%
    \@ifempty{#2}{\let\thmnumber\@gobble}{\let\thmnumber\@iden}%
    \@ifempty{#3}{\let\thmnote\@gobble}{\let\thmnote\@iden}%
    \thm@swap\swappedhead\thmhead{#1}{#2}{#3}}%
  \sbox\@tempboxa{\@theoremhead}%
  \ifdim\wd\@tempboxa>0.7\linewidth \smf@skippttrue\fi
  \ifsmf@skippt
    \global\smf@skipptfalse
    \item [\thm@indent]%
          {\sloppy\@theoremhead\parskip\z@\@@par}%
    \nobreak\everypar{}%
    \let\thmheadnl\relax
  \else
    \item[\hskip\labelsep\thm@indent\unhbox\@tempboxa\the\thm@headpunct]%
  \fi
  \@restorelabelsep
  \thmheadnl 
  \hypertarget{\csname @currentHref\endcsname}{}
  \ignorespaces}
\newcommand{\gmrelax}[1]{\relax}
\newcommand{\pref}[1]{{\widehat{ #1 }}}
\newcommand{\cats}{\mathbf{\Delta}}
\newcommand{\simpl}{\pref{\cats}}
\newcommand{\cat}{{\mathcal{C} \mspace{-2.mu} \it{at}}}
\newcommand{\ens}{{\mathcal{E} \mspace{-2.mu} \it{ns}}}
\newcommand{\ord}{{\mathcal{O} \mspace{-2.mu} \it{rd}}}
\newcommand{\Hom}{\operatorname{\mathsf{Hom}}}
\newcommand{\sHom}{\operatorname{\kern.5truept\underline{\kern-.5truept\mathsf{Hom}\kern-.5truept}\kern1truept}}
\newcommand{\ob}{\operatorname{\mathsf{Ob}}}
\newcommand{\W}{\mathcal{W}}
\newcommand{\fl}{\operatorname{\mathsf{Fl}}}
\newcommand{\smp}[1]{ \varDelta_{#1}}
\newcommand{\e}{\varepsilon}
\newcommand{\im}{\operatorname{\mathrm{Im}}}
\newcommand{\card}{\mathsf{card}}
\renewcommand{\leq}{\leqslant}
\renewcommand{\geq}{\geqslant}
\newcommand{\toto}{{\hskip -2.5pt\xymatrixcolsep{1.3pc}\xymatrix{\ar[r]&}\hskip -2.5pt}}
\renewcommand{\to}{{\hskip -2.5pt\xymatrixcolsep{1pc}\xymatrix{\ar[r]&}\hskip -2.5pt}}
\newcommand{\todouble}{\xymatrixcolsep{1pc}\xymatrix{\ar@<.5ex>[r]\ar@<-.5ex>[r]&}}
\newcommand{\todoubleop}{\xymatrixcolsep{1pc}\xymatrix{\ar@<.5ex>[r]&\ar@<.5ex>[l]}}
\renewcommand{\hookrightarrow}{{\hskip -1.5pt\raise 1.5pt\vbox{\xymatrixcolsep{.9pc}\xymatrix{\ar@{^{(}->}[r]&}}\hskip -3.5pt}}
\renewcommand{\longmapsto}{{\hskip -2.5pt\xymatrixcolsep{1.3pc}\xymatrix{\ar@{|->}[r]&}\hskip -2.5pt}}
\renewcommand{\mapsto}{{\hskip -2.5pt\xymatrixcolsep{.9pc}\xymatrix{\ar@{|->}[r]&}\hskip -2.5pt}}
\def\limind{\mathop{\oalign{\rm lim\cr
\hidewidth$\longrightarrow$\hidewidth\cr}}}%
\newcommand{\cm}[2]{\mathchoice {#1\raise -1.8pt\vbox{\hbox{$\kern -.8pt/#2$}}} {#1\raise -1.8pt\vbox{\hbox{$\kern -.8pt/#2$}}\kern .8pt} {#1\raise -1.8pt\vbox{\hbox{$\scriptstyle\kern -.8pt /#2$}}} {#1\raise -1.8pt\vbox{\hbox{$\scriptscriptstyle\kern -.8pt /#2$}}} }
\newcommand{\mc}[2]{\mathchoice {\raise -1.8pt\vbox{\hbox{$#1\backslash$}}#2} {\raise -1.8pt\vbox{\hbox{$#1\backslash$}}#2} {\raise -1.8pt\vbox{\hbox{$\scriptstyle#1\backslash$}}#2} {\raise -1.8pt\vbox{\hbox{$\scriptscriptstyle#1\backslash$}}#2} }
\newcommand{\mathCat}[1]{\mathcal{#1}}
\newcommand{\mathMod}[1]{\mathcal{#1}}
\newcommand{\mathOp}[1]{\mathsf{#1}}
\newcommand{\build}[3]{\mathrel{\mathop{\kern 0pt#1}\limits_{#2}^{#3}}}
\newcommand{\comp}[2]{\build{*}{#2}{#1}}
\newcommand{\C}{\mathCat{C}}
\newcommand{\D}{\mathCat{D}}
\newcommand{\F}{\mathCat{F}}
\newcommand{\M}{\mathMod{M}}
\newcommand{\del}[1]{\partial^{}_{#1}}
\newcommand{\delet}[1]{\partial^{*}_{#1}}
\newcommand{\face}[2]{\delta^{#1}_{#2}}
\newcommand{\bord}[1]{\partial\smp{#1}}
\newcommand{\cornet}[2]{\Lambda^{#2}_{#1}}
\newcommand{\bordCS}[1]{\partial\Phi_{#1}}
\newcommand{\cornetCS}[2]{\Phi^{#2}_{#1}}
\newcommand{\Sd}{Sd\kern 1pt}
\newcommand{\Ex}{Ex}
\newcommand{\nCat}[1]{{#1}\hbox{\protect\nbd-}\kern1pt\cat}
\newcommand{\Cof}[1]{\mathOp{Cof}^{}_{#1}}
\newcommand{\cell}{\mathOp{cell}}
\newcommand{\Ornt}[1]{\mathCat{O}_{#1}}
\newcommand{\trOrnt}[2]{\mathCat{O}^{\leq #2}_{#1}}
\newcommand{\Orntg}[1]{\mathCat{O}(#1)}
\newcommand{\Orntgfonct}{\mathCat{O}}
\newcommand{\Fl}{\underline{\fl\kern-1pt}\kern1pt}
\newcommand{\FlCart}{\Fl_{\kern.8pt\mathrm{cart}}}
\newcommand{\Nbisimpl}{\mathcal{N}_\mathsf{bs}}
\newcommand{\trb}{\tau^{}_{\mathrm b}}
\newcommand{\tri}{\tau^{}_{\mathrm i}}
\newcommand{\undl}[1]{\underline{#1\kern-2pt}\kern2pt}
\newcommand{\sauf}{\mathchoice{\raise 1.8pt\hbox{${\scriptstyle\kern
2.5pt\smallsetminus\kern 2.5pt}$}}{\raise 1.8pt\hbox{${\scriptstyle\kern
2.5pt\smallsetminus\kern 2.5pt}$}}{\raise
1.8pt\hbox{${\scriptscriptstyle\kern 1.5pt\smallsetminus\kern
1.5pt}$}}{\raise 1.8pt\hbox{${\scriptscriptstyle\kern
1.5pt\smallsetminus\kern 1.5pt}$}}}
\newcommand{\Ho}{{\operatorname{\mathrm{Ho}}}}
\newcommand\zbox[1]{\makebox[0pt][l]{#1}}
\newcommand\pbox[1]{\zbox{\quad#1}}
\newcommand\id[1]{{1^{}_{#1}}}
\let\ncat\nCat
\newcommand\dcat{\ncat2}
\newcommand{\ncatnorm}[1]{\text{$#1$-$\widetilde{\cat}$}}
\newcommand\dcatnorm{\ncatnorm2}
\newcommand{\Uo}{\nu_1}
\newcommand{\Ut}{\nu_2}
\newcommand{\toz}{\smash{\hskip-2.5pt\xymatrixcolsep{1pc}\xymatrix{\ar[r]\ar@<.25ex>@{}[r]_{\scriptscriptstyle0}&}\hskip -2.5pt}}
\let\rightrightarrows\todouble
\let\oldxrightarrow\xrightarrow
\newcommand{\xrightarrowz}[1]{\oldxrightarrow[\protect{\raisebox{3pt}[0pt][0pt]{\makebox[8pt][c]{$\scriptscriptstyle0$}}}]{#1}}
\renewcommand{\xrightarrow}[1]{\oldxrightarrow[\protect{\makebox[8pt]{}}]{#1}}
\newcommand{\DeltaeX}[2]{\partial^{#2}_{#1}}
\newcommand{\Hot}{\mathsf{Hot}}
\let\eps\varepsilon
\let\wt\widetilde
\let\Deltan\smp 
\let\Deltae\del
\let\cDelta\cats
\let\Homi\sHom
\let\Ob\ob
\let\Cat\cat
\let\comp\ast
\let\nbd\nobreakdash
\let\forlang\emph
\let\ndef\emph
\newcommand\hpz{\hphantom{0}}
\newcommand\hpzz{\hphantom{00}}
\newcommand\sstrut{\vphantom{\beta_1}}
\author{Dimitri Ara}
\address{Radboud Universiteit Nijmegen\\
Institute for Mathematics, Astrophysics and Particle Physics\\
Heyendaalseweg 135\\
6525 AJ Nijmegen\\
The Netherlands}
\email{d.ara@math.ru.nl}
\urladdr{http://www.math.ru.nl/\raise -3.3pt\vbox{\hbox{$\widetilde{ \ }\,$}}dara/}
\author{Georges Maltsiniotis}
\address{Institut de Math\'ematiques de Jussieu\\
Universit\'e Paris 7 Denis Diderot\\
Case Postale 7012\\
B\^atiment Sophie Germain\\
75205 Paris Cedex 13\\
France}
\email{maltsin\at math.jussieu.fr}
\urladdr{http://www.math.jussieu.fr/\raise -3.3pt\vbox{\hbox{$\widetilde{ \ }\,$}}maltsin/}
\title[Vers une structure de catégorie de modèles à la Thomason sur
$n$-$\cat$]{Vers une structure de catégorie de modèles à~la Thomason sur la
catégorie des $n$-catégories strictes}
\begin{document}

\frontmatter

\let\sbigskipamount\bigskipamount
\divide\bigskipamount by 3

\begin{abstract}
Le but de cet article est d'exposer quelques réflexions en vue d'obtenir une
structure de catégorie de modèles sur la catégorie des petites
$n$-catégories strictes généralisant celle obtenue par Thomason dans le cas
des catégories ordinaires. En s'inspirant d'idées de \hbox{Grothendieck} et
de Cisinski, nous obtenons un \og théorème de \hbox{Thomason} abstrait\fg{}
dont le théorème de Thomason classique est une conséquence facile. Nous en
déduisons un théorème de Thomason $2$-catégorique dont une preuve incorrecte
a été publiée par K.~Worytkiewicz, K.~Hess, P.~Parent et A.~Tonks. Pour $n >
2$, nous isolons des conditions suffisantes pour obtenir un théorème de
Thomason $n$\nbd-catégorique. L'étude de ces conditions fera l'objet de
travaux ultérieurs. 
\end{abstract}

\begin{altabstract}
The purpose of this article is to present ideas towards obtaining a model
category structure on the category of small strict $n$-categories,
generalizing the one obtained by Thomason on ordinary categories. Following
ideas of Grothendieck and Cisinski, we obtain an ``abstract Thomason
theorem'', which easily implies the classical Thomason theorem. We deduce a
$2$-categorical Thomason theorem, an incorrect proof of which has been
published by K.~Worytkiewicz, K.~Hess, P.~Parent and A.~Tonks. For $n > 2$,
we isolate sufficient conditions to obtain an $n$-categorical Thomason
theorem. These conditions will be investigated in further work.
\end{altabstract}

\subjclass{}
\keywords{}

\maketitle

\tableofcontents

\mainmatter

\let\bigskipamount\sbigskipamount

\section*{Introduction}

La théorie de l'homotopie dans $\Cat$, la catégorie des petites catégories,
débute avec l'introduction par Grothendieck en 1961 du foncteur
nerf~\cite{Nerf} associant à une petite catégorie un ensemble simplicial.
Ce foncteur nerf permet de définir une notion d'équivalence faible dans
$\Cat$ en décrétant qu'un foncteur entre petites catégories est une
équivalence faible si son nerf est une équivalence d'homotopie faible
d'ensembles simpliciaux. Dans sa thèse~\cite{IL}, Illusie démontre
(attribuant ce résultat à Quillen) que le foncteur nerf induit une
équivalence de catégories de la catégorie homotopique des petites catégories
$\Ho(\Cat)$ vers la catégorie homotopique $\Hot = \Ho(\simpl)$ des ensembles
simpliciaux (catégorie homotopique signifiant ici catégorie localisée par
les équivalences faibles). En particulier, grâce à un théorème de Milnor
\cite{Mil}, les petites catégories modélisent les types d'homotopie des
espaces topologiques.

\tb

En 1980, Thomason démontre l'existence d'une structure de catégorie de
modèles de Quillen sur $\Cat$~\cite{Th} dont les équivalences faibles sont
les équivalences faibles définies \forlang{via} le nerf. Ceci entraîne par
exemple que la théorie homotopique de $\cat$ est homotopiquement complète
et cocomplète~\cite{Ci1}. De plus, Thomason définit une équivalence de 
Quillen entre $\Cat$ munie de cette structure et les
ensembles simpliciaux munies de la structure définie par Quillen
dans~\cite{Qu1}. Ceci montre que $\Ho(\Cat)$ et $\Hot$ sont non seulement
équivalentes comme catégories, mais aussi comme $(\infty, 1)$-catégories
(par exemple au sens des catégories simpliciales~\cite{DK}).

\tb

Une des premières applications spectaculaires de la théorie de l'homotopie
dans $\Cat$ est sans doute la définition par Quillen de la $\mathrm
K$\nbd-théorie algébrique~\cite{QuK}. En effet, les groupes de $\mathrm
K$\nbd-théorie algébrique sont définis dans \cite{QuK} comme les groupes
d'homotopie du nerf d'une certaine catégorie. De plus, afin d'établir les
théorèmes de base de cette théorie, Quillen est amené à démontrer ses fameux
théorèmes A et B qui jouent un rôle crucial dans la théorie de l'homotopie
dans $\Cat$.

\tb

Cette théorie a également été largement explorée par
Grothendieck qui lui consacre une grande partie de \emph{Pursuing
Stacks}~\cite{PS1} et des \emph{Dérivateurs}~\cite{Der} (voir
aussi~\cite{MalPRST}). L'idée de Grothendieck est de fonder la théorie
de l'homotopie des espaces sur la théorie de l'homotopie dans $\Cat$. C'est
ce point de vue qui lui permet de développer la théorie des catégories test
(voir~\cite{PS1} et~\cite{MalPRST}). Cette étude est poursuivie
par Cisinski dans sa thèse~\cite{Ci}. Cette thèse contient notamment une
variante plus conceptuelle de la preuve du théorème de Thomason qui est un
des points de départ du présent article.

\tb

Pour fonder une théorie de l'homotopie analogue dans $\nCat{n}$, $1 \le n
\le \infty$, la catégorie des petites $n$-catégories strictes et des
$n$-foncteurs stricts, on a besoin d'un foncteur nerf associant, par
exemple, à une $n$-catégorie (stricte) un ensemble simplicial. En 1987,
Street a construit un tel foncteur pour $n = \infty$~\cite{S1} (voir
aussi~\cite{S2,S2cor,Steiner,SteinerOr}). Pour $n \ge 1$, la restriction de
ce foncteur à $\nCat{n}$ définit un foncteur nerf de $\nCat{n}$ vers la
catégorie des ensembles simpliciaux que nous appellerons $n$-nerf de Street.
Pour $n=1$, on retrouve le nerf introduit par Grothendieck. On dispose ainsi
d'une notion d'équivalence faible dans $\ncat{n}$ : nous appellerons
équivalence faible de \hbox{Thomason} un $n$-foncteur qui s'envoie sur une
équivalence d'homotopie faible d'ensembles simpliciaux \emph{via} le
$n$-nerf de Street.

\tb

On dispose également d'autres foncteurs nerf : pour $n$ fini, un nerf
$n$-simplicial (voir par exemple~\cite{Tamsamani}) à valeurs dans les
ensembles $n$-simpliciaux et, pour $n$ arbitraire, un nerf cellulaire
(introduit par M.~A.~Batanin et R.~Street \cite{BataninStreet} et étudié
indépendamment par M.~Makkai et M.~Zawadowski~\cite{MakZaw} et
C.~Berger~\cite{CB1}) à valeurs dans les préfaisceaux sur la catégorie
$\Theta_n$ de Joyal \cite{Joyal}. Une question fondamentale de la théorie de
l'homotopie dans $\nCat{n}$ est la comparaison de ces différents foncteurs
nerf pour montrer, en particulier, qu'ils induisent la même notion
d'équivalence faible de $n$-catégories. Nous montrerons dans \cite{DG2} que
les équivalences faibles définies \forlang{via} le nerf $n$-simplicial
coïncident avec celles définies \forlang{via} le nerf $n$-cellulaire et nous
étudierons le problème de la comparaison de ces deux foncteurs nerf avec le
$n$-nerf de Street.

\tb

Le cas particulier de la théorie de l'homotopie dans $\dcat$ est activement
étudié par A.~M.~Cegarra et ses collaborateurs. Dans~\cite{BCeg},
M.~Bullejos et A.~M.~\hbox{Cegarra} démontrent que les équivalences faibles
définies \forlang{via} le $2$-nerf de Street coïncident avec celles définies
\forlang{via} le nerf bisimplicial. Ils prouvent par ailleurs un analogue
$2$\nbd-catégorique du théorème~A de Quillen. Dans~\cite{Ceg}, A.~M.~Cegarra
généralise le théorème~B de Quillen et étudie les colimites homotopiques
dans ce cadre. Dans sa thèse~\cite{ChicheThese}, J.~Chiche développe une
théorie de l'homotopie dans $\dcat$ analogue à celle développée par
Grothendieck dans $\cat$. La thèse~\cite{Hoyo} de M.~L.~del Hoyo (qui précède
celle de J.~Chiche) contient une partie importante consacrée à ces
questions. En particulier, en utilisant des résultats de cette thèse, on
peut montrer que l'inclusion de~$\cat$ dans $\dcat$ induit une équivalence
entre les catégories homotopiques. Une preuve complète de ce résultat a été
obtenue par J.~Chiche dans~\cite{Chiche}. On en déduit immédiatement que le
$2$-nerf de Street induit une équivalence de catégories entre $\Ho(\dcat)$
et~$\Hot$.

\tb

Il est naturel de se demander s'il existe une structure de catégorie de
modèles sur $\nCat{n}$ ayant comme équivalences faibles les équivalences
faibles de Thomason, et si celle-ci est Quillen équivalente à celle des
ensembles simpliciaux. Pour $n=2$, K.~\hbox{Worytkiewicz} et~\emph{al}.
affirment dans~\cite{Wor} apporter une réponse positive à ces questions.
Malheureusement, bien que la définition qu'ils proposent pour cette
structure soit correcte, leurs démonstrations des deux points clés
permettant d'adapter à la dimension $2$ la preuve de Cisinski sont erronées
(voir les scholies \ref{trivfaux} et \ref{sch:rdf_wor} du présent article).
Par ailleurs, contrairement à ce qui est affirmé dans l'introduction, leur
texte ne contient pas de preuve du fait que la structure prétendument
construite est Quillen équivalente à celle des ensembles simpliciaux (on
peut déduire facilement ce fait du résultat postérieur de J.~Chiche).

\medbreak

Le présent article est le premier d'une série d'articles qui a pour but
d'obtenir une structure de catégorie de modèles à la Thomason sur
$\ncat{n}$. Dans ce premier article, nous démontrons, en suivant les grandes
lignes de la preuve de Cisinski du théorème de Thomason classique, un \og
théorème de Thomason abstrait\fg{} (théorème~\ref{Thomabs}). Étant donnés
une catégorie $\C$ et un \og foncteur nerf\fg{} $U$ de $\C$ vers la
catégorie des ensembles simpliciaux admettant un adjoint à gauche $F$, on
présente des conditions suffisantes permettant de définir une structure de
catégorie de modèles \og à la Thomason\fg{} sur~$\C$, ayant comme
équivalences faibles (resp. comme fibrations) les flèches de $\C$ dont
l'image par $U$ (resp. par $\Ex^2U$) est une équivalence faible (resp. une
fibration) d'ensembles simpliciaux (où $\Ex$ désigne le foncteur d'extension
de Kan, adjoint à droite du foncteur de subdivision $\Sd$~\cite{Kan}), de
sorte que $(F\Sd^2,\Ex^2U)$ soit une adjonction de Quillen.

\tb

La vérification de ces conditions pour $\C=\cat$ et $U$ le foncteur nerf
usuel est très simple et on retrouve donc le théorème de Thomason
classique.

\tb

Pour $\C = \ncat{n}$ et $U = N_n$ le $n$-nerf de Street, nous démontrons que la
plupart des conditions du théorème de Thomason abstrait sont satisfaites. En
particulier, nous montrons que pour obtenir une structure de catégorie de
modèles à la Thomason sur $\ncat{n}$, il suffit, en notant $c_n$ l'adjoint à
gauche de $N_n$, de prouver les deux propriétés suivantes :
\tb
\begin{enumerate}
  \item[($\text{\emph{d}}'$)] si $E'\to E$ est un crible d'ensembles
    ordonnés admettant une rétraction qui est aussi un adjoint à droite,
    son image par le foncteur $c_nN$ est une équivalence faible de Thomason
    et le reste après tout cochangement de base ;
  \item[(\emph{e})] pour tout ensemble ordonné $E$, le morphisme d'adjonction
  $N(E)\to N_nc_nN(E)$ est une équivalence faible d'ensembles
  simpliciaux.
\end{enumerate}
\tb
La partie la plus technique de l'article est consacrée à la démonstration de
ces conditions dans le cas $n = 2$, corrigeant en particulier les erreurs de
\cite{Wor}. Nous obtenons ainsi une structure de catégorie de modèles à la
Thomason sur $\dcat$. De plus, on déduit d'un théorème de J.~Chiche déjà cité
que cette structure est Quillen équivalente à la structure de catégorie de
modèles sur les ensembles simpliciaux définie par Quillen. En particulier,
la $(\infty, 1)$\nbd-catégorie $\Ho(\dcat)$ est équivalente à la $(\infty,
1)$\nbd-catégorie $\Hot$. Nous montrons par ailleurs que le foncteur
d'inclusion de $\Cat$ munie de la structure de Thomason dans $\nCat{2}$
munie de la structure à la Thomason est une équivalence de Quillen à droite.

\tb

Comme l'ont remarqué les auteurs de \cite{Wor}, si $i : E' \to E$ est un
crible d'ensembles ordonnés vérifiant les hypothèses de la
condition~($\text{\emph{d}}'$), alors le $2$-foncteur 
\hbox{$c_2N(i) : c_2N(E') \to c_2N(E)$} 
est un crible (en un sens adéquat) admettant une rétraction $r$
pour laquelle il existe une homotopie oplax (avec nos conventions, 
lax avec celles de~\cite{Wor}) normalisée $H$ de $ri$ vers
l'identité de $c_2N(E)$, c'est-à-dire un $2$\nbd-foncteur oplax normalisé $H
: \Delta_1 \times c_2N(E) \to c_2N(E)$ compatible en un sens évident à ces
deux $2$-foncteurs. Il est facile de voir qu'un tel crible est une
équivalence faible de Thomason. Pour démontrer la
condition~($\text{\emph{d}}'$), il suffirait donc de s'assurer que cette
notion est stable par images directes.  Les auteurs de \cite{Wor} affirment
que c'est bien le cas. La preuve qu'ils produisent contient cependant une
erreur. De fait, nous doutons de la véracité de cette assertion (voir le
scholie~\ref{sch:rdf_wor} pour plus de détails). Dans la
section~\ref{sec:2-cat}, nous introduisons une condition supplémentaire sur
un tel crible et nous montrons que $c_2N(i)$, où $i$ est comme ci-dessus,
satisfait à cette condition. La délicate section~\ref{sec:2rdf} est
consacrée à la preuve du fait que cette notion est stable par images
directes.

\tb

Les auteurs de \cite{Wor} affirment par ailleurs que le morphisme de la
condition~(\emph{e}) est un isomorphisme. L'ensemble totalement ordonné à
trois éléments fournit pourtant un contre-exemple à cette assertion (voir le
scholie~\ref{trivfaux}). Dans la section~\ref{sec:2-cat}, nous expliquons
comment déduire la condition~(\emph{e}) du résultat de comparaison du
$2$-nerf de Street et du nerf bisimplicial obtenu par M.~Bullejos et
A.~M.~Cegarra dans \cite{BCeg}.

\tb

Pour $n > 2$, les propriétés ($\text{\emph{d}}'$) et (\emph{e}) ne sont pas
encore établies.  Afin de généraliser notre démonstration de la
condition~($\text{\emph{d}}'$) à $\ncat{n}$, $n > 2$, nous aurons besoin
d'une notion de $n$-foncteur lax (ou oplax) normalisé pour $n > 2$. Ce
concept ne semble pas avoir été encore exploré. Dans \cite{DG1}, nous
introduirons et étudierons une telle notion. Par ailleurs, pour établir la
condition $(\emph{e})$ pour $n$ quelconque, il faudra comparer le $n$-nerf de
Street et le nerf $n$-simplicial. Ce sera l'objet de \cite{DG2}.  Une fois
ces deux conditions démontrées, on aura donc une structure de catégorie de
modèles à la Thomason sur $\nCat{n}$, et il sera facile de vérifier (en
généralisant la preuve de la proposition~\ref{eq-Quillen-incl}) que, pour
$0<m<n$, le couple de foncteurs formé de l'inclusion $\nCat{m}\to\nCat{n}$ et
de son adjoint à gauche forment une adjonction de Quillen. Une
généralisation du théorème de J.~Chiche permettra d'affirmer que cette
adjonction est une équivalence de Quillen, et en particulier que les
$n$\nbd-catégories strictes modélisent les types d'homotopie. Enfin, des
arguments simples (analogues à ceux conduisant au
corollaire~\ref{bieqeqThom}) montreront que les cofibrations de la structure
à la Thomason sur $\nCat{n}$ sont des cofibrations de la structure \og
folklorique\fg{} \cite{LMW} et que les équivalences faibles de cette
dernière structure sont des équivalences faibles de Thomason.

\tb

Il est à noter qu'une généralisation du théorème de Thomason dans une
direction différente a été obtenue par T.~M.~Fiore et S.~Paoli~\cite{FP} qui
construisent une structure de catégorie de modèles à la Thomason sur la
catégorie des petites catégories $n$\nbd-uples, Quillen équivalente à
celle des ensembles simpliciaux. 

\medbreak

\begin{organisation}
La plupart des résultats de la section~\ref{cofGroth} sont dus à Grothendieck.
Dans une catégorie de modèles de Quillen, les équivalences faibles ne
déterminent pas les cofibrations. L'idée de Grothendieck est d'associer à
toute classe de flèches $\W$ d'une catégorie, considérées comme des
équivalences faibles, une notion de $\W$-cofibrations, flèches ayant les
propriétés formelles des cofibrations d'une catégorie de modèles de Quillen
propre à gauche. La section~\ref{cofGroth} est consacrée à l'étude de cette
notion, ainsi que d'une variante, dans le cas où $\W$ est l'image inverse
par un foncteur de la classe des équivalences faibles d'une catégorie de
modèles.

\tb

Dans la section~\ref{cribles}, on introduit le formalisme des catégories à
cribles et cocribles, version abstraite des cribles et cocribles dans
$\cat$. On présente l'exemple, crucial pour cet article, des cribles et
cocribles dans $\nCat{n}$. La section~\ref{sec:transfert} est consacrée à
des rappels sur les catégories de modèles combinatoires et le lemme de
transfert classique pour ces structures. On en déduit un nouveau théorème de
transfert. Dans la section~\ref{absThom}, on démontre le \og théorème de
Thomason abstrait\fg{}, ce qui termine la partie abstraite de l'article.

\tb

Dans la section~\ref{nCat}, on présente des conditions suffisantes sur un
foncteur $\cats\to\nCat{n}$ pour que le couple de foncteurs adjoints qui
lui est associé par le procédé de Kan satisfasse à certaines hypothèses du
\og théorème de Thomason abstrait\fg{}. On introduit les orientaux de
Street, ainsi que leur variante $n$\nbd-tronquée. On prouve que le foncteur
correspondant $i^{}_n:\cats\to\nCat{n}$ satisfait à ces conditions
suffisantes. Dans la section~\ref{sec:2-cat}, on s'intéresse au cas de
$\dcat$.  On démontre, modulo un résultat $2$-catégorique établi dans la
section suivante, que le couple de foncteurs adjoints associé au foncteur
$i^{}_2$ satisfait aux hypothèses restantes du \og théorème de Thomason
abstrait\fg{}, achevant ainsi la preuve du théorème de Thomason
$2$\nbd-catégorique. La section~\ref{sec:2rdf} est consacrée à la propriété
$2$-catégorique admise dans la section précédente, à savoir la stabilité par
images directes de la notion de $2$-crible rétracte par déformation oplax
fort. Dans la dernière section, on démontre quelques propriétés des objets
cofibrants pour la structure à la Thomason sur $\nCat{2}$, généralisant à
$\nCat{2}$ un théorème de Thomason sur $\Cat$.

\tb

Enfin, on présente dans un appendice quelques applications d'un théorème de
Smith~\cite{Be}, faisant intervenir la notion de $\W$\nbd-cofibrations de
Grothendieck.
\end{organisation}

\medbreak

\begin{remerciements}
Le second auteur souhaiterait remercier sincèrement Andy Tonks et
\hbox{Krzysztof} Worytkiewicz pour les exposés qu'ils ont donnés sur leur
article~\cite{Wor} dans deux groupes de travail de l'université Paris~7 et
pour les longues discussions qui s'en sont suivies. Ce sont ces exposés et
discussions qui l'ont poussé à s'intéresser à la question de la
généralisation au cadre $n$-catégorique. (Ce n'est que plus tard, en
étudiant en détail \cite{Wor} en vue de cette généralisation
$n$\nbd-catégorique, que les auteurs du présent article ont découvert les
erreurs de ce texte.) Les auteurs remercient par ailleurs Jonathan Chiche
pour les éléments de théorie de l'homotopie des $2$-catégories qu'il leur a
appris, ainsi que pour ses remarques et corrections.
\end{remerciements}

\medbreak

\begin{notations}
Si $A$ est une catégorie, on note $\ob(A)$ l'ensemble de ses objets et
$\fl(A)$ celui de ses flèches. Pour tout couple d'objets $x,y$ de $A$, on
note $\Hom_A(x,y)$ l'ensemble des flèches de $A$ de source $x$ et but $y$.
Si $A$ est une petite catégorie, $\pref{A}$~désigne la catégorie des
préfaisceaux d'ensembles sur $A$, foncteurs contravariants de $A$ vers la
catégorie $\ens$ des ensembles. Si $u:A\to B$ est un foncteur et $b$ un
objet de $B$, on note, quand aucune confusion n'en résulte, $\cm{A}{b}$
(resp.  $\mc{b}{A}$) la comma catégorie $(u\downarrow b)$
(resp.~$(b\downarrow u)$) dont les objets sont les couples $(a,g)$, où $a$
est un objet de $A$ et $g:u(a)\to b$ (resp.~$g:b\to u(a)$) une flèche de
$B$. Toutes les $n$\nbd-catégories considérées dans cet article seront
strictes, et on écrira en général \og $n$\nbd-catégorie \fg{} pour
\og $n$\nbd-catégorie stricte \fg{}. De même, sauf mention expresse du
contraire, les $n$\nbd-foncteurs seront stricts, et on
écrira souvent \og $n$\nbd-foncteur \fg{} pour \og $n$\nbd-foncteur strict
\fg{}. Si $C$ est une $n$\nbd-catégorie et $x,y$ sont deux objets de $C$, on
note $\sHom_C(x,y)$ la $(n-1)$\nbd-catégorie des $1$\nbd-flèches de $C$ de
$x$ vers $y$ (l'indice $C$ étant parfois omis, quand aucune ambiguïté n'en résulte).
On dit que deux $i$\nbd-flèches, $1\leq i\leq n$, de $C$ sont parallèles si elles
ont même source et même but.
\end{notations}

\section{$\W$-cofibrations de Grothendieck}\label{cofGroth}

\begin{paragr}\label{defcofGroth}
Soient $\C$ une catégorie admettant des sommes amalgamées, et $\W$ une classe de flèches de $\C$. Une \emph{$\W$\nbd-cofibration de Grothendieck}, ou plus simplement \emph{$\W$\nbd-cofibration}, est une flèche $i:A\to B$ de $\C$ telle que pour tout diagramme de carrés cocartésiens
$$
\xymatrix{
&A\ar[r]\ar[d]_{i}
&A'\ar[r]^{s}\ar[d]
&A''\ar[d]
\\
&B\ar[r]
&B'\ar[r]_{t}
&B''
&\hskip -20pt,\hskip 20pt
}
$$
si $s$ est dans $\W$, il en est de même de $t$. On note $\Cof{\W}$ la classe des $\W$\nbd-cofibrations de $\C$. La classe $\Cof{\W}$ est stable par composition et images directes. Si la classe $\W$ est stable par isomorphisme (dans la catégorie des flèches de $\C$), alors $\Cof{\W}$ contient les isomorphismes.
\end{paragr}

\begin{prop}\label{prprcofGroth}
Si la catégorie $\C$ admet des petites limites inductives filtrantes et si la classe $\W$ est stable par ces limites, alors la classe $\Cof{\W}$ partage la même propriété de stabilité. En particulier, $\Cof{\W}$ est alors stable par rétractes et par composition transfinie. 
\end{prop}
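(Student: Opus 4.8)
Le plan est d'\'etablir directement la stabilit\'e de $\Cof{\W}$ par limites inductives filtrantes ; les deux assertions particuli\`eres en d\'ecouleront alors formellement, compte tenu de la stabilit\'e de $\Cof{\W}$ par composition d\'ej\`a observ\'ee au paragraphe~\ref{defcofGroth}. Soit donc $I$ une petite cat\'egorie filtrante et $j \mapsto (i_j : A_j \to B_j)$ un foncteur de $I$ vers la cat\'egorie des fl\`eches de $\C$ dont chaque valeur est une $\W$\nbd-cofibration ; notons $i : A \to B$ sa limite inductive, de sorte que $A = \varinjlim_j A_j$ et $B = \varinjlim_j B_j$. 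Pour montrer que $i$ est une $\W$\nbd-cofibration, on se donne un diagramme de deux carr\'es cocart\'esiens juxtapos\'es, de ligne sup\'erieure $A \to A' \xrightarrow{s} A''$ et de ligne inf\'erieure $B \to B' \xrightarrow{t} B''$, la fl\`eche verticale de gauche \'etant $i$, avec $s$ dans $\W$ ; il s'agit de prouver que $t$ appartient \`a $\W$.

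L'id\'ee centrale est de r\'ealiser ce diagramme comme une limite inductive filtrante de diagrammes analogues attach\'es aux $i_j$. Pour chaque objet $j$ de $I$, on consid\`ere le compos\'e $A_j \to A \to A'$ et on forme les deux carr\'es cocart\'esiens de ligne sup\'erieure $A_j \to A' \xrightarrow{s} A''$, obtenus en poussant d'abord $i_j$ le long de $A_j \to A'$ (d'o\`u $B'_j = B_j \sqcup_{A_j} A'$), puis la fl\`eche verticale $A' \to B'_j$ le long de $s$ (d'o\`u $B''_j = B'_j \sqcup_{A'} A''$) ; on note $t_j : B'_j \to B''_j$ la fl\`eche horizontale inf\'erieure ainsi obtenue. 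Comme $i_j$ est une $\W$\nbd-cofibration et que $s$ est dans $\W$, chaque $t_j$ est dans $\W$. Les objets $A'$ et $A''$ et la fl\`eche $s$ \'etant fix\'es, tandis que les fl\`eches $A_j \to A'$ sont naturelles en $j$, ces constructions sont fonctorielles en $j$. Il reste \`a identifier la limite inductive de $j \mapsto t_j$ \`a $t$ : le produit amalgam\'e \'etant une limite inductive, il commute \`a la limite filtrante index\'ee par $I$, et en utilisant $A = \varinjlim_j A_j$, $B = \varinjlim_j B_j$, ainsi que $A' = \varinjlim_j A'$ et $A'' = \varinjlim_j A''$ (limites inductives de foncteurs constants, licites puisque $I$ est connexe), on obtient des isomorphismes canoniques $\varinjlim_j B'_j \cong B'$ et $\varinjlim_j B''_j \cong B''$, compatibles aux fl\`eches structurales, sous lesquels $\varinjlim_j t_j$ s'identifie \`a $t$. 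La stabilit\'e de $\W$ par limites inductives filtrantes livre alors $t = \varinjlim_j t_j \in \W$, d'o\`u $i \in \Cof{\W}$.

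Les deux cons\'equences sont ensuite formelles. Pour les r\'etractes : si $g$ est un r\'etracte de $f \in \Cof{\W}$, les donn\'ees de r\'etracte d\'efinissent, dans la cat\'egorie des fl\`eches de $\C$, un endomorphisme idempotent de $f$ dont la scission est $g$ ; or la scission d'un idempotent est la limite inductive, index\'ee par $\omega$, de la suite dont tous les objets valent $f$ et toutes les fl\`eches de transition valent cet idempotent ; cette suite \'etant \`a valeurs dans $\Cof{\W}$, on conclut $g \in \Cof{\W}$. Pour la composition transfinie : \'etant donn\'ee une suite continue $(X_\alpha)_{\alpha \le \lambda}$ dont toutes les fl\`eches \'el\'ementaires sont des $\W$\nbd-cofibrations, on montre par r\'ecurrence transfinie que $X_0 \to X_\beta$ est une $\W$\nbd-cofibration pour tout $\beta \le \lambda$ ; le cas successeur r\'esulte de la stabilit\'e par composition (paragraphe~\ref{defcofGroth}), et le cas limite du fait que $X_0 \to X_\beta$ est la limite inductive filtrante des $\W$\nbd-cofibrations $X_0 \to X_\alpha$, pour $0 < \alpha < \beta$, calcul\'ee \`a source constante $X_0$ gr\^ace \`a la continuit\'e de la suite.

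La seule v\'eritable difficult\'e r\'eside dans l'\'etape d'identification de la limite inductive des $t_j$ \`a $t$ : justifier rigoureusement l'interversion des limites inductives alors que la base $A_j$ du produit amalgam\'e varie avec $j$ (les objets $A'$ et $A''$ et la fl\`eche $s$ demeurant fix\'es), et s'assurer que les carr\'es cocart\'esiens attach\'es aux $i_j$ s'assemblent effectivement, de fa\c con fonctorielle, en un diagramme dont la limite inductive calcule $t$. Le plus s\^ur est d'introduire le bifoncteur d\'efini sur le produit de $I$ par la cat\'egorie \`a trois objets indexant les produits amalgam\'es, et de calculer sa limite inductive dans les deux ordres (th\'eor\`eme de Fubini pour les limites inductives) ; tout le reste rel\`eve du formalisme de la commutation des limites inductives entre elles et de la connexit\'e de $I$.
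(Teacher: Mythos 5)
Votre preuve est correcte et suit essentiellement la m�me d�marche que celle de l'article, qui se borne � invoquer les m�mes ingr�dients : la stabilit� de $\Cof{\W}$ par limites inductives filtrantes se ram�ne � la commutation des limites inductives entre elles (l'argument de Fubini sur le bifoncteur index� par le produit de $I$ et de la cat�gorie des sommes amalgam�es, que vous explicitez), puis les cas des r�tractes et de la composition transfinie s'en d�duisent par les arguments standards que vous donnez (scission d'un idempotent comme limite inductive d'une suite index�e par $\omega$, r�currence transfinie utilisant la stabilit� par composition binaire). Vous ne faites donc que d�tailler, correctement, ce que la d�monstration de l'article condense en une phrase.
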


\begin{proof}
La proposition résulte de la commutation des limites inductives entre elles, et du fait que la stabilité par limites inductives filtrantes implique celle par rétractes ainsi que celle par composition transfinie (grâce à la stabilité par composition binaire).
\end{proof}

\begin{paragr}
On rappelle que la classe $\W$ \emph{satisfait à la propriété du deux sur trois} si pour tout triangle commutatif de $\C$, si deux des flèches le composant sont dans $\W$, il en est de même de la troisième.
\end{paragr}

\begin{prop}[Grothendieck]\label{lemmeGroth}
Si la classe $\W$ satisfait à la propriété du deux sur trois, et si toute flèche $f$ de $\C$ se décompose en $f=si$ avec $i\in\Cof{\W}$ et $s\in\W$, alors la classe $\Cof{\W}\cap\W$ est stable par images directes.
\end{prop}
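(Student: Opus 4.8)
The plan is to combine the two hypotheses with the stability of $\Cof{\W}$ under direct images already noted in~\ref{defcofGroth}. Let $i\colon A\to B$ belong to $\Cof{\W}\cap\W$ and let $i'\colon A'\to B'$ be the pushout of $i$ along some arrow $u\colon A\to A'$. Since $\Cof{\W}$ is stable under direct images, one immediately has $i'\in\Cof{\W}$; the whole content of the statement is therefore the assertion $i'\in\W$, and this is what I would concentrate on.

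The decisive idea is to apply the factorisation hypothesis not to $i$ but to the arrow $u$ along which the pushout is taken: write $u=\sigma\kappa$ with $\kappa\colon A\to\tilde A$ in $\Cof{\W}$ and $\sigma\colon\tilde A\to A'$ in $\W$. Forming the pushout of $i$ along $\kappa$ and then along $\sigma$ produces a diagram of two cocartesian squares
$$
\xymatrix{
A\ar[r]^{\kappa}\ar[d]_{i} & \tilde A\ar[r]^{\sigma}\ar[d]_{j} & A'\ar[d]^{i'}\\
B\ar[r]_{w} & \tilde B\ar[r]_{w'} & B'
}
$$
whose outer rectangle is the pushout of $i$ along $u$; in particular the right-hand vertical arrow is indeed $i'$, and commutativity of the right square reads $i'\sigma=w'j$.

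First I would check that $j\in\W$. The left square exhibits $j$ as the pushout of the weak equivalence $i$ along $\kappa$; since $\kappa\in\Cof{\W}$, taking in the definition of a $\W$\nbd-cofibration the trivial cobase change along $\id{A}$ together with the top arrow $s=i$, the conclusion is precisely $j\in\W$. Next I would check that $w'\in\W$. Here $j$, being a direct image of $i$, lies in $\Cof{\W}$, and the right square exhibits $w'$ as the pushout of the weak equivalence $\sigma$ along $j$; applying the definition of $\W$\nbd-cofibration to $i$ itself, with the displayed diagram as witnessing diagram of cocartesian squares ($s=\sigma$, $t=w'$), gives $w'\in\W$.

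It then only remains to assemble these facts by means of the two\nbd-out\nbd-of\nbd-three property. From $j\in\W$ and $w'\in\W$ one gets $i'\sigma=w'j\in\W$; since also $\sigma\in\W$, the triangle with edges $\sigma$, $i'$ and $i'\sigma$ forces $i'\in\W$ by a second use of two\nbd-out\nbd-of\nbd-three. Combined with $i'\in\Cof{\W}$ this yields $i'\in\Cof{\W}\cap\W$, as wanted. I expect the only delicate point to be the bookkeeping that identifies the two pushouts of the diagram with instances of the defining diagram of a $\W$\nbd-cofibration --- that $j$ is really the arrow produced by the cofibration property of $\kappa$ applied to the weak equivalence $i$, and that $w'$ is the arrow produced by the cofibration property of $i$ applied to $\sigma$; once these identifications are granted, the argument is purely formal.
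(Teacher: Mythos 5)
Your proof is correct and is essentially the paper's own argument: the paper likewise factors the map along which the pushout is taken (its $f$, your $u$) as $f=si$ with $i\in\Cof{\W}$ and $s\in\W$, forms the same two cocartesian squares, deduces that the middle vertical arrow and the lower-right horizontal arrow lie in $\W$ from the $\W$\nbd-cofibration property of the factor $i$ (your $\kappa$) and of the original map (your $i$) exactly as you do, and concludes by two applications of two-out-of-three. Only the notation differs, so there is nothing to add.
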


\begin{proof}
Soit 
$$
\xymatrix{
A\ar[r]^f\ar[d]_j
&A''\ar[d]^{j''}
\\
B\ar[r]
&B''
}
$$
un carré cocartésien, avec $j$ dans $\Cof{\W}\cap\W$. Il s'agit de montrer que $j''$ est aussi dans $\Cof{\W}\cap\W$. Comme les $\W$\nbd-cofibrations sont stables par images directes, il suffit de montrer que $j''$ est dans $\W$. Par hypothèse, il existe une décomposition $f=si$ de $f$, avec $i\in\Cof{\W}$ et $s\in\W$. On en déduit un diagramme de carrés cocartésiens
$$
\xymatrix{
&A\ar[r]^i\ar[d]_j
&A'\ar[r]^s\ar[d]^{j'}
&A''\ar[d]^{j''}
\\
&B\ar[r]
&B'\ar[r]_t
&B''
&\kern -15pt.\kern15pt
}
$$
Comme $j$ est une $\W$\nbd-cofibration et $s$ est dans $\W$, $t$ est aussi dans $\W$. Comme $i$ est une $\W$\nbd-cofibration et $j$ est dans $\W$, $j'$ est aussi dans $\W$. La propriété du deux sur trois implique alors que $j''$ est dans $\W$.
\end{proof}

\begin{exemple}\label{excofGroth}
Soient $\M$ une catégorie de modèles fermée et $\W$ sa classe d'équivalences faibles. Si $\M$ est propre à gauche, alors les cofibrations de $\M$ sont des $\W$\nbd-cofibrations. Plus précisément, la catégorie de modèles $\M$ est propre à gauche si et seulement si toute cofibration de $\M$ est une $\W$\nbd-cofibration.
\end{exemple}

\begin{paragr}\label{UeqUcof}
Soient $\C$ une catégorie, $\M$ une catégorie de modèles fermée, et $U:\C\to\M$ un foncteur. Une \emph{$U$\nbd-équivalence} est une flèche $s$ de $\C$ telle que $U(s)$ soit une équivalence faible de $\M$. Une \emph{$U$\nbd-cofibration} est une flèche $i:A\to B$ de $\C$ telle que pour tout carré cocartésien de $\C$
$$
\xymatrix{
&A\ar[r]^f\ar[d]_i
&A'\ar[d]^{i'}
\\
&B\ar[r]_g
&B'
&\kern -15pt,\kern15pt
}
$$
le carré
$$\xymatrix{
&U(A)\ar[r]^{U(f)}\ar[d]_{U(i)}
&U(A')\ar[d]^{U(i')}
\\
&U(B)\ar[r]_{U(g)}
&U(B')
&\kern -15pt\kern15pt
}
$$
soit un carré homotopiquement cocartésien de $\M$. On vérifie immédiatement que les $U$\nbd-cofibrations sont stables par composition et images directes, et que tout isomorphisme est une $U$\nbd-cofibration. Plus généralement, toute flèche de $\C$ qui est une $U$\nbd-équivalence et le reste après tout cochangement de base est une $U$\nbd-cofibration.
\end{paragr}

\begin{prop}\label{UcofWcof}
Soient $\C$ une catégorie admettant des sommes amalgamées, $\M$ une catégorie de modèles fermée, $U:\C\to\M$ un foncteur, et $\W$ la classe des $U$\nbd-équivalences. Alors toute $U$\nbd-cofibration est une $\W$\nbd-cofibration. Réciproquement, si toute flèche $f$ de $\C$ se décompose en $f=si$, où $i$ est une $U$\nbd-cofibration et $s$ une $U$\nbd-équivalence, alors toute $\W$\nbd-cofibration est une $U$\nbd-cofibration.
\end{prop}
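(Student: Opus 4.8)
The plan is to prove the two implications in turn, relying on three standard properties of homotopy cocartesian squares in a closed model category $\M$: \textbf{(A)} the pasting law, that for two horizontally adjacent commutative squares whose juxtaposition is a given rectangle, \emph{if} the left-hand square is homotopy cocartesian, \emph{then} the right-hand square is homotopy cocartesian if and only if the whole rectangle is; \textbf{(B)} that a commutative square in which two parallel arrows are weak equivalences is homotopy cocartesian; and \textbf{(C)} that in a homotopy cocartesian square, if one of two parallel arrows is a weak equivalence then so is the other (homotopy cobase change preserves weak equivalences). None of these requires a (co)fibrancy assumption on the objects, which is what lets us apply them to the $U$-images of arbitrary cocartesian squares of $\C$.

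For the first assertion, let $i$ be a $U$\nbd-cofibration and take an arbitrary diagram of two cocartesian squares of $\C$
$$\xymatrix{A\ar[r]\ar[d]_i&A'\ar[r]^s\ar[d]&A''\ar[d]\\B\ar[r]&B'\ar[r]_t&B''}$$
with $i$ on the far left and $s\in\W$. Because $i$ is a $U$\nbd-cofibration, the image under $U$ of the left-hand square is homotopy cocartesian; and since the outer rectangle is again cocartesian with $i$ on its left, its image under $U$ is homotopy cocartesian too. By the pasting law \textbf{(A)}, the image under $U$ of the right-hand square is homotopy cocartesian. Its two horizontal arrows are $U(s)$ and $U(t)$; as $U(s)$ is a weak equivalence, \textbf{(C)} gives that $U(t)$ is one as well, i.e.\ $t\in\W$. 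Hence $i$ is a $\W$\nbd-cofibration.

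For the converse, assume the factorization hypothesis, let $j$ be a $\W$\nbd-cofibration, and let
$$\xymatrix{A\ar[r]^f\ar[d]_j&A'\ar[d]^{j'}\\B\ar[r]_g&B'}$$
be an arbitrary cocartesian square of $\C$; we must show that its image under $U$ is homotopy cocartesian. Write $f=si$ with $i:A\to\tilde A$ a $U$\nbd-cofibration and $s:\tilde A\to A'$ a $U$\nbd-equivalence, and form successively the pushout of $j$ along $i$ and then the pushout of the resulting map along $s$. This exhibits the given square as the outer rectangle of a diagram of two cocartesian squares
$$\xymatrix{A\ar[r]^i\ar[d]_j&\tilde A\ar[r]^s\ar[d]&A'\ar[d]^{j'}\\B\ar[r]&\tilde B\ar[r]_t&B'}$$
Since $i$ is a $U$\nbd-cofibration, the image under $U$ of the left-hand square is homotopy cocartesian. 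Since $s$ is a $U$\nbd-equivalence we have $s\in\W$, so the $\W$\nbd-cofibration property of $j$ yields $t\in\W$; thus the two horizontal arrows $U(s)$ and $U(t)$ of the right-hand square are both weak equivalences, and by \textbf{(B)} the image under $U$ of that square is homotopy cocartesian. Applying the pasting law \textbf{(A)} in the other direction, the image under $U$ of the outer rectangle --- that is, of the original square --- is homotopy cocartesian. As the square was arbitrary, $j$ is a $U$\nbd-cofibration.

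The argument is formal once \textbf{(A)}, \textbf{(B)}, \textbf{(C)} are granted, so the only care needed is in the bookkeeping: setting up the two-step factorization so that the original square really is the outer rectangle, and invoking the pasting law in the correct direction in each half (left plus rectangle to get right in the first part, left plus right to get rectangle in the second). The main point --- which is what makes the factorization hypothesis do its work in the converse --- is that one should \emph{not} try to argue that $s$ is a $U$\nbd-cofibration (it need not be), but instead use the $\W$\nbd-cofibration property of $j$ to promote the single weak equivalence $U(s)$ into a \emph{pair} of parallel weak equivalences $U(s),U(t)$, whence \textbf{(B)} applies.
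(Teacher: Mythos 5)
Your proof is correct and follows essentially the same route as the paper's: the same two-square diagrams, with the first half reducing to the invariance of weak equivalences under homotopy cobase change and the second half using the factorization $f=si$ to exhibit the given square as a pasting of a homotopy cocartesian square with a square of weak equivalences. The only cosmetic difference is in the first half, where you invoke the pasting law to see that the right-hand square becomes homotopy cocartesian, while the paper obtains this directly (the vertical map $A'\to B'$ is itself a $U$\nbd-cofibration, $U$\nbd-cofibrations being stable under images directes, \emph{cf.}~\ref{UeqUcof}); both observations are immediate and equivalent here.
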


\begin{proof}
Soient $i:A\to B$ une $U$\nbd-cofibration, et 
$$
\xymatrix{
&A\ar[r]\ar[d]_{i}
&A'\ar[r]^{s}\ar[d]
&A''\ar[d]
\\
&B\ar[r]
&B'\ar[r]_{t}
&B''
&\hskip -20pt\hskip 20pt
}
$$
un diagramme formé de carrés cocartésiens, avec $s$ dans $\W$. Alors le diagramme
$$
\xymatrix{
&U(A)\ar[r]\ar[d]_{U(i)}
&U(A')\ar[r]^{U(s)}\ar[d]
&U(A'')\ar[d]
\\
&U(B)\ar[r]
&U(B')\ar[r]_{U(t)}
&U(B'')
&\hskip -20pt\hskip 20pt
}
$$
est formé de carrés homotopiquement cocartésiens, et comme $U(s)$ est une équivalence faible, il en est de même de $U(t)$, donc $t$ est une $U$\nbd-équivalence. Réciproquement, supposons que toute flèche de $\C$ se décompose en une $U$\nbd-cofibration suivie d'une $U$\nbd-équivalence, et soit
$$
\xymatrix{
A\ar[r]^f\ar[d]_j
&A''\ar[d]^{j''}
\\
B\ar[r]
&B''
}
$$
un carré cocartésien, avec $j$ une $\W$-cofibration. Par hypothèse, il existe une décomposition $f=si$ de $f$, avec $i$ une $U$\nbd-cofibration et $s$ une $U$\nbd-équivalence. On en déduit un diagramme de carrés cocartésiens
$$
\xymatrix{
&A\ar[r]^i\ar[d]_j
&A'\ar[r]^s\ar[d]^{j'}
&A''\ar[d]^{j''}
\\
&B\ar[r]
&B'\ar[r]_t
&B''
&\kern -15pt,\kern15pt
}
$$
et comme $j$ est une $\W$\nbd-cofibration, $t$ est une $U$\nbd-équivalence. Considérons l'image par~$U$ de ce diagramme dans $\M$:
$$
\xymatrix{
&U(A)\ar[r]^{U(i)}\ar[d]_{U(j)}
&U(A')\ar[r]^{U(s)}\ar[d]^{U(j')}
&U(A'')\ar[d]^{U(j'')}
\\
&U(B)\ar[r]
&U(B')\ar[r]_{U(t)}
&U(B'')
&\kern -15pt.\kern15pt
}
$$
Comme $i$ est une $U$\nbd-cofibration, le carré de gauche est homotopiquement cocartésien. Comme $U(s)$ et $U(t)$ sont des équivalence faibles, il en est de même du carré de droite, donc aussi du carré composé. On en déduit que $j$ est une $U$\nbd-cofibration.
\end{proof}

\section{Catégories à cribles et cocribles}\label{cribles}

\begin{paragr}
Soit $\C$ une catégorie. On dit qu'une flèche $f:X\to Y$ de $\C$ est \emph{quarrable} si pour tout morphisme $g:Y'\to Y$ de $\C$, il existe un carré cartésien de la forme 
$$
\xymatrix{
&X'\ar[r]\ar[d]
&X\ar[d]^f
\\
&Y'\ar[r]_g
&Y
&\hskip -15pt,\hskip 15pt
}$$
autrement dit, si le produit fibré $Y'\times_YX$ est représentable dans $\C$. Dualement, on dit que $f$ est \emph{coquarrable} si elle est quarrable comme flèche de la catégorie opposée à~$\C$.
\end{paragr}

\begin{paragr}
Soit $\C$ une catégorie admettant un objet final $e$. On rappelle qu'un \emph{segment} de $\C$ est un triplet $(I,\del{0},\del{1})$, où $I$ est un objet de $\C$ et $\del{0},\del{1}:e\to I$ sont des morphismes de $\C$ de source l'objet final. Si $\C$ admet aussi un objet initial $\varnothing$, on dit que le segment $(I,\del{0},\del{1})$ est \emph{séparant} si le carré
$$
\xymatrix{
\varnothing\ar[r]\ar[d]
&e\ar[d]^{\del{0}}
\\
e\ar[r]_{\del{1}}
&I
}$$
est cartésien.
\end{paragr}

\begin{paragr}
Soit $\C$ une catégorie admettant un objet final $e$ et un objet initial \emph{strict}~$\varnothing$ (on dit qu'un objet initial est strict si tout morphisme de but cet objet est un isomorphisme). Un \emph{segment criblant} de $\C$ est un segment séparant $(I,\del{0},\del{1})$ de $\C$ satisfaisant aux propriétés suivantes:
\ts
\begin{enumerate}[label=SCR\arabic*), ref=SCR\arabic*, wide]
\item\label{item:SCR1}
le morphisme $\del{\e}$, $\e=0,1$, est quarrable et le foncteur 
$$\delet{\e}:\cm{\C}{I}\toto\cm{\C}{e}\simeq\C\ ,\quad \chi:X\to I\ \mapsto\ \chi^{-1}(\e)\ ,$$
où $\chi^{-1}(\e)$ est le produit fibré $X\times_Ie$ défini par le carré cartésien
$$
\xymatrix{
&\chi^{-1}(\e)\ar[r]\ar[d]_{i^{\chi}_{\e}}
&e\ar[d]^{\del{\e}}
\\
&X\ar[r]_{\chi}
&I
&\hskip -15pt,\hskip 15pt
}$$
transforme carrés cocartésiens en carrés cocartésiens;
\tb
\item\label{item:SCR2}
pour toute flèche $\chi:X\to I$ et $\e=0,1$, le morphisme $i^{\chi}_{\e}$ ci-dessus est coquarrable;
\tb
\item\label{item:SCR3}
l'application $\chi\mapsto i^{\chi}_{\e}$, $\e=0,1$, définit une injection de $\Hom_\C(X,I)$ dans l'ensemble des classes d'isomorphisme de sous-objets de $X$.
\end{enumerate}
\tb
On dit qu'un morphisme $X'\to X$ de $\C$ est un \emph{crible} (resp. un
\emph{cocrible}) s'il est l'image inverse de $\del{0}$ (resp. de $\del{1}$)
par une flèche $\chi:X\to I$ de $\C$. En vertu de la
propriété~\eqref{item:SCR3}, une telle flèche $\chi$ est unique; on dit
qu'elle est la flèche \emph{définissant} le crible (resp. le cocrible)
$X'\to X$. En particulier, tout crible (resp.~cocrible) définit un cocrible
(resp. un crible), image inverse de $\del{1}$ (resp. de $\del{0}$) par
$\chi$, appelé \emph{cocrible complémentaire} (resp. \emph{crible
complémentaire}). Les cribles et les cocribles sont stables par images
inverses. La condition~\eqref{item:SCR2} affirme que les cribles et les
cocribles sont des morphismes coquarrables de $\C$.  \tb

Comme le segment $(I,\del{0},\del{1})$ est séparant et $\varnothing$ un objet initial strict, si $X'\to X$ est un crible et $X''\to X$ le cocrible complémentaire, alors il résulte de la commutation des limites projectives entre elles que le carré
$$\xymatrix{
\varnothing\ar[r]\ar[d]
&X''\ar[d]
\\
X'\ar[r]
&X
}$$
est cartésien.
\tb

Une \emph{catégorie à cribles et cocribles} est une catégorie $\C$ admettant un objet final et un objet initial strict, \emph{munie} d'un segment criblant. On dit alors que ce segment criblant \emph{définit la structure de catégorie à cribles et cocribles} de $\C$.
\end{paragr}

\begin{prop}\label{imdircribles}
Les cribles \emph{(resp.} les cocribles\emph{)} sont stables par images directes. Autrement dit, pour tout carré cocartésien de $\C$
$$
\raise 20pt
\vbox{
\xymatrix{
&A\ar[r]^a\ar[d]_i
&A'\ar[d]^{i'}
\\
&B\ar[r]_b
&B'
&\hskip -15pt,\hskip 15pt
}
}
\leqno(\star)
$$
si le morphisme $i$ est un crible \emph{(resp.} un cocrible\emph{)}, il en est de même du morphisme $i'$. De plus, le carré $(\star)$ est alors aussi cartésien et le morphisme $b$ induit un isomorphisme du cocrible \emph{(resp.} du crible\emph{)} complémentaire à $i$ avec celui complémentaire à $i'$.
\end{prop}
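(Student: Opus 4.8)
We prove the statement for sieves; the case of cosieves follows by exchanging the roles of $\del{0}$ and $\del{1}$. Let $\chi:B\to I$ be the morphism defining the sieve $i$, so that $A=\chi^{-1}(0)$ and $i=i^{\chi}_{0}$. The strategy is to exhibit a morphism $\chi':B'\to I$ defining $i'$, and then to extract all three assertions by applying the functors $\delet{0}$ and $\delet{1}$ to a lift of the square $(\star)$ to $\cm{\C}{I}$. To build $\chi'$, write $p_A:A\to e$ and $p_{A'}:A'\to e$ for the canonical morphisms to the final object. The morphisms $\chi:B\to I$ and $\del{0}p_{A'}:A'\to I$ have the same restriction $\del{0}p_A$ along $i$ and $a$ (this is where $A=\chi^{-1}(0)$ enters), so the universal property of the cocartesian square $(\star)$ yields a unique $\chi':B'\to I$ with $\chi'b=\chi$ and $\chi'i'=\del{0}p_{A'}$. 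Equipping $A,A',B,B'$ with the structure morphisms $\del{0}p_A,\del{0}p_{A'},\chi,\chi'$ turns $(\star)$ into a commuting square of $\cm{\C}{I}$, which is cocartesian there since the forgetful functor $\cm{\C}{I}\to\C$ creates colimits.

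I would now apply $\delet{0}$, which by~\eqref{item:SCR1} sends this to a cocartesian square of $\C$, and identify its four vertices. By definition $\delet{0}(B,\chi)=\chi^{-1}(0)=A$ and $\delet{0}(B',\chi')=(\chi')^{-1}(0)$. For the two left-hand objects, the structure morphism factors through $\del{0}$; since $\del{0}:e\to I$ is a monomorphism (any two morphisms into the final object agree) one has $e\times_Ie\simeq e$, so the pasting lemma gives $\delet{0}(A,\del{0}p_A)\simeq A$ and $\delet{0}(A',\del{0}p_{A'})\simeq A'$, compatibly with $a$. The key point is that the left vertical morphism $\delet{0}(i)$ is an isomorphism: by the pasting lemma it is identified with a projection $A\times_BA\to A$, and $i$, being a pullback of the monomorphism $\del{0}$, is itself a monomorphism, so this projection is invertible. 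A pushout of an isomorphism being an isomorphism, the opposite vertical morphism $\delet{0}(i'):A'\to(\chi')^{-1}(0)$ is an isomorphism, which exhibits $i'$ as the sieve defined by $\chi'$.

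It remains to establish the two supplementary claims. For the cartesian property, the pasting lemma and the relation $\chi=\chi'b$ give $B\times_{B'}A'=B\times_{B'}(\chi')^{-1}(0)=B\times_Ie=\chi^{-1}(0)=A$, and the comparison morphism induced by $(\star)$ is precisely this identification, so $(\star)$ is cartesian. For the complementary cosieves I would apply $\delet{1}$ to the same cocartesian square of $\cm{\C}{I}$. Since the segment is separating, the pullback of $\del{1}$ along $\del{0}$ is $\varnothing$, and as $\varnothing$ is a \emph{strict} initial object the pasting lemma gives $\delet{1}(A,\del{0}p_A)\simeq\varnothing\simeq\delet{1}(A',\del{0}p_{A'})$. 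Hence the image square is a pushout whose upper edge $\varnothing\to\varnothing$ is invertible, so its lower edge $\delet{1}(b):\chi^{-1}(1)\to(\chi')^{-1}(1)$ is an isomorphism; this is exactly the assertion that $b$ identifies the cosieve complementary to $i$ with the one complementary to $i'$.

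The main obstacle is the bookkeeping of the second and third steps: correctly lifting $(\star)$ to $\cm{\C}{I}$ and, after applying $\delet{0}$ and $\delet{1}$, pinning down each of the four vertices together with the maps between them. Once this is done the argument reduces entirely to~\eqref{item:SCR1} and to the two elementary pullback computations of $\del{\e}$ along $\del{0}$ supplied by the separating condition and the strictness of $\varnothing$.
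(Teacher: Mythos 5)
Votre preuve est correcte et suit essentiellement la m�me strat�gie que celle de l'article : construction de $\chi'$ par la propri�t� universelle de la somme amalgam�e, puis application de $\delet{0}$ et $\delet{1}$ au carr� relev� dans $\cm{\C}{I}$ en utilisant \eqref{item:SCR1}, le caract�re s�parant du segment et la stricte initialit� de $\varnothing$. La seule variante est cosm�tique : pour voir que la fl�che verticale de gauche du carr� image par $\delet{0}$ est inversible, vous passez par l'identification avec la projection $A\times_BA\to A$ et la monomorphie de $i$, l� o� l'article observe directement que c'est le morphisme canonique $A\to\chi^{-1}(\e)$, isomorphisme par hypoth�se.
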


\begin{proof}
\newcommand\dstar{\star\mspace{1mu}\star}
Considérons un carré cocartésien $(\star)$ et supposons que $i$ soit un crible (resp. un cocrible) de sorte que l'on ait un carré cartésien
$$
\xymatrix{
&A\ar[r]^p\ar[d]_i
&e\ar[d]^{\del{\e}}
\\
&B\ar[r]_{\chi}
&I
&\hskip -15pt,\hskip 15pt
}$$
avec $\e=0$ (resp. $\e=1$). Si l'on note $p'$ l'unique flèche $A'\to e$, comme on a les égalités $\del{\e}p'a=\del{\e}p=\chi i$, la propriété universelle des sommes amalgamées implique l'existence d'une unique flèche $\chi':B'\to I$ telle que $\chi'b=\chi$ et $\chi'i'=\del{\e}p'$. 
$$
\raise 29pt
\vbox{
\xymatrixrowsep{.8pc}
\xymatrixcolsep{2.6pc}
\xymatrix{
A\ar[r]^a\ar[dd]_i
&A'\ar[dd]_{i'}\ar[rd]^{p'}
\\
&&e\ar[dd]^{\del{\e}}
\\
B\ar[r]^b\ar@<-1ex>[rrd]_{\chi}
&B'\ar@{-->}[rd]^{\chi'}
\\
&&I
}}\leqno(\dstar)$$
En vertu de la condition \eqref{item:SCR1}, l'image inverse du carré cocartésien $(\star)$ par $\del{\e}$
$$
\xymatrix{
A\ar[r]\ar[d]
&A'\ar[d]
\\
\chi^{-1}(\e)\ar[r]
&\chi'^{-1}(\e)
}
$$
est un carré cocartésien, et comme la flèche verticale de gauche est, par hypothèse, un isomorphisme il en est de même de celle de droite, ce qui prouve que $i'$ est un crible (resp. un cocrible). On en déduit que le carré oblique du diagramme $(\dstar)$ est cartésien, et comme le carré composé l'est aussi, il en est de même du carré de gauche, autrement dit, le carré $(\star)$ est aussi cartésien. À nouveau, en vertu de la condition~\eqref{item:SCR1}, l'image inverse du carré cocartésien $(\star)$ par $\del{\e'}$, où $\e'=1-\e$, est cocartésien, et comme le segment $(I,\del{0},\del{1})$ est séparant et l'objet initial de $\C$ strict, ce dernier carré cocartésien  est de la forme
$$
\xymatrix{
&\varnothing\ar[r]\ar[d]
&\varnothing\ar[d]
\\
&\chi^{-1}(\e')\ar[r]
&\chi'^{-1}(\e')
&\hskip -15pt.\hskip 15pt
}$$
On en déduit que le morphisme $\chi^{-1}(\e')\to\chi'^{-1}(\e')$ est un isomorphisme, ce qui achève la démonstration.
\end{proof}

\begin{lemme}
Soient $\chi:X\to I$ un morphisme de $\C$, $p$ l'unique morphisme de $X$ vers $e$, $\e=0,1$ et $\e'=1-\e$. Pour que le carré
$$
\raise 20pt
\vbox{
\xymatrix{
\varnothing\ar[r]\ar[d]
&e\ar[d]^{\del{\e}}
\\
X\ar[r]_{\chi}
&I
}
}
\leqno(\star)
$$
soit cartésien, il faut et il suffit que $\chi=\del{\e'}p$. 
\end{lemme}

\begin{proof}
On a un diagramme de carrés cartésiens
$$\xymatrix{
\varnothing\ar[r]\ar[d]
&\varnothing\ar[r]\ar[d]
&e\ar[d]^{\del{\e}}
\\
X\ar[r]_p
&e\ar[r]_{\del{\e'}}
&I
}$$
(le carré de gauche étant cartésien puisque l'objet initial $\varnothing$ est strict, et celui de droite puisque le segment $(I,\del{0},\del{1})$ est
séparant). Si $\chi=\del{\e'}p\,$, le carré $(\star)$ est donc cartésien. La réciproque résulte alors de \eqref{item:SCR3}.
\end{proof}

\begin{prop}\label{propefficace}
Soient $i^{}_1:U^{}_1\to X$ et $i^{}_2:U^{}_2\to X$ deux cribles de même but, et $j^{}_1:F^{}_1\to X$ et $j^{}_2:F^{}_2\to X$ les cocribles complémentaires. Les conditions suivantes sont équivalentes:
\begin{enumerate}
\item le crible $i^{}_1:U^{}_1\to X$ se factorise par le cocrible $j^{}_2:F^{}_2\to X$;
\item le crible $i^{}_2:U^{}_2\to X$ se factorise par le cocrible $j^{}_1:F^{}_1\to X$;
\item $U^{}_1\times_XU^{}_2\simeq\varnothing$.
\end{enumerate}
\end{prop}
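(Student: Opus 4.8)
Le plan est d'établir les équivalences (a)$\Leftrightarrow$(c) et (b)$\Leftrightarrow$(c), la seconde se déduisant de la première en échangeant les rôles des indices $1$ et $2$, puisque la condition (c) est symétrique via l'isomorphisme canonique $U_1\times_X U_2\simeq U_2\times_X U_1$. Notons $\chi_1,\chi_2:X\to I$ les flèches définissant respectivement les cribles $i_1$ et $i_2$, de sorte que $U_k=\chi_k^{-1}(0)$ et $F_k=\chi_k^{-1}(1)$ pour $k=1,2$.

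L'idée est de ramener tout le problème à un crible et son cocrible complémentaire sur le \emph{même} objet $U_1$. Comme les cribles et les cocribles sont stables par images inverses, l'image inverse le long de $i_1:U_1\to X$ du crible $i_2$ et de son cocrible complémentaire $j_2$ fournit sur $U_1$ un crible et son cocrible complémentaire, l'un et l'autre définis par l'unique flèche composée $\chi_2i_1:U_1\to I$. Par transitivité des produits fibrés, ce crible est $U_1\times_X U_2=(\chi_2i_1)^{-1}(0)\to U_1$ et son cocrible complémentaire est $U_1\times_X F_2=(\chi_2i_1)^{-1}(1)\to U_1$.

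Traduisons alors la condition (a). Le cocrible $j_2$ étant un monomorphisme en vertu de \eqref{item:SCR3}, la flèche $i_1$ se factorise par $j_2$ si et seulement si la projection $U_1\times_X F_2\to U_1$ est un isomorphisme. Ainsi (a) équivaut au fait que le cocrible complémentaire du crible $U_1\times_X U_2\to U_1$ est un isomorphisme. Il suffit donc de prouver le lemme suivant : pour un crible $S\to Y$ de cocrible complémentaire $C\to Y$, défini par une flèche $\psi:Y\to I$, la flèche $C\to Y$ est un isomorphisme si et seulement si $S\simeq\varnothing$. Si $C=\psi^{-1}(1)\to Y$ est un isomorphisme, la projection $C\to e$ composée avec l'inverse de $C\to Y$ fournit, $e$ étant final, l'égalité $\psi=\del{1}p$, où $p:Y\to e$; réciproquement, $\del{1}$ étant un monomorphisme (c'est le cocrible de $I$ défini par l'identité), l'égalité $\psi=\del{1}p$ entraîne que $\psi^{-1}(1)\to Y$ est un isomorphisme. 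Or, d'après le lemme précédent appliqué avec $\e=0$, l'égalité $\psi=\del{1}p$ équivaut au caractère cartésien du carré exhibant $\varnothing$ comme $\psi^{-1}(0)=S$, c'est-à-dire à $S\simeq\varnothing$.

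En appliquant ce lemme à $Y=U_1$, $\psi=\chi_2i_1$, $S=U_1\times_X U_2$ et $C=U_1\times_X F_2$, on transforme la condition obtenue ci-dessus en $U_1\times_X U_2\simeq\varnothing$, soit la condition (c); d'où (a)$\Leftrightarrow$(c), puis (b)$\Leftrightarrow$(c) par symétrie. La principale subtilité est de constater que l'image inverse le long de $i_1$ préserve la relation de complémentarité entre crible et cocrible --- ce qui tient à ce qu'ils sont tous deux définis par la seule flèche $\chi_2i_1$ ---, tout le reste découlant formellement de la caractérisation des factorisations à travers un monomorphisme et du lemme qui précède.
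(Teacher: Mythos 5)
Your proof is correct, and at bottom it rests on the same two ingredients as the paper's own argument: the unlabelled lemma preceding the proposition (the square with $\varnothing$ and $\del{0}$ over a map $\chi$ is cartesian if and only if $\chi=\del{1}p$) together with the pasting of cartesian squares, with (b)$\Leftrightarrow$(c) deduced from (a)$\Leftrightarrow$(c) by symmetry, exactly as in the paper. What differs is the packaging. The paper translates condition (a) in a single step: since $F^{}_2$ is by definition the pullback of $\del{1}$ along $\chi^{}_2$ and $e$ is final, $i^{}_1$ factors through $j^{}_2$ if and only if $\chi^{}_2 i^{}_1=\del{1}p^{}_1$, and the lemma plus the pasting with the cartesian square defining $U^{}_2$ then yield (c) directly. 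You instead pull the complementary pair $(i^{}_2,j^{}_2)$ back along $i^{}_1$ to a complementary pair on $U^{}_1$, recast (a) as invertibility of the pulled-back cocrible $U^{}_1\times_X F^{}_2\to U^{}_1$ --- a step that needs cocribles to be monomorphisms, which is indeed implicit in \eqref{item:SCR3} since it speaks of subobjects --- and then prove the self-contained statement that a crible is empty if and only if its complementary cocrible is invertible. This buys a cleanly reusable characterization of complementary pairs, at the cost of one extra reduction and of the monomorphy input, which the paper's direct appeal to the universal property of the pullback defining $F^{}_2$ avoids. One small simplification available to you: in the converse half of your auxiliary lemma the monomorphy of $\del{1}$ is not actually needed, for when $\psi=\del{1}p$ the commutative square with left edge $1^{}_Y$, top edge $p$ and right edge $\del{1}$ is cartesian purely because $e$ is final, so $\psi^{-1}(1)\to Y$ is an isomorphism outright.
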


\begin{proof}
Soient $\chi^{}_1:X\to I$ et $\chi^{}_2:X\to I$ les morphismes définissant les cribles $i^{}_1:U^{}_1\to X$ et $i^{}_2:U^{}_2\to X$, de sorte qu'on ait des carrés cartésiens
$$
\raise 38pt
\vbox{
\xymatrix{
U^{}_1\ar[d]_{i^{}_1}\ar[r]^{p^{}_1}
&e\ar[d]^{\del{0}}
\\
X\ar[r]_{\chi^{}_1}
&I
}
},\kern 13pt
\raise 38pt
\vbox{
\xymatrix{
U^{}_2\ar[d]_{i^{}_2}\ar[r]^{p^{}_2}
&e\ar[d]^{\del{0}}
\\
X\ar[r]_{\chi^{}_2}
&I
}
},\kern 13pt
\raise 38pt
\vbox{
\xymatrix{
F^{}_1\ar[d]_{j^{}_1}\ar[r]^{q^{}_1}
&e\ar[d]^{\del{1}}
\\
X\ar[r]_{\chi^{}_1}
&I
}
},\kern 13pt
\raise 38pt
\vbox{
\xymatrix{
F^{}_2\ar[d]_{j^{}_2}\ar[r]^{q^{}_2}
&e\ar[d]^{\del{1}}
\\
X\ar[r]_{\chi^{}_2}
&I
}
}.
$$
Pour que le crible $i^{}_1:U^{}_1\to X$ se factorise par le cocrible $j^{}_2:F^{}_2\to X$, il faut et il suffit que $\chi^{}_2i^{}_1=\del{1}p^{}_1$,
$$
\xymatrixrowsep{.4pc}
\xymatrixcolsep{.6pc}
\xymatrix{
U^{}_1\ar@/^1em/[rrrrrdd]^{p^{}_1}\ar@/_1em/[rrddddd]_{i^{}_1}\ar@{-->}[rrdd]
\\
\\
&&F^{}_2\ar[rrr]_{q^{}_2}\ar[ddd]^{j^{}_2}
&&&e\ar[ddd]^{\del{1}}
\\
\\
\\
&&X\ar[rrr]_{\chi^{}_2}
&&&I
}
$$
autrement dit, en vertu du lemme précédent que le carré
$$
\raise 22pt
\vbox{
\xymatrixcolsep{2.6pc}
\xymatrix{
\varnothing\ar[r]\ar[d]
&e\ar[d]^{\del{0}}
\\
U^{}_1\ar[r]_{\chi^{}_2i^{}_1}
&I
}
}\leqno(\star)
$$
soit cartésien. Or, dans le diagramme
$$
\xymatrix{
&\varnothing\ar[r]\ar[d]
&U^{}_2\ar[r]^{p^{}_2}\ar[d]_{i^{}_2}
&e\ar[d]^{\del{0}}
\\
&U^{}_1\ar[r]_{i^{}_1}
&X\ar[r]_{\chi^{}_2}
&I
&\hskip -15pt,\hskip 15pt
}
$$
le carré de droite est cartésien. Ainsi, le carré de gauche est cartésien si et seulement le carré composé l'est. Ce carré composé étant le carré $(\star)$, cela prouve l'équivalence des conditions (\emph{a}) et (\emph{c}). L'équivalence des conditions (\emph{b}) et (\emph{c}) s'en déduit par symétrie, ce qui achève la démonstration.
\end{proof}

\begin{exemple}\label{nCatcribles}
On note $\nCat{n}$ la catégorie des petites $n$\nbd-catégories strictes et $n$\nbd-foncteurs stricts, $1\leq n\leq\infty$, et $e$ la $n$\nbd-catégorie ponctuelle. Soient $\smp{1}$ la catégorie librement engendrée par le graphe $\{0\to1\}$, considérée comme $n$\nbd-catégorie (les $i$\nbd-flèches étant des identités pour $i>1$) et $\del{\e}:e\to\smp{1}$ le $n$\nbd-foncteur défini par l'objet $\e$ de $\smp{1}$, $\e=0,1$. Alors $(\smp{1},\del{0},\del{1})$ est un segment criblant de $\nCat{n}$, faisant de $\nCat{n}$ une catégorie à cribles et cocribles. En effet, la catégorie $\nCat{n}$ étant complète et cocomplète, ayant un objet initial strict, et la condition \eqref{item:SCR3} étant immédiate, la seule chose à vérifier est la propriété d'exactitude des foncteurs $\delet{\e}$, affirmée dans la condition \eqref{item:SCR1}, qui résulte du lemme suivant.
\end{exemple}

\begin{lemme}\label{deladj}
Le foncteur $\delet{\e}:\cm{\nCat{n}}{\smp{1}}\to\cm{\nCat{n}}{e}\simeq\nCat{n}$, $\e=0,1$, admet un adjoint à droite.
\end{lemme}

\begin{proof}
Montrons l'assertion pour $\e=0$. À toute $n$\nbd-catégorie $\C$, on associe une $n$\nbd-catégorie $\C^{\star}$, obtenue par adjonction formelle d'un objet final, définie par
$$
\begin{aligned}
&\ob\C^{\star}=\ob\C\,\amalg\,\{\star\}\\
\noalign{\vskip 3pt}
&\sHom^{}_{\,\C^{\star}}(x,y)=\left\{
\begin{aligned}
&\hbox{$\sHom^{}_{\,\C}(x,y)$ si $x,y\in\ob\C\,,$}\\
&\hbox{$(n-1)$\nbd-catégorie ponctuelle si $y=\star\,,$}\\
&\hbox{$(n-1)$\nbd-catégorie vide sinon\,,}
\end{aligned}
\right.
\end{aligned}
$$
les compositions étant définies de façon évidente. On remarque qu'on a un $n$\nbd-foncteur $\C^{\star}\to\smp{1}$
$$x\mapsto \left\{
\begin{aligned}
&0\,,\kern 12pt x\in\ob\C\\
&1\,,\kern 12pt x=\star
\end{aligned}
\right.$$
définissant ainsi un objet de $\cm{\nCat{n}}{\smp{1}}$. Cette construction est fonctorielle, et on vérifie facilement que le foncteur $\nCat{n}\to\cm{\nCat{n}}{\smp{1}}$ ainsi défini est un adjoint à droite du foncteur $\delet{0}$. L'assertion pour $\delet{1}$ en résulte, en utilisant l'automorphisme de $\nCat{n}$ de passage à la $n$\nbd-catégorie $1$\nbd-opposée, obtenue en inversant les $1$\nbd-flèches.
\end{proof}

\begin{paragr} Dans la suite de cet article, on considérera toujours la catégorie $\nCat{n}$ des petites $n$\nbd-catégories, munie de sa structure de catégorie à cribles et cocribles définie par le segment criblant $(\smp{1},\del{0},\del{1})$.
\end{paragr}

\begin{prop}\label{carcriblesncat}
Si $C'\to C$ est un crible \emph{(resp.} un cocrible\emph{)} de $\nCat{n}$ alors $C'$ s'identifie à une sous\nbd-$n$\nbd-catégorie de $C$. Si $C'$ est une sous\nbd-$n$\nbd-catégorie de $C$, les conditions suivantes sont équivalentes:

\begin{enumerate}
\item l'inclusion $C'\to C$ est un crible \emph{(resp.} un cocrible\emph{);}
\item pour tout $i$, $1\leq i\leq n$, toute $i$\nbd-flèche de $C$ dont l'objet but itéré \emph{(resp.} source itérée\emph{)} est dans $C'$ est elle-même dans $C'$\emph{;}
\item toute $n$\nbd-flèche de $C$ dont l'objet but itéré \emph{(resp.} source itérée\emph{)} est dans $C'$ est elle-même dans $C'$.
\end{enumerate}
De plus, si ces conditions sont satisfaites, le cocrible \emph{(resp.} le crible\emph{)} complémentaire s'identifie à l'inclusion $C''\to C$, où $C''$ est la sous\nbd-$n$\nbd-catégorie de $C$ dont les objets sont les objets de $C$ qui ne sont pas dans $C'$, et dont les $i$\nbd-flèches, $i\geq1$, sont les $i$\nbd-flèches de $C$ dont l'objet source itérée \emph{(resp.} but itéré\emph{)} n'est pas dans $C'$.
\end{prop}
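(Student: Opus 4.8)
The plan is to reduce everything to an explicit description of the $n$-functors $\chi\colon C\to\smp{1}$ and of the fibres $\chi^{-1}(\e)$. The category $\smp{1}$ is the poset $\{0\to1\}$ regarded as an $n$-category, so all its homs are either empty or the point $(n-1)$-category; concretely, the only $i$-cell of $\smp{1}$ whose iterated target object equals $0$ is the identity $i$-cell on $0$ (and dually for source and $1$). Consequently, to give an $n$-functor $\chi\colon C\to\smp{1}$ is to give the partition $\ob C=O_0\amalg O_1$, $O_\e=\chi^{-1}(\e)\cap\ob C$, subject to the single constraint that no $1$-arrow of $C$ goes from $O_1$ to $O_0$; the values of $\chi$ on higher cells are then forced. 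Computing the fibre $\chi^{-1}(0)=C\times_{\smp{1}}e$ over $\del{0}$ cell by cell (limits in $\nCat{n}$ are computed on underlying globular sets), an $i$-cell $f$ of $C$ lies in $\chi^{-1}(0)$ if and only if $\chi(f)$ is the identity $i$-cell on $0$, that is, if and only if both the iterated source object and the iterated target object of $f$ lie in $O_0$. Hence $\chi^{-1}(0)$ is exactly the full sub-$n$-category of $C$ on $O_0$, which in particular proves the first assertion; dually $\chi^{-1}(1)$ is the full sub-$n$-category on $O_1$.

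For the equivalences I would run the cycle $(a)\Rightarrow(b)\Rightarrow(c)\Rightarrow(a)$. For $(a)\Rightarrow(b)$: if $C'=\chi^{-1}(0)$ and $f$ is an $i$-cell whose iterated target object lies in $C'=O_0$, then $\chi(f)$ has iterated target object $0$, so by the first paragraph $\chi(f)$ is the identity on $0$; thus the iterated source object of $f$ also lies in $O_0$ and $f\in\chi^{-1}(0)=C'$. The implication $(b)\Rightarrow(c)$ is the special case $i=n$. For $(c)\Rightarrow(b)$, given an $i$-cell $f$ with $i<n$ whose iterated target object lies in $C'$, apply $(c)$ to the iterated identity $n$-cell $1_f$ on $f$, which has the same iterated target object; then $1_f\in C'$, and since $C'$ is a sub-$n$-category it is stable under iterated sources, whence $f\in C'$. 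Finally, for $(b)\Rightarrow(a)$: condition $(b)$ forces $C'$ to be the full sub-$n$-category on $O_0=\ob C'$ (the inclusion of $C'$ into this full sub-$n$-category holds because $C'$ is a sub-$n$-category, and the reverse inclusion is $(b)$ applied to cells with both iterated boundary objects in $O_0$), while the case $i=1$ of $(b)$ says precisely that no $1$-arrow goes from $O_1$ to $O_0$; by the first paragraph this partition defines an $n$-functor $\chi$ with $\chi^{-1}(0)=C'$, so $C'\to C$ is a crible.

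For the \emph{moreover}, the complementary cocrible is by definition $\chi^{-1}(1)$, the full sub-$n$-category on $O_1$, and it remains to match this with the stated description; that is, to show that the $i$-cells with iterated source object in $O_1$ are exactly the $i$-cells with both iterated boundary objects in $O_1$. The nontrivial inclusion follows again from the sieve condition: a $1$-arrow whose source lies in $O_1$ cannot have its target in $O_0$, hence has it in $O_1$, and the general case reduces to this by passing to the $1$-dimensional source of an $i$-cell. The cocrible case is entirely dual, exchanging $\del{0}$ with $\del{1}$, sources with targets, and $O_0$ with $O_1$. The only genuinely delicate point is the careful handling of the globular source/target iterations and the identification of the fibres over $\del{\e}$ with the expected full sub-$n$-categories; once the translation ``$n$-functor to $\smp{1}$ $=$ sieve partition of objects'' is in place, the rest is bookkeeping.
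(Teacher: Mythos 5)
Votre preuve est correcte et suit pour l'essentiel la même démarche que celle de l'article : traduction d'un $n$\nbd-foncteur $\chi:C\to\smp{1}$ en une partition des objets sans $1$\nbd-flèche de $O_1$ vers $O_0$, identification des fibres $\chi^{-1}(\e)$, et réduction de (c) à (b) par passage aux $n$\nbd-flèches identités itérées puis stabilité d'une sous\nbd-$n$\nbd-catégorie par sources. La seule variation, cosmétique, est que vous déduisez la première assertion du calcul explicite des fibres, là où l'article invoque simplement le fait que $\del{0}$ et $\del{1}$ sont des monomorphismes de $\nCat{n}$.
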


\begin{proof}
La première assertion résulte aussitôt du fait que $\del{0}$ et $\del{1}$ sont des monomorphismes de $\nCat{n}$. Soit donc $C'$ une sous\nbd-$n$\nbd-catégorie de $C$ et supposons, par exemple, que $C'\to C$ soit un crible. Il existe alors un $n$\nbd-foncteur $\chi:C\to\smp{1}$ tel que $C'$ soit la fibre de $\chi$ au-dessus de $0$. Soit $\alpha$ une $i$\nbd-flèche de $C$ dont l'objet but itéré est dans $C'$. On en déduit que l'objet but itéré de $\chi(\alpha)$ est $0$, et par suite que $\chi(\alpha)$ est une identité itérée de $0$. La $i$\nbd-flèche $\alpha$ de $C$ est donc dans la fibre de~$\chi$ au-dessus de~$0$, autrement dit dans $C'$, ce qui prouve l'implication (\emph{a})~$\Rightarrow$~(\emph{b}). Réciproquement, supposons que la condition~(\emph{b}) soit satisfaite. On définit alors un $n$\nbd-foncteur $\chi:C\to\smp{1}$ comme suit. Si $x$ est un objet de $C$, on pose $\chi(x)=0$ ou~$1$ suivant que $x$ est ou n'est pas dans $C'$. Soit $\alpha$ une $i$\nbd-flèche d'objet source itérée $x$ (resp.~d'objet but itéré $y$). Si $y$ est dans $C'$ (et donc $x$ aussi), $\chi(\alpha)$ est la $i$\nbd-flèche identité itérée de $0$. Si $x$ n'est pas dans $C'$ (et donc $y$ non plus), $\chi(\alpha)$ est la $i$\nbd-flèche identité itérée de $1$. Si $x$ est dans $C'$ et $y$ ne l'est pas, alors $\chi(\alpha)$ est la flèche $0\to1$, si $i=1$, ou la $i$\nbd-flèche identité itérée de cette dernière si $i>1$. On vérifie facilement qu'on définit ainsi un $n$\nbd-foncteur, et la condition~(\emph{b}) implique aussitôt que $C'$ s'identifie à la fibre de $\chi$ au-dessus de $0$, ce qui prouve l'implication (\emph{b})~$\Rightarrow$~(\emph{a}). L'implication (\emph{b})~$\Rightarrow$~(\emph{c}) est évidente, et la réciproque résulte de l'application de la condition (\emph{c}) aux $n$\nbd-flèches identités itérées des $i$\nbd-flèches considérées dans la condition (\emph{b}). La dernière assertion résulte facilement de ce qui précède.
\end{proof}

\begin{definition}\label{defefficace} 
Une \emph{catégorie à cribles et cocribles efficaces} est une catégorie à cribles et cocribles telle que pour tout couple de cribles $U^{}_1\to X$ et $U^{}_2\to X$ de même but, tel que $U^{}_1\times_XU^{}_2\simeq\varnothing$, le carré cartésien
$$
\xymatrix{
&F^{}_1\times_XF^{}_2\ar[r]\ar[d]
&F^{}_1\ar[d]
\\
&F^{}_2\ar[r]
&X
&\hskip -15pt,\hskip15pt
}
$$
où $F^{}_1\to X$ et $F^{}_2\to X$ désignent les cocribles complémentaires à $U^{}_1\to X$ et $U^{}_2\to X$ respectivement, est aussi cocartésien.
\end{definition}

\begin{prop}\label{nCatcribleseff}
La catégorie à cribles et cocribles $\nCat{n}$ \emph{(\emph{cf.}~exemple~\ref{nCatcribles})} est une catégorie à cribles et cocribles efficaces.
\end{prop}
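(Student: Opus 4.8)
La strat\'egie consistera \`a exhiber directement $X$, muni des inclusions $F^{}_1\to X$ et $F^{}_2\to X$, comme somme amalgam\'ee de $F^{}_1$ et $F^{}_2$ au-dessus de $F^{}_1\times_X F^{}_2=F^{}_1\cap F^{}_2$ (le produit fibr\'e de deux sous-$n$-cat\'egories n'\'etant autre que leur intersection), en v\'erifiant la propri\'et\'e universelle. On commencera par traduire la situation \`a l'aide de la proposition~\ref{carcriblesncat}: les cribles $U^{}_1\to X$ et $U^{}_2\to X$ s'identifient \`a des sous-$n$-cat\'egories de $X$ dont on notera $A$ et $B$ les ensembles d'objets, et les cocribles compl\'ementaires $F^{}_1\to X$ et $F^{}_2\to X$ sont alors form\'es des fl\`eches dont l'objet source it\'er\'ee n'appartient pas \`a $A$, resp. \`a $B$. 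Comme toute fl\`eche de $X$ poss\`ede un objet source it\'er\'ee, l'hypoth\`ese $U^{}_1\times_X U^{}_2\simeq\varnothing$ se traduira simplement par $A\cap B=\varnothing$. En posant $C=\ob X\smallsetminus(A\cup B)$, on disposera d'une partition $\ob X=A\amalg B\amalg C$, et une fl\`eche de $X$ sera dans $F^{}_1\smallsetminus F^{}_2$, dans $F^{}_2\smallsetminus F^{}_1$ ou dans $F^{}_1\cap F^{}_2$ selon que son objet source it\'er\'ee est dans $B$, dans $A$ ou dans $C$.

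Le c\oe{}ur de la preuve sera un lemme de flot contr\^olant la position relative des objets source et but it\'er\'es. Pour toute $i$-fl\`eche $\alpha$, la $1$-fl\`eche $s^{i-1}(\alpha)$ ira, en vertu des identit\'es globulaires, de l'objet source it\'er\'ee vers l'objet but it\'er\'e de $\alpha$. La propri\'et\'e de crible de $U^{}_1$ (proposition~\ref{carcriblesncat}), appliqu\'ee \`a cette $1$-fl\`eche, fournira l'implication \og objet but it\'er\'e dans $A$ entra\^{\i}ne objet source it\'er\'ee dans $A$\fg{}, et de m\^eme pour $U^{}_2$ et $B$. Par contraposition, on obtiendra que si l'objet source it\'er\'ee de $\alpha$ est dans $A$ (resp. dans $B$, resp. dans $C$), alors son objet but it\'er\'e est dans $A\cup C$ (resp. dans $B\cup C$, resp. dans $C$); en particulier, aucune fl\`eche de $X$ ne reliera un objet de $A$ \`a un objet de $B$, ni dans un sens ni dans l'autre.

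Il restera alors \`a v\'erifier la propri\'et\'e universelle. \'Etant donn\'es deux $n$-foncteurs $g^{}_1:F^{}_1\to Y$ et $g^{}_2:F^{}_2\to Y$ co\"{\i}ncidant sur $F^{}_1\cap F^{}_2$, toute cellule de $X$ appartenant \`a $F^{}_1$ ou \`a $F^{}_2$ d'apr\`es la partition ci-dessus, l'unicit\'e d'un prolongement commun $g:X\to Y$ sera imm\'ediate et forcera $g=g^{}_1$ sur $F^{}_1$ et $g=g^{}_2$ sur $F^{}_2$. La compatibilit\'e de $g$ aux sources, buts et identit\'es d\'ecoulera de ce que $F^{}_1$ et $F^{}_2$ sont des sous-$n$-cat\'egories, et le seul point non trivial sera la compatibilit\'e aux compositions. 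Or une composition $\ast_j$ avec $j\geq1$ ne met en jeu que des cellules de m\^eme objet source it\'er\'ee, donc situ\'ees du m\^eme c\^ot\'e, tandis que le lemme de flot interdira qu'une composition $\ast_0$ fasse intervenir simultan\'ement une cellule de $F^{}_1\smallsetminus F^{}_2$ et une cellule de $F^{}_2\smallsetminus F^{}_1$, une telle composition identifiant l'objet but it\'er\'e d'un facteur \`a l'objet source it\'er\'ee de l'autre. Toute compos\'ee se calculera ainsi enti\`erement dans $F^{}_1$ ou dans $F^{}_2$, et $g$ sera bien un $n$-foncteur.

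Le principal obstacle sera pr\'ecis\'ement ce dernier point. \emph{A priori}, la somme amalgam\'ee calcul\'ee dans $\nCat{n}$ pourrait \^etre strictement plus grosse que l'union globulaire $F^{}_1\cup F^{}_2=X$, \`a cause de compos\'ees \og mixtes\fg{} susceptibles d'appara\^{\i}tre lorsqu'on recolle des cellules de $F^{}_1$ et de $F^{}_2$ le long de $F^{}_1\cap F^{}_2$. C'est le lemme de flot, cons\'equence directe des conditions de crible, qui exclura de tels ph\'enom\`enes et permettra d'identifier cette somme amalgam\'ee \`a $X$, prouvant que le carr\'e cart\'esien de l'\'enonc\'e est aussi cocart\'esien.
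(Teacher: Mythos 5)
Your proposal is correct and takes essentially the same route as the paper's proof: both identify the complementary cocribles \emph{via} the proposition~\ref{carcriblesncat}, observe that the disjointness hypothesis forces every cell of $X$ to lie in $F^{}_1$ or $F^{}_2$, and then verify the universal property of the amalgamated sum directly, the only delicate point being the compatibility with $\ast_0$-composites. Your ``lemme de flot'' is simply an explicit formulation of the step the paper dispatches with a second appeal to the proposition~\ref{carcriblesncat} (an arrow whose iterated target object lies in $U^{}_1$, resp.\ $U^{}_2$, lies itself in $U^{}_1$, resp.\ $U^{}_2$, so no arrow can join the two cribles), and the argument is otherwise identical.
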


\begin{proof}
Soient $C$ une petite $n$\nbd-catégorie, $U^{}_1\to C$ et $U^{}_2\to C$ deux cribles, $U^{}_1$ et $U^{}_2$  étant identifiés à des sous\nbd-$n$\nbd-catégories de $C$, et $F^{}_1\to C$ et $F^{}_2\to C$ les cocribles complémentaires. En vertu de la proposition~\ref{carcriblesncat}, $F^{}_1$ (resp. $F^{}_2$) s'identifie à la sous\nbd-$n$\nbd-catégorie de $C$ dont les objets sont les objets de $C$ qui ne sont pas dans $U^{}_1$ (resp. dans $U^{}_2$), et dont les $i$\nbd-flèches, $i\geq1$, sont les $i$\nbd-flèches de $C$ dont l'objet source itérée n'est pas dans $U^{}_1$ (resp. dans $U^{}_2$). On suppose que $U^{}_1\times_CU^{}_2\simeq\varnothing$, ce qui signifie que les ensembles des objets de $U^{}_1$ et $U^{}_2$ sont disjoints, ou encore que tout objet de $C$ est un objet de $F^{}_1$ ou de $F^{}_2$. Si $\alpha$ est une $i$\nbd-flèche de $C$, $i\geq1$, son objet source itérée est donc dans $F^{}_1$ ou $F^{}_2$, et en vertu de la proposition~\ref{carcriblesncat}, elle est elle-même dans~$F^{}_1$ ou $F^{}_2$. Soient $D$ une petite $n$\nbd-catégorie, et $H^{}_1:F^{}_1\to D$ et $H^{}_2:F^{}_2\to D$ deux $n$\nbd-foncteurs coïncidant sur $F^{}_1\times_CF^{}_2$, intersection des sous\nbd-$n$\nbd-catégories $F^{}_1$ et $F^{}_2$. Il s'agit de montrer qu'il existe un unique $n$\nbd-foncteur $H:C\to D$ tel que  $H\,|\,F^{}_1=H^{}_1$ et $H\,|\,F^{}_2=H^{}_2$. Comme en vertu de ce qui précède, toute $i$\nbd-cellule de $C$, $i\geq0$, est dans $F^{}_1$ ou $F^{}_2$, l'unicité est évidente. Si $x$ est une telle $i$\nbd-cellule on définit $H(x)$ par $H(x)=H^{}_1(x)$ ou $H(x)=H^{}_2(x)$ selon que $x$ est dans $F^{}_1$ ou dans $F^{}_2$, et l'hypothèse que $H^{}_1$ et $H^{}_2$ coïncident sur l'intersection des sous\nbd-$n$\nbd-catégories $F^{}_1$ et $F^{}_2$ implique que $H$ est bien défini comme application. La compatibilité de $H$ aux sources, buts et identités résulte des propriétés correspondantes des foncteurs $H^{}_1$ et $H^{}_2$. Il reste à prouver la compatibilité aux compositions. Soient $i,j$ deux entiers, $0\leq j<i$, et $\alpha,\beta$ deux $i$\nbd-flèches de $C$. On suppose que la $j$\nbd-cellule but itéré de $\alpha$ est égale à la $j$\nbd-cellule source itérée de $\beta$. Cela implique que l'objet source itérée de $\beta$ est égal à l'objet but ou source itéré de $\alpha$ selon que $j=0$ ou $j>0$. Dans les deux cas cela implique, en vertu de la proposition~\ref{carcriblesncat}, que $\alpha$ et $\beta$ sont toute deux dans $F^{}_1$ ou dans $F^{}_2$. La compatibilité de $H$ aux compositions résulte donc de celle de $H^{}_1$ ou de~$H^{}_2$.
\end{proof}

\begin{prop}\label{lemmeDwyer}
Soient $\C$ une catégorie à cribles et cocribles efficaces, $\M$ une catégorie de modèles fermée, et $U:\C\to\M$ un foncteur. On suppose que l'image par~$U$ de tout carré cocartésien de $\C$ formé de cocribles est un carré homotopiquement cocartésien de $\M$. Soit $i:A\to B$ un crible. Si $i$ se décompose en $i=jk$, $k$ étant une $U$\nbd-cofibration et $j$ un cocrible, alors $i$ est une $U$\nbd-cofibration.
\end{prop}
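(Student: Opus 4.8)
The plan is to verify directly the defining property of a $U$-cofibration (paragraph~\ref{UeqUcof}), exploiting the factorization $i=jk$ together with the pasting calculus for pushouts. Write the factorization as $k\colon A\to E$ and $j\colon E\to B$, with $k$ a $U$-cofibration and $j$ a cocrible, and consider an arbitrary cocartesian square of $\C$
$$
\xymatrix{
A\ar[r]^f\ar[d]_i & A''\ar[d]^{i''}\\
B\ar[r] & B''
}
$$
whose image under $U$ I must prove to be homotopically cocartesian.

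First I would split this pushout into two stacked pushout squares along the factorization of $i$. Forming successively the pushout of $k$ along $f$ and then the pushout of $j$ along the resulting arrow $g\colon E\to E''$ (these amalgamated sums exist in $\C$), the pasting law for pushouts identifies the outer rectangle with the given square; up to the canonical identification of pushouts, one obtains
$$
\xymatrix{
A\ar[r]^f\ar[d]_k & A''\ar[d]^{k''}\\
E\ar[r]^g\ar[d]_j & E''\ar[d]^{j''}\\
B\ar[r] & B''
}
$$
in which both squares are cocartesian and $i''=j''k''$.

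I would then treat the two squares separately. The upper square is the pushout of the $U$-cofibration $k$ along $f$, so by the very definition of a $U$-cofibration its image under $U$ is homotopically cocartesian. For the lower square, $j$ is a cocrible, hence by Proposition~\ref{imdircribles} its pushout $j''$ is again a cocrible; the lower square is thus a cocartesian square formed of cocribles, and the standing hypothesis on $U$ guarantees that its image under $U$ is homotopically cocartesian as well.

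It then remains to paste. Since homotopically cocartesian squares are stable under vertical composition in any closed model category, the image under $U$ of the outer rectangle---that is, of the original square---is homotopically cocartesian. As the initial square was arbitrary, $i$ is a $U$-cofibration. The only genuine ingredients are Proposition~\ref{imdircribles}, which feeds the lower square into the hypothesis, and the homotopy pasting lemma in $\M$; I expect this last step to be the real point, the manipulation of pushouts in $\C$ being routine. Note that the efficacity of the cribles plays no role here: it is part of the standing hypotheses of the section and is used elsewhere.
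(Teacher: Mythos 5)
Your opening moves coincide with the paper's: factor the given pushout of $i$ into two stacked pushouts along $i=jk$, and dispose of the upper square by the definition of a $U$-cofibration applied to $k$. The gap is in your treatment of the lower square. That square is \emph{not} a cocartesian square formed of cocribles: its vertical edges $j$ and $j''$ are indeed cocribles (the latter by Proposition~\ref{imdircribles}), but its horizontal edges $g:E\to E''$ and $B\to B''$ are pushouts of the arbitrary morphism $f$ and have no reason to be cocribles. The standing hypothesis on $U$ concerns squares all of whose edges are cocribles (equivalently, by Proposition~\ref{imdircribles}, cocartesian squares whose two legs out of the initial corner are cocribles --- this is how it is verified in $\nCat{n}$), so it simply does not apply to the lower square. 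Your closing remark that the effectivity of cribles plays no role is exactly where the argument goes astray: effectivity is the ingredient introduced precisely to fill this hole, and the manipulation you dismiss as routine is the actual content of the proposition.

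The paper's route for the lower square is as follows. Let $l:V\to B$ be the cocrible complementary to the crible $i$; Proposition~\ref{imdircribles}, applied to the original square, shows both that the lower square is also cartesian and that the composite $V\to B\to B''$ is the cocrible complementary to $i''$. Pulling the lower square back along $l$ produces two cartesian squares, the square on $(V\times_B E,\,V,\,E,\,B)$ and its composite on $(V\times_B E,\,V,\,E'',\,B'')$, and \emph{all} edges of these two squares are cocribles, since cocribles are stable under inverse images. Because $i$ factors through the cocrible $j$ (and $i''$ through $j''$), Proposition~\ref{propefficace} combined with the effectivity hypothesis (Definition~\ref{defefficace}) shows that both of these squares are moreover cocartesian; the hypothesis on $U$ then makes their images homotopy cocartesian, and the cancellation property of homotopy cocartesian squares (if a horizontally composed square and its left factor are homotopy cocartesian, so is the right factor) yields that the image of the lower square is homotopy cocartesian. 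Only then does your final pasting step --- vertical composition of the upper and lower squares --- conclude. So your proposal needs, in place of the direct appeal to the hypothesis, this detour through the complementary cocrible, the effectivity of $\C$, and a cancellation (not merely a composition) lemma for homotopy cocartesian squares.
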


\begin{proof}
Il s'agit de montrer que l'image par $U$ d'un carré cocartésien de~$\C$ de la forme 
$$
\raise 20pt
\vbox{
\xymatrix@R=1.9pc@C=1.9pc{
A\ar[d]_i\ar[r]^a
&A'\ar[d]^{i'}
\\
B\ar[r]_b
&B'
}}\leqno(\star)$$
est un carré homotopiquement cocartésien de $\M$. Formons le diagramme de carrés cocartésiens
$$
\xymatrix@R=1.7pc@C=1.7pc{
&A\ar[r]^a\ar[d]^k\ar@/^-1.3pc/[dd]_i
&A'\ar[d]_{k'\kern -3pt}\ar@/^1.3pc/[dd]^{i'}
\\
&W\ar[r]^w\ar[d]^j
&W'\ar[d]_{j'\kern -3pt}
\\
&B\ar[r]_b
&B'
&\hskip -15pt.\hskip 15pt
}
$$
Comme $k$ est une $U$-cofibration, l'image par $U$ du carré du haut est un carré homotopiquement cocartésien de $\M$, il suffit donc de prouver qu'il en est de même de l'image de celui du bas. En vertu de la proposition~\ref{imdircribles}, le carré du bas est aussi cartésien. Cette même proposition, appliquée au carré cocartésien $(\star)$, implique que si $l:V\to B$ désigne le cocrible complémentaire au crible $i:A\to B$, le composé $bl:V\to B'$ s'identifie au cocrible complémentaire au crible $i':A'\to B'$. Formons le diagramme de carrés cartésiens
$$
\xymatrix{
&V\times_BW\ar[r]\ar[d]
&W\ar[r]^w\ar[d]_j
&W'\ar[d]^{j'}
\\
&V\ar[r]_l
&B\ar[r]_b
&B'
&\hskip -15pt.\hskip 15pt
}
$$
On remarque que le carré de gauche, ainsi que le carré composé sont formés de cocribles. Comme le crible $i$ (complémentaire du cocrible $l$) se factorise par le cocrible~$j$, la proposition~\ref{propefficace} et l'hypothèse que $\C$ est une catégorie à cribles et cocribles efficaces impliquent que le carré de gauche est également cocartésien. On en déduit que l'image par $U$ de chacun de ces deux carrés est un carré homotopiquement cartésien. Il en résulte qu'il en est de même de l'image du carré de droite, ce qui achève la démonstration.
\end{proof}

\section{Le théorème de transfert}\label{sec:transfert}

\begin{paragr}\label{acc}
Soit $\kappa$ un cardinal. On rappelle qu'un ensemble \emph{$\kappa$\nbd-petit} est une ensemble dont le cardinal est strictement plus petit que $\kappa$. Une petite catégorie est \emph{$\kappa$\nbd-petite} si son ensemble des flèches est $\kappa$\nbd-petit. Une limite inductive est \emph{$\kappa$\nbd-petite} si elle est indexée par une catégorie $\kappa$\nbd-petite. On dit que le cardinal $\kappa$ est \emph{régulier} si la sous-catégorie pleine de $\ens$ formée des ensembles $\kappa$\nbd-petits est stable par limites inductives $\kappa$\nbd-petites. 
\tb

Soit $\kappa$ un cardinal régulier. Une catégorie est \emph{$\kappa$\nbd-filtrante} si tout foncteur de but cette catégorie et de source une catégorie $\kappa$\nbd-petite s'insère dans un cône inductif, autrement dit, est source d'un morphisme de foncteurs de but un foncteur constant. Une limite inductive est \emph{$\kappa$\nbd-filtrante} si elle est indexée par une catégorie $\kappa$\nbd-filtrante. Soit $\C$ une catégorie admettant des petites limites inductives $\kappa$\nbd-filtrantes. On dit qu'un objet $X$ de $\C$ est \emph{$\kappa$\nbd-présentable} si le foncteur $\Hom_{\C}(X,?):\C\to\ens$ commute aux petites limites inductives $\kappa$\nbd-filtrantes. L'objet $X$ est de \emph{présentation finie} s'il est $\aleph_0$\nbd-présentable, autrement dit, si le foncteur $\Hom_{\C}(X,?):\C\to\ens$ commute aux petites limites inductives filtrantes. On dit que la catégorie $\C$ est \emph{$\kappa$\nbd-accessible} si elle admet une petite sous-catégorie pleine $\C_0$ formée d'objets $\kappa$-présentables telle que tout objet de $\C$ soit limite inductive $\kappa$\nbd-filtrante d'objets de $\C_0$.
\tb

On dit qu'un objet d'une catégorie $\C$ est \emph{présentable} s'il existe un cardinal régulier~$\kappa$ tel que $X$ soit $\kappa$\nbd-présentable. On dit qu'une catégorie $\C$ est \emph{accessible} s'il existe un cardinal régulier $\kappa$ tel que $\C$ soit $\kappa$\nbd-accessible. Tout objet d'une catégorie accessible est présentable~\cite[remarque~2.2~(3)]{AR}. Une catégorie \emph{localement présentable} est une catégorie accessible et cocomplète.
\end{paragr}

\begin{paragr}
Soit $\C$ une catégorie. On rappelle que si $i:A\to B$ et $p:X\to Y$ sont deux flèches de $\C$, on dit que $i$ a la \emph{propriété de relèvement à gauche} relativement à $p$, ou que $p$ a la \emph{propriété de relèvement à droite} relativement à $i$, si tout carré commutatif de la forme 
$$
\xymatrix{
A\ar[r]^a\ar[d]_i
&X\ar[d]^p
\\
B\ar[r]_b
&Y
}
$$
admet un \emph{relèvement}, autrement dit s'il existe une flèche $h:B\to X$ de $\C$ telle que
$ph=b$ et $hi=a$. Si $\F$ désigne une classe de flèches de $\C$, on note $l(\F)$ (resp. $r(\F)$) la classe des flèches de $\C$ ayant la propriété de relèvement à gauche (resp. à droite) relativement à toutes les flèches de $\C$ appartenant à $\F$. La classe $l(\F)$ est stable par images directes, composés transfinis et rétractes. On note $\cell(\F)$ la classe des flèches de $\C$ qui sont des composés transfinis d'images directes de flèches appartenant à $\F$; on a donc $\cell(\F)\subset l(\F)$.
\end{paragr}

\begin{argptitob}\label{petitobjet}
Soient $\C$ une catégorie localement pré\-sen\-table, et $I$ un (petit) ensemble de flèches de $\C$. Alors toute flèche $f$ de $\C$ se décompose en $f=pi$, avec $i\in\cell(I)$ et $p\in r(I)$. De plus, $lr(I)$ est alors la classe des rétractes des flèches de $\C$ appartenant à $\cell(I)$.
\end{argptitob}

\begin{proof}
Cela résulte de \cite[proposition 10.5.16 et corollaire 10.5.23]{Hir} ou de~\cite[th\'eor\`eme 2.1.14 et corollaire 2.1.15]{Ho}.
\end{proof}

\begin{paragr}
Une \emph{catégorie de modèles combinatoire} est une catégorie de modèles fermée dont la catégorie sous-jacente est localement présentable et qui est à engendrement cofibrant. Cette dernière condition signifie alors simplement qu'il existe des (petits) ensembles $I$ et $J$ de flèches de $\M$ tels que $lr(I)$ (resp. $lr(J)$) soit la classe des cofibrations (resp. des cofibrations triviales) de $\M$, et on dit que la catégorie de modèles~$\M$ est \emph{engendrée} par $(I,J)$.
\end{paragr}

\begin{lemtransf}\label{transfert}
Soient $\C$ une catégorie localement présentable et $\M$~une catégorie de
modèles fermée à engendrement cofibrant, engendrée par $(I,J)$,
$$F:\M\toto\C\ ,\quad U:\C\toto\M$$
un couple de foncteurs adjoints, et $\W$ la classe des $U$-équivalences de
$\C$ \emph{(\emph{cf.}~\ref{UeqUcof})}. On suppose que
$$lr(F(J))\subset\W\ .$$
Alors il existe une structure de catégorie de modèles combinatoire sur $\C$ engendrée par $(F(I),F(J))$ dont les équivalences faibles sont les $U$-équivalences et les fibrations les flèches de $\C$ dont l'image par $U$ est une fibration de $\M$. De plus, si la catégorie de modèles $\M$ est propre à droite, il en est de même de $\C$.
\end{lemtransf}

\begin{proof}
Les assertions autres que la propreté à droite résultent par exemple de~\cite[proposition 1.4.23]{Ci}, de~\cite[théorème 3.3]{Cr} ou de~\cite[théorème~11.3.2]{Hir}. Celle concernant la propreté à droite résulte aussitôt du fait que le foncteur $U$ respecte les fibrations et les carrés cartésiens, et reflète les équivalences faibles.
\end{proof}

\begin{prop}\label{sansSmith}
Soient $\C$ une catégorie localement présentable, $\M$ une catégorie de modèles fermée à engendrement cofibrant, engendrée par $(I,J)$, dont les équivalences faibles sont stables par petites limites inductives filtrantes,
$$F:\M\toto\C\ ,\quad U:\C\toto\M$$
un couple de foncteurs adjoints tel que $U$ commute aux petites limites
inductives filtrantes, et $\W$ la classe des $U$-équivalences de $\C$
\emph{(\emph{cf.}~\ref{UeqUcof})}. On suppose que
\begin{enumerate}
\item $F(I)\subset\Cof{\W}$;
\item $F(J)\subset\W$.
\end{enumerate}
Alors il existe une structure de catégorie de modèles combinatoire sur $\C$, engendrée par $(F(I),F(J))$, dont les équivalences faibles sont les $U$-équivalences et les fibrations les flèches de $\C$ dont l'image par $U$ est une fibration de $\M$. De plus, cette catégorie de modèles est propre à gauche. Elle est aussi propre à droite si $\M$ l'est. 
\end{prop}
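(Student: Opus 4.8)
L'id�e est de ramener la proposition au lemme du transfert~\ref{transfert}, dont la seule hypoth�se non imm�diate est l'inclusion $lr(F(J))\subset\W$. Je commencerais par observer que, sous les hypoth�ses faites, la classe $\W$ est stable par petites limites inductives filtrantes (puisque $U$ commute � ces limites et que les �quivalences faibles de $\M$ le sont), qu'elle satisfait � la propri�t� du deux sur trois, et qu'elle est stable par r�tractes (car $U$ respecte les r�tractes et les �quivalences faibles de $\M$ sont stables par r�tractes). En particulier, la proposition~\ref{prprcofGroth} s'applique : la classe $\Cof{\W}$ est stable par images directes (\emph{cf.}~\ref{defcofGroth}), par composition transfinie et par r�tractes. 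Comme $\W$ est elle aussi stable par composition binaire et limites inductives filtrantes, elle est de m�me stable par composition transfinie, de sorte que $\Cof{\W}\cap\W$ h�rite de ces trois propri�t�s de stabilit�.

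La premi�re �tape technique consiste � v�rifier les hypoth�ses du lemme de Grothendieck~\ref{lemmeGroth} pour la classe $\W$. La propri�t� du deux sur trois �tant acquise, il reste la condition de factorisation. Pour l'obtenir, j'appliquerais l'argument du petit objet~\ref{petitobjet} � l'ensemble $F(I)$ : toute fl�che $f$ de~$\C$ s'�crit $f=si$ avec $i\in\cell(F(I))$ et $s\in r(F(I))$. Comme $F(I)\subset\Cof{\W}$ par l'hypoth�se~(\emph{a}) et que $\Cof{\W}$ est stable par images directes et composition transfinie, on a $i\in\Cof{\W}$. Par ailleurs, l'adjonction $F\dashv U$ donne $s\in r(F(I))$ si et seulement si $U(s)\in r(I)$ ; or $r(I)$ est la classe des fibrations triviales de $\M$, donc $U(s)$ est une �quivalence faible et $s\in\W$. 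On dispose ainsi de la factorisation $f=si$, $i\in\Cof{\W}$, $s\in\W$, requise par~\ref{lemmeGroth}.

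Il faut ensuite �tablir que $F(J)\subset\Cof{\W}\cap\W$. L'inclusion $F(J)\subset\W$ est l'hypoth�se~(\emph{b}). Pour $F(J)\subset\Cof{\W}$, je noterais que tout �l�ment de $J$, �tant une cofibration triviale de $\M$, est en particulier une cofibration, donc appartient � $lr(I)$, et est ainsi r�tracte d'une fl�che de $\cell(I)$ (par engendrement cofibrant de $\M$). Le foncteur $F$, adjoint � gauche, pr�serve les images directes et la composition transfinie, de sorte que $F(\cell(I))\subset\cell(F(I))\subset\Cof{\W}$ ; comme $F$ et $\Cof{\W}$ respectent les r�tractes, on conclut $F(J)\subset\Cof{\W}$. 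Le lemme~\ref{lemmeGroth} assure alors que $\Cof{\W}\cap\W$ est stable par images directes ; joint � sa stabilit� par composition transfinie d�j� �tablie, ceci donne $\cell(F(J))\subset\Cof{\W}\cap\W$, puis, par stabilit� de $\W$ par r�tractes et � nouveau~\ref{petitobjet}, l'inclusion cherch�e $lr(F(J))\subset\W$. Le lemme du transfert~\ref{transfert} fournit d�s lors la structure de cat�gorie de mod�les combinatoire voulue, engendr�e par $(F(I),F(J))$, avec les �quivalences faibles et les fibrations annonc�es, ainsi que la propret� � droite lorsque $\M$ est propre � droite.

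Il reste la propret� � gauche, qui est le point nouveau par rapport au lemme du transfert. Les cofibrations de la structure obtenue sont les �l�ments de $lr(F(I))$, donc des r�tractes de fl�ches de $\cell(F(I))\subset\Cof{\W}$ ; par stabilit� de $\Cof{\W}$ par r�tractes, toute cofibration est une $\W$\nbd-cofibration. La caract�risation de l'exemple~\ref{excofGroth} permet alors de conclure imm�diatement que la structure est propre � gauche. Le principal travail d'organisation r�side dans l'�tape centrale, � savoir l'obtention de $F(J)\subset\Cof{\W}$ et la v�rification de l'hypoth�se de factorisation du lemme~\ref{lemmeGroth} ; une fois ces deux points acquis, tout le reste s'encha�ne par les seules propri�t�s de stabilit� de $\Cof{\W}$ et de $\W$.
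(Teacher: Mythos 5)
Votre preuve est correcte et suit pour l'essentiel la m\^eme d\'emarche que celle de l'article : r\'eduction au lemme de transfert~\ref{transfert} via l'inclusion $lr(F(J))\subset\W$, obtenue en montrant $F(J)\subset\Cof{\W}\cap\W$ gr\^ace \`a la factorisation $f=si$ ($i\in\Cof{\W}$, $s\in\W$) fournie par l'argument du petit objet appliqu\'e \`a $F(I)$ et au lemme de Grothendieck~\ref{lemmeGroth}, puis propret\'e \`a gauche par l'inclusion des cofibrations dans $\Cof{\W}$ et l'exemple~\ref{excofGroth}. Rien \`a redire.
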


\begin{proof}
On observe d'abord que si $f\in r(F(I))$, on a par adjonction $U(f)\in r(I)$, autrement dit, $U(f)$ est une fibration triviale de $\M$, et en particulier une équivalence faible. On en déduit que $f$ est une $U$\nbd-équivalence. On a donc $r(F(I))\subset\W$.
\tb

D'autre part, comme la classe des équivalences faibles de $\M$ est stable par petites limites inductives filtrantes, il en est de même de la classe $\W$ des $U$\nbd-équivalences, puisque le foncteur $U$ commute auxdites limites. En particulier, $\W$ est stable par composés transfinis et rétractes.
\tb

Montrons que toute flèche $f$ de $\C$ se décompose en $f=si$, avec
$i\in\Cof{\W}$ et $s\in\W$. En vertu de l'argument du petit objet, il existe
une décomposition $f=si$ de $f$, avec $i$ composé transfini d'images
directes d'éléments de $F(I)$, et $s\in r(F(I))$. L'assertion résulte donc
de l'hypothèse (\emph{a}), de la stabilité de $\Cof{\W}$ par composés
transfinis (proposition~\ref{prprcofGroth}) et images directes~(\emph{cf.}~\ref{defcofGroth}), et de l'inclusion $r(F(I))\subset\W$. On en déduit que \hbox{$\Cof{\W}\cap\W$} est stable par images directes (proposition~\ref{lemmeGroth}), composés transfinis et rétractes (proposition~\ref{prprcofGroth}). 
\tb

On va prouver que $lr(F(J))\subset\W$. En vertu de ce qui précède et de
l'argument du petit objet, il suffit de montrer que
$F(J)\subset\Cof{\W}\cap\W$. Or, par l'hypothèse (\emph{b}), on a
$F(J)\subset\W$. D'autre part, les éléments de $J$ sont des cofibrations de
$\M$, autrement dit, des rétractes de composés transfinis d'images directes
d'éléments de $I$, et comme le foncteur $F$ commute à ces opérations, les
propriétés de stabilité de $\Cof{\W}$ (\emph{cf.}~\ref{defcofGroth} et~proposition~\ref{prprcofGroth}) et l'hypothèse~(\emph{a}) impliquent que $F(J)\subset\Cof{\W}$, ce qui prouve l'assertion. Les assertions de la proposition, autres que la question de la propreté à gauche résultent donc du lemme précédent.
\tb

À nouveau, les propriétés de stabilité de $\Cof{\W}$, l'hypothèse
(\emph{a}), et l'argument du petit objet impliquent que les cofibrations de
la structure de catégorie de modèles ainsi obtenue sur $\C$ sont des
$\W$-cofibrations, et par suite, cette catégorie de modèles est propre à
gauche (\emph{cf.}~exemple~\ref{excofGroth}). 
\end{proof}

\section{Catégories de modèles à la Thomason}\label{absThom}

\begin{paragr}\label{enssimpl}
On note $\ord$ la catégorie des ensembles ordonnés et applications croissantes, considérée comme sous-catégorie pleine de la catégorie $\cat$ des petites catégories. On rappelle que la \emph{catégorie des simplexes} $\cats$ est la sous-catégorie pleine de $\ord$ formée des ensembles
$$\smp{m}=\{0,1,\dots,m\}\ ,\qquad m\geq0\ ,$$
ordonnés par l'ordre naturel. On note 
$$\face{i}{m}:\smp{m-1}\toto\smp{m}\ ,\qquad m>0\ ,\quad0\leq i\leq m\ ,$$
l'unique injection croissante dont l'image ne contient pas $i$. La catégorie des \emph{ensembles simpliciaux} est la catégorie $\simpl$ des préfaisceaux sur $\cats$. On identifiera, par le plongement de Yoneda, $\cats$ à une sous-catégorie pleine de $\simpl$. On pose 
$$
\begin{aligned}
&\bord{m}=\mathop\cup\limits_{0\leq i\leq m}\im\face{i}{m}\ ,\quad m>0\ ,\qquad\bord{0}=\varnothing\ ,\\
\noalign{\vskip 3pt}
&\cornet{m}{k}=\mathop\cup\limits_{i\neq k}\im\face{i}{m}\ ,\qquad\kern 12pt m>0\ ,\quad0\leq k\leq m\ ,
\end{aligned}
$$
l'image et la réunion étant prises au sens des préfaisceaux, et on note
$$i_m:\bord{m}\hookrightarrow\smp{m}\ ,\qquad   j_{m,k}:\cornet{m}{k}\hookrightarrow\smp{m}$$
les inclusions canoniques. Une \emph{fibration de Kan} est une flèche de
$\simpl$ ayant la propriété de relèvement à droite relativement aux
inclusions $j_{m,k}:\cornet{m}{k}\hookrightarrow\smp{m}$, $m>0$, \hbox{$0\leq k\leq m$}. 
On pose
$$
\begin{aligned}
&I=\{i_m:\bord{m}\hookrightarrow\smp{m}\,|\,m\geq0\}\ ,\\
\noalign{\vskip 3pt}
&J=\{j_{m,k}:\cornet{m}{k}\hookrightarrow\smp{m}\,|\,m>0\,,\ 0\leq k\leq m\}\ .
\end{aligned}
$$
On rappelle le théorème suivant de Quillen:
\end{paragr}

\begin{thm}\label{Quillen}
La catégorie des ensembles simpliciaux admet une structure de catégorie de modèles combinatoire propre, engendrée par $(I,J)$,
dont les cofibrations sont les monomorphismes et dont les fibrations sont les fibrations de Kan (ce qui suffit pour déterminer les équivalences faibles).
De plus, les équivalences faibles de cette structure sont stables par petites limites inductives filtrantes.
\end{thm}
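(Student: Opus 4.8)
The plan is to treat this statement as a recollection of classical facts rather than as something to be built from scratch, exactly as the phrase \og on rappelle\fg{} suggests. First I would invoke the existence of the Quillen model structure on $\simpl$ in which the cofibrations are the monomorphisms and the fibrations are the Kan fibrations: this is the original theorem of Quillen~\cite{Qu1}, with modern cofibrantly generated treatments in Hovey~\cite{Ho} and Hirschhorn~\cite{Hir}. That the structure is generated by the pair $(I,J)$ of boundary and horn inclusions is part of those references, and since the weak equivalences are completely determined once the cofibrations and fibrations are fixed, there is nothing further to pin down on that score.

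Combinatoriality is then formal: $\simpl=\pref{\cats}$ is a presheaf category over the small category $\cats$, hence a Grothendieck topos and in particular locally presentable, and the structure is cofibrantly generated by the two explicit small sets $I$ and $J$. For properness I would argue left properness from the observation that every object of $\simpl$ is cofibrant: any monomorphism is a cofibration, so in particular $\varnothing\to X$ is one for every $X$, and a model category in which all objects are cofibrant is left proper; right properness is again classical and recorded in~\cite{Qu1,Ho,Hir}.

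The only point that calls for a genuine argument is the stability of the weak equivalences under small filtered colimits, and here I would route everything through Kan's functor $\Ex$. The three key observations are: since $\Sd\smp{n}$ is a finite, hence finitely presentable, simplicial set, the formula $\Ex(X)_n=\Hom_{\simpl}(\Sd\smp{n},X)$ shows that $\Ex$ commutes with filtered colimits, and therefore so does the fibrant replacement functor $\Ex^{\infty}$, which carries a natural weak equivalence $X\to\Ex^{\infty}X$; next, a filtered colimit of Kan complexes is again a Kan complex, because the right lifting property against the horn inclusions $\cornet{m}{k}\to\smp{m}$, whose domains and codomains are finitely presentable, is preserved under filtered colimits; and finally, for Kan complexes the homotopy groups, being computed from maps out of finite simplicial spheres and disks, commute with filtered colimits. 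Given a filtered diagram of weak equivalences $f_i:X_i\to Y_i$, applying $\Ex^{\infty}$ reduces to Kan complexes, where the third observation shows that $\pi_\ast$ of the colimit map is the filtered colimit of the isomorphisms $\pi_\ast(\Ex^{\infty}f_i)$, hence an isomorphism.

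The main obstacle, and the reason the argument is not a one-liner, is precisely that homotopy groups are only honestly defined on fibrant objects, so one cannot take $\pi_\ast$ of the raw colimit of non-fibrant simplicial sets directly; the whole point of passing through $\Ex^{\infty}$ is that it is simultaneously a fibrant replacement and a functor commuting with filtered colimits, which lets the colimit and the homotopy-group computation be interchanged. All of this being classical, I expect the final text to state the existence and properness with references to~\cite{Qu1,Ho,Hir} and to dispatch the filtered-colimit stability either by citation or by the short $\Ex^{\infty}$ argument sketched above.
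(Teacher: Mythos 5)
Your proposal is correct and takes essentially the same route as the paper, whose entire proof is a citation to Quillen \cite[II, 3.14, th\'eor\`eme~3]{Qu1} and to Cisinski \cite[th\'eor\`eme 2.1.42 et proposition 2.3.20]{Ci}, the latter covering precisely the filtered-colimit stability of weak equivalences. Your $\Ex^{\infty}$ sketch is the standard argument underlying that citation (the finite-presentability input, namely that $\Ex$ commutes with filtered colimits because $\Sd\smp{m}$ is finite, is exactly what the paper itself records later as lemme~\ref{Exfiltr}), so nothing is missing.
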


\begin{proof}
Voir~\cite[II, 3.14, théorème~3]{Qu1} ou \cite[théorème 2.1.42 et proposition 2.3.20]{Ci}.
\end{proof}

Dans la suite, on considèrera toujours la catégorie $\simpl$ munie de cette structure de catégorie de modèles. En particulier, quand on parlera d'équivalences faibles d'ensembles simpliciaux, il s'agira toujours des équivalences faibles de cette structure.

\begin{paragr}\label{nerfdeAG}
Le foncteur d'inclusion $\cats\hookrightarrow\cat$ définit par le procédé de Kan un couple de foncteurs adjoints
$$c:\simpl\toto\cat\ ,\qquad N:\cat\toto\simpl\ ,$$
où $c$ est le foncteur de \emph{réalisation catégorique}, unique foncteur, à isomorphisme unique près, commutant aux petites limites inductives et prolongeant l'inclusion $\cats\hookrightarrow\cat$, et $N$ est le foncteur \emph{nerf} défini par
$$C\longmapsto\bigl(\smp{m}\mapsto\Hom_{\cat}(\smp{m},C)\bigr)\ .$$
\end{paragr}

\begin{paragr}\label{xi}
Pour tout ensemble ordonné $E$, on note $\xi(E)$ l'ensemble des parties finies non vides totalement ordonnées de $E$, muni de la relation d'ordre définie par l'inclusion des parties. On définit ainsi un foncteur $\xi:\ord\to\ord$. En composant la restriction à $\cats$ de ce foncteur avec la restriction du foncteur nerf à $\ord$, on obtient un foncteur 
$$
\xymatrixcolsep{2.9pc}
\xymatrix{
\cats\ar[r]^(.47){\xi\,|\,\cats}
&\ord\ar[r]^(.53){N\,|\,\ord}
&\simpl
}
$$
qui définit par le procédé de Kan un couple de foncteurs adjoints
$$\Sd:\simpl\toto\simpl\ ,\qquad \Ex:\simpl\toto\simpl\ ,$$ où $\Sd$ est le
foncteur de \emph{subdivision}, unique foncteur, à isomorphisme unique près,
commutant aux petites limites inductives et prolongeant le foncteur
$(N\,|\,\ord)\circ(\xi\,|\,\cats)$, et $\Ex$ est le foncteur défini par 
$$
X\longmapsto\bigl(\smp{m}\mapsto\Hom_{\simpl}(N\xi(\smp{m}),X)\bigr)\ .
$$
Pour tout ensemble ordonné $E$, on a une application croissante
$$\xi(E)\toto E\ ,\qquad S\longmapsto \max S\ ,$$
qui induit un morphisme de foncteurs $\alpha:\Sd\to1_{\simpl}$, et par transposition, un morphisme de foncteurs $\beta:1_{\simpl}\to\Ex$. On rappelle le théorème suivant de Kan:
\end{paragr}

\begin{thm}\label{Kan}
Les morphismes de foncteurs
$$\alpha:\Sd\toto1_{\simpl}\ ,\qquad\beta:1_{\simpl}\to\Ex$$
sont des équivalences faibles d'ensembles simpliciaux, argument par argument.
\end{thm}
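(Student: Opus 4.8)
The plan is to treat $\alpha$ first, reducing to the representable case and then inducting on skeleta, and afterwards to deduce the assertion for $\beta$ by an adjunction argument. I record at the outset two facts about $\Sd$. Being defined by left Kan extension, $\Sd$ commutes with small inductive limits. Moreover it preserves monomorphisms: it sends the inclusions $\bord{m}\hookrightarrow\smp{m}$ to monomorphisms (on representables $\Sd$ is $N\xi$, and $\Sd\bord{m}$ is the nerve of the subposet of proper nonempty subsets of $\{0,\dots,m\}$ inside $N\xi(\smp{m})$), and a cocontinuous functor sending these generating inclusions to monomorphisms preserves all of them, the class of monomorphisms of $\simpl$ being stable under the relevant inductive limits. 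In particular $\Sd$ preserves cofibrations.

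For $\alpha$, I would begin with the representables. By construction $\Sd\smp{m}=N\xi(\smp{m})$ and $\alpha_{\smp{m}}$ is the nerf of the map $\xi(\smp{m})\to\smp{m}$, $S\mapsto\max S$. The ordered set $\xi(\smp{m})$ has a greatest element, namely $\{0,\dots,m\}$, so its nerf is contractile; as $\smp{m}$ is contractile too, the morphism $\alpha_{\smp{m}}$, being a morphism between contractile simplicial sets, is an \'equivalence faible. I would then prove by r\'ecurrence on $n$ that $\alpha_X$ is an \'equivalence faible for every simplicial set $X$ of dimension $\leq n$. For the inductive \'etape one writes the $n$-squelette $\mathrm{sk}_n X$ as the somme amalgam\'ee of $\mathrm{sk}_{n-1}X\leftarrow\coprod\bord{n}\to\coprod\smp{n}$, indexed by the non-degenerate $n$-simplices of $X$. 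Applying the cocontinuous $\Sd$ yields again a carr\'e cocart\'esien, along the cofibration $\Sd(\coprod\bord{n})\to\Sd(\coprod\smp{n})$, and $\alpha$ provides a morphism between the two carr\'es cocart\'esiens. By hypoth\`ese de r\'ecurrence, $\alpha$ is an \'equivalence faible on $\mathrm{sk}_{n-1}X$ and on $\bord{n}$ (both of dimension $<n$), and it is one on $\coprod\smp{n}$ by the representable case and the stability of \'equivalences faibles under sommes (every simplicial set being cofibrant); since $\simpl$ is propre \`a gauche (th\'eor\`eme~\ref{Quillen}), the gluing lemma shows that $\alpha_{\mathrm{sk}_n X}$ is an \'equivalence faible. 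Finally, for arbitrary $X$ one has $X=\varinjlim_n\mathrm{sk}_n X$ and $\Sd X=\varinjlim_n\Sd\,\mathrm{sk}_n X$; the \'equivalences faibles being stable under limites inductives filtrantes (th\'eor\`eme~\ref{Quillen}), $\alpha_X$ is an \'equivalence faible.

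For $\beta$, I would first observe that $\Sd$ preserves and reflects \'equivalences faibles: the naturality square $\alpha_Y\circ\Sd(f)=f\circ\alpha_X$ together with the propri\'et\'e du deux sur trois reduces, in both directions, ``$\Sd(f)$ is an \'equivalence faible'' to ``$f$ is an \'equivalence faible'', since $\alpha_X$ and $\alpha_Y$ are such. With the preservation of monomorphisms this shows moreover that $(\Sd,\Ex)$ is a couple de Quillen. Since $\Sd$ reflects \'equivalences faibles, $\beta_X$ is one if and only if $\Sd(\beta_X)$ is one. Using the identity $\beta=\Ex(\alpha)\circ\eta$ (where $\eta$ is the unit of the adjunction), the identit\'e triangulaire $\varepsilon_{\Sd X}\circ\Sd(\eta_X)=\id{\Sd X}$ and the naturalit\'e of the counit $\varepsilon$, one computes $\varepsilon_X\circ\Sd(\beta_X)=\alpha_X$. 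As $\alpha_X$ is an \'equivalence faible, it then suffices to know that the counit $\varepsilon_X:\Sd\Ex X\to X$ is one.

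This last point is precisely the main obstacle: being an adjoint \`a droite, $\Ex$ does not commute with the limites inductives used above, so the r\'ecurrence squelettique is unavailable for it. Here one invokes the combinatorial core of Kan's theorem, namely that $\Sd$ sends the cornet inclusions $\cornet{m}{k}\hookrightarrow\smp{m}$ to cofibrations triviales; from this it follows that $\Ex$ preserves the fibrations de Kan and the fibrant objects, that the canonical morphism $X\to\Ex^\infty X$ towards the limite inductive of the $\Ex^k X$ is an \'equivalence faible realizing a fibrant replacement, and hence that $\varepsilon_X$, equivalently $\beta_X$, is an \'equivalence faible. Granting this, $\beta$ is an \'equivalence faible argument par argument.
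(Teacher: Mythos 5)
Your treatment of $\alpha$ is complete and correct: the representable case via the greatest element of $\xi(\smp{m})$, the skeletal induction using the cocontinuity of $\Sd$, its preservation of monomorphisms and the gluing lemma, and the passage to the colimit via the stability of weak equivalences under filtered inductive limits (th\'eor\`eme~\ref{Quillen}) constitute a genuine (and standard) proof, whereas the paper itself gives none --- it simply refers to \cite{Kan} and to \cite[corollaire~2.1.27 et proposition~2.3.19]{Ci}. Your reduction of $\beta$ is also sound as far as it goes: $\Sd$ reflects weak equivalences, the identity $\varepsilon_X\circ\Sd(\beta_X)=\alpha_X$ is correct, and everything indeed comes down to the counit, equivalently to $\beta$ itself.

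The last paragraph, however, which is where the entire content of the $\beta$-half lies, is circular. The fact you call the combinatorial core --- that $\Sd(j_{m,k})$ is a trivial cofibration --- is not new input at all: it follows formally from your part on $\alpha$, since $\Sd$ preserves monomorphisms and, by two-out-of-three along $\alpha$, weak equivalences. By adjunction it gives that $\Ex$ preserves Kan fibrations and fibrant objects, but nothing more. In particular it does \emph{not} yield that the canonical map $\nu_X : X\to\Ex^\infty X$ is a weak equivalence: $\Ex^\infty X$ is the filtered colimit of the maps $\beta_{\Ex^kX}$, and the stage-one inclusion $\Ex X\to\Ex^\infty X$ is canonically isomorphic to $\nu_{\Ex X}$, so by th\'eor\`eme~\ref{Quillen} and two-out-of-three the assertion ``$\nu$ is a natural weak equivalence'' is \emph{equivalent} to the statement about $\beta$ you are trying to prove. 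If your claimed implication were valid, the $\beta$-half of Kan's theorem would be a formal consequence of the $\alpha$-half; it is not: the transpose of a natural weak equivalence across a Quillen adjunction has no reason to be one, because a right adjoint such as $\Ex$ is homotopically controlled only on fibrant objects, and $\Ex X$ is not fibrant in general. Nor does fibrancy of $\Ex^\infty X$ help by itself --- a map with fibrant target is not thereby a fibrant \emph{replacement} ($X\to\smp{0}$ also has fibrant target) --- and that fibrancy anyway rests on a different combinatorial lemma of Kan (extension of horns after one further application of $\Ex$), not on the anodyne character of $\Sd(j_{m,k})$. What is missing is precisely the hard part: Kan's original argument, or the proof of Goerss--Jardine (the combinatorial comparison $\pi_0(X)\simeq\pi_0(\Ex X)$ followed by an induction on homotopy groups along fibration sequences, using that $\Ex^\infty$ preserves Kan fibrations and finite limits), or the argument of \cite{Ci} cited by the paper. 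As written, your proposal proves the statement for $\alpha$ but assumes it for $\beta$.
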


\begin{proof}
Voir~\cite{Kan}, ou pour une version moderne~\cite[corollaire~2.1.27 et proposition~2.3.19]{Ci}.
\end{proof}

\begin{lemme}\label{Exfiltr}
Le foncteur $\Ex$ commute aux petites limites inductives filtrantes.
\end{lemme}

\begin{proof}
Pour tout entier $m \geq 0$, l'ensemble ordonné $\xi(\smp{m})$ est fini, et
donc $N\xi(\smp{m})$ est un ensemble simplicial de présentation finie
(\emph{cf.}~\cite[chapitre II, §~5.4]{GZ}). Par conséquent, le foncteur
$$X\longmapsto\Hom_{\simpl} (N\xi(\smp{m}),X)=(\Ex(X))_m$$ 
commute aux petites limites inductives filtrantes, ce qui prouve le lemme.
\end{proof}

\begin{paragr}
On note $\bordCS{m}$, $m\geq0$ (resp. $\cornetCS{m}{k}$, $m>0$, $0\leq k\leq m$) l'ensemble ordonné par inclusion des parties non vides de $\smp{m}$ distinctes de $\smp{m}$ (resp. qui ne contiennent pas $\{0,\dots,k-1,k+1,\dots,m\}$). On note
$$
\begin{aligned}
&\sigma_m:\bordCS{m}\hookrightarrow\xi(\smp{m})\ ,\qquad m\geq0\ ,\\
\noalign{\vskip 3pt}
&\tau_{m,k}:\cornetCS{m}{k}\hookrightarrow\xi(\smp{m})\ ,\qquad\kern 9pt m>0\ ,\quad 0\leq k\leq m\ ,
\end{aligned}
$$
les inclusions canoniques.
\end{paragr}

\begin{lemme}\label{Sd2can}
On a des isomorphismes canoniques 
$$\Sd^2\smp{m}\simeq N\xi^2\smp{m}\ ,\quad\Sd^2\bord{m}\simeq N\xi\bordCS{m}\ ,\quad\Sd^2\cornet{m}{k}\simeq N\xi\cornetCS{m}{k}\ ,$$
identifiant
$$
\Sd^2(
\xymatrix{
\bord{m}\ar[r]^{i_m}
&\smp{m}
})
\qquad\hbox{à}\qquad
N\xi\,(
\xymatrix{
\bordCS{m}\ar[r]^{\sigma_m}
&\xi\smp{m}
})
$$
et
$$
\Sd^2(
\xymatrix{
\cornet{m}{k}\ar[r]^{j_{m,k}}
&\smp{m}
})
\qquad\hbox{à}\qquad
N\xi\,(
\xymatrix{
\cornetCS{m}{k}\ar[r]^{\tau_{m,k}}
&\xi\smp{m}
})\ .
$$
\end{lemme}

\begin{proof}
C'est une conséquence de \cite[lemme 5.2.7 et exemple 2.1.34]{Ci}.
\end{proof}

\begin{paragr}\label{ordcriblant}
Dans ce qui suit, on considérera toujours la catégorie des ensembles
ordonnés $\ord$ munie de la structure de catégorie à cribles et cocribles
efficaces, induite par celle de $\cat$, définie par le segment criblant
$(\smp{1},\del{0},\del{1})$ (\emph{cf.}~exemple~\ref{nCatcribles} et
proposition~\ref{nCatcribleseff}).
\end{paragr}

\begin{lemme}\label{Dwyer}
Soit $i:E\to F$ une inclusion pleine d'ensembles ordonnés. Alors l'image $\xi(i):\xi E\to\xi F$ de celle-ci par le foncteur $\xi$ est un crible se factorisant en
$$
\xymatrix{
\xi E\ar[r]_k\ar@/^4ex/[rr]^{\xi(i)}
&W\ar[r]_j
&\xi F
} \ ,
$$ 
où $j$ est un cocrible et $k$ est un crible admettant une rétraction $r$ qui est aussi un adjoint à droite de~$k$.
\end{lemme}

\begin{proof}
L'inclusion d'ensembles ordonnés $\xi E \to \xi F$ est trivialement un crible. Notons $W$ le sous-ensemble ordonné de $\xi F$ formé des $S \in \xi F$ tels que $S \cap E$ soit non vide. Alors on a des inclusions $k:\xi E\to W$ et $j:W\to\xi F$, telles que $\xi(i)=jk$, $k$ étant un crible et $j$ un cocrible. On définit un foncteur $r: W \to \xi E$ par $S \mapsto S \cap E$, pour $S\in W$. Une vérification immédiate montre que celui-ci est une rétraction et un adjoint à droite de  $k$.
\end{proof}

\begin{thm}[Théorème de Thomason abstrait.]\label{Thomabs}
Soient $\C$ une catégorie à cribles et cocribles efficaces dont la catégorie sous-jacente est localement présentable, 
$$F:\simpl\toto\C\ ,\quad U:\C\toto\simpl$$
un couple de foncteurs adjoints, et $\W$ la classe des $U$-équivalences de $\C$. On suppose que
\begin{enumerate}
\item le foncteur $U$ commute aux petites limites inductives filtrantes;
\item le foncteur $U$ transforme les carrés cocartésiens de $\C$ formés de cocribles en carrés cocartésiens de $\simpl$;
\item le foncteur $FN$ transforme les cribles \emph{(resp.} les cocribles\emph{)} de $\ord$ en cribles \emph{(resp.} en cocribles\emph{)} de $\C$;
\item si $E'\to E$ est un crible de $\ord$ admettant une rétraction qui est aussi un adjoint à droite, son image par le foncteur $FN$ est une $U$\nbd-cofibration;
\item pour tout ensemble ordonné $E$, le morphisme d'adjonction $N(E)\to UF(N(E))$ est une équivalence faible d'ensembles simpliciaux.
\end{enumerate}
Alors il existe une structure de catégorie de modèles combinatoire sur $\C$ engendrée par $(F\Sd^2(I),F\Sd^2(J))$ dont les équivalences faibles sont les $U$-équivalences et les fibrations les flèches de $\C$ dont l'image par $\Ex^2U$ est une fibration de Kan. De plus, cette catégorie de modèles est propre.
\end{thm}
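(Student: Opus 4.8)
The plan is to obtain the structure by transfer along the composite adjunction
$$F\Sd^2:\simpl\toto\C\ ,\qquad \Ex^2U:\C\toto\simpl\ ,$$
applying Proposition~\ref{sansSmith} with $\M=\simpl$ endowed with the structure of Theorem~\ref{Quillen}, left adjoint $F\Sd^2$ and right adjoint $\Ex^2U$. Three preliminary points are essentially formal. First, since $\beta:1_{\simpl}\to\Ex$ is an argument-wise weak equivalence (Theorem~\ref{Kan}), the functor $\Ex^2$ preserves and reflects weak equivalences (naturality of $\beta^2$ together with the two-out-of-three property), so the class of $(\Ex^2U)$-equivalences coincides with the class $\W$ of $U$-equivalences. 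Second, $\Ex^2U$ commutes with filtered colimits, since $U$ does by~\emph{(a)} and $\Ex$ does by Lemma~\ref{Exfiltr}. Third, $\simpl$ is combinatorial, generated by $(I,J)$, with weak equivalences stable under filtered colimits (Theorem~\ref{Quillen}). It then remains to verify the two hypotheses of Proposition~\ref{sansSmith} for $(F\Sd^2,\Ex^2U)$, namely $F\Sd^2(I)\subset\Cof{\W}$ and $F\Sd^2(J)\subset\W$; the conclusion—including that the fibrations are the maps sent by $\Ex^2U$ to Kan fibrations, and left properness—is then exactly the statement, while right properness follows since $\simpl$ is proper.

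The inclusion $F\Sd^2(J)\subset\W$ is the easier of the two. For a horn inclusion $j_{m,k}\in J$, Lemma~\ref{Sd2can} identifies $\Sd^2(j_{m,k})$ with the nerve $N\xi(\tau_{m,k})$ of a morphism of ordered sets. As $j_{m,k}$ is a trivial cofibration it is a weak equivalence, and $\Sd$ preserves weak equivalences (by Theorem~\ref{Kan} and two-out-of-three applied to $\alpha$), so $\Sd^2(j_{m,k})$ is a weak equivalence of simplicial sets. Whiskering the adjunction morphism $N\to UFN$ along $\xi(\tau_{m,k})$ yields a naturality square whose two vertical arrows are weak equivalences by hypothesis~\emph{(e)}; two-out-of-three then forces $UF\Sd^2(j_{m,k})$ to be a weak equivalence, that is, $F\Sd^2(j_{m,k})\in\W$.

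The inclusion $F\Sd^2(I)\subset\Cof{\W}$ is the technical heart. For a boundary inclusion $i_m\in I$, Lemma~\ref{Sd2can} identifies $F\Sd^2(i_m)$ with $FN\xi(\sigma_m)$. Since $\sigma_m:\bordCS{m}\hookrightarrow\xi\smp{m}$ is a full inclusion of ordered sets, Lemma~\ref{Dwyer} factors the crible $\xi(\sigma_m)$ as a crible $k$ followed by a cocrible $j$, where $k$ admits a right-adjoint retraction. Hypothesis~\emph{(c)} then shows $FN\xi(\sigma_m)$ to be a crible of $\C$, with $FN(k)$ a crible and $FN(j)$ a cocrible, while hypothesis~\emph{(d)} shows $FN(k)$ to be a $U$-cofibration. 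Thus $F\Sd^2(i_m)$ is a crible factoring as a $U$-cofibration followed by a cocrible, and I would conclude it is a $U$-cofibration by Proposition~\ref{lemmeDwyer}; since $U$-cofibrations are $\W$-cofibrations (Proposition~\ref{UcofWcof}), this gives $F\Sd^2(i_m)\in\Cof{\W}$.

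The main obstacle is to secure the standing hypothesis of Proposition~\ref{lemmeDwyer}, that $U$ carries cocartesian squares of cocribles to \emph{homotopy} cocartesian squares of $\simpl$. Hypothesis~\emph{(b)} supplies only genuine cocartesian squares, so the point is to upgrade ``cocartesian'' to ``homotopy cocartesian''. Here I would exploit that $U$, being a right adjoint, preserves monomorphisms, and that cribles and cocribles are monomorphisms by~\eqref{item:SCR3}; hence the image under $U$ of such a square is a pushout one of whose sides is a monomorphism, i.e.\ a cofibration of $\simpl$. Left properness of $\simpl$ (Theorem~\ref{Quillen}) then makes this pushout homotopy cocartesian, as required, and the argument is complete.
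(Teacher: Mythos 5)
Your proposal is correct and follows essentially the same route as the paper's proof: transfer via Proposition~\ref{sansSmith} applied to $(F\Sd^2,\Ex^2U)$, identification of the $\Ex^2U$-equivalences with the $U$-equivalences via Theorem~\ref{Kan}, verification of $F\Sd^2(J)\subset\W$ through Lemma~\ref{Sd2can} and hypothesis~\emph{(e)} with two-out-of-three, and of $F\Sd^2(I)\subset\Cof{\W}$ through Lemmas~\ref{Sd2can} and~\ref{Dwyer}, hypotheses~\emph{(c)} and~\emph{(d)}, and Propositions~\ref{lemmeDwyer} and~\ref{UcofWcof}. In particular, your upgrade of hypothesis~\emph{(b)} from cocartesian to homotopy cocartesian squares --- using that $U$, as a right adjoint, sends the cocribles (monomorphisms by~\eqref{item:SCR3}) to monomorphisms, hence cofibrations of $\simpl$ --- is exactly the argument in the paper.
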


\begin{proof}
On remarque d'abord que le théorème~\ref{Kan} implique aussitôt que les $U$\nbd-équivalences coïncident avec les $\Ex^2U$\nbd-équivalences. D'autre part, en vertu de l'hypothèse~(\emph{a}) et du lemme~\ref{Exfiltr}, le foncteur $\Ex^2U$ commute aux petites limites inductives filtrantes. Le théorème sera donc conséquence du théorème~\ref{Quillen} et de la proposition~\ref{sansSmith}, appliquée au couple de foncteurs adjoints $(F\Sd^2,\Ex^2U)$, pourvu qu'on montre que le foncteur $F\Sd^2$ satisfait aux conditions~(\emph{a}) et~(\emph{b}) de cette proposition.
\tb

Montrons d'abord que $F\Sd^2(I)\subset\Cof{\W}$, autrement dit que pour tout \hbox{$m\geq0$,} le morphisme 
\smash{$F\Sd^2(
\xymatrix{
\bord{m}\ar[r]^{i_m}
&\smp{m}
})$}
est une $\W$\nbd-cofibration. En vertu du lemme~\ref{Sd2can}, le morphisme
\smash{$\Sd^2(
\xymatrix{
\bord{m}\ar[r]^{i_m}
&\smp{m}
})$}
s'identifie à 
\smash{$N\xi\,(
\xymatrix{
\bordCS{m}\ar[r]^{\sigma_m}
&\xi\smp{m}
})$},
et comme $\sigma_m$ est une inclusion pleine d'ensembles ordonnés, le lemme~\ref{Dwyer} implique que $\xi(\sigma_m)$ est un crible se factorisant en $\xi(\sigma_m)=jk\,$,
$$\xymatrix{
\xi(\bordCS{m})\ar[r]^(.6)k
&W\ar[r]^(.4)j
&\xi^2\smp{m}
}\ ,$$
où $j$ est un cocrible et $k$ est un crible admettant une rétraction $r$ qui est aussi un adjoint à droite de~$k$. L'hypothèse (\emph{c}) implique que $FN\xi(\sigma_m)$ (resp. $FN(j)$) est un crible (resp. un cocrible) de $\C$, et l'hypothèse (\emph{d}) que $FN(k)$ est une $U$\nbd-cofibration. Comme $U$ est un adjoint à droite, il respecte les monomorphismes, et en particulier, transforme les cocribles de $\C$ en monomorphismes de $\simpl$. L'hypothèse (\emph{b}) implique donc que $U$ transforme les carrés cocartésiens de $\C$ formés de cocribles en carrés homotopiquement cocartésiens. Il résulte donc de la proposition~\ref{lemmeDwyer} que $FN\xi(\sigma_m)$ est une $U$-cofibration, et en particulier une $\W$-cofibration (proposition~\ref{UcofWcof}), ce qui prouve l'assertion.
\tb

Il reste à prouver que $F\Sd^2(J)\subset\W$, autrement dit que pour tout $m>0$ et tout $k$, $0\leq k\leq m$, le morphisme 
\smash{$
UF\Sd^2(
\xymatrix{
\cornet{m}{k}\ar@{}@<-.4ex>[r]^{j_{m,k}}\ar[r]
&\smp{m}
})$}
est une équivalence faible d'ensembles simpliciaux. En vertu du lemme~\ref{Sd2can}, le morphisme
\smash{$
\Sd^2(
\xymatrix{
\cornet{m}{k}\ar@{}@<-.4ex>[r]^{j_{m,k}}\ar[r]
&\smp{m}
})$}
s'identifie à
\smash{$N\xi\,(
\xymatrix{
\cornetCS{m}{k}\ar@{}@<-.4ex>[r]^{\tau_{m,k}}\ar[r]
&\xi\smp{m}
})$}. Or, on a un carré commutatif
$$
\xymatrixrowsep{2.8pc}
\xymatrixcolsep{4.5pc}
\xymatrix{
&N\xi(\cornetCS{m}{k})\ar[r]^(.487){N\xi(\tau_{m,k})}\ar[d]
&N\xi^2(\smp{m})\ar[d]
\\
&UFN\xi(\cornetCS{m}{k})\ar[r]^(.487){UFN\xi(\tau_{m,k})}
&UFN\xi^2(\smp{m})
&\kern -45pt,\kern 45pt
}
$$
dont les flèches verticales sont les morphismes d'adjonction, qui sont des équivalences faibles par l'hypothèse (\emph{e}). Comme $j_{m,k}$ est une équivalence faible, en vertu du théorème~\ref{Kan}, il en est de même de $\Sd^2(j_{m,k})$, donc aussi de la flèche horizontale du haut du carré. L'assertion résulte donc de la propriété du deux sur trois.
\end{proof}

\begin{rem}\label{remThomabseq}
Comme en vertu du théorème~\ref{Kan} les $U$\nbd-équivalences coïncident
avec les $\Ex^2U$\nbd-équivalences, sous les hypothèses du théorème, le
foncteur $\Ex^2U$ respecte les fibrations et les fibrations triviales, et
par suite, le couple $(F\Sd^2,\Ex^2U)$ est une adjonction de Quillen. De
plus, comme le théorème~\ref{Kan} implique aussi que le foncteur $\Ex$
induit une autoéquivalence de la catégorie homotopique des ensembles
simpliciaux, l'adjonction $(F\Sd^2,\Ex^2U)$ est une équivalence de Quillen
si et seulement si le foncteur $U$ induit une équivalence entre la catégorie
homotopique de la catégorie de modèles $\C$ du théorème précédent et celle
des ensembles simpliciaux (\emph{cf.}~\cite[Proposition 1.3.13]{Ho}). 
\end{rem}

\begin{rem}\label{remThomabs}
Le plus souvent, pour montrer la condition~(\emph{d}) du théorème précédent,
on montrera la condition plus forte suivante (\emph{cf.}~\ref{UeqUcof}):
\begin{enumerate}
\item[(d${}'$)] \emph{si $E'\to E$ est un crible de $\ord$ admettant une rétraction qui est aussi un adjoint à droite, son image par le foncteur $FN$ est une $U$\nbd-équivalence et le reste après tout cochangement de base.}
\end{enumerate}
Par ailleurs, on observe que dans la démonstration du théorème 4.11, les
conditions~(\emph{d}) et (\emph{e}) ne sont appliquées qu'à des ensembles ordonnés
\emph{finis}.
\end{rem}

\section{Nerfs simpliciaux de $n$-catégories}\label{nCat}

Dans la suite, on se fixe $n$, $0<n\leq\infty$, et un foncteur
$i:\cats\to\nCat{n}$ de la catégorie des simplexes vers celle des petites
$n$\nbd-catégories strictes. On rappelle que la catégorie $\nCat{n}$ est
localement présentable (pour $n=\infty$, cela résulte par exemple
de~\cite[proposition~1.51]{AR} et~\cite[proposition~3.14]{Ara} ; le cas
général résulte du cas $n = \infty$ et de \cite[théorème~1.39,
condition~(\emph{ii})]{AR} appliqué à l'inclusion $\nCat{n}\to\nCat{\infty}$
qui admet à la fois un adjoint à gauche et un adjoint à droite
(\emph{cf.}~\ref{tronq})). En particulier, la catégorie $\nCat{n}$ est
complète et cocomplète. On en déduit, par le procédé de Kan, un couple de
foncteurs adjoints
$$i^{}_!:\simpl\to\nCat{n}\ ,\qquad i^*:\nCat{n}\to\simpl\ ,$$
où $i^{}_!$ est l'unique foncteur, à isomorphisme unique près, commutant aux
petites limites inductives et prolongeant le foncteur $i$, et $i^*$ est
défini par
$$
C\longmapsto\bigl(\smp{m}\mapsto\Hom_{\nCat{n}}(i(\smp{m}),C)\bigr)\ .
$$
En vertu du théorème~\ref{Thomabs} (appliqué à la catégorie $\C=\nCat{n}$ et
aux foncteurs $F=i^{}_!$ et $U=i^*$), pour avoir une structure de catégorie
de modèles à la Thomason sur $\nCat{n}$, il suffit de vérifier que, pour un
foncteur $i$ convenable, le couple de foncteurs adjoints~$(i^{}_!,i^*)$
satisfait aux conditions (\emph{a})--(\emph{e}) de ce théorème. Dans
cette section, on présente des conditions suffisantes sur le foncteur $i$
pour que les conditions (\emph{a}), (\emph{b}) et~(\emph{c}) dudit théorème
soient satisfaites, et on donne un exemple d'un tel foncteur.

\begin{prop}\label{nerffiltant}\label{conditiona}
Si pour tout $m\geq0$, $i(\smp{m})$ est un objet de présentation finie de
$\nCat{n}$, et en particulier si $i(\smp{m})$ est une $n$\nbd-catégorie
finie, alors le foncteur $i^*$ commute aux limites inductives filtrantes.
\end{prop}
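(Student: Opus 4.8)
The plan is to reduce the statement to the very definition of an object of finite presentation recalled in~\ref{acc}, using that small colimits in a category of presheaves are computed argument by argument. Since $\simpl=\pref{\cats}$, a filtered colimit of simplicial sets is, in each simplicial degree, the corresponding filtered colimit of sets.

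First I would unwind the definition of $i^*$: for an integer $m\geq0$, the degree-$m$ component of $i^*(C)$ is the set $\Hom_{\nCat{n}}(i(\smp{m}),C)$. Hence, given a small filtered diagram $(C^{}_\lambda)$ of colimit $C=\varinjlim_\lambda C^{}_\lambda$ in $\nCat{n}$, the canonical comparison morphism $\varinjlim_\lambda i^*(C^{}_\lambda)\to i^*(C)$ is, in each degree $m$, the canonical map
$$
\varinjlim_\lambda\Hom_{\nCat{n}}(i(\smp{m}),C^{}_\lambda)
\toto
\Hom_{\nCat{n}}\bigl(i(\smp{m}),\varinjlim_\lambda C^{}_\lambda\bigr)\ .
$$
By the definition of finite presentation, this map is bijective for every $m$ precisely because each $i(\smp{m})$ is $\aleph_0$-presentable, that is, of finite presentation. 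As a levelwise isomorphism of simplicial sets is an isomorphism, it follows that $i^*$ commutes with small filtered colimits under the first hypothesis; this is the main assertion.

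It then remains to justify the parenthetical claim that a finite $n$-category --- one with finitely many cells --- is of finite presentation. The plan is to exploit that $\nCat{n}$ is an algebraic, locally finitely presentable category, in which the objects of finite presentation are exactly those admitting a finite presentation by generators and relations. A finite $n$-category is such an object: it has finitely many generating cells, subject to the finitely many relations recording its composites and units. Equivalently, it is a finite colimit of the free $n$-categories on a single cell, each of which represents a cells functor commuting with filtered colimits and is therefore of finite presentation; since the objects of finite presentation are stable under finite colimits, so is our $n$-category. The previous paragraph then applies.

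I do not expect any serious obstacle: the argument is essentially formal, resting only on the levelwise computation of colimits in a presheaf category and on the definition of $\aleph_0$-presentability. The one point requiring a little care is the final step --- the finite presentability of finite $n$-categories --- which uses the algebraic, locally finitely presentable nature of $\nCat{n}$ (\emph{cf.}~\ref{acc} and the discussion opening this section).
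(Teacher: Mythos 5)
Your proposal is correct, and its first half coincides with the paper's: colimits of simplicial sets are computed degreewise, so the statement reduces, by the very definition of $i^*$ and of objects of finite presentation, to the claim that a finite $n$\nbd-cat�gorie is of finite presentation in $\nCat{n}$. Where you diverge is in the proof of that claim. The paper argues on the side of hom-functors: for $C$ finite, the set $\Hom_{\nCat{n}}(C,D)$ is a finite limit (a subset of a finite product, cut out by finitely many source/target/unit/composition equations) of the cell sets $\fl^{}_i(D)$; since each functor $\fl^{}_i:\nCat{n}\to\ens$ commutes with filtered colimits and finite limits commute with filtered colimits in $\ens$, one concludes directly. You instead argue on the side of objects: $C$ admits a finite presentation by generators and relations, hence is a finite colimit of free $n$\nbd-categories on single cells, i.e.\ of the objects representing the $\fl^{}_i$; these are of finite presentation because the $\fl^{}_i$ are finitary, and objects of finite presentation are closed under finite colimits. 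The two arguments are dual packagings of the same two ingredients (finitariness of the cell functors, and commutation of finite limits with filtered colimits in $\ens$, which in your version is hidden inside the closure of finitely presentable objects under finite colimits). The paper's route is slightly more economical, since it needs no presentation of $C$; your route requires the extra step that a finite $n$\nbd-category really is a \emph{finite} colimit of frees on cells (a presentation as a coequalizer of frees on finite $n$\nbd-graphs, each such free object being a finite colimit of frees on single cells) --- true, but asserted rather than proved in your sketch, and worth a word of care for $n=\infty$. In exchange, your version isolates a reusable general principle (in a locally finitely presentable algebraic category, finitely presented objects are finitely presentable), whereas the paper's \emph{ad hoc} computation stays entirely elementary.
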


\begin{proof}
En vertu de la définition des objets de présentation finie (\emph{cf.}~\ref{acc}) et de celle de $i^*$, la seule chose à vérifier est qu'une $n$\nbd-catégorie finie est de présentation finie. Cette dernière assertion est conséquence facile du fait que pour tout $i\geq0$, le foncteur $\fl^{}_i:\nCat{n}\to\ens$, associant à une petite $n$\nbd-catégorie l'ensemble de ses $i$\nbd-cellules, commute aux limites inductives filtrantes, et de la commutation dans $\ens$ des limites inductives filtrantes aux limites projectives finies. 
\end{proof}

\begin{lemme}
Soit $C$ une petite $n$\nbd-catégorie admettant un objet $x^{}_0$ tel que pour tout objet $x$ de $C$, l'ensemble des $1$\nbd-flèches de $C$ de $x^{}_0$ vers $x$ soit non vide. Alors le foncteur $\Hom_{\nCat{n}}(C,?)$ transforme les carrés cocartésiens de $\nCat{n}$ formés de cocribles en carrés cocartésiens d'ensembles.
\end{lemme}

\begin{proof}
Soit
$$
\xymatrix{
D_0\ar[r]\ar[d]
&D_1\ar[d]
\\
D_2\ar[r]
&D
}$$
un carré cocartésien de $\nCat{n}$ formé de cocribles, qui sont en particulier des monomorphismes. On peut donc supposer, pour simplifier, que les $D_i$, $i=0,1,2$, sont des sous\nbd-$n$\nbd-catégories de $D$. En vertu de la proposition~\ref{imdircribles}, ce carré est aussi cartésien. Ainsi, le carré
$$
\xymatrix{
\Hom_{\nCat{n}}(C,D_0)\ar[r]\ar[d]
&\Hom_{\nCat{n}}(C,D_1)\ar[d]
\\
\Hom_{\nCat{n}}(C,D_2)\ar[r]
&\Hom_{\nCat{n}}(C,D)
}$$
est un carré cartésien d'ensembles formé d'inclusions. Pour montrer qu'il est cocartésien, il suffit donc de montrer que tout $n$\nbd-foncteur $u:C\to D$ se factorise par $D_1$ ou $D_2$. Comme le foncteur associant à une $n$\nbd-catégorie son ensemble d'objets admet un adjoint à droite, il transforme les carrés cocartésiens en carrés cocartésiens. On en déduit que $u(x^{}_0)$ est dans $D_1$ ou dans $D_2$. Supposons, par exemple, que $u(x^{}_0)$ soit un objet de $D_1$ et montrons qu'alors $u$ se factorise par $D_1$. Soient $i$, $0\leq i\leq n$, et $\gamma$ une $i$\nbd-cellule de $C$ d'objet source itérée $x$. Par hypothèse, il existe une $1$\nbd-flèche $f$ de $C$ de~$x^{}_0$ vers $x$. Comme $u(x^{}_0)$ est un objet de $D_1$, il résulte de la proposition~\ref{carcriblesncat} que $u(f)$ est dans $D_1$, et en particulier, que $u(x)$ est dans $D_1$. Une nouvelle application de la proposition~\ref{carcriblesncat} implique que $u(\gamma)$ est dans $D_1$, ce qui achève la démonstration.
\end{proof}

\begin{prop}\label{nerfcoc}\label{conditionb}
Si pour tout $m\geq0$, la $n$\nbd-catégorie $i(\smp{m})$ admet un objet $x^{}_m$ tel que pour tout objet $x$ de $i(\smp{m})$, l'ensemble des $1$\nbd-flèches de $i(\smp{m})$ de $x^{}_m$ vers $x$ soit non vide, alors le foncteur $i^*$ transforme les carrés cocartésiens de $\nCat{n}$ formés de cocribles en carrés cocartésiens de $\simpl$.
\end{prop}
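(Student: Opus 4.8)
Le plan est de ramener la question \`a une v\'erification degr\'e par degr\'e, en exploitant le fait que $\simpl=\pref{\cats}$ est une cat\'egorie de pr\'efaisceaux, de sorte que les colimites, et en particulier les sommes amalgam\'ees, y sont calcul\'ees argument par argument. Ainsi, un carr\'e d'ensembles simpliciaux est cocart\'esien si et seulement s'il l'est dans chaque degr\'e simplicial $m\geq0$, c'est-\`a-dire apr\`es application de chacun des foncteurs d'\'evaluation $X\mapsto X^{}_m$.

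Je commencerais par rappeler la description du foncteur $i^*$ : pour toute $n$\nbd-cat\'egorie $C$, on a $(i^*C)^{}_m=\Hom_{\nCat{n}}(i(\smp{m}),C)$, de sorte que l'effet de $i^*$ en degr\'e $m$ n'est autre que celui du foncteur $\Hom_{\nCat{n}}(i(\smp{m}),?)$. Par cons\'equent, \'etant donn\'e un carr\'e cocart\'esien de $\nCat{n}$ form\'e de cocribles, son image par $i^*$ est, en chaque degr\'e $m$, l'image de ce m\^eme carr\'e par le foncteur $\Hom_{\nCat{n}}(i(\smp{m}),?)$.

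L'hypoth\`ese de la proposition affirme pr\'ecis\'ement que chaque $n$\nbd-cat\'egorie $i(\smp{m})$ satisfait l'hypoth\`ese du lemme pr\'ec\'edent, \`a savoir l'existence d'un objet $x^{}_m$ depuis lequel tout objet est accessible par une $1$\nbd-fl\`eche. Ce lemme garantit alors que $\Hom_{\nCat{n}}(i(\smp{m}),?)$ transforme le carr\'e cocart\'esien de cocribles consid\'er\'e en un carr\'e cocart\'esien d'ensembles. Le carr\'e image par $i^*$ \'etant ainsi cocart\'esien en chaque degr\'e, il est cocart\'esien dans $\simpl$, ce qui conclura la d\'emonstration.

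L'essentiel du travail \'etant d\'ej\`a contenu dans le lemme pr\'ec\'edent, je n'anticipe pas d'obstacle majeur : la seule chose \`a garder \`a l'esprit est que les sommes amalgam\'ees d'ensembles simpliciaux se calculent argument par argument, ce qui est standard pour une cat\'egorie de pr\'efaisceaux, et que les foncteurs d'\'evaluation refl\`etent donc conjointement le caract\`ere cocart\'esien. La preuve est ainsi essentiellement formelle une fois acquis le lemme.
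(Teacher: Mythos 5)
Votre preuve est correcte et suit exactement la même voie que celle de l'article, qui se contente d'invoquer le lemme précédent \og vu la définition de $i^*$\fg{} : vous ne faites qu'expliciter ce que ce renvoi sous-entend, à savoir que $(i^*C)^{}_m=\Hom_{\nCat{n}}(i(\smp{m}),C)$ et que les carrés cocartésiens de $\simpl$ se détectent argument par argument, les colimites de préfaisceaux étant calculées degré par degré. Rien à redire.
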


\begin{proof} Vu la définition de $i^*$, il s'agit d'une conséquence immédiate du lemme précédent.
\end{proof}

\begin{lemme}\label{limindnerf}
Soit $E$ un ensemble ordonné. Alors le morphisme canonique 
\[ \limind_{S\in\xi E}NS\to NE\ , \]
induit par les inclusions $S\to E$, est un isomorphisme.
\end{lemme}

\begin{proof}
Le lemme résulte d'une vérification directe, ou de l'observation que
l'ensemble ordonné $\xi E$ (\emph{cf.}~\ref{xi}) s'identifie à une sous-catégorie cofinale de la catégorie $\cm{\cats}{NE}$ des simplexes de $NE$, en associant à un sous-ensemble totalement ordonné fini non vide $S$ de $E$ le $m$\nbd-simplexe de $NE$, $m=\card(S)-1$, défini par la suite strictement croissante des éléments de $S$.
\end{proof}

\begin{prop}\label{conditionc}
On suppose que le foncteur $i:\cats\to\nCat{n}$ satisfait aux deux conditions suivantes:
\begin{enumerate}
\item il respecte le segment $(\smp{1},\del{0},\del{1})$ \emph{(cf.~\ref{ordcriblant}),} autrement dit, $i(\smp{0})=\smp{0}$, \hbox{$i(\smp{1})=\smp{1}$} et $i(\del{\e})=\del{\e}$, \emph{$\e=0,1$;}
\item si $\smp{p}\to\smp{q}$ est une inclusion comme section commençante \emph{(resp.} finissante\emph{),} 
alors le $n$-foncteur $i(\smp{p})\to i(\smp{q})$ est un crible \emph{(resp.} un cocrible\emph{)}, et de plus, si $\chi:\smp{q}\to\smp{1}$ désigne l'application croissante définissant le crible \emph{(resp.} le cocrible\emph{)} $\smp{p}\to\smp{q}$ de $\ord$, alors le crible \emph{(resp.} le cocrible\emph{)} $i(\smp{p})\to i(\smp{q})$ de $\nCat{n}$ est défini par le $n$\nbd-foncteur $i^{}_!N(\chi)=i(\chi):i(\smp{q})\to i(\smp{1})$.
\end{enumerate}
Alors le foncteur $i^{}_!N$ transforme les cribles \emph{(resp.} les cocribles\emph{)} de $\ord$ en cribles \emph{(resp.} en cocribles\emph{)} de $\nCat{n}$.
\end{prop}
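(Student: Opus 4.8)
Le plan que je suivrais consiste à traiter d'abord le cas des cribles, le cas des cocribles s'obtenant de manière analogue (en échangeant les rôles de $\del{0}$ et $\del{1}$, de $\delet{0}$ et $\delet{1}$, et des sections commençantes et finissantes dans la condition~(\emph{b})). Soit donc $E'\to E$ un crible de $\ord$, défini par une application croissante $\chi:E\to\smp{1}$ avec $E'=\chi^{-1}(0)$. Grâce à la condition~(\emph{a}), on a $i^{}_!N\smp{1}=i(\smp{1})=\smp{1}$, de sorte que $\chi$ fournit un $n$\nbd-foncteur $\tilde\chi:=i^{}_!N(\chi):i^{}_!NE\to\smp{1}$. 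Je chercherais à démontrer que $i^{}_!N(E')\to i^{}_!N(E)$ est précisément le crible de $\nCat{n}$ défini par $\tilde\chi$, c'est-à-dire que $i^{}_!N(E')\simeq\tilde\chi^{-1}(0)$ de façon compatible aux morphismes vers $i^{}_!NE$.

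La première étape serait d'exprimer les deux membres comme limites inductives. Le lemme~\ref{limindnerf} donne $NE\simeq\limind_{S\in\xi E}NS$, et comme $i^{}_!$ commute aux petites limites inductives, on obtient $i^{}_!NE\simeq\limind_{S\in\xi E}i(S)$, en identifiant $i^{}_!NS$ à $i(S)$ via $S\simeq\smp{\card(S)-1}$. Comme le foncteur d'oubli $\cm{\nCat{n}}{\smp{1}}\to\nCat{n}$ crée les limites inductives et que la restriction de $\tilde\chi$ à chaque $i(S)$ vaut $i^{}_!N(\chi\circ\iota_S)=i(\chi|_S)$, où $\iota_S:S\hookrightarrow E$ est l'inclusion et $\chi|_S$ la restriction de $\chi$, cette limite inductive se relève dans $\cm{\nCat{n}}{\smp{1}}$ en $(i^{}_!NE,\tilde\chi)\simeq\limind_{S\in\xi E}(i(S),i(\chi|_S))$.

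L'étape clé serait ensuite d'appliquer le foncteur $\delet{0}$. D'après le lemme~\ref{deladj}, celui-ci admet un adjoint à droite, donc commute aux limites inductives, d'où
\[ \tilde\chi^{-1}(0)=\delet{0}(i^{}_!NE,\tilde\chi)\simeq\limind_{S\in\xi E}\delet{0}\bigl(i(S),i(\chi|_S)\bigr)\ . \]
Pour identifier chaque terme, j'observerais que $\chi|_S$ définit le crible $S\cap E'\hookrightarrow S$ de $\ord$, qui est une section commençante de $S$. Lorsque $S\cap E'\neq\varnothing$, la condition~(\emph{b}) montre que $i(\chi|_S)$ définit le crible $i(S\cap E')\to i(S)$, donc $\delet{0}(i(S),i(\chi|_S))\simeq i(S\cap E')$. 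Lorsque $S\cap E'=\varnothing$, l'application $\chi|_S$ est constante de valeur~$1$, donc $i(\chi|_S)$ se factorise par $\del{1}$ (en vertu de la condition~(\emph{a})), et puisque le segment $(\smp{1},\del{0},\del{1})$ est séparant et l'objet initial de $\nCat{n}$ strict, on a $\delet{0}(i(S),i(\chi|_S))\simeq\varnothing$. En convenant que $i(\varnothing)=\varnothing$, on aboutirait ainsi à $\tilde\chi^{-1}(0)\simeq\limind_{S\in\xi E}i(S\cap E')$.

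La difficulté principale résiderait dans la dernière étape : identifier $\limind_{S\in\xi E}i(S\cap E')$ à $\limind_{S'\in\xi E'}i(S')=i^{}_!N(E')$, de façon compatible aux morphismes vers $i^{}_!NE$. Le point délicat est la présence des termes indexés par les $S$ tels que $S\cap E'=\varnothing$, qui valent l'objet initial. J'utiliserais que le foncteur $S\mapsto S\cap E'$ est l'identité sur la sous-catégorie pleine $\xi E'\subseteq\xi E$, et que, pour $S\cap E'\neq\varnothing$, le morphisme de transition $i(S\cap E')\to i(S\cap E')$ associé à l'inclusion $S\cap E'\subseteq S$ est l'identité, afin de vérifier directement la propriété universelle de la limite inductive, les termes initiaux n'imposant aucune contrainte nouvelle. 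On conclurait que $i^{}_!N(E')\to i^{}_!N(E)$ est le crible défini par $\tilde\chi$, le cas des cocribles se traitant de manière analogue avec $\del{1}$, $\delet{1}$ et la partie « sections finissantes » de la condition~(\emph{b}).
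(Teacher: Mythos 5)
Your proof is correct, and its skeleton coincides with the paper's: both decompose $NE$ as $\limind_{S\in\xi E}NS$ (lemme~\ref{limindnerf}), identify each local term using conditions \emph{(a)} and \emph{(b)} --- with the case $S\cap E'=\varnothing$ handled exactly as in the paper, via the separating segment and the strict initial object --- and invoke lemme~\ref{deladj} to make $\delet{\e}$ commute with inductive limits. The one genuine divergence is how the identification of $i_!N(E')$ with the fibre of $\tilde\chi$ is completed. The paper works one step earlier, in $\simpl$: since inductive limits are universal (stable under base change) in a presheaf category, pulling the decomposition of $NE$ back along $NE'\to NE$ yields directly $NE'\simeq\limind_{S\in\xi E}N(S\cap E')$, indexed by the \emph{same} category $\xi E$; applying $i_!$ then makes any comparison of colimits over $\xi E$ and over $\xi E'$ a non-issue. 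You instead compute $\tilde\chi^{-1}(0)\simeq\limind_{S\in\xi E}i(S\cap E')$ entirely in $\nCat{n}$ and must then reindex this to $\limind_{S'\in\xi E'}i(S')$; this is the ``difficult\'e principale'' you flag, and it is real in your route but dissolved in the paper's. Your sketch does close it: the full subcategory of those $S$ with $S\cap E'\neq\varnothing$ is a cosieve in $\xi E$ (the complementary terms take the value $\varnothing$ and impose no condition on cocones), on that cosieve the functor $S\mapsto S\cap E'$ is right adjoint to the inclusion of $\xi E'$ (since $S'\subseteq S$ if and only if $S'\subseteq S\cap E'$ for $S'\in\xi E'$), hence final, and --- as you observe --- the diagram sends the unit $S\cap E'\subseteq S$ to an identity, so the direct verification of the universal property goes through, including the compatibility of the resulting isomorphism over $i_!NE$. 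In short, your route spends a cofinality argument where the paper spends the universality of colimits in $\simpl$; both are complete, and the paper's trick is worth remembering precisely because it eliminates the reindexing step.
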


\begin{proof}
Soit donc $E'\to E$ un crible (resp. un cocrible) de $\ord$, qu'on peut supposer être une inclusion pour simplifier, défini par une flèche $\chi:E\to\smp{1}$, de sorte que l'on a un carré cartésien d'ensembles ordonnés
$$
\xymatrix{
&E'\ar[r]\ar[d]
&\smp{0}\ar[d]^{\del{\e}}
\\
&E\ar[r]_{\chi}
&\smp{1}
&\kern -15pt,\kern15pt
}
$$
où $\e=0$ (resp. $\e=1$). Pour tout $S\in\xi E$, on en déduit un carré cartésien
$$
\xymatrix{
&\,\,S\cap E'\ar[r]\ar[d]
&\smp{0}\ar[d]^{\del{\e}}
\\
&S\ar[r]_{\chi^{}_S}
&\smp{1}
&\kern -15pt,\kern15pt
}
$$
où $\chi^{}_S$ désigne la restriction de $\chi$ à $S$. Comme le foncteur $N$ commute aux limites projectives, on en déduit des carrés cartésiens de~$\simpl$
$$
\xymatrix{
&NE'\ar[r]\ar[d]
&\smp{0}\ar[d]^{\del{\e}}
&&\,\,N(S\cap E')\ar[r]\ar[d]
&\smp{0}\ar[d]^{\del{\e}}
\\
&NE\ar[r]_{N(\chi)}
&\smp{1}
&\kern -15pt,\kern15pt
&NS\ar[r]_{N(\chi^{}_S)}
&\smp{1}
&\kern -15pt.\kern15pt
}
$$
En vertu du lemme~\ref{limindnerf}, et du fait que les limites inductives sont universelles dans $\simpl$, la flèche $NE'\to NE$ s'identifie donc à la limite inductive 
$$\limind_{S\in\xi E}N(S\cap E')\toto\limind_{S\in\xi E}NS$$
des flèches $N(S\cap E')\to NS$. Comme le foncteur $i^{}_!$ commute aux limites inductives, on en déduit que le $n$\nbd-foncteur $i^{}_!NE'\to i^{}_!NE$ s'identifie à la limite inductive
$$\limind_{S\in\xi E}i^{}_!N(S\cap E')\toto\limind_{S\in\xi E}i^{}_!NS$$
des flèches $i^{}_!N(S\cap E')\to i^{}_!NS$. Or, le carré
$$
\xymatrix{
\,\,i^{}_!N(S\cap E')\ar[r]\ar[d]
&\smp{0}\ar[d]^{\del{\e}}
\\
i^{}_!NS\ar[r]_(.54){i^{}_!N(\chi^{}_S)}
&\smp{1}
}
$$
est cartésien. En effet, si $S\cap E'\neq\varnothing$, l'inclusion $S\cap E'\to S$ s'identifie à 
une inclusion de la forme $\smp{p}\to\smp{q}$, section commençante (resp. finissante), 
et l'assertion résulte de la condition (\emph{b}). Si $S\cap E'=\varnothing$, on a aussi $i^{}_!N(S\cap E')=\varnothing$,  et $\chi^{}_S$ se factorise par $\del{\e'}$, $\e'=1-\e$, donc grâce à la condition (\emph{a}), $i^{}_!N(\chi^{}_S)$ aussi, ce qui implique l'assertion. Comme en vertu du lemme~\ref{deladj} le foncteur $\delet{\e}:\cm{\nCat{n}}{\smp{1}}\to\cm{\nCat{n}}{\smp{0}}$ admet un adjoint à droite, on en déduit un carré cartésien
$$
\xymatrixrowsep{2.8pc}
\xymatrixrowsep{2.5pc}
\xymatrix{
&i^{}_!NE'\simeq\smash{\limind\limits_{S\in\xi E}}i^{}_!N(S\cap E')\ar[r]\ar@<2.3ex>[d]
&\smp{0}\ar[d]^{\del{\e}}
\\
&i^{}_!NE\simeq\limind\limits_{S\in\xi E}i^{}_!NS\ar[r]_(.68){i^{}_!N(\chi^{}_S)}
&\smp{1}
&\kern-15pt,\kern15pt
}
$$
ce qui achève la démonstration. 
\end{proof}

\begin{paragr}\label{stablecomp}
Soit $C$ une $n$\nbd-catégorie. Un \emph{ensemble multiplicatif} de cellules de $C$ est un ensemble $M$ de cellules de $C$ satisfaisant aux deux conditions suivantes:
\begin{enumerate}[label=(\emph{\alph*})]
\item pour tout $i$, $0\leq i<n$, si $f$ est une $i$\nbd-cellule de $C$ appartenant à $M$, la \hbox{$(i+1)$}\nbd-flèche identité de $f$ appartient aussi à $M$;
\item pour tous $i,j$, $0\leq j<i\leq n$, si $g$ et $h$ sont deux $i$\nbd-flèches de $C$ appartenant à~$M$ telles que la $j$\nbd-cellule source itérée de $g$ soit égale à la $j$\nbd-cellule but itéré de~$h$, alors la $i$\nbd-flèche composée $g*_jh$ appartient à $M$.
\end{enumerate}
On dit qu'un ensemble $S$ de cellules de $C$ \emph{engendre par compositions} la $n$\nbd-catégorie~$C$ si l'ensemble de toutes les cellules de $C$ est le plus petit ensemble multiplicatif de cellules de $C$ contenant $S$. Dans ce cas, $S$ contient forcément l'ensemble des objets de~$C$, et pour tout $i>0$, toute $i$\nbd-flèche de $C$ est un composé d'un nombre fini de $i$\nbd-flèches, qui sont dans $S$ ou sont des unités itérées de $j$\nbd-cellules, $0\leq j<i$, appartenant à $S$.
\end{paragr}

\begin{lemme}\label{criblestablecomp}
Soient $C$ une $n$\nbd-catégorie, $S$ un ensemble de cellules de $C$ qui engendre la $n$\nbd-catégorie $C$ par compositions, et $C'$ une sous\nbd-$n$\nbd-catégorie de $C$. Si toute cellule de $C$ appartenant à $S$ dont l'objet but itéré \emph{(resp.} source itérée\emph{)} est dans $C'$ est elle-même dans $C'$, alors l'inclusion $C'\to C$ est un crible \emph{(resp.} un cocrible\emph{)}.
\end{lemme}

\begin{proof}
Pour toute $i$\nbd-flèche $f$ de $C$, $1\leq i\leq n$, on note $l(f)$ le nombre minimal de compositions nécessaires pour exprimer $f$ comme composé de $i$\nbd-flèches qui sont dans $S$ ou sont des unités itérées de cellules appartenant à $S$. On va montrer, par récurrence sur $l(f)$, que si l'objet but itéré (resp. source itérée) de $f$ est dans $C'$, alors $f$ est elle-même dans $C'$, ce qui prouvera le lemme en vertu de la proposition~\ref{carcriblesncat}. Si $l(f)=0$, alors $f$ appartient à $S$, ou est une unité itérée d'une cellule appartenant à~$S$, cela est donc vrai par hypothèse. Supposons la propriété vérifiée si $l(f)<k$, pour un $k>0$, et montrons-la si $l(f)=k$. Un tel $f$ se décompose en \smash{$g*_jh$}, $0\leq j<i$, avec $l(g),l(h)<k$. On distingue deux cas. Si $j=0$, l'objet but itéré (resp. source itérée) de $f$ est égal à celui de $g$ (resp.~de~$h$). Par hypothèse de récurrence, $g$ (resp. $h$) est dans $C'$, donc aussi son objet source itérée (resp. but itéré), qui n'est autre que l'objet but itéré de $h$ (resp. source itérée de $g$). On en déduit, toujours par hypothèse de récurrence, que $h$ (resp. $g$) est également dans $C'$, et par stabilité de $C'$ par composition, $f$ aussi. Si $j>0$, alors l'objet but itéré (resp. source itérée) de~$f$ est égal à celui commun de~$g$ et $h$, qui sont donc par hypothèse de récurrence dans $C'$, et on conclut à nouveau par stabilité par composition, ce qui achève la démonstration.
\end{proof}

\begin{paragr}\label{tronq}
On rappelle que pour tous $m,n$, $1\leq m<n\leq\infty$, l'inclusion pleine
$\nCat{m}\to\nCat{n}$ (définie en considérant une $m$\nbd-catégorie comme
une $n$\nbd-catégorie dont toutes les $i$\nbd-flèches, pour $i>m$, sont des
identités) admet à la fois un adjoint à gauche et un adjoint à droite.
L'adjoint à droite associe à toute petite $n$\nbd-catégorie $C$, le
$m$\nbd-\emph{tronqué bête} de $C$ dont les $i$\nbd-cellules, $0\leq i\leq
m$, sont les $i$\nbd-cellules de $C$, les sources, buts, unités et
compositions venant de celles dans $C$.
L'adjoint à gauche associe à $C$ le
$m$\nbd-\emph{tronqué intelligent} de $C$ dont les $i$\nbd-cellules, $0\leq
i< m$, sont les $i$\nbd-cellules de $C$ et les $m$\nbd-flèches sont les
$m$\nbd-flèches de $C$, modulo la relation d'équivalence identifiant deux
$m$\nbd-flèches de $C$ s'il existe un zigzag de $(m+1)$\nbd-flèches de $C$
les reliant. Les sources, buts, unités et compositions sont induites par
celles de $C$.
\end{paragr}

\begin{paragr}\label{nerfdeStreet}
Street a introduit dans~\cite{S1} un foncteur $i_\infty:\cats\to\nCat{\infty}$ (voir aussi~\cite{S2,S2cor,Steiner,SteinerOr}), permettant par le procédé de Kan de définir un couple de foncteurs adjoints
$$c_\infty:\simpl\to\nCat{\infty}\ ,\qquad N_{\infty}:\nCat{\infty}\to\simpl\ ,$$
l'adjoint à droite $N_{\infty}$ étant connu sous le nom de \emph{nerf de Street}. Le foncteur $i_\infty$ \hbox{associe} à l'objet $\smp{m}$ de $\cats$ une $m$\nbd-catégorie $\Ornt{m}$ (considérée comme $\infty$\nbd-catégorie), appelée \emph{l'oriental de Street de dimension $m$}. 
\tb
Les objets de $\Ornt{m}$ sont les mêmes que ceux de la catégorie $\smp{m}$,
autrement dit, les entiers $0,1,\dots,m$. Pour $0<i\leq m$, les
$i$\nbd-flèches \emph{indécomposables} (c'est-à-dire les $i$-flèches qui ne peuvent pas s'écrire de façon non triviale comme composées de deux $i$\nbd-flèches) sont les parties à $i+1$ éléments de l'ensemble des objets de $\Ornt{m}$. Ces parties s'identifient aux suites strictement croissantes $x_0<x_1<\cdots<x_i$ d'entiers entre $0$ et~$m$, autrement dit aux applications strictement croissantes $x:\smp{i}\to\smp{m}$, $i$\nbd-simplexes non dégénérés de $\smp{m}$ (identifié à un ensemble simplicial par le plongement de \hbox{Yoneda}). 
Le but (resp. la source) d'une telle $i$\nbd-flèche est un composé des
\hbox{$(i-1)$}\nbd-cellules correspondant aux applications strictement
croissantes $x\face{k}{i}$ (\emph{cf.}~\ref{enssimpl}), $0\leq k\leq i$, $k$
pair (resp. impair), et d'unités itérées de $j$\nbd-cellules, $j<i-1$. Pour
$i$ grand, les formules définissant ces composés sont très compliquées, mais,
dans cet article, nous aurons seulement besoin de les décrire en petite
dimension. Si $0\leq x_0<x_1\leq m$, alors $\{x_0,x_1\}$ est une
$1$\nbd-flèche de $\Ornt{m}$ de source $x_0$ et but $x_1$. Si $0\leq
x_0<x_1<x_2\leq m$, alors $\{x_0,x_1,x_2\}$ est une $2$\nbd-flèche de
$\Ornt{m}$ de source $\{x_0,x_2\}$ et but $\{x_1,x_2\}*_0\{x_0,x_1\}$.
\[
\shorthandoff{;}
\xymatrix{
&x_1\ar[rd]^{\{x_1,x_2\}} \\
x_0\ar[ru]^{\{x_0,x_1\}} \ar[rr]_{\{x_0,x_2\}}
&
\ar@{}[u]^(.10){}="a"^(.75){}="b"
\ar@{=>}"a";"b"|(.45){\scriptstyle\{\!x_0,x_1,x_2\!\}}
&x_2
}
\]
Si $0\leq x_0<x_1<x_2<x_3\leq m$, alors $\{x_0,x_1,x_2,x_3\}$ est une $3$\nbd-flèche de $\Ornt{m}$ de source $(1_{\{x_2,x_3\}}*_0\{x_0,x_1,x_2\})*_1\{x_0,x_2,x_3\}$ et but $(\{x_1,x_2,x_3\}*_01_{\{x_0,x_1\}})*_1\{x_0,x_1,x_3\}$. 
Si $0\leq x_0<x_1<\cdots<x_i\leq m$, alors l'objet source itérée (resp. but itéré) de la $i$\nbd-flèche $\{x_0,x_1,\dots,x_i\}$ de $\Ornt{m}$ est $x_0$ (resp. $x_i$).
\tb

Un \emph{atome} de la $m$\nbd-catégorie $\Ornt{m}$ est un objet ou une $i$\nbd-flèche indécomposable de~$\Ornt{m}$, $1\leq i\leq m$. Ainsi, les atomes de $\Ornt{m}$ sont en bijection avec les parties non vides de l'ensemble $\{0,1,\dots,m\}$, ou les simplexes non dégénérés de $\smp{m}$. L'ensemble des atomes engendre par compositions la $m$\nbd-catégorie $\Ornt{m}$~\cite[théorème~3.15]{S1} (et l'engendre même librement~\cite[théorème~4.1]{S1}). En particulier, les $m$\nbd-catégories $\Ornt{m}$ sont finies, $\Ornt{0}$ est un singleton s'identifiant à $\smp{0}$, $\Ornt{1}$ s'identifie à $\smp{1}$ et $\Ornt{2}$ est la $2$\nbd-catégorie 
\[
\shorthandoff{;}
\xymatrix{
&1\ar[rd]^{\{1,2\}} \\
0\ar[ru]^{\{0,1\}} \ar[rr]_{\{0,2\}}
&
\ar@{}[u]^(.10){}="a"^(.75){}="b"
\ar@{=>}"a";"b"|(.45){\hbox{\vphantom{0}$\scriptstyle\{\!0,1,2\!\}$}}
&2 \pbox{.}
}
\]

Soit $\varphi:\smp{p}\to\smp{q}$ un morphisme de $\cats$. Le $\infty$\nbd-foncteur $i_\infty(\varphi):\Ornt{p}\to\Ornt{q}$ associe à un objet $k$, $0\leq k\leq p$, de $\Ornt{p}$ l'objet $\varphi(k)$ de $\Ornt{q}$ et à une $i$\nbd-flèche indécomposable $E\subset\{0,1,\dots,p\}$, $i=\card(E)-1$, de $\Ornt{p}$ la $i$\nbd-flèche unité itérée de la $j$\nbd-cellule $\varphi(E)$ de~$\Ornt{q}$ ($j=\card(\varphi(E))-1$). En particulier, si $\varphi$ est un monomorphisme, le $\infty$\nbd-foncteur $i_\infty(\varphi)$ associe à un atome de $\Ornt{p}$ correspondant à une partie $E$ de $\{0,1,\dots,p\}$, l'atome de $\Ornt{q}$ correspondant à la partie $\varphi(E)$ de $\{0,1,\dots,q\}$. 
\end{paragr}

\begin{paragr}\label{trnerfdeStreet}
Pour $n\geq1$, on note $N_n:\nCat{n}\to\simpl$ la restriction du nerf de
Street à la sous-catégorie pleine $\nCat{n}$ de $\nCat{\infty}$. Ce foncteur
admet un adjoint à gauche \hbox{$c_n:\simpl\to\nCat{n}$}, égal au composé du
foncteur $c_\infty:\simpl\to\nCat{\infty}$ avec le foncteur de
$n$\nbd-troncation intelligente
$\nCat{\infty}\to\nCat{n}$~(\emph{cf.}~\ref{tronq}). Le couple de foncteurs
adjoints $(c_n,N_n)$ s'obtient également par le procédé de Kan à partir du
foncteur \hbox{$i_n:\cats\to\nCat{n}$}, composé du foncteur
$i_\infty:\cats\to\nCat{\infty}$ avec le foncteur de $n$\nbd-troncation
intelligente. Ainsi, le foncteur $i_n$ associe à $\smp{m}$ le
$n$\nbd-tronqué intelligent~$\trOrnt{m}{n}$ de l'oriental de
Street~$\Ornt{m}$. Pour tout $m\geq0$, l'ensemble des objets de
$\trOrnt{m}{n}$ est égal à $\{0,1,\dots,m\}$ et, pour $n=1$, la catégorie
$\trOrnt{m}{1}$ s'identifie à $\smp{m}$ (de sorte que $N_1$ est le foncteur
nerf habituel $N:\cat\to\simpl$ et $c_1$ son adjoint à gauche $c$ (\emph{cf.}~\ref{nerfdeAG})).
\end{paragr}

\begin{paragr}\label{eqfThom}
Soit $n$, $1\leq n\leq\infty$. On dit qu'une flèche de $\nCat{n}$,
$n$\nbd-foncteur strict entre deux petites $n$\nbd-catégories strictes, est
une \emph{équivalence faible de Thomason} si son image par le nerf de Street
(ou ce qui revient au même par $N_n$ ou par $\Ex^2N_n$~(\emph{cf.}~\ref{Kan})) est une équivalence faible d'ensembles simpliciaux. On dit qu'elle est une \emph{fibration de Thomason} si son image par $\Ex^2N_n$ est une fibration de Kan. Si $n=1$, on retrouve ainsi les équivalences faibles et les fibrations de la structure de catégorie de modèles sur $\cat$, introduite par Thomason dans~\cite{Th}.
\end{paragr}

\begin{lemme}\label{orientauxcribles}
Soient $n\geq1$, et $\smp{p}\to\smp{q}$ une inclusion comme section commençante \emph{(resp.} finissante\emph{)}. Alors le $n$-foncteur $\trOrnt{p}{n}\to\trOrnt{q}{n}$ est un crible \emph{(resp.} un cocrible\emph{)}. De plus, si $\chi:\smp{q}\to\smp{1}$ désigne l'application croissante définissant le crible \emph{(resp.} le cocrible\emph{)} $\smp{p}\to\smp{q}$, alors le crible \emph{(resp.} le cocrible\emph{)} $\trOrnt{p}{n}\to\trOrnt{q}{n}$ est défini par le $n$\nbd-foncteur $c_nN(\chi)$.
\end{lemme}

\begin{proof}
Supposons par exemple que $\smp{p}\to\smp{q}$ est une inclusion comme section commençante, et montrons que $\trOrnt{p}{n}\to\trOrnt{q}{n}$ est un crible.
La $n$\nbd-catégorie $\trOrnt{q}{n}$ est engendrée par compositions par l'ensemble des cellules de $\trOrnt{q}{n}$ correspondant aux parties non vides $E$ de $\{0,1,\dots,q\}$. L'objet but itéré d'une telle cellule est le plus grand élément de $E$. Si ce dernier est dans $\trOrnt{p}{n}$, on a donc $E\subset\{0,1,\dots,p\}$ et la cellule correspondante est donc dans $\trOrnt{p}{n}$. On conclut par le lemme~\ref{criblestablecomp}. La dernière assertion résulte facilement de ce qui précède. Le cas d'une section finissante se démontre de façon analogue, ou s'en déduit par un argument de dualité.
\end{proof}

\begin{prop}\label{prop:cN_abc}
Pour tout $n\geq1$, le couple de foncteurs adjoints $(c_n,N_n)$ satisfait
aux conditions \emph{(a), (b)} et \emph{(c)} du
théorème~\emph{{\ref{Thomabs}}}, autrement dit:
\begin{enumerate}
\item le foncteur $N_n$ commute aux petites limites inductives filtrantes;
\item le foncteur $N_n$ transforme les carrés cocartésiens de $\nCat{n}$ formés de cocribles, en carrés cocartésiens de $\simpl$;
\item le foncteur $c_nN$ transforme les cribles \emph{(resp.} les cocribles\emph{)} de $\ord$ en cribles \emph{(resp.} en cocribles\emph{)} de $\nCat{n}$.
\end{enumerate}
\end{prop}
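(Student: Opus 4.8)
Le plan est d'appliquer au foncteur $i = i_n$ les trois propositions~\ref{conditiona}, \ref{conditionb} et~\ref{conditionc}, dont les conclusions sont exactement les conditions~(a), (b) et~(c) du th\'eor\`eme~\ref{Thomabs}. En effet, rappelons que le couple $(c_n,N_n)$ n'est autre que le couple adjoint $(i^{}_!,i^*)$ associ\'e par le proc\'ed\'e de Kan \`a $i = i_n$ (\emph{cf.}~\ref{trnerfdeStreet}), de sorte qu'il ne reste qu'\`a v\'erifier les hypoth\`eses de ces trois propositions, ce qui se fait \`a l'aide des propri\'et\'es des orientaux rassembl\'ees en~\ref{nerfdeStreet} et~\ref{trnerfdeStreet}.

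Pour la condition~(a), d'apr\`es la proposition~\ref{conditiona}, il suffit que chaque $i_n(\smp{m}) = \trOrnt{m}{n}$ soit de pr\'esentation finie; or ces $n$\nbd-cat\'egories sont finies, puisque l'oriental $\Ornt{m}$ l'est (\emph{cf.}~\ref{nerfdeStreet}) et que son $n$\nbd-tronqu\'e intelligent, \'etant un quotient, le reste. Pour la condition~(b), on applique la proposition~\ref{conditionb} en prenant pour $x^{}_m$ l'objet $0$ de $\trOrnt{m}{n}$: pour tout objet $x$, la $1$\nbd-fl\`eche $\{0,x\}$ (ou l'identit\'e de $0$ lorsque $x = 0$) relie $0$ \`a $x$, comme rappel\'e en~\ref{nerfdeStreet}.

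Pour la condition~(c), on applique la proposition~\ref{conditionc} au foncteur $i_n$, dont la conclusion affirme que $i^{}_!N = c_nN$ transforme les cribles (resp. les cocribles) de $\ord$ en cribles (resp. en cocribles) de $\nCat{n}$. Les deux hypoth\`eses de cette proposition sont satisfaites. L'hypoth\`ese~(a), le respect du segment $(\smp{1},\del{0},\del{1})$, r\'esulte des identifications $\trOrnt{0}{n} = \smp{0}$ et $\trOrnt{1}{n} = \smp{1}$ (\emph{cf.}~\ref{trnerfdeStreet}). L'hypoth\`ese~(b) est exactement le contenu du lemme~\ref{orientauxcribles}: pour une inclusion $\smp{p}\to\smp{q}$ comme section commen\c{c}ante (resp. finissante) d\'efinie par $\chi$, ce lemme assure que $\trOrnt{p}{n}\to\trOrnt{q}{n}$ est un crible (resp. un cocrible) d\'efini par $c_nN(\chi)$, c'est-\`a-dire par $i^{}_!N(\chi) = i_n(\chi)$, ce qui est bien la forme requise.

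L'essentiel du travail ayant \'et\'e report\'e en amont, cette proposition ne pr\'esente pas de difficult\'e propre: elle se borne \`a combiner les propositions~\ref{conditiona}, \ref{conditionb} et~\ref{conditionc}. Le seul point r\'eellement substantiel, \`a savoir la description explicite des cribles et cocribles entre orientaux tronqu\'es qui sous-tend la condition~(c), a d\'ej\`a \'et\'e \'etabli dans le lemme~\ref{orientauxcribles}.
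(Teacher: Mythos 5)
Votre preuve est correcte et suit essentiellement la m\^eme voie que celle de l'article : on y combine exactement de la m\^eme fa\c{c}on les propositions~\ref{conditiona}, \ref{conditionb} et~\ref{conditionc} (avec $x^{}_m=0$ et la $1$\nbd-fl\`eche $\{0,x\}$ pour le point~(b), et le lemme~\ref{orientauxcribles} pour le point~(c)). Les quelques justifications suppl\'ementaires que vous donnez (finitude de $\trOrnt{m}{n}$ comme quotient de $\Ornt{m}$, respect du segment via $\trOrnt{0}{n}=\smp{0}$ et $\trOrnt{1}{n}=\smp{1}$) sont des explicitations correctes de points que l'article laisse implicites.
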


\begin{proof}
Comme pour tout $m\geq0$, la $m$\nbd-catégorie $i_n(\smp{m})=\trOrnt{m}{n}$ est finie, l'assertion~(\emph{a}) résulte de la proposition~\ref{conditiona}. Pour tout objet $i$, $0\leq i\leq m$, de $\trOrnt{m}{n}$, il existe une $1$\nbd-flèche de $\trOrnt{m}{n}$ de source $0$ et but $i$. L'assertion~(\emph{b}) résulte donc de la proposition~\ref{conditionb}. Vu que le foncteur $i_n$ respecte le segment $(\smp{1},\del{0},\del{1})$, l'assertion~(\emph{c}) résulte du lemme précédent et de la proposition~\ref{conditionc}.
\end{proof}

\begin{sch}\label{remnThom}
En vertu de la proposition précédente, du théorème~\ref{Thomabs} et de la remarque~\ref{remThomabs}, pour montrer l'existence d'une structure de catégorie de modèles à la Thomason sur $\nCat{n}$, autrement dit, d'une catégorie de modèles combinatoire propre ayant comme équivalences faibles (resp. comme fibrations) les équivalences faibles de Thomason (resp. les fibrations de Thomason), il suffit de montrer les deux propriétés suivantes:
\begin{enumerate}
\item[(d${}'$)] \emph{si $E'\to E$ est un crible de $\ord$ admettant une rétraction qui est aussi un adjoint à droite, son image par le foncteur $c_nN$ est une équivalence faible de Thomason et le reste après tout cochangement de base;}
\item[(e)] \emph{pour tout ensemble ordonné $E$, le morphisme d'adjonction $N(E)\to N_nc_nN(E)$ est une équivalence faible d'ensembles simpliciaux.}
\end{enumerate}
De plus, $(c_n\Sd^2,\Ex^2N_n)$ sera alors une adjonction de Quillen
(\emph{cf.}~remarque~\ref{remThomabseq}).
\tb

Si $n=1$, comme le foncteur nerf $N=N_1$ est pleinement fidèle, le morphisme
d'adjonction $cN\to1_\cat$ est un isomorphisme. La propriété (\emph{d}${}'$)
affirme alors simplement qu'un crible $E'\to E$ de $\ord$ admettant une
rétraction qui est aussi un adjoint à droite est une équivalence faible de
Thomason dans $\cat$, et le reste après tout cochangement de base. Cela
résulte du fait qu'un tel crible est un rétracte par déformation fort, du
fait que les rétractes par déformation fort sont stables par images
directes~\cite[proposition~5.1.8]{Ci}, et du fait que les rétractes par
déformation sont des équivalences faibles de Thomason (puisque le nerf,
commutant aux produits, transforme un rétracte par déformation de $\cat$ en
un rétracte par déformation de $\simpl$).
Par ailleurs, puisque $N$ est un foncteur pleinement fidèle, le morphisme
d'adjonction $cN(E)\to E$ est un isomorphisme, et comme le composé $N(E) \to
NcN(E) \to N(E)$ est l'identité, le morphisme d'adjonction $N(E) \to
Nc(N(E))$ est aussi un isomorphisme. La propriété~(\emph{e}) est donc
immédiate.
\tb

Les deux sections suivantes seront consacrées à la preuve de ces deux
propriétés pour $n=2$. Dans ce cas, contrairement à ce qui est affirmé
dans~\cite{Wor}, le morphisme d'adjonction $N(E)\to N_2c_2(N(E))$ n'est plus
un isomorphisme (même si $E$ est l'ensemble ordonné $\smp{2}$
(\emph{cf.}~scholie~\ref{trivfaux})). Ainsi, la preuve de la propriété
(\emph{e}) est non triviale. Néanmoins, le point le plus délicat est la
démonstration de la condition~(\emph{d}${}'$). La difficulté vient du fait
que comme le foncteur $c_2$ ne commute pas aux produits binaires, l'image
par $c_2N$ d'un crible de $\ord$ admettant une rétraction qui est aussi un
adjoint à droite n'est pas en général un rétracte par déformation fort.
Comme il a déjà été observé dans~\cite{Wor}, la notion de rétracte par
déformation fort doit pour cela être assouplie en exigeant seulement de
l'homotopie intervenant dans la définition qu'elle soit un $2$-foncteur
oplax normalisée (lax normalisé dans~\cite{Wor}, où la convention opposée
est adoptée pour le sens des $2$\nbd-flèches des $2$\nbd-tronqués
$\trOrnt{m}{2}$ des orientaux de Street), par opposition à un $2$-foncteur
strict. En revanche, contrairement à ce qui est affirmé dans~\cite{Wor}, les
cribles de $2$\nbd-catégories rétractes par déformation forts en ce sens
généralisé ne semblent pas être stables par images directes. Du moins, la
preuve exposée dans~\cite{Wor} est incorrecte
(\emph{cf.}~scholie~\ref{sch:rdf_wor}). Pour contourner cette difficulté,
nous introduirons (\emph{cf.}~définition \ref{def:rdf}) une condition
supplémentaire sur un tel crible de $2$\nbd-catégories rétracte (prenant la
forme d'une compatibilité entre le crible et les contraintes de
fonctorialité de l'homotopie) et nous montrerons que cette nouvelle notion
est stable par images directes. On conclura en prouvant que l'image par
$c_2N$ d'un crible de $\ord$ admettant une rétraction qui est aussi un
adjoint à droite est un exemple d'un tel crible rétracte.
\tb

Pour $n\geq3$, les propriétés (\emph{e}) et (\emph{d}${}'$) ne sont pas
encore établies. On vient d'observer que dans le cas $n=2$, la notion de
$2$\nbd-foncteur oplax normalisé joue un rôle crucial. Or, le concept de
$n$\nbd-foncteur lax ou oplax normalisé (ou pas) n'a pas encore été
exploré. Ce sera l'objet d'un article en préparation~\cite{DG1}. De même,
pour prouver la condition~(\emph{e}), on aura besoin de montrer que dans
$\nCat{n}$, les équivalences faibles définies à l'aide du nerf de Street
coïncident avec celles définies par le nerf $n$\nbd-simplicial, généralisant
ainsi un résultat obtenu, pour $n=2$, par M.~Bullejos et
A.~M.~Cegarra~\cite{BCeg} et, pour~$n=3$, par A.~M.~Cegarra et
B.~A.~Heredia~\cite{HCeg}. Ce sera l'objet de~\cite{DG2}.
\end{sch}

\section{Le théorème de Thomason $2$-catégorique}\label{sec:2-cat}

Le but de cette section est de démontrer les conditions~($\text{\emph{d}}'$)
et ($\emph{e}$) du scholie~\ref{remnThom} pour $n = 2$ (modulo des questions
$2$\nbd-catégoriques qui seront traitées dans la section suivante), ce qui
achèvera la preuve du théorème de Thomason $2$-catégorique.

\begin{paragr}\label{or2tronq}
Les $2$\nbd-tronqués des orientaux de Street admettent une description
particulièrement simple. Pour tout entier $m\geq0$, la $2$\nbd-catégorie
$\trOrnt{m}{2}$ s'identifie à la $2$\nbd-catégorie dont les objets sont les
entiers $0,1,\dots,m$ et dont la catégorie des flèches de $k$ vers~$l$,
$0\leq k,\,l\leq m$, est l'ensemble ordonné par inclusion des parties $S$ de
$\{0,1,\dots,m\}$ telles que $\min(S)=k$ et $\max(S)=l$. En particulier,
cette catégorie est vide si~$l<k$ et est la catégorie ponctuelle si $k=l$.
Ainsi, l'ensemble des $1$\nbd-flèches de $\trOrnt{m}{2}$ s'identifie à
l'ensemble des parties non vides de $\{0,1,\dots,m\}$, la composition
correspondant à la réunion. Il faut se garder de confondre cette bijection
entre l'ensemble de \emph{toutes} les $1$\nbd-flèches de $\trOrnt{m}{2}$
(d'ailleurs égal à l'ensemble des $1$\nbd-flèches de $\Ornt{m}$) et
l'ensemble des parties non vides de $\{0,1,\dots,m\}$, et la bijection entre
ce dernier ensemble et l'ensemble des atomes de $\Ornt{m}$, formé des objets
et des $i$\nbd-flèches \emph{indécomposables}, $1\leq i\leq m$,
de~$\Ornt{m}$ (\emph{cf.}~\ref{nerfdeStreet}).
\tb

Soit $\varphi:\smp{p}\to\smp{q}$ un morphisme de $\cats$. Le $2$\nbd-foncteur $i_2(\varphi):\trOrnt{p}{2}\to\trOrnt{q}{2}$ associe à un objet $k$, $0\leq k\leq p$, de $\trOrnt{p}{2}$ l'objet $\varphi(k)$ de $\trOrnt{q}{2}$ et à une $1$\nbd-flèche $S\subset\{0,1,\dots,p\}$ de $\trOrnt{p}{2}$ la $1$\nbd-flèche $\varphi(S)$ de $\trOrnt{q}{2}$.
\end{paragr}

\begin{paragr}\label{orntg}
Plus généralement, si $E$ est un ensemble ordonné, on note $\Orntg{E}$ la $2$\nbd-catégorie définie comme suit. Les objets de $\Orntg{E}$ sont les éléments de $E$. Les $1$\nbd-flèches sont les parties finies, totalement ordonnées, non vides de $E$, autrement dit, les éléments de~$\xi(E)$ (\emph{cf.}~\ref{xi}). Si $S\in\xi(E)$ est une $1$\nbd-flèche de $\Orntg{E}$, la source (resp. le but) de~$S$ est le minimum (resp. le maximum) de $S$. Si $S$ et $S'$ sont deux $1$\nbd-flèches composables de $\Orntg{E}$, leur composé est égal à la réunion de $S$ et $S'$ (qui est encore une partie finie, totalement ordonnée, non vide de $E$). Si $S$ et $S'$ sont deux $1$\nbd-flèches parallèles de $\Orntg{E}$ (c'est-à-dire telles que $\min(S)=\min(S')$ et $\max(S)=\max(S')$), l'ensemble des $2$\nbd-flèches de $\Orntg{E}$, de source $S$ et but $S'$, est un singleton si $S\subset S'$ et est vide sinon. Autrement dit, pour tout couple $x,y\in E$ d'objets de $\Orntg{E}$, la catégorie $\sHom_{\Orntg{E}}(x,y)$ est l'ensemble ordonné par inclusion des partie finies, totalement ordonnées, $S$ de $E$ telles que $\min(S)=x$ et $\max(S)=y$. On remarque que pour tout entier $m\geq0$, on a $\trOrnt{m}{2}=\Orntg{\smp{m}}$.
\tb

Soit $\varphi:E\to E'$ une application croissante entre ensembles ordonnés. On définit un $2$\nbd-foncteur $\Orntg{\varphi}:\Orntg{E}\to\Orntg{E'}$ en associant à tout objet $x\in E$ de $\Orntg{E}$, l'objet $\varphi(x)\in E'$ de $\Orntg{E'}$ et à toute $1$\nbd-flèche $S\in\xi(E)$ de $\Orntg{E}$, la $1$\nbd-flèche $\varphi(S)\in\xi(E')$ de~$\Orntg{E'}$ (en remarquant que pour tout couple $x,y$ d'objets de $\Orntg{E}$, on obtient ainsi une application croissante $\sHom_{\Orntg{E}}(x,y)\to\sHom_{\Orntg{E'}}(\varphi(x),\varphi(y))$). On remarque que si $E=\smp{p}$ et $E'=\smp{q}$, alors on a $\Orntg{\varphi}=i_2(\varphi)$. On définit ainsi un foncteur $\Orntgfonct:\ord\to\nCat{2}$, qui prolonge le foncteur $i_2$.
\end{paragr}

\begin{lemme}\label{limindorntg}
Soit $E$ un ensemble ordonné. Alors le morphisme canonique
\[ \limind_{S\in\xi E}\Orntg{S}\to \Orntg{E}\ , \]
induit par les inclusions $S\to E$, est un isomorphisme de $2$\nbd-catégories.
\end{lemme}

\begin{proof}
Pour tout $S\in\xi(E)$, on note $\e^{}_S$ l'inclusion $S\to E$ et pour tous $S,S'\in\xi(E)$, $S'\subset S$, on note $\e^{}_{S,S'}$ l'inclusion $S'\to S$. Il s'agit de montrer que pour toute $2$\nbd-catégorie $C$, et toute famille de $2$\nbd-foncteurs $u^{}_S:\Orntg{S}\to C$, $S\in\xi(E)$, telle que pour tout couple $S,S'\in\xi(E)$, $S'\subset S$, on ait $u^{}_{S'}=u^{}_S\,\Orntg{\e^{}_{S,S'}}$, il existe un unique $2$\nbd-foncteur $u:\Orntg{E}\to C$ tel que pour tout $S\in\xi(E)$, on ait 
\begin{equation}\label{limindorntg1}
u\,\Orntg{\e^{}_S}=u^{}_S\ .
\end{equation}
\textsc{Unicité.} Soit $x\in E$ un objet de $\Orntg{E}$. La relation~\ref{limindorntg1}, appliquée à $S=\{x\}$ et à l'objet $x$ de $\Orntg{\{x\}}$, implique que
\begin{equation}\label{limindorntg2}
u(x)=u^{}_{\{x\}}(x)\ .
\end{equation}
Soit $S\in \xi(E)$ une $1$\nbd-flèche de $\Orntg{E}$. La relation~\ref{limindorntg1}, appliquée à la $1$\nbd-flèche $S$ de~$\Orntg{S}$, implique que
\begin{equation}\label{limindorntg3}
u(S)=u^{}_S(S)\ .
\end{equation}
Soit $S'\subset S$, $\min(S)=\min(S')$, $\max(S)=\max(S')$, une $2$\nbd-flèche de $\Orntg{E}$. La relation~\ref{limindorntg1}, appliquée à la $2$\nbd-flèche $S'\subset S$ de $\Orntg{S}$, implique que
\begin{equation}\label{limindorntg4}
u(\,S'\subset S\,)=u^{}_S(\,S'\subset S\,)\ .
\end{equation}
\textsc{Existence.} On laisse au lecteur la longue et fastidieuse, mais triviale, vérification du fait que les formules~\ref{limindorntg2}, \ref{limindorntg3} et~\ref{limindorntg4} définissent un $2$\nbd-foncteur, et que ce $2$\nbd-foncteur satisfait à la relation~\ref{limindorntg1}.
\end{proof}

\begin{prop}\label{orntgadj}
Pour tout ensemble ordonné $E$, les $2$\nbd-catégories $\Orntg{E}$ et
$c_2N(E)=c_2N_2(E)$ sont canoniquement isomorphes. De plus, le morphisme
d'adjonction $c_2N_2(E)\to E$ s'identifie à travers cet isomorphisme
canonique à l'unique $2$\nbd-foncteur $\Orntg{E}\to E$ induisant l'identité
sur les objets. 
\end{prop}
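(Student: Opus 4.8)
The plan is to reduce everything, via the colimit over $\xi(E)$, to the agreement of $\Orntgfonct$ and $c_2N$ on the simplex category $\cats$, where both equal the functor $i_2$. First I would record the base case. Since $E$ is a $1$-category, $N(E)=N_2(E)$, so $c_2N(E)=c_2N_2(E)$ as asserted. For $\smp{m}$ in $\cats$ the nerve $N(\smp{m})$ is the representable presheaf $\smp{m}$, and as $c_2$ is the colimit-preserving extension of $i_2$ along the Yoneda embedding one has $c_2N(\smp{m})=c_2(\smp{m})=i_2(\smp{m})=\trOrnt{m}{2}=\Orntg{\smp{m}}$, the last equality by~\ref{orntg}. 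More precisely, since $\Orntg{\varphi}=i_2(\varphi)=c_2N(\varphi)$ for every morphism $\varphi$ of $\cats$ (again~\ref{orntg}), the functors $\Orntgfonct$ and $c_2N$ restrict to \emph{one and the same} functor $i_2$ on $\cats$.

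Next I would assemble the two colimit presentations. For a finite, totally ordered, nonempty set $S$, transport along the unique order-isomorphism $S\cong\smp{m}$, $m=\card(S)-1$, to obtain an isomorphism $\Orntg{S}\cong c_2N(S)$; these are natural with respect to the inclusions $S'\subset S$, because such an inclusion is a morphism of $\cats$ once the order-isomorphisms are fixed, and there the two functors literally coincide. Thus the two diagrams $\xi(E)\to\nCat{2}$ given by $S\mapsto\Orntg{S}$ and $S\mapsto c_2N(S)$ are canonically isomorphic. Now $c_2$, being a left adjoint, preserves all small colimits, so applying it to the isomorphism of Lemma~\ref{limindnerf} yields $c_2N(E)\cong c_2(\limind_{S\in\xi E}NS)\cong\limind_{S\in\xi E}c_2N(S)$; combining with the isomorphism of diagrams and then with Lemma~\ref{limindorntg} gives $\limind_{S\in\xi E}c_2N(S)\cong\limind_{S\in\xi E}\Orntg{S}\cong\Orntg{E}$. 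Composing produces the sought canonical isomorphism $c_2N(E)\cong\Orntg{E}$; by construction it sends the object of $\Orntg{E}$ attached to $x\in E$ to the object coming from the $0$-simplex $x$ of $N(E)$, both being the image of the unique object of $\Orntg{\{x\}}=c_2N(\{x\})$ under the colimit cocone.

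It remains to identify the counit. There is a unique $2$-functor $\pi_E:\Orntg{E}\to E$ which is the identity on objects: a $1$-arrow $S$, of source $\min(S)$ and target $\max(S)$, must be sent to the unique arrow $\min(S)\to\max(S)$ of $E$, and the (trivial) $2$-cells to identities; this prescription is visibly functorial, composition being union. Because $E$ is an ordered set --- at most one $1$-arrow between two objects and only identity $2$-cells --- \emph{any} $2$-functor $\Orntg{E}\to E$ is determined by its effect on objects, so it suffices to show that the composite $\Orntg{E}\cong c_2N_2(E)\to E$ with the counit is the identity on objects. This follows from naturality of the counit: for each $x\in E$ the inclusion $\{x\}\to E$ yields a commutative square comparing the counits at $\{x\}$ and at $E$, and since $c_2N_2(\{x\})=\Orntg{\smp{0}}=\smp{0}$ is the point, the counit at $\{x\}$ is forced to be the identity; tracking the unique object $x$ through the cocone $\Orntg{\{x\}}\to\Orntg{E}$ then gives that the composite sends $x$ to $x$.

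The substance of the argument is the formal colimit computation of the second paragraph, resting on Lemmas~\ref{limindnerf} and~\ref{limindorntg} together with the fact that $c_2$ preserves colimits. The only delicate bookkeeping I anticipate is verifying that the isomorphisms $\Orntg{S}\cong c_2N(S)$ are natural over $\xi(E)$ and that the two colimit cocones match up under them; once this is in place, the identification of the counit is forced by $E$ being a poset.
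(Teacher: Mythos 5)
Your proof is correct and follows essentially the same route as the paper's: both rest on Lemmas~\ref{limindnerf} and~\ref{limindorntg}, the fact that $c_2$ preserves colimits as a left adjoint, and the agreement of $\Orntgfonct$ and $c_2N$ on $\cats$, with the counit identified via naturality applied to the inclusions $\{x\}\to E$. You merely spell out more explicitly (via transport along the order-isomorphisms $S\cong\smp{m}$) the matching of the two diagrams over $\xi(E)$, which the paper leaves implicit.
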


\begin{proof}
Vu que les foncteurs $\Orntgfonct$ et $c_2N_2$ coïncident par définition sur les~$\smp{m}$, $m\geq0$, la proposition résulte des lemmes~\ref{limindnerf} et~\ref{limindorntg} et du fait que le foncteur $c_2$ commute aux limites inductives (en tant qu'adjoint à gauche). Un $2$\nbd-foncteur $\Orntg{E}\to E$ qui est l'identité sur les objets  associe forcément à une $1$\nbd-flèche $S\in\xi(E)$ de $\Orntg{E}$ l'unique flèche de $E$ de source $\min(S)$ et but $\max(S)$ et à une $2$\nbd-flèche de~$\Orntg{E}$ l'identité de l'image de sa source (qui est également l'image de son but). Un tel $2$\nbd-foncteur est donc unique. Pour montrer la deuxième assertion, il suffit donc de prouver que le morphisme d'adjonction $c_2N_2(E)\to E$ induit l'identité sur les objets. Cela résulte de la fonctorialité de ce morphisme, appliquée à l'inclusion $\{x\}\to E$, pour $x\in E$.
\end{proof}

\begin{paragr}\label{nerfbisimpl}
Soit $C$ une petite $2$\nbd-catégorie. Le \emph{nerf bisimplicial} de $C$ est l'ensemble bisimplicial dont les $(p,q)$\nbd-simplexes sont les $q$\nbd-simplexes du nerf de la catégorie 
$$\textstyle\coprod\limits_{x_0,\dots,x_p\in\ob(C)}\sHom_C(x_0,x_1)\times\cdots\times\sHom_C(x_{p-1},x_p)\ .$$
On vérifie facilement qu'on définit ainsi un foncteur $\Nbisimpl:\nCat{2}\to\pref{\cats\times\cats}$.
\end{paragr}

\begin{paragr}\label{eqfbisimpl}
On note $\delta:\cats\to\cats\times\cats$ le foncteur diagonal et $\delta^*:\pref{\cats\times\cats}\to\simpl$ le foncteur image inverse déduit de $\delta$.
Les \emph{équivalences faibles diagonales} de $\pref{\cats\times\cats}$ sont les flèches de $\pref{\cats\times\cats}$ dont l'image par $\delta^*$ est une équivalence faible simpliciale. On a le lemme classique suivant dû à Quillen~\cite{QuK} (pour une preuve purement simpliciale, voir~\cite[lemme 5.3.1]{Ho}):
\end{paragr}

\begin{lemme}\label{lemmebisimpl}
Les équivalences faibles argument par argument de $\pref{\cats\times\cats}$ sont des équivalences faibles diagonales. Autrement dit, pour qu'un morphisme d'ensembles bisimpliciaux $X_{\bullet,\bullet}\to Y_{\bullet,\bullet}$ soit une équivalence faible diagonale, il suffit que pour tout $p\geq0$, $X_{p,\bullet}\to Y_{p,\bullet}$ soit une équivalence faible simpliciale.
\end{lemme}

\begin{thm}\label{Cegarra}
Une flèche de $\nCat{2}$ est une équivalence faible de Thomason si et seulement si son nerf bisimplicial est une équivalence faible diagonale.
\end{thm}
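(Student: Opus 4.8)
The plan is to show that the two functors $\nCat{2}\to\simpl$ given by $C\mapsto N_2(C)$ and $C\mapsto\delta^*\Nbisimpl(C)$ invert exactly the same morphisms, by producing a natural simplicial weak equivalence between them. Concretely, I would construct a natural transformation $\theta_C:\delta^*\Nbisimpl(C)\to N_2(C)$ (or a natural zigzag) and prove that $\theta_C$ is a weak equivalence of simplicial sets for every small $2$\nbd-category $C$. Granting this, for an arbitrary $2$\nbd-functor $f:C\to D$ the naturality square relating $\theta_C$ and $\theta_D$ together with the two-out-of-three property shows that $N_2(f)$ is a simplicial weak equivalence if and only if $\delta^*\Nbisimpl(f)$ is; since the left-hand condition is by definition (\emph{cf.}~\ref{eqfThom}) that $f$ be a Thomason weak equivalence and the right-hand one that $\Nbisimpl(f)$ be a diagonal weak equivalence, this is precisely the assertion.

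To build and check $\theta$, I would first rewrite $\Nbisimpl(C)$ as the levelwise ordinary nerve $\nerf(\mathcal{N}_\bullet(C))$ of the simplicial object $[p]\mapsto\coprod_{x_0,\dots,x_p}\sHom_C(x_0,x_1)\times\cdots\times\sHom_C(x_{p-1},x_p)$ in $\cat$, so that an $m$\nbd-simplex of $\delta^*\Nbisimpl(C)$ is a sequence of objects $x_0,\dots,x_m$ equipped with an $m$\nbd-chain of composable $2$\nbd-cells in each hom-category $\sHom_C(x_{i-1},x_i)$. An $m$\nbd-simplex of $N_2(C)$ is, by contrast, a $2$\nbd-functor $\Orntg{\smp{m}}\to C$, that is, a system of objects, connecting $1$\nbd-cells, and coherent comparison $2$\nbd-cells on every triangle. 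The map $\theta_C$ is obtained by reading off from the chain data the comparison $2$\nbd-cells of such a $2$\nbd-functor, and naturality in $C$ is immediate. To prove $\theta_C$ is a weak equivalence I would fix the simplicial direction coming from the hom-categories and invoke Lemma~\ref{lemmebisimpl}: it then suffices to exhibit, for each fixed $p$, a weak equivalence in the remaining direction, reducing the problem to a statement about nerves of (products of) the hom-categories where both sides become manifestly equivalent.

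The hard part is exactly the construction and verification of the comparison $\theta$, because $N_2(C)$ is not literally a row or the diagonal of $\Nbisimpl(C)$: the strictly coherent triangle data of the Street nerve and the composable-$2$\nbd-cell chains of the bisimplicial nerve do not match on the nose, so an auxiliary bisimplicial resolution interpolating between the two is needed before Lemma~\ref{lemmebisimpl} can be applied. This comparison is the content of the theorem of M.~Bullejos and A.~M.~Cegarra~\cite{BCeg}, which I would invoke directly: they produce the required natural weak equivalence between the diagonal of the double (bisimplicial) nerve of a $2$\nbd-category and its geometric (Street) nerve, whence the equivalence of the two notions of weak equivalence. The only remaining point is to check that the bisimplicial nerve and the weak equivalences used here coincide with theirs, which is routine.
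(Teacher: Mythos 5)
Your proposal is correct and ultimately takes the same route as the paper: the paper's entire proof is a direct appeal to \cite[th\'eor\`eme~1.1]{BCeg}, which is precisely the comparison you invoke after (rightly) observing that a hands-on construction of $\theta_C$ does not match up on the nose. Your preliminary reduction---naturality of the comparison plus two-out-of-three to pass from an objectwise weak equivalence to the ``inverts the same morphisms'' statement---is a correct unpacking of why the citation immediately yields the theorem.
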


\begin{proof}
C'est une conséquence immédiate de~\cite[théorème 1.1]{BCeg}.
\end{proof}

\noindent
Le lemme suivant est classique (voir \cite[lemme 2.3]{BCF}  ou
\cite[proposition 3.3]{Hoy}) :

\begin{lemme}\label{delHoyo}
Soient $C,D$ deux petites $2$\nbd-catégories, et $u:C\to D$ un $2$\nbd-foncteur induisant une bijection des ensembles des objets. Si pour tout couple d'objets $x,y$ de~$C$, le foncteur $\sHom_C(x,y)\to\sHom_D(u(x),u(y))$, induit par $u$, est une équivalence faible de Thomason, alors $u$ est une équivalence faible de Thomason.
\end{lemme}

\begin{proof}
Comme le foncteur nerf $N:\cat\to\simpl$ commute aux sommes disjointes et aux produits, et comme les équivalences faibles simpliciales sont stables par produits finis et sommes, l'hypothèse implique que pour tout $p\geq0$, le morphisme $(\Nbisimpl(C))_{p,\bullet}\to(\Nbisimpl(D))_{p,\bullet}$ est une équivalence faible simpliciale. Le lemme~\ref{lemmebisimpl} implique donc que le morphisme d'ensembles bisimpliciaux $\Nbisimpl(u)$ est une équivalence faible diagonale. Le lemme résulte alors du théorème précédent.
\end{proof}

\begin{prop}\label{prop:c_2N_2_eq_thom}
Pour tout ensemble ordonné $E$, le morphisme d'adjonction $c_2N_2(E)\to E$ est une équivalence faible de Thomason.
\end{prop}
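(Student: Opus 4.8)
Proposition \ref{prop:c_2N_2_eq_thom} states that for every ordered set $E$, the adjunction morphism $c_2N_2(E) \to E$ is a Thomason weak equivalence. Let me plan a proof.

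The plan is to reduce the statement to a contractibility check on hom-categories, using the machinery assembled just before. First I would invoke Proposition~\ref{orntgadj} to identify the adjunction morphism $c_2N_2(E)\to E$ with the canonical $2$-functor $\Orntg{E}\to E$ that is the identity on objects, where $E$ is regarded as a $2$-category whose only $2$-cells are identities. Since this $2$-functor induces a bijection (indeed the identity) on objects, Lemma~\ref{delHoyo} applies: it suffices to show that for every pair of objects $x,y$ of $\Orntg{E}$, the functor $\sHom_{\Orntg{E}}(x,y)\to\sHom_E(x,y)$ induced on hom-categories is a Thomason weak equivalence.

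Next I would analyse these hom-categories explicitly using the description recalled in~\ref{orntg}. The category $\sHom_{\Orntg{E}}(x,y)$ is the ordered set, ordered by inclusion, of finite totally ordered non-empty subsets $S\subseteq E$ with $\min(S)=x$ and $\max(S)=y$; on the target side, $\sHom_E(x,y)$ is empty if $x\not\leq y$ and is isomorphic to the point category $\smp{0}$ if $x\leq y$. If $x\not\leq y$, both sides are empty and there is nothing to prove. If $x=y$, the source reduces to the singleton $\{x\}$ and the functor is an isomorphism onto $\smp{0}$. The essential case is $x<y$, and here the key observation is that the ordered set $\sHom_{\Orntg{E}}(x,y)$ has a least element, namely the two-element subset $\{x,y\}$: every $S$ with $\min(S)=x$ and $\max(S)=y$ contains both $x$ and $y$, so $\{x,y\}\subseteq S$. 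A category with an initial object has a weakly contractible nerve, whence the unique functor $\sHom_{\Orntg{E}}(x,y)\to\sHom_E(x,y)\simeq\smp{0}$ is a Thomason weak equivalence.

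Feeding this back into Lemma~\ref{delHoyo} then yields that $\Orntg{E}\to E$, hence $c_2N_2(E)\to E$, is a Thomason weak equivalence. I do not anticipate a serious obstacle: once the hom-categories are identified via~\ref{orntg}, contractibility is immediate from the presence of the initial object $\{x,y\}$, and Lemma~\ref{delHoyo} does the rest. The only mild care needed is to keep the three cases $x\not\leq y$, $x=y$, $x<y$ straight, and to recall the standard fact that a category possessing an initial (equivalently, a terminal) object is weakly contractible, which is precisely what makes the middle step work.
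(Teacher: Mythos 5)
Your proof is correct and follows essentially the same route as the paper: identify the adjunction morphism with the identity-on-objects $2$-functor $\Orntg{E}\to E$ via Proposition~\ref{orntgadj}, reduce to hom-categories by Lemma~\ref{delHoyo}, and conclude by noting that $\sHom_{\Orntg{E}}(x,y)$ has least element $\{x,y\}$ (hence contractible nerve) when $x\leq y$ and is empty otherwise. Your splitting of $x\leq y$ into the subcases $x=y$ and $x<y$ is an inessential refinement of the paper's argument.
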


\begin{proof}
En vertu de la proposition~\ref{orntgadj}, ce morphisme d'adjonction s'identifie à l'unique $2$\nbd-foncteur $\Orntg{E}\to E$ induisant l'identité sur les objets. Or, pour tout couple $x,y\in E$, le foncteur $\sHom_{\Orntg{E}}(x,y)\to\sHom_E(x,y)$, induit par ce $2$\nbd-foncteur, est une équivalence faible de Thomason. En effet, si $x\leq y$, alors $\sHom_E(x,y)$ est la catégorie ponctuelle et $\sHom_{\Orntg{E}}(x,y)$ est l'ensemble ordonné par inclusion des parties finies, totalement ordonnées $S$ de $E$ telles que $\min(S)=x$ et $\max(S)=y$. Cet ensemble ordonné admet un plus petit élément, à savoir l'ensemble $\{x,y\}$, et son nerf est donc contractile. Si $x\not\leq y$, les catégories $\sHom_{\Orntg{E}}(x,y)$ et $\sHom_E(x,y)$ sont vides, ce qui prouve l'assertion. La proposition résulte alors du lemme précédent.
\end{proof}

\begin{cor}\label{conditione}
Pour tout ensemble ordonné $E$, le morphisme d'adjonction $N(E)\to N_2c_2(N(E))$ est une équivalence faible d'ensembles simpliciaux.
\end{cor}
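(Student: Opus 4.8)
The plan is to deduce this corollary from Proposition~\ref{prop:c_2N_2_eq_thom} by a purely formal argument resting on one of the triangle identities of the adjunction $(c_2,N_2)$. First I would record that, for an ordered set $E$ regarded as a $2$-category with identity higher cells, the Street nerve $N_2(E)$ coincides with the ordinary nerve $N(E)$ --- the identification already used in Proposition~\ref{orntgadj} when writing $c_2N(E)=c_2N_2(E)$. Indeed a $2$\nbd-foncteur $\trOrnt{m}{2}\to E$ must send each $2$\nbd-cellule to an identity, hence collapse each connected poset $\sHom_{\trOrnt{m}{2}}(k,l)$ to a single arrow of $E$, so that it amounts precisely to a functor $\smp{m}\to E$; this is natural in $m$. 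Consequently the adjunction morphism $N(E)\to N_2c_2(N(E))$ of the statement is exactly the unit $\eta^{}_{N(E)}$ of $(c_2,N_2)$ evaluated at the simplicial set $N(E)=N_2(E)$.

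Next I would invoke the triangle identity: writing $\eta$ for the unit and $\e$ for the counit $c_2N_2\to1^{}_{\nCat{2}}$, one has $N_2(\e^{}_X)\circ\eta^{}_{N_2(X)}=1^{}_{N_2(X)}$ for every $2$\nbd-cat�gorie $X$. Taking $X=E$ and using $N_2(E)=N(E)$, this exhibits the identity of $N(E)$ as the composite
$$
\xymatrix{
N(E)\ar[r]^(.42){\eta^{}_{N(E)}}
&N_2c_2N(E)\ar[r]^(.58){N_2(\e^{}_E)}
&N(E)
}\,,
$$
in which $\eta^{}_{N(E)}$ is the very morphism we wish to show is a weak equivalence.

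The geometric input then enters through Proposition~\ref{prop:c_2N_2_eq_thom}: the counit $\e^{}_E:c_2N_2(E)\to E$ is a Thomason weak equivalence, which by the definition of Thomason weak equivalences (\emph{cf.}~\ref{eqfThom}) means exactly that $N_2(\e^{}_E)$ is a weak equivalence of simplicial sets. Since the composite $N_2(\e^{}_E)\circ\eta^{}_{N(E)}$ equals $1^{}_{N(E)}$, it is in particular a weak equivalence; the two\nbd-out\nbd-of\nbd-three property applied to the triangle then forces $\eta^{}_{N(E)}$, that is, the morphism $N(E)\to N_2c_2N(E)$, to be a weak equivalence of simplicial sets.

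I expect no real obstacle in this argument: all the substantive content has already been absorbed into Proposition~\ref{prop:c_2N_2_eq_thom} (itself resting on the comparison with the bisimplicial nerve and on Theorem~\ref{Cegarra}). The only two points deserving a word of care are the literal identification $N_2|_{\ord}=N|_{\ord}$ and the recognition of the statement's adjunction morphism as the factor $\eta^{}_{N(E)}$ occurring in the triangle identity; both are routine.
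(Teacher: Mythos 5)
Your argument is exactly the paper's own proof: identify $N(E)=N_2(E)$, write the triangle identity $N_2(\e^{}_E)\circ\eta^{}_{N_2(E)}=1^{}_{N_2(E)}$, invoke the proposition~\ref{prop:c_2N_2_eq_thom} to see that the right-hand factor is a weak equivalence, and conclude by two-out-of-three. The proposal is correct and matches the paper's route in every substantive respect, merely spelling out the identification $N_2|_{\ord}=N|_{\ord}$ which the paper treats as implicit.
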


\begin{proof}
On a $N(E)=N_2(E)$, et comme dans l'égalité du triangle
$$
\xymatrix{
N_2(E)\ar[r]\ar@/_2em/[rr]_{1_{N_2(E)}}
&N_2c_2N_2(E)\ar[r]
&N_2(E)\ \ 
}
$$
la flèche de droite est, en vertu de la proposition précédente, une équivalence faible d'ensembles simpliciaux, il en est de même de la flèche de gauche.
\end{proof}

\begin{sch}\label{trivfaux}
En revanche, contrairement à ce qui est affirmé dans~\cite{Wor}, le morphisme d'adjonction $N(E)\to N_2c_2(N(E))$ n'est en général \emph{pas} un isomorphisme. Par exemple, si $E=\smp{2}$, alors $c_2(N(E))$ est par définition la $2$\nbd-catégorie $\trOrnt{2}{2}=\Ornt{2}$
$$
\xymatrixrowsep{.3pc}
\xymatrix{
&1\ar[dddddr]^{u^{}_{21}\kern 50pt\textstyle{u_{20}\neq u_{21}\circ u_{10}}}
\\
\\
&
\\
&
\\
&\ar@{=>}[uu]^{\alpha}
\\
0\ar[uuuuur]^{u^{}_{10}}\ar[rr]_{u^{}_{20}}
&&2
}\kern 25pt
$$
et les $1$\nbd-simplexes non dégénérés de $N_2c_2(N(E))$ sont les
$2$\nbd-foncteurs non constants de $\Ornt{1}=\smp{1}$ vers $\Ornt{2}$. Il y
en a quatre, correspondant aux $1$\nbd-flèches de $\Ornt{2}$ qui ne sont pas
des identités, à savoir $u^{}_{10},\,u^{}_{21},\,u^{}_{20}$ et
$u^{}_{21}\circ u^{}_{10}$. Ainsi, $N_2c_2(N(E))$ n'est \emph{pas} isomorphe
à $N(E)=\smp{2}$ qui n'a que trois $1$\nbd-simplexes non dégénérés.
\end{sch}

\begin{definition}\label{def:fonct_lax}
Un \ndef{$2$-foncteur oplax normalisé} $F$ d'une $2$-catégorie $C$ vers une
$2$-catégorie $D$ est donné par
\begin{itemize}
  \item 
   pour tout objet $x$ de $C$, un objet $F(x)$ de $D$ ;
  \item pour toute $1$-flèche $f : x \to y$ de $C$, une
    $1$-flèche $F(f) : F(x) \to F(y)$ de $D$ ;
  \item pour toute $2$-flèche $\alpha : f \Rightarrow g$ de $C$, une
    $2$-flèche $F(\alpha) : F(f) \Rightarrow F(g)$ de $D$ ;
  \item pour tout couple $x \xrightarrow{f} y \xrightarrow{g} z$ de
    $1$-flèches composables de $C$, une \ndef{contrainte de fonctorialité},
    c'est-à-dire une $2$-flèche
    \[ F(g, f):F(gf)\Rightarrow F(g)F(f) \]
    de $D$,
\end{itemize}
satisfaisant aux conditions suivantes :
\begin{itemize}
  \item \ndef{conditions de normalisation} :
    \begin{itemize}
      \item pour tout objet $x$ de $C$, on a $F(\id{x}) = \id{F(x)}$ ;
      \item pour toute $1$-flèche $f : x \to y$ de $C$, on a
        $F(\id{y}, f) = \id{f} = F(f, \id{x})$ ;
    \end{itemize}
  \item \ndef{condition de cocycle} : pour tout triplet
    $x \xrightarrow{f} y \xrightarrow{g} z \xrightarrow{h}t $ de $1$-flèches
    composables de $C$, on a
    \[
    (F(h) \comp F(g, f))F(h, gf) = (F(h, g) \comp F(f))F(hg, f)\ ;
    \]
  \item \ndef{compatibilité à la composition verticale} : pour tout couple
    $f \xRightarrow{\alpha\ } g \xRightarrow{\beta\,} h$ de $2$\nbd-flèches
    composables verticalement de $C$, on a $F(\beta\alpha) =
    F(\beta)F(\alpha)$ ;
  \item \ndef{compatibilité aux identités des $1$-flèches} : pour toute $1$-flèche $f$ de $C$, on a \hbox{$F(\id{f}) = \id{F(f)}$} ;
  \item \ndef{compatibilité à la composition horizontale} :
    pour tout couple
    \[
\UseAllTwocells
\xymatrix@C=3pc{
x \rtwocell^f_g{\,\alpha}
&
y \rtwocell^k_l{\,\,\beta}
&
z
}
    \]
    de $2$-flèches composables horizontalement de $C$, on a
    \[
    F(l, g)F(\beta \comp \alpha) = (F(\beta) \comp F(\alpha))F(k, f)\ .
    \]
\end{itemize}

On notera $\dcatnorm$ la catégorie dont les objets sont les $2$-catégories
strictes et les morphismes sont les $2$-foncteurs oplax normalisés entre
celles-ci. Tout $2$-foncteur strict étant canoniquement un $2$-foncteur
oplax normalisé, on dispose d'un foncteur canonique de $\dcat$
vers~$\dcatnorm$, identifiant $\dcat$ à une sous-catégorie non pleine de
$\dcatnorm$.
On vérifie facilement que ce foncteur d'inclusion commute aux petites limites projectives.
\tb

Dans la suite, on omettra systématiquement l'adjectif « normalisé ». En
particulier, on appellera foncteur oplax un foncteur oplax normalisé. De
même, ci-dessous, on appellera homotopie oplax (resp. déformation oplax) ce
qu'on devrait appeler homotopie oplax normalisée (resp. déformation oplax
normalisée).
\end{definition}

\begin{definition}
Soient $A$ une $2$-catégorie et $\eps = 0, 1$. On notera $\DeltaeX{\eps}{A}$
le $2$\nbd-foncteur $\Deltae{\eps} \times \id{A} : A \simeq e \times A \to
\Deltan1 \times A$. 
(Comme le foncteur d'inclusion de $\dcat$ dans $\dcatnorm$ commute aux
limites projectives, le produit $\Deltan1 \times A$ dans $\dcat$ est aussi
un produit dans $\dcatnorm$.)
\tb

Soient $F, G : A \to B$ deux $2$-foncteurs oplax (le cas qui nous intéresse
vraiment est celui où $F$ et $G$ sont stricts). Une \ndef{homotopie oplax}
de $F$ vers $G$ est un $2$-foncteur oplax $H : \Deltan1
\times A \to A$ satisfaisant
\[ H\DeltaeX{0}{A} = F \quad\text{et}\quad H\DeltaeX{1}{A} = G\ . \]

On dira que deux tels $2$-foncteurs $F$ et $G$ sont \ndef{élémentairement
oplax homotopes} s'il existe une homotopie oplax de $F$ vers $G$. On
dira que $F$ et $G$ sont \ndef{oplax homotopes} s'ils sont dans la même
classe d'équivalence pour la relation d'équivalence engendrée par la
relation «~être élémentairement oplax homotopes~». Autrement dit, $F$
et $G$ sont oplax homotopes s'ils sont reliés par un zigzag
d'homotopies oplax.
\end{definition}

\begin{definition}\label{def:rdf}
Soit $i : A \to B$ un crible de $2$-catégories (qu'on considèrera comme une
inclusion). Une \ndef{structure de rétracte par déformation oplax fort}
sur $i$ est donnée par
\begin{itemize}
  \item une rétraction $r : B \to A$ de $i$ (c'est-à-dire un $2$\nbd-foncteur strict tel que $ri=1_A$) ;
  \item une homotopie oplax $H$ de $ir$ vers $\id{B}$,
\end{itemize}
satisfaisant aux conditions suivantes :
\begin{enumerate}[label=RDF\arabic*), ref=RDF\arabic*, labelindent=\parindent, leftmargin=*]
  \item l'homotopie $H$ est \ndef{relative à $A$} au sens où on a
    \[ H(1_{\Deltan1} \times i) = ip^{}_A\ , \]
    où $p^{}_A : \Deltan1 \times A \to A$ désigne la deuxième projection ;
    \label{item:RDF1}
  \item pour tout couple \hbox{$x \xrightarrow{f} y \xrightarrow{g} z$} de
    $1$-flèches composables de $B$ avec $f$ dans~$A$, on a 
     \[ H(0 \to 1, gf) = H(0 \to 1, g)H(0, f)\ , \]
    et la contrainte de fonctorialité correspondante est triviale.
    \label{item:RDF2}
\end{enumerate}

On dira qu'un $2$-foncteur $i : A \to B$ est un \ndef{crible rétracte par
déformation oplax fort} si $i$ est un crible sur lequel il existe une
structure de rétracte par déformation oplax fort. On appellera également un
tel $2$-foncteur un \ndef{$2$-crible rétracte par déformation oplax fort}.
\end{definition}

\begin{paragr}\label{paragr:Ntilde}
Notons $\tilde{i}_2 : \cDelta \to \dcatnorm$ le foncteur obtenu en composant
les inclusions canoniques
\[
\cDelta \to \Cat \to \dcat \to \dcatnorm\ .
\]
On définit un foncteur $\wt{N}_2:\dcatnorm\to\simpl$ par la formule
\[
C\longmapsto\bigl(\smp{m}\mapsto\Hom_{\dcatnorm}(\tilde{i}_2(\smp{m}),C)\bigr)\ .
\]
Il est immédiat que ce foncteur commute aux limites projectives.
\tb

Si $C$ est une $2$-catégorie, on vérifie facilement (voir l'avant-dernier
exercice de~\cite{StreetHandbook}) que la donnée d'un $2$-foncteur oplax
(normalisé) de $\tilde{i}_2(\Deltan{m}) = \Deltan{m}$ vers $C$ est
naturellement équivalente à celle d'un $2$-foncteur strict de
$i_2(\Deltan{m}) = \trOrnt{m}{2}$ (\emph{cf.}~\ref{trnerfdeStreet})
vers~$C$. On en déduit qu'on a
\[ \wt{N}_2\,|\,\dcat = N_2\ . \]
\end{paragr}

\begin{prop}
Soient $F$ et $G$ deux $2$-foncteurs oplax qui sont oplax homotopes.  Alors
les morphismes d'ensembles simpliciaux $\wt{N}_2(F)$ et $\wt{N}_2(G)$ sont
homotopes (au sens où ils sont reliés par un zigzag d'homotopies
simpliciales).
\end{prop}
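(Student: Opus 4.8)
Le plan est de se ramener au cas élémentaire, puis de vérifier que $\wt{N}_2$ transforme une homotopie oplax en une homotopie simpliciale. Les relations « être élémentairement oplax homotopes » et « être reliés par une homotopie simpliciale » engendrent respectivement, par passage au zigzag, les relations « être oplax homotopes » et « être homotopes au sens de l'énoncé ». Il suffira donc de montrer que si $H : \Deltan1 \times A \to B$ est une homotopie oplax de $F$ vers $G$, alors $\wt{N}_2(H)$ fournit une homotopie simpliciale de $\wt{N}_2(F)$ vers $\wt{N}_2(G)$ : en appliquant ce fait à chaque étape d'un zigzag d'homotopies oplax reliant $F$ et $G$, on obtiendra un zigzag d'homotopies simpliciales reliant $\wt{N}_2(F)$ et $\wt{N}_2(G)$.

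Le point clé sera l'identification de $\wt{N}_2(\Deltan1)$ avec l'ensemble simplicial $\smp1$. Comme $\Deltan1$ est une $2$-catégorie stricte et que $\wt{N}_2\,|\,\dcat = N_2$ (\emph{cf.}~\ref{paragr:Ntilde}), on a $\wt{N}_2(\Deltan1) = N_2(\Deltan1)$, d'où, pour tout $m \geq 0$,
\[
\wt{N}_2(\Deltan1)_m = \Hom_{\dcat}(\trOrnt{m}{2}, \Deltan1)\ .
\]
Or $\Deltan1$ ne possède pas de $2$-flèche non triviale, de sorte qu'un $2$-foncteur strict $\trOrnt{m}{2} \to \Deltan1$ identifie toutes les $1$-flèches parallèles de $\trOrnt{m}{2}$ et se réduit ainsi à la donnée d'une application croissante $\smp{m} \to \smp{1}$ ; on obtient donc $\wt{N}_2(\Deltan1)_m = (\smp1)_m$, c'est-à-dire $\wt{N}_2(\Deltan1) = \smp1$. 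De même, $\wt{N}_2(e) = \smp0$, et pour $\e = 0, 1$ le morphisme $\wt{N}_2(\Deltae{\e}) : \smp0 \to \smp1$ est l'inclusion du sommet $\e$.

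Il restera à assembler ces ingrédients. Comme $\wt{N}_2$ commute aux limites projectives (\emph{cf.}~\ref{paragr:Ntilde}) et que le produit $\Deltan1 \times A$ calculé dans $\dcat$ est aussi un produit dans $\dcatnorm$, on dispose d'un isomorphisme canonique $\wt{N}_2(\Deltan1 \times A) \simeq \wt{N}_2(\Deltan1) \times \wt{N}_2(A) = \smp1 \times \wt{N}_2(A)$, et $\wt{N}_2(H)$ devient un morphisme
\[
\wt{N}_2(H) : \smp1 \times \wt{N}_2(A) \to \wt{N}_2(B)\ .
\]
Puisque $\wt{N}_2(\DeltaeX{\e}{A}) = \wt{N}_2(\Deltae{\e}) \times \id{\wt{N}_2(A)}$ s'identifie à l'inclusion de $\{\e\} \times \wt{N}_2(A)$, les égalités $H\DeltaeX{0}{A} = F$ et $H\DeltaeX{1}{A} = G$ se transportent, par application de $\wt{N}_2$ et fonctorialité, en le fait que les restrictions de $\wt{N}_2(H)$ aux deux sommets de $\smp1$ sont respectivement $\wt{N}_2(F)$ et $\wt{N}_2(G)$. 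Ainsi $\wt{N}_2(H)$ est bien une homotopie simpliciale de $\wt{N}_2(F)$ vers $\wt{N}_2(G)$ (au facteur $\smp1$ près, placé à gauche, ce qui est sans importance par symétrie du produit). La principale difficulté, en réalité le seul point non purement formel, est l'identification $\wt{N}_2(\Deltan1) \simeq \smp1$ du deuxième paragraphe, qui repose sur le fait que le nerf à la Street d'une $1$-catégorie redonne son nerf ordinaire ; tout le reste se déduit mécaniquement de la préservation des produits par $\wt{N}_2$.
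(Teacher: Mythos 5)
Your proof is correct and follows essentially the same route as the paper's: reduce to the elementary case, identify $\wt{N}_2(\Deltan{1})\simeq\smp{1}$, and use that $\wt{N}_2$ preserves products, so that $\wt{N}_2(H)$ is a simplicial homotopy. The only difference is that you spell out the identification $\wt{N}_2(\Deltan{1})\simeq\smp{1}$ (via $\wt{N}_2\,|\,\dcat=N_2$ and the observation that strict $2$-functors $\trOrnt{m}{2}\to\Deltan{1}$ amount to monotone maps $\smp{m}\to\smp{1}$), a fact the paper simply invokes.
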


\begin{proof}
Il suffit de traiter le cas où $F$ et $G$ sont élémentairement oplax
homotopes. Vu que $\wt{N}_2(\Deltan1)\simeq\Deltan1$, ce cas résulte immédiatement du fait que $\wt{N}_2$ commute aux
produits et envoie donc une homotopie oplax sur une homotopie simpliciale.
\end{proof}

\begin{cor}\label{coro:eq_hom_faible}
Soit $F : A \to B$ un $2$-foncteur oplax. Supposons qu'il existe un
$2$-foncteur oplax $G$ tel que $GF$ et $FG$ soient oplax homotopes à
$\id{A}$ et $\id{B}$ respectivement. Alors $\wt{N}_2(F)$ est une équivalence
faible d'ensembles simpliciaux. En particulier, si le $2$-foncteur $F$ est
strict, alors $F$ est une équivalence faible de Thomason
\emph{(cf.~\ref{eqfThom})}.
\end{cor}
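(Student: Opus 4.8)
The plan is to deduce this corollary formally from the preceding Proposition, from the functoriality of $\wt{N}_2$, and from the elementary fact that simplicially homotopic maps agree in the homotopy category of simplicial sets.

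First I would record that $\wt{N}_2 : \dcatnorm \to \simpl$ is genuinely a functor: it is defined (cf.~\ref{paragr:Ntilde}) via the sets $\Hom_{\dcatnorm}(\tilde{i}_2(\smp{m}), C)$, and since the oplax $2$-functors $G$ and $F$ compose in $\dcatnorm$ we obtain $\wt{N}_2(GF) = \wt{N}_2(G)\wt{N}_2(F)$ and $\wt{N}_2(FG) = \wt{N}_2(F)\wt{N}_2(G)$. By hypothesis $GF$ is oplax homotopic to $\id{A}$ and $FG$ to $\id{B}$; applying the preceding Proposition to the pairs $(GF, \id{A})$ and $(FG, \id{B})$, and using $\wt{N}_2(\id{A}) = \id{\wt{N}_2(A)}$ and $\wt{N}_2(\id{B}) = \id{\wt{N}_2(B)}$, shows that $\wt{N}_2(G)\wt{N}_2(F)$ is connected to $\id{\wt{N}_2(A)}$, and $\wt{N}_2(F)\wt{N}_2(G)$ to $\id{\wt{N}_2(B)}$, by a zigzag of simplicial homotopies.

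Next I would use that simplicially homotopic maps become equal in $\Ho(\simpl)$. Indeed, the two inclusions $i_0, i_1 : X \to X \times \smp{1}$ satisfy $p i_0 = p i_1 = \id{X}$, where $p : X \times \smp{1} \to X$ is the projection; as $p$ is a weak equivalence ($\smp{1}$ being contractible), both $i_0$ and $i_1$ are weak equivalences and $[i_0] = [p]^{-1} = [i_1]$ in $\Ho(\simpl)$, so any simplicial homotopy $H : X \times \smp{1} \to Y$ gives $[H i_0] = [H][i_0] = [H][i_1] = [H i_1]$. A zigzag of such homotopies therefore also identifies its endpoints in $\Ho(\simpl)$. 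Passing to $\Ho(\simpl)$ thus shows that the classes of $\wt{N}_2(F)$ and $\wt{N}_2(G)$ are mutually inverse, so $\wt{N}_2(F)$ becomes an isomorphism there. Since in any model category a morphism is a weak equivalence exactly when its image in the homotopy category is an isomorphism, $\wt{N}_2(F)$ is a weak equivalence of simplicial sets.

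For the final clause, if $F$ is strict then $\wt{N}_2(F) = N_2(F)$ by the identity $\wt{N}_2\,|\,\dcat = N_2$ of~\ref{paragr:Ntilde}, so $N_2(F)$ is a weak equivalence, which is exactly the definition~(\ref{eqfThom}) of $F$ being a Thomason weak equivalence. There is no serious obstacle here: the corollary is essentially formal once the preceding Proposition is available. The only point requiring mild care is the passage from ``simplicially homotopic'' to ``equal in $\Ho(\simpl)$'', where one must use the contractibility of $\smp{1}$ rather than any fibrancy of the simplicial sets involved; equivalently, one may realize geometrically and observe that $|\wt{N}_2(F)|$ is then a homotopy equivalence of spaces, hence a weak equivalence.
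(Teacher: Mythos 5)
Your proof is correct and follows essentially the same route as the paper, which simply observes that the preceding proposition immediately makes $\wt{N}_2(F)$ invertible with inverse $\wt{N}_2(G)$ in the homotopy category of simplicial sets, hence a weak equivalence, and that $\wt{N}_2(F)=N_2(F)$ when $F$ is strict. You merely make explicit the standard ingredients the paper leaves implicit (functoriality of $\wt{N}_2$ on $\dcatnorm$, the fact that simplicially homotopic maps coincide in $\Ho(\simpl)$ via the contractibility of $\smp{1}$, and the saturation of weak equivalences in a model category), all of which are sound.
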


\begin{proof}
La proposition précédente implique immédiatement que $\wt{N}_2(F)$ est
inversible d'inverse $\wt{N}_2(G)$ dans la catégorie homotopique des
ensembles simpliciaux, et donc que $\wt{N}_2(F)$ est une équivalence faible
d'ensembles simpliciaux.
Le second point résulte du fait que si $F$ est strict, alors $N_2(F) =
\wt{N}_2(F)$.
\end{proof}

\begin{prop}\label{prop:2rdf_thom}
Les $2$-cribles rétractes par déformation oplax forts sont des équivalences
faibles de Thomason.
\end{prop}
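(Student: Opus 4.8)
Le plan est de déduire l'énoncé directement du corollaire~\ref{coro:eq_hom_faible}. Soit $i : A \to B$ un $2$\nbd-crible rétracte par déformation oplax fort, et fixons une structure de rétracte par déformation oplax fort sur $i$, donnée par une rétraction stricte $r : B \to A$ (vérifiant $ri = \id{A}$) et une homotopie oplax $H$ de $ir$ vers $\id{B}$. Le $2$\nbd-foncteur $i$ étant un crible, il est strict, donc en particulier un $2$\nbd-foncteur oplax ; il en va de même de $r$, tout $2$\nbd-foncteur strict étant canoniquement oplax. J'appliquerai le corollaire~\ref{coro:eq_hom_faible} au $2$\nbd-foncteur oplax $F = i$, en prenant pour $G$ le $2$\nbd-foncteur oplax $r$.

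Il reste alors à vérifier les deux hypothèses du corollaire. D'une part, comme $r$ est une rétraction de $i$, on a $GF = ri = \id{A}$, qui est donc trivialement oplax homotope à $\id{A}$. D'autre part, $FG = ir$, et l'homotopie oplax $H$ est précisément une homotopie oplax de $ir$ vers $\id{B}$, de sorte que $FG$ et $\id{B}$ sont élémentairement oplax homotopes, et \emph{a fortiori} oplax homotopes. Les hypothèses du corollaire~\ref{coro:eq_hom_faible} sont ainsi satisfaites, et puisque $F = i$ est strict, ce corollaire permet de conclure que $i$ est une équivalence faible de Thomason.

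Il n'y a donc pas d'obstacle véritable : une fois le corollaire~\ref{coro:eq_hom_faible} acquis, l'argument se réduit à identifier $(F, G)$ à $(i, r)$ et à relire les définitions. On notera d'ailleurs que cette preuve n'utilise que l'existence d'une rétraction stricte et d'une homotopie oplax ; les conditions supplémentaires \eqref{item:RDF1} et \eqref{item:RDF2} de la définition~\ref{def:rdf} ne jouent ici aucun rôle. Leur intérêt, plus fin, n'apparaîtra que dans la section~\ref{sec:2rdf}, où elles serviront à établir la stabilité par images directes de la notion de $2$\nbd-crible rétracte par déformation oplax fort ; c'est ce point de stabilité, et non l'énoncé présent, qui constitue le véritable cœur technique de la démonstration du théorème de Thomason $2$\nbd-catégorique.
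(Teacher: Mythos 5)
Votre preuve est correcte et suit exactement la m\^eme voie que l'article, dont la d\'emonstration se r\'eduit \`a la phrase \og c'est un cas particulier du corollaire~\ref{coro:eq_hom_faible}\fg{} : vous explicitez simplement l'identification $(F,G)=(i,r)$ et la v\'erification des hypoth\`eses. Votre observation finale est \'egalement juste : les conditions \eqref{item:RDF1} et \eqref{item:RDF2} ne servent pas ici, mais seulement pour la stabilit\'e par images directes \'etablie dans la section~\ref{sec:2rdf}.
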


\begin{proof}
C'est un cas particulier du corollaire \ref{coro:eq_hom_faible}.
\end{proof}

\begin{prop}\label{prop:2rdf_img_dir}
Les $2$-cribles rétractes par déformation oplax forts sont stables par
images directes dans $\dcat$. Plus précisément, si $i:A\to B$ est un crible de $2$\nbd-catégories et 
$$
\xymatrix{
&A\ar[r]^u\ar[d]_i
&A'\ar[d]^{i'}
\\
&B\ar[r]_{v}
&B'
&
}
$$
est un carré cocartésien de $\dcat$, alors pour toute structure de rétracte
par déformation oplax fort~$(r,H)$ sur~$i$, il existe une unique structure
de rétracte par déformation oplax fort~$(r',H')$ sur $i'$ compatible à $(r,
H)$ au sens où on a
$$r'v=ur\quad\hbox{et}\quad H'(1_{\smp{1}}\times v)=vH\ .$$
\end{prop}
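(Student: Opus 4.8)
The plan is to construct $r'$ first, then the oplax homotopy $H'$, and finally to read off uniqueness from the way each datum is forced. The retraction is immediate: since the square is a pushout in $\dcat$, the two strict $2$-functors $\id{A'}:A'\to A'$ and $ur:B\to A'$ agree on $A$ (as $uri=u$), hence glue to a unique strict $r':B'\to A'$ with $r'i'=\id{A'}$ and $r'v=ur$. In particular $r'$ is a retraction of $i'$ and the first compatibility holds. By Proposition~\ref{imdircribles}, $i'$ is again a sieve, the square is also cartesian, and $v$ restricts to an isomorphism from the complementary cosieve $V$ of $i$ onto the complementary cosieve $V'$ of $i'$. Using the description of pushouts along a sieve, I would check that every cell of $B'$ lies either in $i'(A')$ or in $v(B)$, that a $1$-flèche with target in $A'$ comes from $A'$ and one with source in $V'$ comes from $B$, and that every mixed $1$-flèche (source in $A'$, target in $V'$) factors as $v(\psi)\,i'(\phi)$ with $\phi$ a $1$-flèche of $A'$ and $\psi$ a $1$-flèche of $B$ whose iterated source lies in $A$.

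The homotopy $H'$ is forced on two pieces: the compatibility $H'(1_{\smp{1}}\times v)=vH$ fixes it on the image of $1_{\smp{1}}\times v$, while condition~\ref{item:RDF1} for $(r',H')$, namely $H'(1_{\smp{1}}\times i')=i'p^{}_{A'}$, fixes it on the image of $1_{\smp{1}}\times i'$; these agree on $\smp{1}\times A$ (using \ref{item:RDF1} for $H$ and $vi=i'u$). Since $\dcat$ is cartesian closed, $\smp{1}\times(-)$ preserves the pushout, so $\smp{1}\times B'$ is the pushout of $\smp{1}\times A'$ and $\smp{1}\times B$ along the sieve $1_{\smp{1}}\times i$, and its cells are composites of cells of these two pieces. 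I would therefore define $H'$ cellwise: on objects, $H'(\e,b')$ is $i'r'(b')$ or $b'$ according as $\e=0$ or $1$, hence determined by $r'$; on a mixed transition $1$-flèche I set, for $f'=v(\psi)\,i'(\phi)$ as above,
\[ H'(0\to1,\,f')\;=\;v\,H(0\to1,\psi)\;i'(\phi)\ , \]
which is well typed because the middle object lies in $A$, where $r$ acts as the identity. The functoriality constraints of $H'$ are taken to be $v$ of those of $H$ on the $v$-part and trivial on the $i'$-part.

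The \emph{main obstacle} is that this recipe be well defined, i.e.\ independent of the factorization $f'=v(\psi)\,i'(\phi)$. The only ambiguity comes from sliding a $1$-flèche $\gamma$ of $A$ across the middle object via the amalgamation relation $i'(u(\gamma))=v(i(\gamma))$: if $\psi=\psi'\,i(\gamma)$ then the pair $(\psi,\phi)$ may be replaced by $(\psi',u(\gamma)\phi)$. Independence of the two resulting values reduces exactly to
\[ H(0\to1,\psi)\;=\;H(0\to1,\psi')\,i(\gamma)\ , \]
which is precisely condition~\ref{item:RDF2} for $H$ applied to the composable pair $i(\gamma),\psi'$ with $i(\gamma)$ in $A$, since $H(0,i(\gamma))=i(\gamma)$ as $ri=\id{A}$; the triviality clause of~\ref{item:RDF2} likewise makes the functoriality constraints agree under sliding. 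I expect the bulk of the work to be the verification that $H'$ so defined is a genuine oplax $2$-foncteur: normalisation, the cocycle identity, and compatibility with vertical and horizontal composition each reduce, after the same factorization, either to the corresponding axiom for the oplax functor $H$ on the $B$-part, to strictness of $i'p^{}_{A'}$ on the $A'$-part, or once more to~\ref{item:RDF2} on mixed composites. This careful bookkeeping is exactly the delicate verification announced for section~\ref{sec:2rdf}.

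It then remains to record that $(r',H')$ is a strong oplax deformation retract structure on $i'$ compatible with $(r,H)$. The equalities $H'\DeltaeX{0}{B'}=i'r'$ and $H'\DeltaeX{1}{B'}=\id{B'}$ hold piecewise by construction; condition~\ref{item:RDF1} for $H'$ is built into the definition on the $i'$-image; condition~\ref{item:RDF2} for $H'$ follows from~\ref{item:RDF2} for $H$ together with the triviality of the constraints of $i'p^{}_{A'}$; and $H'(1_{\smp{1}}\times v)=vH$ holds by construction. Finally, uniqueness is automatic: $r'$ is forced by the universal property of the pushout, any compatible $H'$ must equal $vH$ on the $v$-image and, by~\ref{item:RDF1}, $i'p^{}_{A'}$ on the $i'$-image, and its constraints on mixed composites are then forced by~\ref{item:RDF2}; hence $(r',H')$ is unique.
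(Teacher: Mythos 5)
Your skeleton is the same as the proof given in section~\ref{sec:2rdf}: $r'$ comes from the universal property of the pushout; the pushout along a sieve is described explicitly, with every mixed $1$\nbd-cell of $B'$ of the form $\overline{(f_2,a,f_1)}=v(f_2)\,i'(f_1)$ with $f_1$ in $A'$ and $f_2:a\to y$ in $B$, $a$ in $A$; the homotopy $H'$ is forced to be $vH$ on the image of $1_{\smp{1}}\times v$ and $i'p^{}_{A'}$ on the image of $1_{\smp{1}}\times i'$, and is defined on a mixed transition cell by $H'(0\to1,\overline{(f_2,f_1)})=vH(0\to1,f_2)\,f_1$; and your reduction of well-definedness under the sliding relation to the triviality clause of \eqref{item:RDF2} is exactly the identity \eqref{eq:A1} of the paper (with \eqref{eq:A2} playing the same role at the level of $2$\nbd-cells, which you gloss over but which works the same way).

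The genuine gap is in your recipe for the functoriality constraints, \og $v$ of those of $H$ on the $v$-part and trivial on the $i'$-part\fg. This fails for the mixed pairs of type $((1,g),(0\to1,f))$ where $f$ is a $1$\nbd-cell of $A'$ and $g=\overline{(g_2,a,g_1)}$ is mixed: there the source of the constraint is $H'(0\to1,gf)=vH(0\to1,g_2)\,g_1f$ while its target is $H'(1,g)\,H'(0\to1,f)=v(g_2)\,g_1f$, and these differ in general, so a trivial constraint is type-incorrect; nor is the constraint the image under $v$ of a constraint of $H$ attached to the given data. The forced value is
\[
H'((1,\overline{(g_2,g_1)}),(0\to1,f))=vH((1,g_2),(0\to1,\id{a}))\comp g_1f\ ,
\]
a whiskering of a constraint of $H$ taken at the \emph{identity} $1$\nbd-cell of the middle object $a$ --- a cell that does not occur in your factorization data. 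Both the fact that this value is forced (for uniqueness) and the fact that it is independent of the representative $(g_2,a,g_1)$ (for existence) come not from \eqref{item:RDF2} but from the cocycle condition of $H$, via the preliminary identities \eqref{eq:B2}--\eqref{eq:B4} of section~\ref{sec:2rdf}; \eqref{item:RDF2} only disposes of the pairs of type $((0\to1,g),(0,f))$, so your closing claim that the mixed constraints \og are then forced by \eqref{item:RDF2}\fg{} misattributes the mechanism. To complete your argument you would need to extract these cocycle-derived identities first, exactly as the paper does, and then run the case-by-case verification of the cocycle and horizontal-composition axioms with the corrected constraints.
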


\begin{proof}
La section \ref{sec:2rdf}, entièrement indépendante des autres résultats de l'article, sera consacrée à la preuve de cette proposition. 
\end{proof}

\begin{rem}\label{rem:rdf_wor}
L'exemple suivant montre que la condition \eqref{item:RDF2} de la définition de la
notion de $2$\nbd-crible rétracte par déformation oplax fort est essentielle
pour la véracité de la proposition précédente (du moins dans sa forme
précise).

\tb

Considérons le crible de $2$\nbd-catégories
$$
A=
\left\{
\raise 22pt
\vbox{
\xymatrixrowsep{.7pc}
\xymatrix{
x\ar[rd]^f
\\
&z
\\
y\ar[ru]_g
}
}
\right\}
\xymatrix{\ar[r]^i&}
\left\{
\raise 22pt
\vbox{
\xymatrixrowsep{.7pc}
\xymatrix{
x\ar[rd]^f\ar@/^3ex/[rrd]^p\drrtwocell<\omit>{<-.5>\alpha}
\\
&z\ar[r]^h
&t\dlltwocell<\omit>{<-.5>\kern 25pt\beta\vrule depth 6pt width 0pt}
\\
y\ar[ru]_g\ar@/_3ex/[rru]_q
}
}
\right\} = B
$$
(où $p\neq hf$ et $q\neq hg$), le cocrible
$$
A=
\left\{
\raise 22pt
\vbox{
\xymatrixrowsep{.7pc}
\xymatrix{
x\ar[rd]^f
\\
&z
\\
y\ar[ru]_g
}
}
\right\}
\xymatrix{\ar[r]^u&}
\left\{
\raise 22pt
\vbox{
\xymatrixrowsep{.7pc}
\xymatrix{
&x\ar[rd]^f
\\
a\ar[ru]^k\ar[rd]_l
&&z
\\
&y\ar[ru]_g
}
}
\right\}=A'\ ,
$$
où $A'$ est la catégorie obtenue à partir de $A$ par adjonction formelle d'un objet initial~$a$ (de sorte que $fk=g\,l$), et le crible image directe de $i$ le long de $u$
$$
A'=
\left\{
\raise 22pt
\vbox{
\xymatrixrowsep{.7pc}
\xymatrix{
&x\ar[rd]^f
\\
a\ar[ru]^k\ar[rd]_l
&&z
\\
&y\ar[ru]_g
}
}
\right\}
\xymatrix{\ar[r]^{i'}&}
\left\{
\raise 22pt
\vbox{
\xymatrixrowsep{.7pc}
\xymatrix{
&x\ar[rd]^f\ar@/^3ex/[rrd]^p\drrtwocell<\omit>{<-.5>\alpha}
\\
a\ar[ru]^k\ar[rd]_l
&&z\ar[r]^h
&t\dlltwocell<\omit>{<-.5>\kern 25pt\beta\vrule depth 6pt width 0pt}
\\
&y\ar[ru]_g\ar@/_3ex/[rru]_q
}
}
\right\} = B'\ .
$$
On a donc un carré cocartésien dans $\nCat{2}$
$$
\xymatrix{
&A\ar[r]^u\ar[d]_i
&A'\ar[d]^{i'}
\\
&B\ar[r]_{v}
&B'
&\kern-16pt.\kern15pt
}
$$
Le crible $i$ admet une unique rétraction $r:B\to A$. Cette rétraction envoie $t$ sur $z$, $h$~sur $1_z$, $p$ et $hf$ sur $f$, $q$ et $hg$ sur $g$, $\alpha$ sur $1_f$ et $\beta$ sur $1_g$. De même, le crible $i'$  admet une unique rétraction $r':B'\to A'$ et cette dernière satisfait à la relation $rv=ur$.
\tb

Il existe plusieurs homotopies oplax relatives à $A$ de $ir$ vers $1_B$.
Comme les restrictions d'une telle homotopie sur $\{0\}\times B$,
$\{1\}\times B$ et $\Deltan1\times A$ sont imposées et comme $B$ est une
catégorie enrichie en ensembles ordonnés, pour définir une telle
homotopie~$H$, il suffit de la définir sur les $1$\nbd-flèches de
$\Deltan1\times B$ de la forme $(0\to1,w)$, avec $w$ une $1$-flèche de $B$
qui n'est pas dans $A$ (c'est-à-dire $w\in\{1_t,h,p,q,hf,hg\}$). La
compatibilité aux sources et buts implique que 
$$
H(0\to1,1_t) = h \quad\hbox{et}\quad H(0\to1,h)=h\ ,
$$ 
et celle à la contrainte de fonctorialité relative aux couples de morphismes
composables
\[
(0, x) \xrightarrow{(0 \rightarrow 1, 1_x)} 
(1, x) \xrightarrow{\hpzz(1, p)\hpzz}
(1, t)
\quad\text{et}\quad
(0, y) \xrightarrow{(0 \rightarrow 1, 1_y)}
(1, y) \xrightarrow{\hpzz(1, q)\hpzz}
(1, t)
\]
que
$$H(0\to1,p)=p\quad\hbox{et}\quad H(0\to1,q)=q\ .$$ 
Enfin, à nouveau, la compatibilité aux sources et buts implique que
$$H(0\to1,hf)=hf\ \hbox{ou}\ p\qquad\hbox{et}\qquad H(0\to1,hg)=hg\ \hbox{ou}\ q\ .$$
On vérifie facilement que chacun de ces quatre choix définit une homotopie oplax relative à $A$ de $ir$ vers $1_B$, et que parmi ces quatre homotopies, seule celle qui est définie par
$$H(0\to1,hf)=hf\qquad\hbox{et}\qquad H(0\to1,hg)=hg\ $$
satisfait à la condition \eqref{item:RDF2}.
\tb

On va montrer que celle qui est définie par 
$$H(0\to1,hf)=p\qquad\hbox{et}\qquad H(0\to1,hg)=q\ $$
ne se prolonge \emph{pas} en une homotopie oplax $H'$ relative à $A'$ de $i'r'$ vers $1_{B'}$. 
En effet, si $H'$ était une telle homotopie, on aurait, entre autres, des
$2$\nbd-flèches de contrainte de fonctorialité
\[
\begin{aligned}
H'(0 \to 1, hfk) & \Rightarrow 
H'(0 \to 1, hf)\,H'(0, k)
=vH(0 \to 1, hf)\,i'r'(k)=pk\ , \\
H'(0 \to 1, h\,g\,l) & \Rightarrow 
H'(0 \to 1, h\,g)\,H'(0, l)
=vH(0 \to 1, h\,g)\,i'r'(l)=ql\ .
\end{aligned}
\]
Étant donné que $hfk=h\,g\,l$, il faudrait donc qu'il existe deux
$2$-flèches de $B'$ de même source et de buts respectifs $pk$ et $ql$. Or il
n'existe pas de telles $2$-flèches dans~$B'$, ce qui prouve qu'un telle
homotopie $H'$ n'existe pas.
\end{rem}

\begin{sch}\label{sch:rdf_wor}
Dans \cite{Wor}, les auteurs introduisent une notion de «~\forlang{skew
immersion}~» dans le but d'obtenir une variante de notre
condition~($\text{\emph{d}}'$) (ou plus précisément de la conséquence que
nous tirons de cette condition dans la preuve du théorème~\ref{Thomabs} en
utilisant la proposition~\ref{lemmeDwyer}). Nous traduirons ce terme par
«~immersion tordue~». Une \ndef{immersion tordue} est un crible $j:A\to C$
de $\dcat$ admettant une décomposition
$$
\xymatrix{
A\ar[r]^i\ar@/_1.5em/[rr]_{j}
&B\ar[r]^k
&C\ \ 
}\ \ ,
$$
où $k$ est un cocrible et $i$ un crible admettant une rétraction $r$, telle qu'il existe une homotopie oplax (lax dans~\cite{Wor}) relative à $A$ de $ir$ vers $1_B$ (mais sans qu'on demande que cette homotopie satisfasse à la condition \eqref{item:RDF2}).
\tb

Les auteurs de \cite{Wor} affirment (proposition~4.4.1) que cette notion
d'immersion tordue est stable par images directes dans $\dcat$. Pour le
montrer, ils considèrent un $2$-foncteur $u:A\to A'$, et ils forment le
diagramme de carrés cocartésiens 
$$
\xymatrix{
&A\ar[r]^u\ar[d]^i\ar@/^-1.3pc/[dd]_j
&A'\ar[d]_{i'\kern -3pt}\ar@/^1.3pc/[dd]^{j'}
\\
&B\ar[r]^{v}\ar[d]^k
&B'\ar[d]_{k'\kern -3pt}
\\
&C\ar[r]_{w}
&C'
&\hskip -15pt.\hskip 15pt
}
$$
Comme les cocribles sont stables par images directes, ils expliquent que
pour prouver leur affirmation, il suffit de construire une rétraction $r'$
de $i'$, et une homotopie oplax~$H'$ relative à $A'$ de $i'r'$ vers
$1_{B'}$. La construction de la rétraction $r'$ ne pose aucun problème; elle
résulte de la propriété universelle de la somme amalgamée $B'$. En revanche,
ils définissent l'homotopie $H'$ par des formules explicites qui impliquent
que $H'(1_{\smp{1}}\times v)= vH$. L'exemple de la remarque précédente
montre pourtant que cela n'est pas toujours possible. De fait, si $H$
désigne l'homotopie définie à la fin de la remarque précédente, un examen de
la preuve de la proposition~4.4.1 de \cite{Wor} révèle que, parmi les
équations définissant leur homotopie $H'$, celle de la ligne~10, page~226,
implique les égalités
\[
\begin{aligned}
H'(0 \to 1,hfk) & = 
H(0 \to 1, hf) \, k 
= pk\ ,
\\
H'(0 \to 1, h\,g\,l) & = 
H(0 \to 1, h\,g) \, l
= ql\ .
\end{aligned}
\]
Or celles-ci sont contradictoires, puisque $hfk=h\,g\,l$, mais $pk\neq ql$. 

\tb

Bien que cet exemple montre que la preuve de la proposition~4.4.1
de~\cite{Wor} est incorrecte, il ne dément pas la stabilité des immersions
tordues par images directes. En effet, il a été observé dans la remarque
précédente que dans cet exemple, il existe une autre homotopie oplax
relative à $A$ de $ir$ vers $1_A$ qui, elle, satisfait à la
condition~\eqref{item:RDF2}, et qui se prolonge donc, en vertu de la
proposition~\ref{prop:2rdf_img_dir}, en une homotopie oplax relative à $A'$
de $i'r'$ vers $1_B'$. Il rend néanmoins cette stabilité très douteuse.
\end{sch}

\begin{cor}\label{coro:2rdf_couniv}
Les $2$-cribles rétractes par déformation oplax forts sont des équivalences
faibles de Thomason et le restent après tout cochangement de base. 
\end{cor}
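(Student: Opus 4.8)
The plan is to obtain this corollary as a formal consequence of the two preceding propositions, with no additional work. First I would observe that the assertion that such a $2$-crible is a Thomason weak equivalence is exactly Proposition~\ref{prop:2rdf_thom}, so nothing new is needed there; the substance of the statement lies entirely in the claim that this property is preserved under cobase change.

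To treat this, I would start from a strong oplax deformation retract $2$-crible $i : A \to B$, together with its retract structure $(r, H)$, and an arbitrary cocartesian square in $\dcat$ exhibiting a cobase change $i' : A' \to B'$ of $i$ along some $u : A \to A'$. By Proposition~\ref{prop:2rdf_img_dir}, the morphism $i'$ is again a crible carrying a uniquely determined retract structure $(r', H')$; in other words, the class of strong oplax deformation retract $2$-cribles is stable under pushouts.

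Finally, I would apply Proposition~\ref{prop:2rdf_thom} once more, this time to $i'$, to conclude that the cobase change $i'$ is again a Thomason weak equivalence. Since $i'$ was an arbitrary cobase change of $i$, this proves that the property of being a Thomason weak equivalence is preserved under every cobase change, which is exactly the statement. I expect no genuine obstacle in this final step: the corollary is nothing more than the conjunction of the inclusion of strong oplax deformation retract $2$-cribles into the Thomason weak equivalences (Proposition~\ref{prop:2rdf_thom}) and their stability under pushouts (Proposition~\ref{prop:2rdf_img_dir}). All the real difficulty is concentrated in the latter proposition, whose proof occupies the whole of Section~\ref{sec:2rdf}.
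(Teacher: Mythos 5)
Votre argument est correct et coïncide exactement avec la preuve du papier, qui déduit le corollaire immédiatement de la proposition~\ref{prop:2rdf_thom} et de la proposition~\ref{prop:2rdf_img_dir} : la stabilité par images directes fournie par cette dernière permet d'appliquer la première au cochangement de base $i'$. Vous identifiez également à juste titre que toute la difficulté est concentrée dans la proposition~\ref{prop:2rdf_img_dir}, démontrée dans la section~\ref{sec:2rdf}.
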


\begin{proof}
C'est une conséquence immédiate de la proposition \ref{prop:2rdf_thom} et de
la proposition précédente.
\end{proof}

\begin{paragr}
Soit $i : E'\to E$ un crible de $\ord$ admettant une rétraction $r:E\to E'$
qui est aussi un adjoint à droite. Notons que la condition d'adjonction
signifie exactement que pour tout $x$ dans $E$, on a $ir(x) \leq x$.
\tb

Considérons le $2$-foncteur $c_2N(i)$. En vertu de la proposition~\ref{prop:cN_abc},
ce $2$-foncteur est un crible. Il admet de plus $c_2N(r)$ comme rétraction.
Nous allons maintenant définir une homotopie oplax $H$ de
$c_2N(ir) = c_2N(i)c_2N(r)$ vers $\id{c_2NE}$. La $2$-catégorie $c_2NE$
étant en fait une catégorie enrichie en ensembles ordonnés (\emph{cf.}~proposition~\ref{orntgadj} et paragraphe~\ref{orntg}), les données pour
définir une telle homotopie se réduisent à la donnée de l'action de $H$ sur
les objets et les $1$-flèches de $\Deltan1 \times c_2NE$. 
\tb

Définissons ces données en utilisant la description explicite de $c_2NE$
donnée dans le paragraphe~\ref{orntg}
(\emph{cf.}~proposition~\ref{orntgadj}). Si $x$ est un objet de $c_2NE$,
c'est-à-dire un élément de $E$, on pose
\[
H(0, x) = ir(x) \quad\text{et}\quad H(1, x) = x\ ,
\]
et si $S$ est une $1$\nbd-flèche de $c_2NE$, c'est-à-dire un élément de $\xi(E)$, on pose
\[
H(0, S) = ir(S) \quad\text{et}\quad H(1, S) = S\ .
\]
Enfin, si $S$ est un élément de $\xi(E)$, on pose
\[
H(0 \to 1, S) =
\begin{cases}
  \{ir(\min S),\max S\}, & \text{si $S\subset E-E'$,} \\
  (S\cap E')\cup\{\max S\}, & \text{sinon.} \\
\end{cases}
\]
\end{paragr}

\begin{prop}\label{prop:c_2Ni_RDF}
Le crible $c_2N(i)$ est un rétracte par déformation oplax fort. Plus
précisément, $c_2N(r)$ et $H$ définissent une structure de rétracte par
déformation oplax fort sur $c_2N(i)$.
\end{prop}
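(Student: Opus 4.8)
Le plan est d'exploiter que, d'après la proposition~\ref{orntgadj}, la $2$\nbd-catégorie $c_2NE=c_2N_2(E)$ s'identifie à $\Orntg{E}$, laquelle est, comme décrit au paragraphe~\ref{orntg}, enrichie en ensembles ordonnés : chaque $\sHom$ est un ensemble ordonné, et il existe une (unique) $2$\nbd-flèche $S\Rightarrow S'$ entre $1$\nbd-flèches parallèles exactement lorsque $S\subset S'$. Le but de $H$ étant ainsi localement un ensemble ordonné, toute contrainte de fonctorialité est unique dès qu'elle existe, et toutes les conditions de la définition~\ref{def:fonct_lax} s'exprimant comme des égalités de $2$\nbd-flèches (cocycle, compatibilités aux compositions verticale et horizontale, aux identités) seront automatiquement satisfaites. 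Comme déjà observé, la donnée de $H$ se réduit donc à son action sur les objets et les $1$\nbd-flèches de $\smp{1}\times\Orntg{E}$, et la preuve se ramènera à trois points : que $H$ est bien défini (compatibilité aux sources, buts et $2$\nbd-flèches), que les contraintes de fonctorialité existent (i.e. que l'image d'un composé est \emph{incluse} dans le composé des images, la composition de $\Orntg{E}$ étant la réunion), et enfin que $H$ vérifie les conditions au bord et les conditions~\eqref{item:RDF1} et~\eqref{item:RDF2}.

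La vérification reposera sur quelques faits combinatoires que j'établirais d'abord. Comme $i:E'\to E$ est un crible, la proposition~\ref{carcriblesncat} montre que $E'$ est une partie commençante de $E$ (si $y\in E'$ et $x\leq y$, alors $x\in E'$). La rétraction adjointe $r$ fixe $E'$ et vérifie $ir(x)\leq x$ pour tout $x\in E$. Il en résulte que, pour toute $1$\nbd-flèche $S\in\xi(E)$, l'intersection $S\cap E'$ est un segment commençant de la chaîne $S$, qu'elle est non vide si et seulement si $\min S\in E'$, et que dans ce cas $ir(\min S)=\min S=\min(S\cap E')$. Je m'en servirais pour vérifier d'abord que $H(0\to1,S)$ a bien pour source $ir(\min S)=H(0,\min S)$ et pour but $\max S=H(1,\max S)$ dans chacun des deux cas de sa définition (dans le cas $S\subset E-E'$ parce que $ir(\min S)\leq\max S$, dans l'autre parce que $\min S\in E'$), puis que, si $S\subset S'$ sont parallèles, une disjonction selon l'appartenance de $\min S=\min S'$ à $E'$ donne $H(0\to1,S)\subset H(0\to1,S')$ : ainsi $H$ envoie les $2$\nbd-flèches sur des $2$\nbd-flèches.

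Viendrait ensuite l'existence des contraintes de fonctorialité. J'énumérerais les couples de $1$\nbd-flèches composables de $\smp{1}\times\Orntg{E}$ selon les flèches de $\smp{1}$ en jeu. Les composés entièrement au niveau $\{0\}$ (resp.~$\{1\}$) sont stricts, puisque $H$ y restreint vaut le $2$\nbd-foncteur strict $c_2N(ir)$ (resp.~l'identité), et leurs contraintes sont triviales. Restent les deux cas mixtes, où l'on compose une flèche de la forme $(0\to1,-)$ avec une flèche d'un niveau : dans chacun, l'inclusion voulue se vérifie par une disjonction de cas selon que l'élément de jonction appartient ou non à $E'$, en utilisant systématiquement que $E'$ est commençante et que $ir$ fixe $E'$. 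C'est cette analyse de cas, élémentaire mais un peu fastidieuse, qui constitue le principal obstacle technique : il faut contrôler la réunion $S\cup S'$ et comparer $S\cap E'$, $S'\cap E'$ et les $ir(\min\cdot)$ suivant la position de la jonction.

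Les conditions au bord $H\DeltaeX{0}{c_2NE}=c_2N(ir)$ et $H\DeltaeX{1}{c_2NE}=\id{c_2NE}$ seront immédiates sur les objets et les $1$\nbd-flèches. Pour~\eqref{item:RDF1}, j'observerais que si $S\subset E'$ alors $S\cap E'=S\neq\varnothing$, d'où $H(0\to1,S)=(S\cap E')\cup\{\max S\}=S$ et $H(0,S)=ir(S)=S$, de sorte que la restriction de $H$ à $\smp{1}\times c_2NE'$ coïncide, contraintes comprises, avec $c_2N(i)\,p_{c_2NE'}$. Le point crucial sera la condition~\eqref{item:RDF2} : pour $f$ une $1$\nbd-flèche $S_f\subset E'$ et $g$ une $1$\nbd-flèche $S_g$ composables, la jonction $\min S_g=\max S_f$ appartient à $E'$, donc $S_g\cap E'\neq\varnothing$, et un calcul direct fournit l'\emph{égalité} $H(0\to1,S_f\cup S_g)=S_f\cup H(0\to1,S_g)=H(0\to1,g)H(0,f)$, ce qui donne à la fois la relation souhaitée et, s'agissant d'une égalité de $1$\nbd-flèches, la trivialité de la contrainte. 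C'est bien cette compatibilité~\eqref{item:RDF2} qui est au centre de l'énoncé : la remarque~\ref{rem:rdf_wor} exhibe une homotopie oplax relative à $A$ de $ir$ vers $1_B$ ne la vérifiant pas, et c'est le choix précis de $H(0\to1,S)$ dans le cas $S\subset E-E'$ qui la garantit ici.
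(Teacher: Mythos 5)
Your proposal is correct and takes essentially the same route as the paper's proof: exploit that $c_2NE=\Orntg{E}$ is enriched in ordered sets to reduce the oplax conditions to the existence (as inclusions) of the constraints, check well-definedness on $2$-cells via the fact that membership of $S$ in $E-E'$ depends only on $\min S$ since $E'$ is a crible, carry out the case analysis on the position of $S$ relative to $E'$ for the two mixed constraint inclusions, and deduce \eqref{item:RDF2} from the equality obtained in the case $S\subset E'$. The only difference is organizational (you parametrize the cases by the junction element and $\min S$ rather than directly by the three positions of $S$, and you make the source/target and \eqref{item:RDF1} checks slightly more explicit than the paper, which treats them as immediate), with no substantive divergence.
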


\begin{proof}
Vérifions que $H$ est un $2$-foncteur oplax. Nous devons tout d'abord nous
assurer que $H$ est bien défini sur les $2$-flèches. Il s'agit de voir que
si $S \subset S'$ sont deux éléments de $\xi E$ tels que $\min S = \min S'$
et $\max S = \max S'$, alors on a
\[
H(0, S) \subset H(0, S'), \quad
H(1, S) \subset H(1, S') \quad\text{et}\quad
H(0 \to 1, S) \subset H(0 \to 1, S')\ .
\]
Les deux premières inclusions sont évidentes. Pour la troisième, il suffit
de remarquer qu'on a $S \subset E-E'$ si et seulement si $S' \subset E-E'$,
car $E'$ étant un crible, cette condition ne dépend en fait que de l'entier
$\min S = \min S'$.
\tb

Vérifions maintenant que les contraintes de fonctorialité de $H$ sont bien
définies. Comme la restriction de $H$ à $\{\e\}\times c_2NE$, $\e=0,1$, est
un $2$\nbd-foncteur strict, il suffit de prouver que pour $S,S'$ deux
éléments de $\xi E$ tels que $\max S=\min S'$, on a des inclusions
\[
\begin{aligned}
H\bigl((0\to1,S')\circ(0,S)\bigr)=H(0\to1,S'\cup S)\subset H(0\to1,S')\circ H(0,S)\ ,\\
H\bigl((1,S')\circ(0\to1,S)\bigr)=H(0\to1,S'\cup S)\subset H(1,S')\circ H(0\to1,S)\ .
\end{aligned}
\]
On remarque que $\min (S\cup S')=\min S$ et $\max (S\cup S')=\max S'$.
On distingue trois cas :
\begin{enumerate}[label=(\arabic*), wide]
  \item $S\subset E'$, ce qui implique $S'\not\subset E-E'$ et $S'\cup S\not\subset E-E'$.

   On a alors
   \[
\begin{aligned}
&H(0\to1,S'\cup S)=((S'\cup S)\cap E')\cup\{\max (S'\cup S)\}=(S'\cap E')\cup S\cup\{\max S'\}\ ,\\
&H(0\to1,S')\circ H(0,S)=(S'\cap E')\cup\{\max S'\}\cup ir(S)=(S'\cap E')\cup\{\max S'\}\cup S\ ,\\
&H(1,S')\circ H(0\to1,S)=S'\cup(S\cap E')\cup\{\max S\}=S'\cup S\ .
\end{aligned}
   \]

  \item $S\cap E'\neq\varnothing\neq S\cap(E-E')$, ce qui implique
    $S'\subset E-E'$ et $S'\cup S\not\subset E-E'$.
    
    On a alors
    \[
\begin{aligned}
&H(0\to1,S'\cup S)=((S'\cup S)\cap E')\cup\{\max (S'\cup S)\}=(S\cap E')\cup \{\max S'\}\ ,\\
&H(0\to1,S')\circ H(0,S)=\{ir(\min S'),\max S'\}\cup ir(S)\ ,\\
&H(1,S')\circ H(0\to1,S)=S'\cup(S\cap E')\cup\{\max S\}\ .
\end{aligned}
    \]

  \item  $S\subset E-E'$, ce qui implique $S'\subset E-E'$ et $S'\cup
    S\subset E-E'$.
    
    On a alors
    \[
\begin{aligned}
&H(0\to1,S'\cup S)=\{ir(\min (S'\cup S)),\max (S'\cup S)\}=\{ir(\min S),\max S'\}\ ,\\
&H(0\to1,S')\circ H(0,S)=\{ir(\min S'),\max S'\}\cup ir(S)\ ,\\
&H(1,S')\circ H(0\to1,S)=S'\cup\{ir(\min S),\max S\}\ .
\end{aligned}
    \]
\end{enumerate}

\noindent
On a donc bien, dans tous les cas, ces deux inclusions. Il résulte alors du fait que $c_2NE$ est
une catégorie enrichie en ensembles ordonnés que $H$ est bien un foncteur
oplax.

\tb
Il est par ailleurs immédiat que $H$ est une homotopie oplax de
$c_2N(i)\,c_2N(r)$ vers~$1_{c_2NE}$ et qu'elle satisfait à la
condition~\eqref{item:RDF1}. Enfin, on remarque que dans le cas~(1)
ci-dessus, on a montré que si $S\subset E'$, on a non seulement une
inclusion, mais même une égalité
\[
H(0 \to 1, S' \cup S) = H(0 \to 1, S') \circ H(0, S)\ ,
\]
ce qui prouve que $H$ satisfait à la condition~\eqref{item:RDF2} et achève la démonstration.
\end{proof}

\begin{cor}\label{coro:d'}
Si $E'\to E$ est un crible de $\ord$ admettant une rétraction qui est aussi
un adjoint à droite, son image par le foncteur $c_2N$ est une équivalence
faible de Thomason et le reste après tout cochangement de base.
\end{cor}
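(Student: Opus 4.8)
Le plan est de reconna\^itre que cet \'enonc\'e est une simple cons\'equence des deux r\'esultats qui le pr\'ec\`edent imm\'ediatement. D'apr\`es la proposition~\ref{prop:cN_abc}~(\emph{c}), le foncteur $c_2N$ transforme le crible $i:E'\to E$ de $\ord$ en un crible $c_2N(i)$ de $\nCat{2}$, de sorte qu'il est licite de chercher \`a munir ce dernier d'une structure de r\'etracte par d\'eformation oplax fort. L'hypoth\`ese que la r\'etraction $r:E\to E'$ est un adjoint \`a droite de $i$ se traduit par la relation $ir(x)\leq x$ pour tout $x\in E$, ce qui est pr\'ecis\'ement ce qui permet de d\'efinir l'homotopie oplax explicite $H$ du paragraphe pr\'ec\'edent.

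La premi\`ere \'etape consiste \`a appliquer la proposition~\ref{prop:c_2Ni_RDF} : le $2$\nbd-foncteur $c_2N(i)$ est un crible r\'etracte par d\'eformation oplax fort, la structure \'etant fournie par la r\'etraction $c_2N(r)$ et l'homotopie $H$. La seconde \'etape consiste \`a invoquer le corollaire~\ref{coro:2rdf_couniv}, selon lequel un tel crible est une \'equivalence faible de Thomason qui le reste apr\`es tout cochangement de base. La conjonction de ces deux \'enonc\'es fournit imm\'ediatement l'assertion.

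Le v\'eritable contenu ne r\'eside donc pas dans la d\'emonstration de ce corollaire, mais dans les deux r\'esultats mis bout \`a bout. D'une part, le point d\'elicat de la proposition~\ref{prop:c_2Ni_RDF} sera la v\'erification que les formules d\'efinissant $H$ respectent les contraintes de fonctorialit\'e, le c\oe ur \'etant la condition~\eqref{item:RDF2}, ce qui obligera \`a distinguer les cas selon la position de $S$ relativement \`a $E'$ et \`a $E-E'$. D'autre part, et c'est l\`a l'obstacle principal de toute la strat\'egie, le corollaire~\ref{coro:2rdf_couniv} repose sur la stabilit\'e par images directes \'etablie dans la proposition~\ref{prop:2rdf_img_dir}, dont la preuve non triviale est renvoy\'ee \`a la section~\ref{sec:2rdf} et corrige l'erreur de~\cite{Wor}. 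C'est pr\'ecis\'ement la condition additionnelle~\eqref{item:RDF2} qui rend cette stabilit\'e vraie, comme l'illustre le contre-exemple de la remarque~\ref{rem:rdf_wor}.
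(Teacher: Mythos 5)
Votre preuve est correcte et suit exactement la d\'emonstration du papier : appliquer la proposition~\ref{prop:c_2Ni_RDF} pour munir $c_2N(i)$ d'une structure de $2$\nbd-crible r\'etracte par d\'eformation oplax fort, puis conclure par le corollaire~\ref{coro:2rdf_couniv}. Vos remarques compl\'ementaires sur la localisation du contenu r\'eel (v\'erification de la condition~\eqref{item:RDF2} et stabilit\'e par images directes via la proposition~\ref{prop:2rdf_img_dir}) sont justes mais ne modifient pas l'argument.
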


\begin{proof}
En vertu de la proposition précédente, $c_2N(E' \to E)$ est un $2$\nbd-crible rétracte
par déformation oplax fort. Le résultat est donc une conséquence immédiate
du corollaire~\ref{coro:2rdf_couniv}.
\end{proof}

\begin{thm}\label{thm:cmf_2cat}
Il existe une structure de catégorie de modèles combinatoire propre sur
$\dcat$ engendrée par $(c_2\Sd^2(I), c_2\Sd^2(J))$ \emph{(cf.
\ref{enssimpl})} dont les équivalences faibles sont les équivalences faibles
de Thomason et les fibrations sont les fibrations de Thomason
\emph{(cf.~\ref{eqfThom})}.
De plus, $(c_2\Sd^2,\Ex^2N_2)$ est une adjonction de Quillen.
\end{thm}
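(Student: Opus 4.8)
Le plan est d'appliquer le th\'eor\`eme de Thomason abstrait~\ref{Thomabs} \`a la cat\'egorie $\C=\nCat{2}=\dcat$, munie des foncteurs adjoints $F=c_2$ et $U=N_2$ qui lui sont associ\'es par le proc\'ed\'e de Kan \`a partir du foncteur $i_2:\cats\to\nCat{2}$. Pour ce faire, je commencerais par rappeler que la cat\'egorie $\nCat{2}$ est localement pr\'esentable et qu'elle est une cat\'egorie \`a cribles et cocribles efficaces (exemple~\ref{nCatcribles} et proposition~\ref{nCatcribleseff}), de sorte que les hypoth\`eses portant sur $\C$ dans le th\'eor\`eme~\ref{Thomabs} sont bien satisfaites. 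Il ne reste alors qu'\`a v\'erifier les conditions~(\emph{a})--(\emph{e}) dudit th\'eor\`eme pour le couple $(c_2,N_2)$.

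Les conditions~(\emph{a}), (\emph{b}) et~(\emph{c}) ont d\'ej\`a \'et\'e \'etablies dans la proposition~\ref{prop:cN_abc}, et la condition~(\emph{e}) n'est autre que le corollaire~\ref{conditione}. Pour la condition~(\emph{d}), j'invoquerais la remarque~\ref{remThomabs}, selon laquelle il suffit de v\'erifier la condition plus forte~(\emph{d}${}'$); or celle-ci est pr\'ecis\'ement le corollaire~\ref{coro:d'}. En effet, en vertu de~\ref{UeqUcof}, toute fl\`eche qui est une $U$\nbd-\'equivalence et le reste apr\`es tout cochangement de base est une $U$\nbd-cofibration, de sorte que~(\emph{d}${}'$) entra\^ine~(\emph{d}). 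De fa\c con \'equivalente, on pourrait se contenter d'invoquer le scholie~\ref{remnThom}, qui r\'eduit d\'ej\`a l'existence de la structure \`a la Thomason sur $\nCat{n}$ \`a la preuve des seules conditions~(\emph{d}${}'$) et~(\emph{e}), fournies ici, pour $n=2$, par les corollaires~\ref{coro:d'} et~\ref{conditione}.

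Le th\'eor\`eme~\ref{Thomabs} fournit alors une structure de cat\'egorie de mod\`eles combinatoire propre sur $\nCat{2}$ engendr\'ee par $(c_2\Sd^2(I),c_2\Sd^2(J))$, dont les \'equivalences faibles sont les $N_2$\nbd-\'equivalences --- c'est-\`a-dire, en vertu de~\ref{eqfThom}, les \'equivalences faibles de Thomason --- et dont les fibrations sont les fl\`eches dont l'image par $\Ex^2N_2$ est une fibration de Kan, autrement dit les fibrations de Thomason. Enfin, l'assertion que $(c_2\Sd^2,\Ex^2N_2)$ est une adjonction de Quillen r\'esulte de la remarque~\ref{remThomabseq}. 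L'essentiel du travail n'est donc pas dans cet assemblage final, qui est purement formel, mais en amont, dans l'\'etablissement de la condition~(\emph{d}${}'$), dont la preuve repose sur la stabilit\'e par images directes des $2$\nbd-cribles r\'etractes par d\'eformation oplax forts (proposition~\ref{prop:2rdf_img_dir}), report\'ee \`a la d\'elicate section~\ref{sec:2rdf}.
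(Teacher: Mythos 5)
Votre preuve est correcte et suit exactement la m\^eme voie que celle de l'article, qui se contente d'invoquer le scholie~\ref{remnThom} pour r\'eduire l'\'enonc\'e aux conditions~($\text{\emph{d}}'$) et~(\emph{e}), \'etablies par les corollaires~\ref{coro:d'} et~\ref{conditione}. Votre d\'etour explicite par le th\'eor\`eme~\ref{Thomabs}, la proposition~\ref{prop:cN_abc} et la remarque~\ref{remThomabs} ne fait que d\'eplier ce que le scholie condense d\'ej\`a, comme vous le notez vous-m\^eme.
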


\begin{proof}
En vertu du scholie~\ref{remnThom}, il suffit de vérifier les
conditions~($\text{\emph{d}}'$) et~($\emph{e}$) dudit scholie. C'est
exactement le contenu des corollaires~\ref{conditione} et \ref{coro:d'}.
\end{proof}

Nous appellerons cette structure de catégorie de modèles la \ndef{structure
de catégorie de modèles à la Thomason sur $\dcat$}.

\begin{paragr}
Nous terminons cette section en expliquant comment on peut déduire
immédiatement de résultats classiques et d'un théorème de J.~Chiche que 
l'adjonction de Quillen
\[ c_2\Sd^2 : \simpl \to \dcat\ , \qquad \Ex^2N_2 : \dcat \to \simpl\ \]
est une équivalence de Quillen.
\end{paragr}

\begin{thm}[Illusie, Quillen]
Le foncteur $N : \cat \to \simpl$ induit une équivalence de catégories entre
les catégories homotopiques (où on considère $\cat$ munie des équivalences
faibles de Thomason).
\end{thm}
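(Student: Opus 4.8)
The plan is to deduce this from the (classical, $n=1$ case of) Thomason model structure produced by Theorem~\ref{Thomabs}, namely to prove that the Quillen pair $(c\Sd^2,\Ex^2N)$, with $c\dashv N$ as in~\ref{nerfdeAG}, is a Quillen equivalence; the asserted equivalence of homotopy categories is then immediate. First I would isolate the two purely formal inputs. Since the Thomason weak equivalences of $\cat$ are \emph{by definition} the maps inverted by $N$, the functor $N$ both preserves and reflects weak equivalences; in particular it descends to a functor $\overline N\colon\Ho(\cat)\to\Hot$ requiring no derived correction, and $\Ex^2N$ reflects weak equivalences outright (using Theorem~\ref{Kan}, which makes $\Sd$ and $\Ex$ induce autoequivalences of $\Hot$ canonically isomorphic to the identity). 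This settles the first half of the standard criterion for a Quillen equivalence: the right adjoint reflects weak equivalences between fibrant objects.

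It remains to check the second half: for every cofibrant object $X$ of $\simpl$ --- that is, every simplicial set, since the cofibrations are the monomorphisms --- the derived unit $X\to\Ex^2N\,(c\Sd^2X)^{\mathrm{fib}}$ is a weak equivalence. Because $N$ preserves \emph{all} weak equivalences, the fibrant replacement in $\cat$ becomes invisible after $\Ex^2N$, so the derived unit coincides with the ordinary unit $X\to\Ex^2Nc\Sd^2X$ of the composite adjunction. This unit factors as the $(\Sd^2,\Ex^2)$-unit $X\to\Ex^2\Sd^2X$ followed by $\Ex^2$ applied to the $(c,N)$-unit $\Sd^2X\to Nc\Sd^2X$. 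The first factor is a weak equivalence by Theorem~\ref{Kan}: the identity $\beta=\Ex(\alpha)\circ\eta$ (which exhibits $\beta$ as the adjoint transpose of $\alpha$, $\eta$ being the $(\Sd,\Ex)$-unit), together with two-out-of-three, shows $\eta$ to be a weak equivalence, and one squares this. Since $\Ex^2$ reflects weak equivalences, I am reduced to a single statement: \emph{for every simplicial set $X$, the unit $\Sd^2X\to Nc\Sd^2X$ of the adjunction $(c,N)$ is a weak equivalence.}

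This last point is the heart of the matter and the step I expect to be the main obstacle. I would establish it through the classical regularity phenomenon for iterated barycentric subdivision (Fritsch--Latch): the double subdivision $\Sd^2X$ of an \emph{arbitrary} simplicial set is canonically isomorphic to the nerve $NP$ of a partially ordered set $P$. Granting this, the rest is formal. The nerve functor is fully faithful, so the counit $cN\to1_\cat$ is invertible; hence $c\Sd^2X=cNP$ is canonically identified with $P$, and the triangle identity forces the unit $\Sd^2X=NP\to NcNP=NP$ to be an \emph{isomorphism}, \emph{a fortiori} a weak equivalence. All the genuine work is concentrated in the combinatorial fact that $\Sd^2X$ is a nerve of a poset; rather than reproduce the barycentric bookkeeping I would cite it, as it is classical and available in~\cite{IL} (where the theorem is attributed to Quillen) and, in model-categorical form, in~\cite{Ci}.

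Finally I would assemble the pieces. The criterion now gives that $(c\Sd^2,\Ex^2N)$ is a Quillen equivalence, so $L(c\Sd^2)$ and $R(\Ex^2N)$ are inverse equivalences between $\Hot$ and $\Ho(\cat)$. Since $N$ preserves all weak equivalences, $R(\Ex^2N)$ is computed without fibrant replacement and equals $\Ex^2\circ\overline N$, where $\Ex^2$ denotes the autoequivalence of $\Hot$ coming from Theorem~\ref{Kan}; cancelling this autoequivalence shows that $\overline N$ is itself an equivalence of categories, which is the assertion. This is of course consistent with Remark~\ref{remThomabseq}, which identifies \og$(c\Sd^2,\Ex^2N)$ is a Quillen equivalence\fg{} with \og$N$ induces an equivalence of homotopy categories\fg{}: the two are two faces of one result, and the only non-formal ingredient is the Fritsch--Latch lemma.
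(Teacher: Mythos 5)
Your formal reductions are correct as far as they go: the derived unit does coincide with the ordinary unit because $N$ sends the fibrant replacement map (a Thomason weak equivalence) to a simplicial weak equivalence, the factorisation of the unit of $(c\Sd^2,\Ex^2N)$ through $X\to\Ex^2\Sd^2X$ is legitimate, and the theorem~\ref{Kan} manipulations showing that $\Sd$ and $\Ex$ preserve and reflect all weak equivalences are fine. But the step you yourself identify as the heart of the matter is false: it is \emph{not} true that $\Sd^2X$ is isomorphic to the nerve of a poset for an arbitrary simplicial set $X$. Take $X=\smp{2}/\bord{2}$. Then $Y=\Sd X$ has two vertices $p$ (the class of the collapsed boundary) and $t$, six nondegenerate edges $p\to t$, and six nondegenerate $2$-simplexes whose $d_2$-face is the degenerate edge at $p$; subdividing once more, each such $2$-simplex $\sigma$ produces in $\Sd Y=\Sd^2X$ \emph{three distinct} edges from $p$ to the barycentre of $\sigma$ (the images of the chains $\{0\}<\{012\}$, $\{1\}<\{012\}$, $\{01\}<\{012\}$, whose sources are all identified to $p$ but which, containing the top vertex, lie in no proper face and are identified with nothing). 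A nerve of a poset has at most one edge between two given vertices, so $\Sd^2X$ is not such a nerve, and in particular the unit $\Sd^2X\to Nc\,\Sd^2X$ is not an isomorphism: one checks that the $2$-simplexes of $\Sd Y$ force these parallel edges to become equal in $c\,\Sd^2X$, so the unit genuinely collapses them. Consequently your triangle-identity argument, which rests on the isomorphism claim, breaks down; what Fritsch and Latch actually prove is the weaker statement that this unit is a \emph{weak equivalence} for every $X$, and their proof is a genuinely homotopical argument (regularity of double subdivisions, comparison with CW structures), of essentially the same depth as the theorem you are trying to establish — it cannot be recovered from the full faithfulness of $N$.

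Your attribution is also anachronistic, and this matters for the logic of the paper. Illusie's proof (following Quillen), which is what the paper cites and which predates Fritsch--Latch by a decade, does not pass through the model structure at all: it uses the category of simplexes $\cm{\cats}{X}$, the natural weak equivalence $N(\cm{\cats}{X})\to X$ given by the last-vertex map, and Theorem~A-type arguments to produce a quasi-inverse to $\overline{N}$ directly on homotopy categories. Note moreover that the paper's own logical order is the reverse of yours: by remark~\ref{remThomabseq}, \og $(c\Sd^2,\Ex^2N)$ is a Quillen equivalence\fg{} and \og $\overline{N}$ is an equivalence\fg{} are interchangeable, and the paper uses the present theorem (together with Chiche's theorem~\ref{thm:Chiche}) as the \emph{input} from which the Quillen equivalences, for $\cat$ and then for $\nCat{2}$, are deduced. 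Your direction is viable in principle, but only if you replace your false isomorphism lemma by the actual Fritsch--Latch weak-equivalence theorem, cited as such — at which point you have simply traded one classical black box for another of comparable weight.
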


\begin{proof}
Voir \cite[chapitre VI, corollaire 3.3.1]{IL}.
\end{proof}

\begin{thm}[J.~Chiche]\label{thm:Chiche}
Le foncteur d'inclusion $\cat \to \dcat$ induit une équivalence de
catégories entre les catégories homotopiques (où on considère $\cat$
et~$\dcat$ munies des équivalences faibles de Thomason).
\end{thm}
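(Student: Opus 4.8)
L'id\'ee est de ramener l'\'enonc\'e au th\'eor\`eme d'Illusie--Quillen ci-dessus gr\^ace au nerf de Street. Observons d'abord que la restriction de $N_2$ \`a $\cat$ co\"incide avec le nerf usuel~$N$. En effet, pour toute petite cat\'egorie $C$, vue comme $2$\nbd-cat\'egorie localement discr\`ete, l'adjonction entre le foncteur de $1$\nbd-troncation intelligente $\tau^i_1$ et l'inclusion $\cat\to\dcat$ (\emph{cf.}~\ref{tronq}), jointe \`a l'\'egalit\'e $\tau^i_1\trOrnt{m}{2}=\smp{m}$, fournit des bijections naturelles
\[
N_2(C)_m=\Hom_{\dcat}(\trOrnt{m}{2},C)\simeq\Hom_{\cat}(\smp{m},C)=N(C)_m\ ,
\]
d'o\`u $N_2\,|\,\cat=N$ (\emph{cf.}~aussi~\ref{trnerfdeStreet}). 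L'inclusion respecte les \'equivalences faibles de Thomason, et $N$ comme $N_2$ les envoient sur des \'equivalences faibles simpliciales~; on obtient ainsi un triangle commutatif de foncteurs entre cat\'egories homotopiques
\[
\Ho(\cat)\to\Ho(\dcat)\to\Hot\ ,
\]
le premier foncteur \'etant induit par l'inclusion, le second par $N_2$, et leur compos\'e \'etant $N_{*}$, une \'equivalence d'apr\`es le th\'eor\`eme d'Illusie--Quillen. En vertu de la propri\'et\'e du deux sur trois pour les \'equivalences de cat\'egories, il suffit donc de prouver que le foncteur $(N_2)_{*}\colon\Ho(\dcat)\to\Hot$ est une \'equivalence.

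La surjectivit\'e essentielle de $(N_2)_{*}$ est imm\'ediate, puisque tout ensemble simplicial est faiblement \'equivalent \`a un nerf $N(D)=N_2(D)$, $D$ une petite cat\'egorie. Pour la pleine fid\'elit\'e, consid\'erons l'adjonction de Quillen $(c_2\Sd^2,\Ex^2N_2)$ fournie par le th\'eor\`eme~\ref{thm:cmf_2cat}. Son foncteur de droite refl\`ete les \'equivalences faibles~: $\Ex^2N_2(f)$ est une \'equivalence faible si et seulement si $N_2(f)$ en est une (th\'eor\`eme~\ref{Kan}), c'est-\`a-dire si et seulement si $f$ est une \'equivalence faible de Thomason. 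Par un crit\`ere classique (\emph{cf.}~\cite{Ho}), cette adjonction est alors une \'equivalence de Quillen d\`es que son morphisme d'unit\'e d\'eriv\'e est une \'equivalence faible pour tout ensemble simplicial~; r\'eciproquement, une telle \'equivalence de Quillen entra\^ine que $(N_2)_{*}$ est une \'equivalence (remarque~\ref{remThomabseq}). Modulo le th\'eor\`eme~\ref{Kan}, qui permet d'\'eliminer $\Sd^2$ et $\Ex^2$, tout revient donc \`a prouver que le morphisme d'adjonction
\[
Y\to N_2c_2(Y)
\]
est une \'equivalence faible d'ensembles simpliciaux pour une classe suffisamment large de $Y$ (au minimum pour tout $Y$ de la forme $N_2(C)$, $C$ une $2$\nbd-cat\'egorie).

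C'est ici que r\'eside le c\oe ur, et la principale difficult\'e, de la preuve. Le corollaire~\ref{conditione} n'\'etablit le caract\`ere d'\'equivalence faible de $Y\to N_2c_2(Y)$ que pour $Y=N(E)$, $E$ un ensemble ordonn\'e, via la comparaison du nerf de Street avec le nerf bisimplicial due \`a Bullejos--Cegarra (th\'eor\`eme~\ref{Cegarra} et proposition~\ref{prop:c_2N_2_eq_thom}). Le passage des nerfs d'ensembles ordonn\'es \`a tous les ensembles simpliciaux ne peut r\'esulter d'une simple r\'ecurrence cellulaire, car $N_2c_2$ ne commute pas aux colimites~; il rel\`eve de la th\'eorie de l'homotopie \`a la Grothendieck des $2$\nbd-cat\'egories d\'evelopp\'ee par J.~Chiche (notamment d'un th\'eor\`eme~A $2$\nbd-cat\'egorique), combin\'ee \`a la comparaison bisimpliciale et au lemme~\ref{delHoyo}. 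On notera enfin que le candidat na\"if \`a l'inverse, le foncteur de $1$\nbd-troncation intelligente $\tau^i_1$, n'est \emph{pas} homotopique~: le morphisme d'unit\'e $C\to\tau^i_1(C)$ n'est en g\'en\'eral pas une \'equivalence faible de Thomason. Par exemple, si $A$ est un groupe ab\'elien non trivial, la $2$\nbd-cat\'egorie $C$ \`a un objet dont la cat\'egorie des endomorphismes est le groupo\"ide mono\"idal $A$ (vu comme groupo\"ide \`a un objet, muni de la structure mono\"idale donn\'ee par l'addition) a pour nerf de Street un $K(A,2)$, alors que $\tau^i_1 C$ est la cat\'egorie ponctuelle, de nerf contractile. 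L'\'equivalence $\Ho(\cat)\simeq\Ho(\dcat)$ n'est donc pas induite par la troncation, mais par un foncteur d\'eriv\'e, ce qui rend compte de la profondeur du r\'esultat.
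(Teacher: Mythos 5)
Your proposal does not actually prove the theorem: after two correct reductions, the whole content of the statement is concentrated in a step that you leave open. The preliminary observations are sound --- the identification $N_2\,|\,\cat = N$ (\emph{cf.}~\ref{trnerfdeStreet}), the commutative triangle $\Ho(\cat)\to\Ho(\dcat)\to\Hot$ whose composite is an equivalence by Illusie--Quillen, the two-out-of-three reduction to showing that $\overline{N_2}:\Ho(\dcat)\to\Hot$ is an equivalence, and the further reduction (via the remark~\ref{remThomabseq}, the theorem~\ref{Kan} and the fact that $\Ex^2N_2$ reflects weak equivalences) to showing that the unit $Y\to N_2c_2(Y)$ is a weak equivalence for \emph{every} simplicial set $Y$. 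But that last claim is precisely where the theorem lives, and you do not establish it: the corollary~\ref{conditione} gives it only for $Y=N(E)$ with $E$ an ordered set, and, as you yourself observe, no cellular induction can promote this to arbitrary $Y$, since $N_2c_2$ neither commutes with colimits nor takes pushouts of simplicial sets to homotopy pushouts. At this point you appeal to the Grothendieck-style homotopy theory of $2$-categories developed by J.~Chiche (in particular a $2$-categorical Theorem~A) --- but the principal output of that theory \emph{is} the statement under proof, so invoking it here is circular: you have reduced the theorem to itself.

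For comparison, the paper does not prove this theorem either; it is an external input, cited as \cite[th\'eor\`eme 7.9]{Chiche}, and the logical flow there is the reverse of yours. The paper deduces from Chiche's theorem (together with Illusie--Quillen) that $\overline{N_2}$ is an equivalence, and thence that $(c_2\Sd^2,\Ex^2N_2)$ is a Quillen equivalence; you attempt to run the argument backwards, deriving Chiche's theorem from the equivalence $\overline{N_2}$, which is not available by the paper's own means --- indeed the introduction stresses that the Quillen equivalence with $\simpl$ follows from Chiche's (posterior) result, not conversely. Your closing remarks are correct but do not repair the gap: the $K(A,2)$ example rightly shows that the unit $C\to\tri(C)$ of the intelligent-truncation adjunction is not in general a Thomason weak equivalence, which is consistent with the proposition~\ref{eq-Quillen-incl} (the equivalence is realized by the \emph{derived} functor of $\tri$, not by $\tri$ itself), yet this observation is orthogonal to the missing step.
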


\begin{proof}
Voir \cite[théorème 7.9]{Chiche}.
\end{proof}

\begin{cor}
Le foncteur $N_2 : \dcat \to \simpl$ induit une équivalence de catégories
entre les catégories homotopiques (où on considère $\dcat$ munie des
équivalences faibles de Thomason).  \end{cor}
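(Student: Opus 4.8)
The plan is to realise $N_2$ as the second factor of a commutative triangle of functors whose other two protagonists are already known to induce equivalences of homotopy categories, and then to invoke the two-out-of-three property for equivalences of categories.

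The first step is to identify the restriction of $N_2$ to $\cat$, along the full inclusion $\cat \to \dcat$, with the classical nerve $N$. This rests on the compatibility of the Street nerve with intelligent truncation: the intelligent $1$-truncation functor $\dcat \to \cat$ is left adjoint to the full inclusion (cf.~\ref{tronq}), and the intelligent $1$-truncation of $\trOrnt{m}{2}$ is $\trOrnt{m}{1} = \smp{m}$ (cf.~\ref{trnerfdeStreet}). Hence, for a small category $C$ regarded as a $2$-category, one has $\Hom_{\nCat{2}}(\trOrnt{m}{2}, C) \simeq \Hom_{\cat}(\smp{m}, C)$ naturally in $m$ and in $C$, that is $N_2\,|\,\cat = N$. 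Consequently the triangle formed by the inclusion $\cat \to \dcat$, by $N_2 : \dcat \to \simpl$ and by $N : \cat \to \simpl$ commutes on the nose.

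Next I would pass to homotopy categories, all three functors being equipped with the Thomason weak equivalences on $\cat$ and $\dcat$ and with the simplicial weak equivalences on $\simpl$. The functor $N_2$ respects Thomason weak equivalences by their very definition (cf.~\ref{eqfThom}), and the inclusion $\cat \to \dcat$ respects them because $N_2\,|\,\cat = N$ forces a functor of $\cat$ to be a Thomason weak equivalence in $\cat$ exactly when it is one in $\dcat$. Thus each functor descends to the localised categories, yielding a commutative triangle $\Ho(\cat) \to \Ho(\dcat) \to \Hot$ whose composite is the functor induced by $N$.

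Finally I would conclude by two-out-of-three. The functor $\Ho(\cat) \to \Hot$ induced by $N$ is an equivalence by the theorem of Illusie and Quillen recalled above, and the functor $\Ho(\cat) \to \Ho(\dcat)$ induced by the inclusion is an equivalence by Chiche's theorem~\ref{thm:Chiche}. Since the composite and its first factor are equivalences, so is the second factor, namely the functor $\Ho(\dcat) \to \Hot$ induced by $N_2$, which is exactly the assertion. The argument is essentially formal; the only point demanding care, and hence the main (if modest) obstacle, is the identification $N_2\,|\,\cat = N$ of the first step.
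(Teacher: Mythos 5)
Your proposal is correct and follows essentially the same route as the paper's own proof: identify $N_2 i = N$ (where $i:\cat\to\dcat$ is the inclusion), pass to the localised categories, and conclude by two-out-of-three for equivalences, using the Illusie--Quillen theorem for $\overline{N}$ and Chiche's theorem for $\overline{i}$. Your justification of the identification $N_2\,|\,\cat = N$ via the intelligent-truncation adjunction and $\tri(\trOrnt{m}{2}) = \smp{m}$ is a valid elaboration of what the paper states without comment (\emph{cf.}~\ref{trnerfdeStreet}).
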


\begin{proof}
Notons $i : \cat \to \dcat$ le foncteur d'inclusion. On a $N_2i = N$.  Ces
trois foncteurs respectant les équivalences faibles, on a également
$\overline{N_2}\,\overline{i} = \overline{N}$ (où
$\overline{N_2},\,\overline{i}$ et $\overline{N}$ désignent les foncteurs
entre les catégories localisées induits par $N_2,\,i$ et~$N$
respectivement). Mais en vertu des deux théorèmes précédents, $\overline{i}$ et
$\overline{N}$ sont des équivalences de catégories. Il en est donc de même
de $\overline{N_2}$.
\end{proof}

\begin{cor}
L'adjonction de Quillen 
\[ c_2\Sd^2 : \simpl \to \dcat\ , \qquad \Ex^2N_2 : \dcat \to \simpl \]
est une équivalence de Quillen.
\end{cor}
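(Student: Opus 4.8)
Le plan est d'appliquer directement le crit\`ere fourni par la remarque~\ref{remThomabseq}. Le th\'eor\`eme~\ref{thm:cmf_2cat} nous dit d\'ej\`a que le th\'eor\`eme de Thomason abstrait (th\'eor\`eme~\ref{Thomabs}) s'applique \`a la cat\'egorie $\C=\dcat$ et au couple de foncteurs adjoints $(c_2,N_2)$ --- les conditions~(\emph{a}), (\emph{b}) et~(\emph{c}) r\'esultant de la proposition~\ref{prop:cN_abc}, la condition~(\emph{d}) de sa forme renforc\'ee~(\emph{d}${}'$) prouv\'ee au corollaire~\ref{coro:d'}, et la condition~(\emph{e}) du corollaire~\ref{conditione} --- et que, par suite, le couple $(c_2\Sd^2,\Ex^2N_2)$ est une adjonction de Quillen pour la structure \`a la Thomason sur $\dcat$.

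La remarque~\ref{remThomabseq} fournit de plus le crit\`ere d'\'equivalence de Quillen dans ce cadre~: puisque le th\'eor\`eme~\ref{Kan} assure que le foncteur $\Ex$ induit une auto\'equivalence de la cat\'egorie homotopique des ensembles simpliciaux, l'adjonction $(c_2\Sd^2,\Ex^2N_2)$ est une \'equivalence de Quillen si et seulement si le foncteur $N_2$ induit une \'equivalence entre la cat\'egorie homotopique de la structure \`a la Thomason sur $\dcat$ et celle des ensembles simpliciaux. Or la cat\'egorie homotopique de cette structure de mod\`eles n'est autre que la localisation de $\dcat$ par les \'equivalences faibles de Thomason, de sorte que cette condition co\"incide exactement avec l'\'enonc\'e du corollaire pr\'ec\'edent. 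L'adjonction est donc une \'equivalence de Quillen, ce qui ach\`eve la preuve.

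Il n'y a ainsi aucun v\'eritable obstacle \`a franchir dans cette derni\`ere \'etape~: toute la substance du r\'esultat a \'et\'e report\'ee en amont, d'une part sur le th\'eor\`eme de Thomason $2$-cat\'egorique (th\'eor\`eme~\ref{thm:cmf_2cat}) via la remarque~\ref{remThomabseq}, d'autre part sur le th\'eor\`eme d'Illusie--Quillen et le th\'eor\`eme de J.~Chiche (th\'eor\`eme~\ref{thm:Chiche}), qui ensemble fournissent le corollaire pr\'ec\'edent. Le seul point m\'eritant attention est la coh\'erence entre les diverses notions de cat\'egorie homotopique en jeu, coh\'erence imm\'ediate puisque les \'equivalences faibles de la structure de mod\`eles construite sont, par d\'efinition m\^eme, les \'equivalences faibles de Thomason.
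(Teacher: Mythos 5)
Your proof is correct and follows exactly the paper's own route: the paper deduces the corollary immediately from the remarque~\ref{remThomabseq} and the preceding corollary (itself obtained from the Illusie--Quillen theorem and the theorem of J.~Chiche, théorème~\ref{thm:Chiche}). Your version merely spells out the identification of the homotopy category of the Thomason structure with the localization at the Thomason weak equivalences, which is indeed the only (trivial) point to check.
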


\begin{proof}
Cela résulte immédiatement de la remarque \ref{remThomabseq} et du
corollaire précédent.
\end{proof}

\section{Stabilité par images directes des 2-cribles rétractes par\\
déformation oplax forts}\label{sec:2rdf}

Cette section est consacrée à la démonstration de la
proposition~\ref{prop:2rdf_img_dir}. En vertu de la proposition
\ref{imdircribles} et de l'exemple \ref{nCatcribles}, les cribles de
$2$-catégories sont stables par images directes. Néanmoins, afin de montrer
la proposition~\ref{prop:2rdf_img_dir}, nous aurons besoin de décrire
explicitement l'image directe d'un tel crible. Nous commençons par le cas
$1$-catégorique.

\subsection{Image directe de cribles dans $\Cat$}

Soient $i : A \to B$ un crible de catégories et $u : A \to A'$ un foncteur
entre catégories. Le but de cette sous-section est de décrire
l'image directe de $i$ le long de $u$. Il s'agit donc de définir une
catégorie~$B'$ munie de foncteurs $i' : A' \to B'$ et $v : B \to B'$ 
tels que le carré
\[
\xymatrix{
A \ar[r]^u \ar[d]_i & A' \ar[d]^{i'}\\
B \ar[r]_v & B' \\
}
\]
soit un carré cocartésien de $\Cat$.
\tb

Dans la suite, pour simplifier, nous supposerons que $i$ est une inclusion.

\begin{dparagr}
Commençons par définir le graphe sous-jacent à $B'$. Les objets de $B'$ sont
donnés par
\[ \Ob(B') = \Ob(A') \amalg \Ob(B \sauf A)\ . \]
Pour $x$ et $y$ deux objets de $B'$, on pose
\[
\Hom_{B'}(x, y) = 
         \begin{cases}
           \Hom_{A'}(x, y), & \text{si $x, y \in \Ob(A')$ ;} \\
           \Hom_B(x, y), & \text{si $x, y \in \Ob(B \sauf A)$ ;}\\
           \varnothing, & \text{si $x \in \Ob(B \sauf A)$ et $y \in \Ob(A')$ ;}\\
           \pi_0(\cm{A}{y} \times_A \mc{x}{A}), & \text{si $x \in \Ob(A')$ et $y \in
           \Ob(B \sauf A)$.}
         \end{cases}
\]
Soient $x$ un objet de $A'$ et $y$ un objet de $B \sauf A$. Explicitons
l'ensemble $\Hom_{B'}(x, y)$. Un élément de cet ensemble est représenté par un
triplet $(f_2, a, f_1)$, où $a$ est un objet de~$A$, $f_1 : x \to u(a)$ est
un morphisme de $A'$ et $f_2 : a \to y$ est un morphisme de $B$. Si $(f_2,
a, f_1)$ et $(f'_2, a', f'_1)$ sont deux tels triplets, on note $(f_2, a,
f_1) \sim (f'_2, a', f'_1)$ s'il existe une flèche $h : a \to a'$ de $A$
faisant de
\[
\xymatrix@C=1.5pc@R=.5pc{
& x \ar[ddl]_{f_1} \ar[ddr]^{f'_1} &
& &
a \ar[rr]^{h} \ar[ddr]_{f_2} & & a' \ar[ddl]^{f'_2} \\
& & & \text{et}\\
u(a) \ar[rr]_{u(h)} & & u(a')
& &
& y
}
\]
des triangles commutatifs.
Deux triplets représentent le même élément de $\Hom_{B'}(x, y)$ s'ils sont
dans la même classe d'équivalence pour la relation d'équivalence engendrée
par $\sim$. On notera $\overline{(f_2, a, f_1)}$ l'image d'un tel triplet dans
$\Hom_{B'}(x, y)$. L'objet $a$ d'un tel triplet étant uniquement déterminé
par $f_2$, on représentera souvent les éléments de $\Hom_{B'}(x, y)$ par des
couples $(f_2, f_1)$.
\end{dparagr}
  
\begin{dparagr}
Définissons maintenant une structure de catégorie sur $B'$. Les identités
sont héritées de manière évidente des identités de $A'$ et $B$. 
Soient $x \xrightarrow{f} y \xrightarrow{g} z$ deux flèches composables de
$B'$. Définissons leur composé.
\begin{itemize}
  \item Si $x$, $y$ et $z$ sont dans $A'$, le composé est induit par celui
    de $A'$.
  \item Si $x$, $y$ et $z$ sont dans $B\sauf A$, le composé est induit par
    celui de $B$.
  \item Si $x$ est dans $A'$ et $y, z$ sont dans $B\sauf A$, on a 
    $f = \overline{(f_2, f_1)}$ et on pose
    \[ g\overline{(f_2, f_1)} = \overline{(gf_2, f_1)}\ . \]
    Il est immédiat que $g$ induit un foncteur
    \[ \cm{A}{y} \times_A \mc{x}{A} \to \cm{A}{z} \times_A \mc{x}{A} \]
    et la formule précédente est donc bien définie.
  \item Si $x$ et $y$ sont dans $A'$ et $z$ est dans $B\sauf A$, on a 
    $g = \overline{(g_2, g_1)}$ et on pose
    \[ \overline{(g_2, g_1)}f = \overline{(g_2, g_1f)}\ . \]
    On vérifie comme ci-dessus que cette formule est bien définie.
\end{itemize}
On vérifie immédiatement qu'on obtient ainsi une catégorie. 
\end{dparagr}
  
\begin{dparagr}
Par définition, $A'$ est une sous-catégorie de $B'$. On notera $i' :
A' \to B'$ le foncteur d'inclusion. On définit un foncteur $v : B \to B'$
de la manière suivante. Si $x$ est un objet de $B$, on pose
\[
v(x) =
\begin{cases}
  x, & \text{si $x \in \Ob(B\sauf A$),}\\
  u(x), & \text{si $x \in \Ob(A)$.}
\end{cases}
\]
Si $f : x \to y$ est une flèche de $B$, on pose
\[
v(f) =
\begin{cases}
  f, & \text{si $x, y \in \Ob(B\sauf A$)},\\
  u(f), & \text{si $x, y \in \Ob(A)$},\\
  \overline{(f, \id{u(x)})}, & \text{si $x \in \Ob(A)$ et
  $y \in \Ob(B\sauf A)$}. 
\end{cases}
\]
On vérifie immédiatement que $v$ est un foncteur.
\tb

On a donc un carré
\[
\xymatrix{
A \ar[r]^u \ar[d]_i & A' \ar[d]^{i'}\\
B \ar[r]_v & B' \pbox{.} \\
}
\]
On vérifie immédiatement que ce carré est commutatif.
\end{dparagr}

\begin{dprop}
Le carré
\[
\xymatrix{
A \ar[r]^u \ar[d]_i & A' \ar[d]^{i'}\\
B \ar[r]_v & B'
}
\]
est un carré cocartésien de $\Cat$.
\end{dprop}

\begin{proof}
Soient $C$ une catégorie et $j : A' \to C$ et $w : B \to C$ des foncteurs
tels que $ju = wi$. Il s'agit de montrer qu'il existe un unique foncteur $l
: B' \to C$ rendant commutatif le diagramme
\[
\xymatrix@R=.4pc@C=.6pc{
A \ar[rrr]^u \ar[ddd]_i & & & A' \ar[ddd]_{i'} \ar@/^1em/[dddddrr]^{j}\\
\\
\\
B \ar[rrr]^v \ar@/_1em/[ddrrrrr]_{w} & & & B' \ar@{-->}[ddrr]^{l} \\
\\
& & & & & C \pbox{.}
}
\]

\noindent\textsc{Existence.}
Définissons un tel foncteur $l$. Pour $x$ un objet de $B'$, on pose
\[
l(x) = 
\begin{cases}
j(x), & \text{si $x \in \Ob(A')$,}\\ 
w(x), & \text{si $x \in \Ob(B\sauf A)$.}\\ 
\end{cases}
\]
Pour $f : x \to y$ un morphisme de $B'$, on pose
\[
l(f) =
\begin{cases}
j(f), & \text{si $x, y \in \Ob(A')$,}\\
w(f), & \text{si $x, y \in \Ob(B\sauf A)$,}\\
w(f_2)j(f_1), & \text{si $x \in \Ob(A')$, $y \in \Ob(B\sauf A)$ et $f = \overline{(f_2,
f_1)}$.}
\end{cases}
\]
Montrons que la dernière formule est bien définie. Supposons qu'on ait deux
couples $(f_2, f_1)$ et $(f'_2, f'_1)$ en relation \forlang{via} un
morphisme $h$ de $A$ (autrement dit, $h$ vérifie $f'_1 = u(h)f^{}_1$ et
$f^{}_2 = f'_2h$). On a alors
\[
\begin{split}
w(f_2)j(f_1) & = w(f'_2h)j(f_1) = w(f'_2)w(h)j(f_1)
= w(f'_2)ju(h)j(f_1) \\
&
= w(f'_2)j(u(h)f_1)
= w(f'_2)j(f'_1)\ .
\end{split}
\]
On vérifie immédiatement que $l$ définit bien un foncteur et qu'il satisfait
aux égalités $li' = j$ et $lv = w$.
\tb

\noindent\textsc{Unicité.}
Les formules définissant $l$ sont imposées par la compatibilité à $j$
et~$w$. C'est évident pour toutes les formules sauf éventuellement pour la
dernière. Pour celle-ci, cela résulte des égalités
\[
\overline{(f_2, f_1)} = \overline{(f_2, \id{})}f_1 = v(f_2)i'(f_1)\ .
\]
Le foncteur $l$ est donc unique, ce qui achève la démonstration.
\end{proof}

\subsection{Image directe de cribles dans $\ncat2$}\label{sec:cribles2Cat}

Soient $i : A \to B$ un crible de $2$-catégories et $u : A \to A'$ un
$2$-foncteur strict entre $2$-catégories. Le but de cette sous-section est de
décrire l'image directe de $i$ le long de $u$. Il s'agit donc de définir une
$2$-catégorie $B'$ munie de $2$-foncteurs stricts $i' : A' \to B'$ et $v :
B \to B'$ tels que le carré
\[
\xymatrix{
A \ar[r]^u \ar[d]_i & A' \ar[d]^{i'}\\
B \ar[r]_v & B' \\
}
\]
soit un carré cocartésien de $\ncat2$.
\tb

Dans la suite, pour simplifier, nous supposerons que $i$ est une inclusion
(ce qui est licite en vertu de la proposition~\ref{carcriblesncat}).

\begin{dparagr}\label{paragr:relR}
Commençons par définir le graphe enrichi en catégories sous-jacent à $B'$.
Le graphe sous-jacent à ce graphe enrichi ne sera rien d'autre que le graphe
sous-jacent à la catégorie $\Uo(B) \amalg_{\Uo(A)} \Uo(A')$, où $\Uo$
désigne $1$-tronqué bête du paragraphe~\ref{tronq}, qu'on a décrite dans la
sous-section précédente.
\tb

Les objets de $B'$ sont donnés par
\[ \Ob(B') = \Ob(A') \amalg \Ob(B \sauf A)\ . \]
Pour $x$ et $y$ deux objets de $B'$, on pose
\[
\Homi_{B'}(x, y) = 
         \begin{cases}
           \Homi_{A'}(x, y), & \text{si $x, y \in \Ob(A')$ ;} \\
           \Homi_B(x, y), & \text{si $x, y \in \Ob(B \sauf A)$ ;}\\
           \varnothing, & \text{si $x \in \Ob(B \sauf A)$ et $y \in \Ob(A')$ ;}\\
           B'_{x, y}, & \text{si $x \in \Ob(A')$ et $y \in
           \Ob(B \sauf A)$,}
         \end{cases}
\]
où $B'_{x, y}$ est la catégorie que nous allons maintenant définir.
\tb

Rappelons que si $C$ est une $2$-catégorie, on peut lui associer son
$1$-tronqué bête~$\Uo(C)$, mais aussi une $1$-catégorie
$\Ut(C)$ dont les objets sont ceux de $C$ et dont les flèches sont les
$2$\nbd-flèches de $C$ (les opérations source et but étant induites par les
opérations source et but itérés de $C$). De plus, les opérations source et
but pour les $2$-flèches de $C$ induisent des foncteurs source et but
$\Ut(C) \rightrightarrows \Uo(C)$ (qui sont l'identité sur les objets).
On utilisera la notation $\alpha : x \toz y$ pour indiquer que $\alpha$ est
une flèche de $\Ut(C)$ de source $x$ et but $y$, autrement dit, que $\alpha$
est une $2$-flèche de $C$ dont la source itérée est l'objet $x$ et le but
itéré est l'objet $y$.
\tb

Fixons maintenant un objet $x$ de $A'$ et un objet $y$ de $B \sauf A$ et
décrivons la catégorie~$B'_{x, y}$. En utilisant les foncteurs définis
ci-dessus, on obtient un graphe de catégories
\[ 
\cm{\Ut(A)}{y} \times_{\Ut(A)} \mc{x}{\Ut(A)}
\rightrightarrows
\cm{\Uo(A)}{y} \times_{\Uo(A)} \mc{x}{\Uo(A)}\ ,
\]
et donc des graphes (d'ensembles)
\[
\begin{split}
G^\flat_{x, y} & =
\Ob(\cm{\Ut(A)}{y} \times_{\Ut(A)} \mc{x}{\Ut(A)})
\rightrightarrows
\Ob(\cm{\Uo(A)}{y} \times_{\Uo(A)} \mc{x}{\Uo(A)})\ , \\
G_{x, y} & =
\Ob(\cm{\Ut(A)}{y} \times_{\Ut(A)} \mc{x}{\Ut(A)})
\rightrightarrows
\pi_0(\cm{\Uo(A)}{y} \times_{\Uo(A)} \mc{x}{\Uo(A)})\ .
\end{split}
\]
Explicitement, les objets de $G^\flat_{x, y}$ sont les triplets $(f_2, a,
f_1)$ où $f_1 : x \to u(a)$ est une $1$-flèche de $A'$ et $f_2 : a \to y$
est une $1$-flèche de~$B$, et ses flèches sont les triplets $(\alpha_2, a,
\alpha_1)$ où $\alpha_1 : x \toz u(a)$ est une $2$-flèche de $A'$ et
$\alpha_2 : a \toz y$ est une $2$-flèche de~$B$. Les objets du graphe $G_{x,
y}$ sont les classes d'équivalences de triplets $(f_2, a, f_1)$ comme
ci-dessus (pour la même relation d'équivalence que dans le cas
$1$-catégorique) et ses flèches sont les mêmes que celles du graphe
$G^\flat_{x, y}$.
\tb

La catégorie $B'_{x,y}$ est la catégorie engendrée par le graphe $G_{x, y}$
et les relations suivantes :
\begin{enumerate}[label=R\arabic*),ref=R\arabic*]
  \item\label{item:R1}
    pour toutes flèches composables 
    \[\xrightarrow{(\alpha_2, a, \alpha_1)}\xrightarrow{(\beta_2, b,
    \beta_1)}\] de $G^\flat_{x, y}$ (ces flèches sont \forlang{a fortiori}
    composables dans $G_{x, y}$ et on a $a = b$), on a
    \[
    (\beta_2, a, \beta_1)(\alpha_2, a, \alpha_1) \sim
    (\beta_2\beta_1, a, \alpha_2\alpha_1)\ ;
    \]
  \item\label{item:R2}
   pour tout triplet $(f_2, a, f_1)$ représentant un objet de $G_{x, y}$, on a
    \[ \id{\overline{(f_2, a, f_1)}} \sim (\id{f_2}, a, \id{f_1})\ ;\]
  \item\label{item:R3}
  pour toutes flèches $(\alpha_2, a, \alpha_1)$ et $(\alpha'_2, a',
    \alpha'_1)$ de $G_{x, y}$ telles qu'il existe une $2$-flèche $\gamma : a
    \toz a'$ de $A$ faisant de
\[
\xymatrix@C=1.5pc@R=.5pc{
& x \ar[ddl]_{\alpha_1} \ar[ddr]^{\alpha'_1} &
& &
a \ar[rr]^{\gamma} \ar[ddr]_{\alpha_2} & & a' \ar[ddl]^{\alpha'_2} \\
& & & \text{et}\\
u(a) \ar[rr]_{u(\gamma)} & & u(a')
& &
& y
}
\]
des triangles commutatifs, on a
    \[ (\alpha_2, a, \alpha_1) \sim (\alpha'_2, a', \alpha'_1)\ . \]
\end{enumerate}

Comme dans le cas $1$-catégorique, on notera $\overline{\sstrut(\alpha_2, a,
\alpha_1)}$, ou simplement $\overline{\sstrut(\alpha_2, \alpha_1)}$, la
$2$-flèche de $B'$ correspondant à un triplet $(\alpha_2, a, \alpha_1)$
comme ci-dessus.
\tb

Comme annoncé, le $1$-graphe sous-jacent à $B'$ n'est rien d'autre que le
graphe sous-jacent à la catégorie $\Uo(B) \amalg_{\Uo(A)} \Uo(A')$. En
particulier, en vertu du cas $1$-catégorique, ce graphe est munie d'une
structure de catégorie.
\end{dparagr}

\begin{dparagr}
Pour faire de $B'$ une $2$-catégorie, il nous reste
à définir, pour tous objets $x$, $y$ et $z$ de $B'$, un foncteur de
composition horizontale 
\[ \Homi_{B'}(y, z) \times \Homi_{B'}(x, y) \to \Homi_{B'}(x, z)\ . \]
Soient donc $x$, $y$ et $z$ trois objets de $B'$.
\tb
\begin{enumerate}[label=(\arabic*), wide]
  \item Si $x$, $y$ et $z$ sont dans $A'$, la composition horizontale de
    $B'$ est héritée de celle de~$A'$.
    \tb
  \item Si $x$, $y$ et $z$ sont dans $B\sauf A$, la composition horizontale
    de $B'$ est héritée de celle de~$B$.
    \tb
  \item Supposons que $x$ et $y$ sont dans $A'$ et que $z$ est dans $B\sauf
    A$. Considérons le morphisme de graphes 
    \[
    \comp : G_{y, z} \to \Homi(\Homi_{B'}(x, y), \Homi_{B'}(x, z))
    \]
    donné sur les objets par 
    \[
    \overline{(f_2, f_1)} \longmapsto
    \begin{cases}
    g \mapsto
    \overline{(f_2, f_1)} \comp g \coloneqq \overline{(f_2,
    f_1g)}, &
    \text{si $g \in \Ob(\Homi_{B'}(x, y))$ ;} 
    \\
    \delta \mapsto
    \overline{(f_2, f_1)} \comp \delta \coloneqq \overline{(\id{f_2},
    f_1 \comp \delta)}, &
    \text{si $\delta \in \fl(\Homi_{B'}(x, y))$,} 
    \end{cases}
    \] 
    (la première formule est bien définie par le cas $1$-catégorique et la
    relation \eqref{item:R3} implique facilement qu'il en est de même de la seconde) et
    sur les flèches par
    \[
    (\alpha_2, \alpha_1) \longmapsto
    \big( g \mapsto
     (\alpha_2, \alpha_1) \comp  g \coloneqq \overline{(\alpha_2, \alpha_1 \comp
    g)}\big)\ .
    \]
    Si $(f_2, f_1)$ représente un objet de $G_{y, z}$, $g$ est un objet
    de $\Homi_{B'}(x, y)$ et $\delta, \delta'$ sont des flèches composables
    de $\Homi_{B'}(x, y)$, alors les égalités
        \[
        \overline{(f_2, f_1)} \comp \id{g} =
        \overline{(\id{f_2}, \id{f_1g})} =
        \id{\overline{(f_2, f_1g)}} =
        \id{\overline{(f_2, f_1)} \comp g}
        \]
    (où la deuxième égalité résulte de la relation~\eqref{item:R2}) et
        \[
        \begin{split}
        \overline{(f_2, f_1)} \comp  \delta'\delta & = 
        \overline{(\id{f_2}, f_1 \comp  \delta'\delta)} = 
        \overline{(\id{f_2}, (f_1 \comp  \delta')(f_1\comp
        \delta))} \\
        & =
        \overline{(\id{f_2}, f_1 \comp  \delta')}\,\,
        \overline{(\id{f_2}, f_1 \comp  \delta)} \\
        & \phantom{=1} \text{(en vertu de la relation~\eqref{item:R1})} \\
        & =
        \big(\overline{(f_2, f_1)} \comp \delta'\big)
        \big(\overline{(f_2, f_1)} \comp \delta\big)
        \end{split}
        \]
     montrent que $\overline{(f_2, f_1)} \comp \hbox{?}$ est bien un
     foncteur ; si $(\alpha_2, \alpha_1)$ est une flèche de $G_{y, z}$,
     $\delta$~est une
     flèche de $\Homi_{B'}(x, y)$ et si on note $f_1$, $f_2$ et $g$ (resp.
     $f'_1$, $f'_2$ et $g'$) les sources (resp. les buts) respectifs de
     $\alpha_1$, $\alpha_2$ et $\delta$, de sorte qu'on a
\[
\UseAllTwocells
\xymatrix@C=3pc{
x \rtwocell^g_{g'}{\,\delta}
&
y \rtwocell^{f_1}_{f'_1}{\;\;\alpha_1}
&
\zbox{$u(a)$\ ,}\phantom{a}
}
\qquad\quad
\xymatrix@C=3pc{
a \rtwocell^{f_2}_{f'_2}{\;\;\alpha_2}
&
z
\zbox{\ ,}
}
\]
    alors les égalités
        \[
        \begin{split}
          \big((\alpha_2, \alpha_1) \comp g' \big)
          \big(\overline{(f_2, f_1)} \comp \delta\big)
          & =
          \overline{(\alpha_2, \alpha_1 \comp g')}\,\,
          \overline{(\id{f_2}, f_1 \comp \delta)} \\
          & =
          \overline{(\alpha_2, (\alpha_1 \comp g')(f_1 \comp \delta))}
          \\
          & \phantom{=1} \text{(en vertu de la relation~\eqref{item:R1})} \\
          & =
          \overline{(\alpha_2, (f'_1 \comp \delta)(\alpha_1 \comp g))}
          \\
          & =
          \overline{(\id{f'_2}, f'_1 \comp \delta)}\,\,
          \overline{(\alpha_2, \alpha_1 \comp g)} \\
          & \phantom{=1} \text{(en vertu de la relation~\eqref{item:R1})} \\
          & =
          \big(\overline{(f'_2, f'_1)} \comp \delta\big)
          \big((\alpha_2, \alpha_1) \comp g \big)
        \end{split}
        \]
    montrent que $(\alpha_2, \alpha_1) \comp \hbox{?}$ est bien une
    transformation naturelle.
    \tb

    Ce morphisme de graphes passe au quotient par les relations \eqref{item:R1}, \eqref{item:R2}
    et \eqref{item:R3}. En effet, si $(\alpha_2, \alpha_1)$ et $(\alpha'_2,
    \alpha'_1)$ sont des générateurs de $\Homi_{B'}(x, y)$ satisfaisant aux
    hypothèses de \eqref{item:R1}, on a
           \[
        \begin{split}
        ((\alpha'_2, \alpha'_1) \comp g)
        ((\alpha_2, \alpha_1) \comp g)
        & =
        \overline{(\alpha'_2, \alpha'_1 \comp g)}\,\,
        \overline{(\alpha_2, \alpha_1 \comp g)} \\
        & =
        \overline{(\alpha'_2\alpha^{}_2, (\alpha'_1 \comp g)(\alpha_1 \comp
        g))} \\
        & \phantom{=1} \text{(en vertu de la relation~\eqref{item:R1})} \\
        & =
        \overline{(\alpha'_2\alpha^{}_2, \alpha'_1\alpha^{}_1 \comp g)} \\
        & =
        (\alpha'_2\alpha^{}_2, \alpha'_1\alpha^{}_1) \comp g\ ,
        \end{split}
        \]
    ce qui établit la compatibilité à \eqref{item:R1}. La compatibilité à \eqref{item:R2} résulte de
    l'égalité
        \[
        \overline{(\id{f_2}, \id{f_1})} \comp g =
        \overline{(\id{f_2}, \id{f_1g})} =
        \id{\overline{(f_2, f_1g)}}\ ,
        \]
    où $\overline{(f_2, f_1)}$ est un objet de $G_{y, z}$ et $g$ est un objet
    de $\Homi_{B'}(x, y)$ (la deuxième égalité résultant de la
    relation~\eqref{item:R2}).
    \tb

    Enfin, si $(\alpha_2, \alpha_1)$ et $(\alpha'_2, \alpha'_1)$ sont des
    flèches de $G_{y, z}$ satisfaisant aux hypothèses de \eqref{item:R3}, on vérifie
    immédiatement qu'il en est de même
    de $(\alpha_2, \alpha_1 \comp g)$ et $(\alpha'_2, \alpha'_1 \comp g)$
    pour tout objet $g$ de $\Homi_{B'}(x, y)$ et on a donc
           \[
        (\alpha_2, \alpha_1) \comp g
        =
        \overline{(\alpha_2, \alpha_1 \comp g)}
        =
        \overline{(\alpha'_2, \alpha'_1 \comp g)}
        = 
        (\alpha'_2, \alpha'_1) \comp g\ ,
        \]
    la deuxième égalité résultant de la relation~\eqref{item:R3}, ce qui prouve la
    compatibilité à~\eqref{item:R3}.
    \tb

    Le morphisme de graphes considéré induit donc par la propriété
    universelle définissant $\Homi_{B'}(y, z)$ et par adjonction un foncteur
    \[ 
    \Homi_{B'}(y, z) \times \Homi_{B'}(x, y) \to \Homi_{B'}(x, z)\ ,
    \]
    et on définit la composition horizontale par ce foncteur.
    \tb

  \item Si $x$ est dans $A'$ et $y, z$ sont dans $B\sauf A$, la composition
    horizontale est définie comme dans le cas précédent à partir du
    morphisme de graphes
    \[ 
     G_{x, y} \to \Homi(\Homi_{B'}(y, z), \Homi_{B'}(x, z)) \\
    \]
    donné sur les objets par
    \[
       g \comp \overline{(f_2, f_1)} = \overline{(gf_2, f_1)}
       \quad\text{et}\quad
       \delta \comp \overline{(f_2, f_1)} = \overline{(\delta \comp
       f_2, \id{f_1})}
    \] 
    et sur les flèches par
    \[
    g \comp (\alpha_2, \alpha_1) = \overline{(g \comp
    \alpha_2, \alpha_1)}\ .
    \]
\tb
On vérifie immédiatement que la composition ainsi définie est associative et
compatible aux identités des objets.
\end{enumerate}
\end{dparagr}

\begin{dparagr}\label{paragr:def_v}
Par définition, $A'$ est une sous-$2$-catégorie de $B'$. On notera $i' : A'
\to B'$ le $2$\nbd-foncteur d'inclusion. On définit un $2$-foncteur strict
$v : B \to B'$ de la manière suivante. Si $x$ est un objet de $B$, on pose
\[
v(x) =
\begin{cases}
  x, & \text{si $x \in \Ob(B\sauf A)$,}\\
  u(x), & \text{si $x \in \Ob(A)$.}
\end{cases}
\]
Si $f : x \to y$ est une $1$-flèche de $B$, on pose
\[
v(f) =
\begin{cases}
  f, & \text{si $x, y \in \Ob(B\sauf A)$},\\
  u(f), & \text{si $x, y \in \Ob(A)$},\\
  \overline{(f, \id{u(x)})}, & \text{si $x \in \Ob(A)$
  et $y \in \Ob(B\sauf A)$}. 
\end{cases}
\]
Enfin, si $\alpha : x \toz y$ est une $2$-flèche de $B$, on pose
\[
v(\alpha) =
\begin{cases}
  \alpha, & \text{si $x, y \in \Ob(B\sauf A)$},\\
  u(\alpha), & \text{si $x, y \in \Ob(A)$},\\
  \overline{(\alpha, \id{u(x)})}, & \text{si $x \in \Ob(A)$
  et $y \in \Ob(B\sauf A)$}. 
\end{cases}
\]
Il est immédiat que $v$ est un morphisme de $2$-graphes. De plus, en vertu
du cas $1$\nbd-catégorique, $u$ est fonctoriel sur les $1$-flèches. Il nous
reste donc à montrer la $2$\nbd-fonctorialité. La compatibilité aux
identités et à la composition verticale est évidente. Montrons la
compatibilité à la composition horizontale.
\tb

Soient donc $x \xrightarrowz{\alpha} y \xrightarrowz{\beta} z$ deux
$2$-flèches de $B$. Si les objets $x$, $y$ et $z$ sont tous les trois dans
$A$, ou tous les trois dans $B\sauf A$, la compatibilité est évidente.
Supposons que $x$ et $y$ sont dans $A$ et que $z$ est dans $B\sauf A$. On a
alors
\[
\begin{split}
  v(\beta) \comp v(\alpha) & = \overline{(\beta, \id{u(y)})} \comp u(\alpha)
  = \overline{(\beta, u(\alpha))} = \overline{(\beta \comp \alpha, \id{u(x)})} =
  v(\beta \comp \alpha)\ ,
\end{split}
\]
où l'avant-dernière égalité résulte de la relation \eqref{item:R3}. Le cas restant se
traite de manière similaire.
\tb

On a donc un carré
\[
\xymatrix{
A \ar[r]^u \ar[d]_i & A' \ar[d]^{i'}\\
B \ar[r]_v & B' \pbox{.} \\
}
\]
On vérifie immédiatement que ce carré est commutatif.
\end{dparagr}

\begin{dprop}
Le carré
\[
\xymatrix{
A \ar[r]^u \ar[d]_i & A' \ar[d]^{i'}\\
B \ar[r]_v & B'
}
\]
est un carré cocartésien de $\ncat2$.
\end{dprop}

\begin{proof}
Soient $C$ une $2$-catégorie et $j : A' \to C$
et $w : B \to C$ deux $2$\nbd-foncteurs stricts tels que $ju = wi$. Il
s'agit de montrer qu'il existe un unique $2$\nbd-foncteur strict $l : B' \to
C$ rendant commutatif le diagramme
\[
\xymatrix@R=.4pc@C=.6pc{
A \ar[rrr]^u \ar[ddd]_i & & & A' \ar[ddd]_{i'} \ar@/^1em/[dddddrr]^{j}\\
\\
\\
B \ar[rrr]^v \ar@/_1em/[ddrrrrr]_{w} & & & B' \ar@{-->}[ddrr]^{l} \\
\\
& & & & & C \pbox{.}
}
\]

\noindent\textsc{Existence.}
Définissons un tel foncteur $l$. Pour $x$ un objet de $B'$, on pose
\[
l(x) = 
\begin{cases}
j(x), & \text{si $x \in \Ob(A')$,}\\ 
w(x), & \text{si $x \in \Ob(B\sauf A)$.}\\ 
\end{cases}
\]
Pour $f : x \to y$ une $1$-flèche de $B'$, on pose
\[
l(f) =
\begin{cases}
j(f), & \text{si $x, y \in \Ob(A')$,}\\
w(f), & \text{si $x, y \in \Ob(B\sauf A)$,}\\
w(f_2)j(f_1), & \text{si $x \in \Ob(A')$, $y \in \Ob(B\sauf A)$} \\
\phantom{w(f_2)j(f_1),} & \text{et $(f_2, f_1)$ est un représentant de $f$.}
\end{cases}
\]
(La troisième formule est bien définie en vertu du cas $1$-catégorique.)
Enfin, pour $\alpha : x \toz y$ une $2$-flèche de $B'$, on pose
\[
l(\alpha) =
\begin{cases}
j(\alpha), & \text{si $x, y \in \Ob(A')$,}\\
w(\alpha), & \text{si $x, y \in \Ob(B\sauf A)$,}\\
w(\alpha_2) \comp j(\alpha_1), & \text{si $x \in \Ob(A')$,
$y \in \Ob(B\sauf A)$} \\
\phantom{w(\alpha_2) \comp j(\alpha_1),} & \text{et $(\alpha_2, \alpha_1)$
est un représentant de $\alpha$.}
\end{cases}
\]
Dans le troisième cas, nous avons défini $l(\alpha)$ uniquement sur les
générateurs de $\Homi_{B'}(x, y)$. Nous devons donc vérifier que les
relations \eqref{item:R1}, \eqref{item:R2} et \eqref{item:R3} sont
satisfaites. Si $(\beta_2, \beta_1)$ et $(\alpha_2, \alpha_1)$ sont des
générateurs de $\Homi_{B'}(x, y)$ satisfaisant aux hypothèses de
\eqref{item:R1}, on a
\[
\begin{split}
l(\overline{\beta_2, \beta_1})l(\overline{\sstrut\alpha_2, \alpha_1})
& =
(w(\beta_2) \comp j(\beta_1))(w(\alpha_2) \comp j(\alpha_1)) \\
& = 
(w(\beta_2)w(\alpha_2)) \comp (j(\beta_1)j(\alpha_1)) \\
& =
w(\beta_2\alpha_2) \comp j(\beta_1\alpha_1) \\
& =
l(\overline{\beta_2\alpha_2, \beta_1\alpha_1})\ ,
\end{split}
\]
ce qui établit la compatibilité à \eqref{item:R1}. La compatibilité à
\eqref{item:R2} résulte des égalités
\[
l(\overline{\id{f_2}, \id{f_1}}) = w(\id{f_2}) \comp j(\id{f_1})
= \id{w(f_2)j(f_1)} = \id{l\overline{(f_2, f_1)}}\ ,
\]
où $(f_2, f_1)$ est un représentant d'un objet de $\Homi_{B'}(x, y)$.
Enfin, si $(\alpha_2, \alpha_1)$ et $(\alpha'_2, \alpha'_1)$ sont
deux générateurs de $\Homi_{B'}(x, y)$ satisfaisant aux hypothèses de \eqref{item:R3}
\forlang{via} une $2$\nbd-flèche $\gamma$ de $A$, on a
\[
\begin{split}
l(\overline{\sstrut\alpha_2, \alpha_1})
& =
w(\alpha_2) \comp j(\alpha_1)
=
w(\alpha'_2 \comp \gamma) \comp j(\alpha_1) \\
& =
w(\alpha'_2) \comp w(\gamma) \comp j(\alpha_1)
=
w(\alpha'_2) \comp ju(\gamma) \comp j(\alpha_1) \\
& =
w(\alpha'_2) \comp j(u(\gamma) \comp \alpha_1)
=
w(\alpha'_2) \comp j(\alpha'_1) \\
& =
l(\overline{\sstrut\alpha'_2, \alpha'_1})\ ,
\end{split}
\]
ce qui achève la démonstration du fait que $l$ est bien défini.
\tb

Montrons maintenant que $l$ est un $2$-foncteur strict. Le cas
$1$-catégorique montre que $l$ est $1$-fonctoriel. Il est immédiat que $l$
est compatible aux identités des $1$-flèches et à la composition verticale.
Montrons la compatibilité à la composition horizontale. Comme dans la
vérification de la propriété analogue pour $v$
(\emph{cf.}~\ref{paragr:def_v}), il y a quatre configurations à traiter.
Deux sont triviales et deux sont essentiellement symétriques. Traitons un
des cas non triviaux, l'autre cas se traitant de façon analogue.
\tb

Soient donc $x \xrightarrowz{\alpha} y \xrightarrowz{\beta} z$ deux
$2$-flèches de $B'$ avec $x, y$ dans $A'$ et $z$ dans $B\sauf A$. On peut
supposer que $\beta$ est un générateur $\Homi_{B'}(y, z)$
donné par un couple $(\beta_2, \beta_1)$. On a alors
\[
\begin{split}
l\overline{(\beta_2, \beta_1)} \comp l(\alpha)
& =
w(\beta_2) \comp j(\beta_1) \comp j(\alpha) \\
& =
w(\beta_2) \comp j(\beta_1 \comp \alpha) \\
& =
l\overline{(\beta_2, \beta_1 \comp \alpha)} \\
& =
l(\overline{(\beta_2, \beta_1)} \comp \alpha)\ .
\end{split}
\]

On a donc bien défini un $2$-foncteur strict $l$. On vérifie immédiatement
qu'on a $li' = j$ et $lv = w$.
\tb

\noindent\textsc{Unicité.}
Les formules définissant $l$ sont imposées par la
compatibilité à~$j$ et~$w$. C'est évident pour toutes les formules sauf
éventuellement le troisième cas dans la définition de l'action de $l$ sur
les $1$-flèches et les $2$-flèches. Le cas des $1$-flèches résulte du cas
$1$\nbd-catégorique et celui des $2$-flèches des égalités
\[
\overline{(\alpha_2, \alpha_1)} = \overline{(\alpha_2, \id{})}\comp\alpha_1 =
v(\alpha_2)\comp i'(\alpha_1)\ .
\]
Le foncteur $l$ est donc unique, ce qui achève la démonstration.
\end{proof}

\subsection{Quelques identités préliminaires}

Fixons un crible $i : A \to B$ de $2$\nbd-catégories et une structure de
rétracte par déformation oplax fort sur $i$ donnée par $r : B \to A$ et $H
: \Deltan1 \times B \to B$ (\emph{cf.} définition~\ref{def:rdf}).

\begin{dprop}
Soit $\gamma$ un objet, une $1$-flèche ou une $2$-flèche de $A$. On a
\[ H(0, \gamma) = \gamma\ . \]
\end{dprop}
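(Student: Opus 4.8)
The plan is to read the identity off directly from the definition of the oplax homotopy $H$, combined with the fact that $r$ is a retraction of $i$. Recall that, by definition of an oplax homotopy from $ir$ to $\id{B}$, one has the equality of $2$-foncteurs oplax $H\DeltaeX{0}{B} = ir$, where $\DeltaeX{0}{B} = \Deltae{0} \times \id{B} : B \simeq e \times B \to \Deltan1 \times B$. Evaluating this equality on an arbitrary cell $\gamma$ of $B$ (object, $1$-flèche or $2$-flèche), and unwinding the notation so that $\DeltaeX{0}{B}(\gamma)$ is exactly the cell written $(0,\gamma)$, I would obtain
\[ H(0, \gamma) = ir(\gamma)\ . \]

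First I would record that, since $i$ is a crible treated as an inclusion, the sous-$2$-catégorie $A$ is identified with its image in $B$, so that $i(\gamma) = \gamma$ for every cell $\gamma$ of $A$. As $r$ is a retraction of $i$, i.e. $ri = \id{A}$, this gives $r(\gamma) = r(i(\gamma)) = \gamma$ for such a $\gamma$. Substituting into the displayed equality then yields $H(0, \gamma) = i(r(\gamma)) = i(\gamma) = \gamma$, uniformly for objects, $1$-flèches and $2$-flèches, which is precisely the desired identity.

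There is essentially no serious obstacle: the only point deserving care is to interpret the notation $H(0,\gamma)$ as the value of $H$ on $\DeltaeX{0}{B}(\gamma)$ and to use the identification of $A$ with a sous-$2$-catégorie of $B$. As a variant, avoiding the retraction altogether, one may instead invoke the relativity condition~\eqref{item:RDF1}, namely $H(1_{\Deltan1} \times i) = ip^{}_A$: evaluating this equality of $2$-foncteurs on $\DeltaeX{0}{A}(\gamma)$ and using the relation $(1_{\Deltan1}\times i)\DeltaeX{0}{A} = \DeltaeX{0}{B}\,i$, so that the left-hand side is $H(0,\gamma)$, one gets at once $H(0, \gamma) = ip^{}_A(0,\gamma) = i(\gamma) = \gamma$. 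Either route settles the three cases simultaneously, so I would simply present the computation once.
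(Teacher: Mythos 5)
Your argument is correct and coincides with the paper's own proof, which computes $H(0,\gamma) = ir(\gamma) = iri(\gamma) = i(\gamma) = \gamma$ using exactly the same two ingredients: $H$ is a homotopy from $ir$ to $\id{B}$, and $r$ retracts the crible $i$ viewed as an inclusion. Your alternative route via the condition \eqref{item:RDF1} is also valid but unnecessary here.
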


\begin{proof}
En effet, on a
\[
H(0, \gamma) = ir(\gamma) = iri(\gamma) = i(\gamma) = \gamma\ .\qedhere \]
\end{proof}

\begin{dprop}
Soient $x \xrightarrow{f} y \xrightarrow{g} z$ deux $1$-flèches composables de $B$.
Si $f$ est dans $A$, on a
\begin{align*}
  H(0 \to 1, gf) & = H(0 \to 1, g)f\ .\tag{A1}\label{eq:A1}
\end{align*}
Soient $x \xrightarrowz{\alpha} y \xrightarrowz{\beta} z$ deux $2$-flèches composables de $B$.
Si $\alpha$ est dans $A$, on a
\begin{align*}
H(0 \to 1, \beta\comp\alpha) & = H(0 \to 1, \beta) \comp \alpha\
.\tag{A2}\label{eq:A2}
\end{align*}
\end{dprop}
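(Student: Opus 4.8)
The identity (A1) is immediate: condition \eqref{item:RDF2} gives $H(0 \to 1, gf) = H(0 \to 1, g)\,H(0, f)$ as soon as $f$ lies in $A$, and the preceding proposition gives $H(0, f) = f$ for such an $f$; substituting yields (A1). The plan thus reduces to proving (A2).

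For (A2) the plan is to invoke the compatibility of the oplax $2$-functor $H$ with horizontal composition (\emph{cf.}~d\'efinition~\ref{def:fonct_lax}), applied inside the product $\Deltan1 \times B$. Write $\alpha : f \Rightarrow g$ and $\beta : k \Rightarrow l$; since $\alpha$ is a $2$-cell of the crible $A$, the $1$-arrows $f, g$ and the objects bounding it lie in $A$, whereas $k, l$ are arbitrary $1$-arrows of $B$. The first step is to record the horizontal decomposition in $\Deltan1 \times B$
\[ (0 \to 1,\ \beta \comp \alpha) = (0 \to 1,\ \beta) \comp (0,\ \alpha)\ , \]
where $(0, \alpha)$ denotes the $2$-cell lying over the object $0$ (on which $H$ acts as $H(0, \alpha)$) and $(0 \to 1, \beta)$ the $2$-cell lying over the arrow $0 \to 1$; this merely reflects the equality $(0 \to 1)\,1_0 = (0 \to 1)$ in $\Deltan1$, the two factors being horizontally composable with composite the $2$-cell over $0 \to 1$ determined by $\beta \comp \alpha$.

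Applying $H$ and the horizontal-composition axiom to this decomposition produces the equation, in which juxtaposition denotes vertical composition,
\[ H\bigl((0 \to 1, l),(0, g)\bigr)\,H(0 \to 1, \beta \comp \alpha) = \bigl(H(0 \to 1, \beta) \comp H(0, \alpha)\bigr)\,H\bigl((0 \to 1, k),(0, f)\bigr)\ . \]
It then suffices to dispose of the three spurious terms. The functoriality constraints $H((0 \to 1, l),(0, g))$ and $H((0 \to 1, k),(0, f))$ are exactly the constraints attached to the composites $(0 \to 1, l)\,(0, g)$ and $(0 \to 1, k)\,(0, f)$, whose inner factors $g$ and $f$ belong to $A$; by \eqref{item:RDF2} they are therefore trivial, i.e.\ identity $2$-cells. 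Moreover $H(0, \alpha) = \alpha$ by the preceding proposition, since $\alpha \in A$. Feeding these three facts into the displayed equation collapses it to $H(0 \to 1, \beta \comp \alpha) = H(0 \to 1, \beta) \comp \alpha$, which is (A2).

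The step I expect to be the main obstacle is the bookkeeping in the $\Deltan1$ factor. One must choose the decomposition that places $\alpha$ over the object $0$ and $\beta$ over the arrow $0 \to 1$, for only then do the functoriality constraints arising from the horizontal-composition axiom have their inner factor in $A$, so that \eqref{item:RDF2} annihilates them. The other natural decomposition, placing $\alpha$ over $0 \to 1$ and $\beta$ over the object $1$, produces constraints of the shape $H\bigl((1, l),(0 \to 1, g)\bigr)$, which are not governed by \eqref{item:RDF2}; it is to be avoided. Beyond this choice everything is formal, relying only on the normalized oplax axioms and the preceding proposition.
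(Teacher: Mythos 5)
Your proposal is correct and takes essentially the same route as the paper: (A1) follows from condition \eqref{item:RDF2} combined with $H(0,f)=f$, and (A2) is obtained, exactly as in the paper's proof, by applying the compatibility of $H$ with the horizontal composition of $(0\to1,\beta)$ and $(0,\alpha)$, then trivializing the two constraints $H((0\to1,l),(0,g))$ and $H((0\to1,k),(0,f))$ via \eqref{item:RDF2} (licit because $\alpha$ in $A$ forces $f$ and $g$ to be in $A$, since $A$ is a crible) and using $H(0,\alpha)=\alpha$. Your closing remark about placing $\alpha$ over the object $0$ rather than over the arrow $0\to1$ correctly identifies the decomposition implicitly chosen in the paper.
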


\begin{proof}
La première identité est une conséquence immédiate de \eqref{item:RDF2} et
de la proposition précédente. (C'est également un cas particulier de la
seconde identité.) Montrons la seconde. Soient
\[
\UseAllTwocells
\xymatrix@C=3pc{
x \rtwocell^f_g{\,\alpha}
&
y \rtwocell^k_l{\,\,\beta}
&
z
}
\]
deux $2$-flèches de $B$ avec $\alpha$ dans $A$. En vertu de la compatibilité
de $H$ à la composition horizontale de $(0 \to 1, \beta)$ et $(0, \alpha)$,
on a
\[
\begin{split}
\MoveEqLeft
H((0 \to 1, l), (0, g))H(0 \to 1, \beta\comp\alpha) \\
& = \big(H(0 \to 1, \beta) \comp H(0, \alpha)\big)
H((0 \to 1, k), (0, f))\ .
\end{split}
\]
Puisque $\alpha$ appartient à $A$, il en est de même de $f$ et $g$, et par
conséquent, en vertu de~\eqref{item:RDF2}, les contraintes $H((0 \to 1, l), (0, g))$ et
$H((0 \to 1, k), (0, f))$ sont triviales. On obtient donc
\[
\begin{split}
H(0 \to 1, \beta\comp\alpha) & = H(0 \to 1, \beta) \comp H(0, \alpha) \\
& = H(0 \to 1, \beta) \comp \alpha\ ,
\end{split}
\]
ce qu'on voulait démontrer.
\end{proof}

\begin{dprop}
Soient $x \xrightarrow{f} y \xrightarrow{g} z \xrightarrow{h} t$ trois
$1$-flèches composables de $B$. Si $f$ est dans $A$, on a
\begin{align*}
H((0 \to 1, h), (0, gf)) & = H((0 \to 1, h), (0, g)) \comp f\
,\tag{B1}\label{eq:B1}\\
H((1, h), (0 \to 1, gf)) & = H((1, h), (0 \to 1, g)) \comp f\
.\tag{B2}\label{eq:B2}
\end{align*}
Si de plus $g$ est dans $A$, on a
\begin{align*}
H((1, hg), (0 \to 1, f)) & = H((1, h), (0 \to 1, gf))\
.\tag{B3}\label{eq:B3}
\end{align*}
En particulier, sous cette dernière hypothèse, on a
\begin{align*}
H((1, hg), (0 \to 1, f)) & = H((1, h), (0 \to 1, \id{z})) \comp gf\
.\tag{B4}\label{eq:B4}
\end{align*}
\end{dprop}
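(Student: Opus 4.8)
Les quatre identités se déduisent toutes de la \emph{condition de cocycle} du $2$-foncteur oplax $H$ (\emph{cf.}~définition~\ref{def:fonct_lax}), appliquée à trois triplets bien choisis de $1$-flèches composables de $\Deltan1\times B$ mêlant les niveaux $0$ et $1$. Pour un tel triplet, la condition de cocycle exprime une égalité entre deux composés verticaux de contraintes de fonctorialité de $H$, et l'essentiel consiste à observer que la plupart de ces contraintes sont triviales. On utilisera à cette fin que la restriction de $H$ à $\{0\}\times B$ est le $2$-foncteur strict $ir$ et sa restriction à $\{1\}\times B$ le $2$-foncteur strict $\id{B}$, que les contraintes de la forme $H((0\to1,g),(0,f))$ avec $f$ dans $A$ sont triviales en vertu de~\eqref{item:RDF2}, et enfin que $H(0,f)=ir(f)=f$ pour $f$ dans $A$, tandis que $H(1,h)=h$ pour toute $1$-flèche $h$ de $B$.

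Pour établir~\eqref{eq:B1}, on applique la condition de cocycle au triplet de $1$-flèches composables $(0,f)$, $(0,g)$, $(0\to1,h)$ de $\Deltan1\times B$. Les contraintes $H((0,g),(0,f))$ et $H((0\to1,hg),(0,f))$ qui y figurent sont triviales, la première car $H$ est strict sur $\{0\}\times B$, la seconde en vertu de~\eqref{item:RDF2} (puisque $f$ est dans $A$) ; comme de plus $H(0,f)=f$, la condition de cocycle se réduit exactement à~\eqref{eq:B1}. On procède de même pour~\eqref{eq:B2}, à partir du triplet $(0,f)$, $(0\to1,g)$, $(1,h)$, les contraintes $H((0\to1,g),(0,f))$ et $H((0\to1,hg),(0,f))$ étant toutes deux triviales d'après~\eqref{item:RDF2}, et $H(1,h)=h$.

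L'identité~\eqref{eq:B3} s'obtient en appliquant la condition de cocycle au triplet $(0\to1,f)$, $(1,g)$, $(1,h)$, où cette fois $f$ \emph{et} $g$ sont dans $A$. La contrainte $H((1,h),(1,g))$ est triviale puisque $H$ est strict sur $\{1\}\times B$. Le point délicat, et c'est là l'obstacle principal, est de voir que la contrainte $H((1,g),(0\to1,f))$ est elle aussi triviale : comme $f$ et $g$ sont dans $A$, les $1$-flèches $(0\to1,f)$ et $(1,g)$ proviennent de $\Deltan1\times A$ par le foncteur $1_{\Deltan1}\times i$, et la condition~\eqref{item:RDF1} affirme précisément que $H(1_{\Deltan1}\times i)=ip^{}_A$ est strict ; la contrainte considérée coïncide donc avec une contrainte, nécessairement triviale, du $2$-foncteur strict $ip^{}_A$. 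La condition de cocycle se réduit alors à~\eqref{eq:B3}.

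Enfin,~\eqref{eq:B4} résulte immédiatement de~\eqref{eq:B3} et de~\eqref{eq:B2} : la première fournit $H((1,hg),(0\to1,f))=H((1,h),(0\to1,gf))$, et la seconde, appliquée au triplet $x\xrightarrow{gf}z\xrightarrow{\id{z}}z\xrightarrow{h}t$ (ce qui est licite car $gf$ est dans $A$), donne $H((1,h),(0\to1,gf))=H((1,h),(0\to1,\id{z}))\comp gf$, d'où le résultat.
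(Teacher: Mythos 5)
Votre preuve est correcte et suit essentiellement la même démarche que celle de l'article : mêmes triplets de $1$\nbd-flèches composables de $\Deltan1\times B$ pour la condition de cocycle, mêmes contraintes triviales (strictité de $H$ sur $\{0\}\times B$ et sur $\{1\}\times B$, conditions \eqref{item:RDF1} et \eqref{item:RDF2}), avec le fait que $H(0,f)=f$ pour $f$ dans $A$. La déduction de \eqref{eq:B4} à partir de \eqref{eq:B3} et de \eqref{eq:B2} appliquée au triplet $x\xrightarrow{gf}z\xrightarrow{\id{z}}z\xrightarrow{h}t$ (licite car $A$ est une sous-$2$\nbd-catégorie, donc $gf$ est dans $A$) est exactement celle que l'article laisse implicite.
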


\begin{proof}
 Démontrons ces identités.
  \begin{enumerate}[label=(B\arabic*), wide]
  \item
  En vertu de la condition de cocycle satisfaite par $H$ appliquée à
  \[ \xrightarrow{\hpz(0, f)\hpz} \xrightarrow{\hpz(0, g)\hpz}
  \xrightarrow{(0 \rightarrow 1, h)}\ , \]
  on a
  \[
  \begin{split}
  \MoveEqLeft
  \big(H(0 \to 1, h) \comp H((0, g), (0, f))\big)H((0 \to 1, h),
  (0, gf)) \\
  & = \big(H((0 \to 1, h), (0, g)) \comp H(0, f)\big)
      H((0 \to 1, hg), (0, f))\ .
  \end{split}
  \]
  Les contraintes $H((0, g), (0, f))$ et $H((0 \to 1, hg), (0, f))$ étant
  triviales (la première car $H\vert \{0\} \times B = ir$ est un
  $2$-foncteur strict et la seconde en vertu de \eqref{item:RDF2}), on obtient
  \[
  \begin{split}
  H((0 \to 1, h), (0, gf)) 
  & = H((0 \to 1, h), (0, g)) \comp H(0, f) \\
  & = H((0 \to 1, h), (0, g)) \comp f\ ,
  \end{split}
  \]
  ce qu'on voulait démontrer.

  \tb 
  \item
  En vertu de la condition de cocycle appliquée à
  \[ \xrightarrow{\hpz(0, f)\hpz} \xrightarrow{(0 \rightarrow 1, g)}
  \xrightarrow{\hpz(1, h)\hpz}\ , \]
  on a
  \[
  \begin{split}
  \MoveEqLeft
  \big(H(1, h) \comp H((0 \to 1, g), (0, f))\big)H((1, h),
  (0 \to 1, gf)) \\
  & = \big(H((1, h), (0 \to 1, g)) \comp H(0, f)\big)
      H((0 \to 1, hg), (0, f))\ .
  \end{split}
  \]
  Les contraintes $H((0 \to 1, g), (0, f))$ et $H((0 \to 1, hg), (0,
  f))$ étant triviales en vertu de \eqref{item:RDF2}, on obtient 
  \[
  \begin{split}
  H((1, h), (0 \to 1, gf)) 
  & = H((1, h), (0 \to 1, g)) \comp H(0, f) \\
  & = H((1, h), (0 \to 1, g)) \comp f\ ,
  \end{split}
  \]
  ce qu'on voulait démontrer.
  \tb
  \item
  En vertu de la condition de cocycle appliquée à
  \[ \xrightarrow{(0 \rightarrow 1, f)} \xrightarrow{\hpz(1, g)\hpz}
  \xrightarrow{\hpz(1, h)\hpz}\ , \]
  on a
  \[
  \begin{split}
  \MoveEqLeft
  \big(H(1, h) \comp H((1, g), (0 \rightarrow 1, f))\big)H((1, h),
  (0 \rightarrow 1, gf)) \\
  & = \big(H((1, h), (1, g)) \comp H(0 \rightarrow 1, f)\big)
      H((1, hg), (0 \rightarrow 1, f))
  \end{split}
  \]
  Les contraintes $H((1, g), (0 \rightarrow 1, f))$ et $H((1, h), (1,
  g))$ étant triviales (la première en vertu de~\eqref{item:RDF1} et la
  seconde car $H\,|\,\{1\} \times B = \id{B}$ est un $2$-foncteur strict),
  on obtient immédiatement \eqref{eq:B3}.

  \tb
  \item
  L'identité \eqref{eq:B4} est une conséquence immédiates des identités
  \eqref{eq:B3} et \eqref{eq:B2}.
\end{enumerate}
\end{proof}

\subsection{Image directe de $2$-cribles rétractes par déformation oplax
forts}

Fixons un crible $i : A \to B$ de $2$-catégories et une structure de
rétracte par déformation oplax fort sur $i$ donnée par $r : B \to A$ et $H
: \Deltan1 \times B \to B$. Soit $u : A \to A'$ un $2$-foncteur strict.
Considérons le carré cocartésien
\[
\xymatrix{
A \ar[r]^u \ar[d]_i & A' \ar[d]^{i'}\\
B \ar[r]_v & B' \\
}
\]
de $\dcat$ associé à $i$ et $u$. Le but de cette sous-section est
de montrer qu'il existe une unique structure de $2$-crible rétracte
par déformation oplax fort $(r', H')$ sur le crible $i'$ compatible à $(r,
H)$ au sens où on a $r'v=ur$ et $H'(\id{\Deltan1} \times v)=vH$, ce qui
établira la proposition~\ref{prop:2rdf_img_dir}. Plus précisément, on dira
que $r'$ est compatible à $r$ si on a $r'v = ur$ et que $H'$ est compatible
à $H$ si on a $H'(\id{\Deltan1} \times v)=vH$.

\tb

Dans cette sous-section, nous utiliserons librement les descriptions de $i'$,
$v$ et $B'$ données dans la sous-section~\ref{sec:cribles2Cat}. Les
identités (A$m$) et (B$n$) (pour différentes valeurs de $m$ et $n$)
auxquelles nous ferons référence sont celles qui ont été établies dans la
sous-section précédente.

\begin{dparagr}
En vertu de la propriété universelle des carrés cocartésiens, il existe une
unique rétraction $r' : B' \to A'$ de $i'$
compatible à~$r$ : l'unique $2$-foncteur $r'$ rendant commutatif le diagramme
\[
\xymatrix@R=.4pc@C=.6pc{
A \ar[rrr]^u \ar[ddd]_i & & & A' \ar[ddd]_{i'} \ar@/^1em/[dddddrr]^{\id{A'}}\\
\\
\\
B \ar[rrr]^v \ar@/_1em/[ddrrrrr]_{ur} & & & B' \ar[ddrr]^{r'}  \\
\\
& & & & & A' \pbox{.}
}
\]
\end{dparagr}

\begin{dparagr}\label{paragr:H'_uniq_cell}
Nous allons commencer par montrer qu'il existe au plus une homotopie
oplax~$H'$ de $i'r'$ vers $\id{B'}$ compatible à $H$ faisant de $i'$ un
$2$-crible rétracte par déformation oplax fort.

\tb

Soit $H'$ une telle homotopie. Commençons par déterminer l'action de $H'$
sur les cellules de $\Deltan1 \times B'$.  Pour $\gamma$ un objet, une
$1$-flèche ou une $2$-flèche de $B'$, puisque $H'$ est une homotopie de
$i'r'$ vers $\id{B'}$, on a nécessairement
\[
H'(0, \gamma) = i'r'(\gamma) \quad\text{et}\quad H'(1, \gamma) = \gamma\ .
\]

\tb

Si $f : x \to y$ est une $1$-flèche de $B'$, on a
\[
H'(0 \to 1, f) =
\begin{cases}
f, & \text{si $x, y \in \Ob(A')$,}\\
vH(0 \to 1, f), & \text{si $x, y \in \Ob(B\sauf A)$,}\\
vH(0 \to 1, f_2)f_1, & \text{si $x \in \Ob(A')$, $y \in \Ob(B\sauf A)$} \\
\phantom{vH(0 \to 1, f_2)f_1,} & \text{et $(f_2, f_1)$ est un représentant
de $f$.}
\end{cases}
\]
En effet, la première égalité résulte de la condition~\eqref{item:RDF1}, la deuxième de
la compatibilité de $H'$ à $H$ et la troisième du calcul suivant :
\[
\begin{split}
H'(0 \to 1, \overline{(f_2, f_1}))
& = H'(0 \to 1, v(f_2)f_1) \\
& = H'(0 \to 1, v(f_2))f_1 \\
& = vH(0 \to 1, f_2)f_1\ ,
\end{split}
\]
où l'avant-dernière égalité résulte de l'identité~\eqref{eq:A1} et la dernière de
la compatibilité de $H'$ à $H$.

\tb

Enfin, si $\alpha : x \toz y$ est une $2$-flèche de $B'$, on obtient comme
ci-dessus (en remplaçant l'identité~\eqref{eq:A1} par
l'identité~\eqref{eq:A2}) les formules
\[
H'(0 \to 1, \alpha) =
\begin{cases}
\alpha, & \text{si $x, y \in \Ob(A')$,}\\
vH(0 \to 1, \alpha), & \text{si $x, y \in \Ob(B\sauf A)$,}\\
vH(0 \to 1, \alpha_2) \comp \alpha_1, & \text{si $x \in \Ob(A')$,
$y \in \Ob(B\sauf A)$}\\
\phantom{vH(0 \to 1, \alpha_2) \comp \alpha_1,} & \text{et $(\alpha_2,
\alpha_1)$ est un représentant de $\alpha$.}
\end{cases}
\]
Notons que dans le troisième cas, nous avons défini $H'(0 \to 1, \alpha)$ uniquement
si $\alpha$ est un générateur de $\Homi_{B'}(x, y)$. Ceci détermine
évidemment la valeur de $H'(0 \to 1, \alpha)$ pour un $\alpha$ quelconque.
\end{dparagr}

\begin{dparagr}\label{paragr:H'_uniq_contr}
Soit $H'$ comme dans le paragraphe précédent. Nous allons déterminer les
contraintes de fonctorialité de $H'$.  Soient 
\[ x \xrightarrow{f} y \xrightarrow{g} z \] 
deux $1$-flèches composables de $B'$. De telles $1$-flèches déterminent
quatre suites de $1$\nbd-flèches composables de $\Delta_1 \times B'$ :
\begin{enumerate}[label=(\alph*)]
\item 
$(0, x) \xrightarrow{(0 \rightarrow 1, f)} (1, y) \xrightarrow{\hpz(1,
g)\hpz} (1, z)$\ ;
\item 
$(0, x) \xrightarrow{\hpz(0, f)\hpz} (0, y) \xrightarrow{(0 \rightarrow 1,
g)} (1, z)$\ ;
\item
$(0, x) \xrightarrow{\hpz(0, f)\hpz} (0, y) \xrightarrow{\hpz(0, g)\hpz}
(1, z)$\ ;
\item
$(1, x) \xrightarrow{\hpz(1, f)\hpz} (1, y) \xrightarrow{\hpz(1, g)\hpz}
(1, z)$\ ;
\end{enumerate}
et il s'agit de déterminer les contraintes de fonctorialité
associées à ces quatre suites. Dans les deux derniers cas, puisque les
$2$-foncteurs
\[
H\,|\,\{0\} \times B' = i'r'
\quad\text{et}\quad
H\,|\,\{1\} \times B' = \id{B'}
\]
sont stricts, on a nécessairement
\[
  H'((0, g), (0, f)) = \id{i'r'(gf)}
  \quad\text{et}\quad
  H'((1, g), (1, f)) = \id{gf}\ .
\]
Traitons les deux premiers cas.  On distingue les configurations suivantes :
\ts
\begin{enumerate}[label=(\arabic*), wide]
 \item $x, y, z \in \Ob(A')\ .$
 \ts

   Dans ce cas, la condition~\eqref{item:RDF1} entraîne qu'on a
   \[
   H'((1, g),(0 \to 1, f)) = \id{gf}
   \quad\text{et}\quad
   H'((0 \to 1, g),(0, f)) = \id{gf}\ .
   \]
 \tb
 \item $x, y, z \in \Ob(B\sauf A)\ .$
 \ts

 Dans ce cas, la compatibilité de $H'$ à $H$ impose les relations
 \[
 \begin{split}
 H'((1, g),(0 \to 1, f)) & = vH((1, g),(0 \to 1, f)) \ , \\
 H'((0 \to 1, g),(0, f)) & = vH((0 \to 1, g),(0, f)) \ .
 \end{split}
 \]
 \tb
 \goodbreak
 \item $x, y \in \Ob(A'), \quad z \in \Ob(B\sauf A), \quad g =
   \overline{(g_2, a, g_1)}\ .$
 \ts
 \begin{enumerate}[label=(\alph*), wide=2\parindent]
  \item 
    On a nécessairement
    \[
    H'((1, g), (0 \to 1, f)) = vH((1, g_2), (0 \to 1, \id{a})) \comp g_1f\ .
    \]
    En effet, on a
    \[
    \begin{split}
    H'((1, g), (0 \to 1, f))
    & = H'((1, v(g_2)g_1), (0 \to 1, f)) \\
    & = H'((1, v(g_2)), (0 \to 1, \id{u(a)}))\comp g_1f \\
    & = H'((1, v(g_2)), (0 \to 1, v(\id{a})))\comp g_1f \\
    & = vH((1, g_2), (0 \to 1, \id{a}))\comp g_1f\ ,
    \end{split}
    \]
    où la deuxième égalité résulte de l'identité \eqref{eq:B4} et la dernière de
    la compatibilité de~$H'$ à $H$.
  \tb
  \item La condition~\eqref{item:RDF2} entraîne qu'on a
   \[ H'((0 \to 1, g),(0, f)) = \id{vH(0 \to 1, g_2)g_1f}\ . \]
  \end{enumerate}
  \tb
\item $x \in \Ob(A'), \quad y, z \in \Ob(B\sauf A), \quad f =
   \overline{(f_2, f_1)}\ .$
 \ts
 \begin{enumerate}[label=(\alph*), wide=2\parindent]
  \item 
    On a nécessairement
    \[
    H'((1, g), (0 \to 1, f)) = vH((1, g), (0 \to 1, f_2)) \comp f_1\ .
    \]
    En effet, on a
    \[
    \begin{split}
    H'((1, g), (0 \to 1, f))
    & = H'((1, g), (0 \to 1, v(f_2)f_1)) \\
    & = H'((1, g), (0 \to 1, v(f_2))) \comp f_1 \\
    & = vH((1, g), (0 \to 1, f_2)) \comp f_1\ ,
    \end{split}
    \]
    où l'avant-dernière égalité résulte de l'identité \eqref{eq:B2} et la dernière de
    la compatibilité de $H'$ à $H$.
  \tb
  \item
  \tb
    On a nécessairement
    \[
    H'((0 \to 1, g), (0, f)) = vH((0 \to 1, g), (0, f_2)) \comp f_1\ .
    \]
    En effet, on a
    \[
    \begin{split}
    H'((0 \to 1, g), (0, f))
    & = H'((0 \to 1, g), (0, v(f_2)f_1)) \\
    & = H'((0 \to 1, g), (0, v(f_2))) \comp f_1 \\
    & = vH((0 \to 1, g), (0, f_2)) \comp f_1\ ,
    \end{split}
    \]
    où l'avant-dernière égalité résulte de l'identité~\eqref{eq:B1} et la dernière de
    la compatibilité de $H'$ à $H$.
  \end{enumerate}
\end{enumerate}
\tb
\end{dparagr}

\begin{dparagr}\label{paragr:H'_def_cell}
Le paragraphe précédent achève de démontrer l'unicité de $H'$. Nous allons
maintenant montrer que les formules obtenues dans les deux paragraphes
précédents définissent effectivement une homotopie oplax de $i'r'$ vers
$\id{B'}$ compatible à $H$ faisant de $i'$ une $2$-crible rétracte par
déformation oplax fort.

\tb

Commençons par définir l'action de $H'$ sur les cellules de $\Deltan1 \times
B'$ suivant les formules obtenues dans le
paragraphe~\ref{paragr:H'_uniq_cell}.  Pour $\gamma$ un objet, une
$1$-flèche ou une $2$-flèche de $B'$, on pose
\[
H'(0, \gamma) = i'r'(\gamma) \quad\text{et}\quad H'(1, \gamma) = \gamma\ .
\]
\tb
Pour $f : x \to y$ une $1$-flèche de $B'$, on pose
\[
H'(0 \to 1, f) =
\begin{cases}
f, & \text{si $x, y \in \Ob(A')$,}\\
vH(0 \to 1, f), & \text{si $x, y \in \Ob(B\sauf A)$,}\\
vH(0 \to 1, f_2)f_1, & \text{si $x \in \Ob(A')$, $y \in \Ob(B\sauf A)$} \\
\phantom{vH(0 \to 1, f_2)f_1,} & \text{et $(f_2, f_1)$ est un représentant
de $f$.}
\end{cases}
\]
Vérifions que la troisième formule est bien définie. Supposons qu'on ait
deux couples $(f_2, f_1)$ et $(f'_2, f'_1)$ en relation \forlang{via} une
$1$-flèche $h$ de $A$ (autrement dit, $h$ vérifie \hbox{$f'_1 = u(h)f^{}_1$} et
$f^{}_2 = f'_2h$). On a alors
\[
\begin{split}
H'(0 \to 1, \overline{(f_2, f_1)})
&
= vH(0 \to 1, f_2)f_1
= vH(0 \to 1, f'_2h)f_1 \\
&
= v\big(H(0 \to 1, f'_2)h\big)f_1 
= vH(0 \to 1, f'_2)v(h)f_1 \\
&
= vH(0 \to 1, f'_2)u(h)f_1 
= vH(0 \to 1, f'_2)f'_1 \\
& 
= H'(0 \to 1, \overline{(f'_2, f'_1)})\ ,
\end{split}
\]
où la troisième égalité résulte de l'identité~\eqref{eq:A1}.

\tb

Enfin, pour $\alpha : x \toz y$ une $2$-flèche de $B'$, on pose
\[
H'(0 \to 1, \alpha) =
\begin{cases}
\alpha, & \text{si $x, y \in \Ob(A')$,}\\
vH(0 \to 1, \alpha), & \text{si $x, y \in \Ob(B\sauf A)$,}\\
vH(0 \to 1, \alpha_2) \comp \alpha_1, & \text{si $x \in \Ob(A')$,
$y \in \Ob(B\sauf A)$}\\
\phantom{vH(0 \to 1, \alpha_2) \comp \alpha_1,} & \text{et $(\alpha_2,
\alpha_1)$ est un représentant de $\alpha$.}
\end{cases}
\]
Dans le troisième cas, nous avons défini $H'(0 \to 1, \alpha)$ uniquement
sur les générateurs de $\Homi_{B'}(x, y)$. Nous devons donc vérifier que les
relations \eqref{item:R1}, \eqref{item:R2} et \eqref{item:R3} du paragraphe~\ref{paragr:relR} sont satisfaites. Si
$(\beta_2, \beta_1)$ et $(\alpha_2, \alpha_1)$ sont des générateurs de
$\Homi_{B'}(x, y)$ satisfaisant aux hypothèses de \eqref{item:R1}, on a
\[
\begin{split}
\MoveEqLeft H'(0 \to 1, \overline{(\beta_2, \beta_1)}) 
H'(0 \to 1, \overline{(\alpha_2, \alpha_1)}) \\
& =
\big(vH(0 \to 1, \beta_2) \comp \beta_1\big)
\big(vH(0 \to 1, \alpha_2) \comp \alpha_1\big) \\
& =
\big(vH(0 \to 1, \beta_2)vH(0 \to 1, \alpha_2)\big)
\comp (\beta_1\alpha_1) \\
& =
\big(vH(0 \to 1, \beta_2\alpha_2)\big)
\comp (\beta_1\alpha_1) \\
& =
H'(0 \to 1, \overline{(\beta_2\alpha_2, \beta_1\alpha_1)})\ ,
\end{split}
\]
ce qui établit la compatibilité à \eqref{item:R1}. La compatibilité à
\eqref{item:R2} résulte des égalités
\[
\begin{split}
H'(0 \to 1, \overline{(\id{f_2}, \id{f_1})}) & = vH(0 \to 1,
\id{f_2}) \comp \id{f_1} =  \id{vH(0 \to 1, f_2)f_1} \\
& = \id{H'(0 \to 1, \overline{(f_2, f_1)})}\ .
\end{split}
\]
Enfin, si $(\alpha_2, \alpha_1)$ et $(\alpha'_2, \alpha'_1)$ sont des
générateurs de $\Homi_{B'}(x, y)$ satisfaisant aux hypothèses de
\eqref{item:R3} \forlang{via} une $2$-flèche $\gamma$ de $A$, on a
\[
\begin{split}
H'(0 \to 1, \overline{\sstrut(\alpha_2, \alpha_1)})
& =
vH(0 \to 1, \alpha_2) \comp \alpha_1
= vH(0 \to 1, \alpha'_2 \comp \gamma) \comp \alpha_1 \\
&
= v\big(H(0 \to 1, \alpha'_2) \comp \gamma\big) \comp \alpha_1
= vH(0 \to 1, \alpha'_2) \comp v(\gamma) \comp \alpha_1 \\
& \phantom{=1} \text{(en vertu de l'identité~\eqref{eq:A2})} \\
&
= vH(0 \to 1, \alpha'_2) \comp u(\gamma) \comp \alpha_1 
= vH(0 \to 1, \alpha'_2) \comp \alpha'_1 \\
& 
= H'(0 \to 1, \overline{(\sstrut\alpha'_2, \alpha'_1)})\ ,
\end{split}
\]
ce qui achève la démonstration du fait que la dernière formule est bien
définie.

\tb

On vérifie immédiatement qu'on définit ainsi un morphisme de $2$-graphes
$H'$ et que de plus, celui-ci est compatible aux identités et à la
composition verticale.
\end{dparagr}

\begin{dparagr}\label{paragr:comp_H'_H_gr}
Le paragraphe précédent achève la définition du morphisme de $2$-graphes
sous-jacent à $H'$. Vérifions que ce morphisme de $2$-graphes est compatible
à $H$ au sens où on a l'égalité de morphismes de $2$-graphes
\[
H'(1_{\smp{1}}\times v)=vH\ .
\]
Soit $\gamma$ un objet, une $1$-flèche ou une $2$-flèche
de $B$. On a
\[
H'(0, v(\gamma)) = vH(0, \gamma)\ .
\]
En effet, on a
\[
H'(0, v(\gamma)) = i'r'v(\gamma) = i'ur(\gamma) = vir(\gamma) 
= vH(0, \gamma)\ .
\]
Par ailleurs, on a
\[ H'(1, v(\gamma)) = v(\gamma) = vH(1, \gamma)\ . \]
Supposons maintenant que $\alpha$ est une $1$-flèche ou une $2$-flèche de
$B$ d'objet source $x$ et d'objet but $y$, et montrons que $H'(0 \to 1,
v(\gamma)) = vH(0 \to 1, \gamma)$. On distingue trois cas~:
\tb
\begin{enumerate}[label=(\arabic*)]
  \item Si $x, y \in \Ob(A)$, on a
    \[
      \begin{split}
      H'(0 \to 1, v(\gamma)) & = H'(0 \to 1, u(\gamma)) \\
      & = u(\gamma) = v(\gamma) \\
      & = vH(0 \to 1, \gamma)\ ,
      \end{split}
    \]
    où la dernière égalité résulte du fait que $H$ satisfait à la
    condition~\eqref{item:RDF1}.
  \tb
  \item Si $x, y \in \Ob(B\sauf A)$, on a
   \[ 
    H'(0 \to 1, v(\gamma)) = H'(0 \to 1, \gamma) = vH(0 \to 1, \gamma)\ .
   \]
  \tb
  \item Si $x \in \Ob(A)$ et $y \in \Ob(B\sauf A)$, on a
    \[
    \begin{split}
    H'(0 \to 1, v(\gamma))
    & = H'(0 \to 1, \overline{(\gamma, \id{u(x)})})
      = vH(0 \to 1, \gamma) \comp \id{u(x)} \\
    & = vH(0 \to 1, \gamma)\ .
    \end{split}
    \]
\end{enumerate}
\end{dparagr}

\begin{dparagr}\label{paragr:eq_C}
On aura besoin dans la suite des identités suivantes. Soient 
$x$ un objet de~$A'$ et $y$ un objet de $B\sauf A$. Si $f : x \to y$ 
est une $1$-flèche de $B'$ représentée par un couple $(f_2, f_1)$, on a
\begin{align*}
H'(0, \overline{(f_2, f_1)}) = vH(0, f_2)f_1\ .
\tag{C1}\label{eq:C1}
\end{align*}
De même, si $(\alpha_2, \alpha_1)$ est un générateur de $\Hom_{B'}(x, y)$,
on a
\begin{align*}
H'(0, \overline{(\alpha_2, \alpha_1)}) = vH(0, \alpha_2) \comp \alpha_1\ .
\tag{C2}\label{eq:C2}
\end{align*}
Le première égalité est un cas particulier de la seconde et il suffit donc
de prouver cette dernière. Or, en utilisant le fait que 
le $2$-foncteur $H'\,|\,\{0\}\times B' = i'r'$ est strict et la
compatibilité de $H'$ à $H$ qu'on a établie au paragraphe précédent, on a
\[
H'(0, \overline{(\alpha_2, \alpha_1)})
=
H'(0, v(\alpha_2) \comp \alpha_1)
=
H'(0, v(\alpha_2)) \comp H'(0, \alpha_1)
=
vH(0, \alpha_2) \comp \alpha_1\ .
\]
\end{dparagr}

\begin{dparagr}\label{paragr:H'_def_contr}
Définissons maintenant les contraintes de fonctorialité de $H'$ suivant les
formules obtenues dans le paragraphe~\ref{paragr:H'_uniq_contr}. Soient
\[ x \xrightarrow{f} y \xrightarrow{g} z \]
deux $1$-flèches composables de $B'$. On pose
\[
  H'((0, g), (0, f)) = \id{i'r'(gf)}
  \quad\text{et}\quad
  H'((1, g), (1, f)) = \id{gf}\ .
\]
Examinons les différentes configurations restantes~:
\tb
\begin{enumerate}[label=(\arabic*), wide]
  \item $x, y, z \in \Ob(A')$\ .
    \tb
    On a alors $H'(0 \to 1, gf) = gf$.
    \tb
    \begin{enumerate}[label=(\alph*), wide=2\parindent]
      \item De plus, on a $H'(0 \to 1, g)H'(0, f) = gf$ et on pose
        \[ H'((0 \to 1, g),(0, f)) = \id{gf}\ . \]
      \tb
      \item De même, on a $H'(1, g)H'(0 \to 1, f) = gf$ et on pose
        \[ H'((1, g),(0 \to 1, f)) = \id{gf}\ .  \]
    \end{enumerate}
    \tb
  \item $x, y, z \in \Ob(B\sauf A)$\ .
  \tb
   On a alors $H'(0 \to 1, gf) = vH(0 \to 1, gf)$.
   \tb
    \begin{enumerate}[label=(\alph*), wide=2\parindent]
      \item De plus, par compatibilité de $H'$ à $H$
        (voir le paragraphe~\ref{paragr:comp_H'_H_gr}), on a
        \[
        \begin{split}
        H'(0 \to 1, g)H'(0, f) & = vH(0 \to 1, g)vH(0, f) \\
        & = v\big(H(0 \to 1, g)H(0, f)\big)\ ,
        \end{split}
        \]
        et on pose
        \[ H'((0 \to 1, g), (0, f)) = vH((0 \to 1, g), (0, f))\ . \]
        \tb
      \item On vérifie comme ci-dessus qu'on a
        \[ H'(1, g)H'(0 \to 1, f) = v\big(H(1, g)H(0 \to 1, f)\big)\ , \]
        et on pose
        \[ H'((1, g), (0 \to 1, f)) = vH((1, g), (0 \to 1, f))\ . \]
        \tb
    \end{enumerate} 
    \tb
  \item $x, y \in \Ob(A'), \quad z \in \Ob(B\sauf A),\quad g =
    \overline{(g_2, a, g_1)}$\ .
  \tb
    On a alors
    \[ 
    \begin{split}
    H'(0 \to 1, \overline{(g_2, g_1)}f) 
    & = H'(0 \to 1, \overline{(g_2, g_1f)}) \\
    & = vH(0 \to 1, g_2)g_1f\ .
    \end{split}
    \]
    \tb
    \begin{enumerate}[label=(\alph*), wide=2\parindent]
      \item De plus, on a $H'(0 \to 1, \overline{(g_2, g_1)})H'(0, f) = vH(0 \to 1,
        g_2)g_1f$ et on pose
        \[ H'((0 \to 1, \overline{(g_2, g_1)}), (0, f)) = \id{vH(0 \to 1,
        g_2)g_1f}\ , \]
        ce qui est bien défini puisque $\id{vH(0 \to 1, g_2)g_1f} = \id{H'(0
        \to 1, \overline{(g_2, g_1)}f)}$.
      \tb
      \item Par ailleurs, on a $H'(1, \overline{(g_2, g_1)})H'(0 \to 1, f) =
        \overline{(g_2, g_1)}f = v(g_2)g_1f$. On dispose d'une $2$-flèche
        \[
           H((1, g_2), (0 \to 1, \id{a})) : H(0 \to 1, g_2) \Rightarrow H(1,
           g_2)H(0 \to 1, \id{a}) = g_2
        \]
        (le calcul du but de cette $2$-flèche utilise le fait que $H$
        vérifie la condition~\eqref{item:RDF1}), et on pose
        \[
           H'((1, \overline{(g_2, g_1)}), (0 \to 1, f)) = vH((1, g_2), (0
           \to 1, \id{a})) \comp g_1f\ .
        \]
        Vérifions que cette formule est bien définie. Soit $(g'_2, a',
        g'_1)$ un second triplet en relation avec $(g_2, a, g_1)$ \emph{via}
        une $1$-flèche $h$ de $A$ (autrement dit, $h$ vérifie $g'_1 =
        u(h)g^{}_1$ et \hbox{$g^{}_2 = g'_2h$}). On a alors
        \[
        \begin{split}
           H((1, g_2), (0 \to 1, \id{a})) 
           & = H((1, g'_2h), (0 \to 1, \id{a})) \\
           & = H((1, g'_2), (0 \to 1, \id{a'})) \comp h\ ,
        \end{split}
        \]
        où la dernière égalité résulte de l'identité \eqref{eq:B4}.
        On obtient donc
        \[
        \begin{split}
        vH((1, g_2), (0 \to 1, \id{a})) \comp g_1f
        & = v\big(H((1, g'_2), (0 \to 1, \id{a'})) \comp h\big) \comp g_1f \\
        & = vH((1, g'_2), (0 \to 1, \id{a'})) \comp v(h)g_1f \\
        & = vH((1, g'_2), (0 \to 1, \id{a'})) \comp u(h)g_1f \\
        & = vH((1, g'_2), (0 \to 1, \id{a'})) \comp g'_1f\ .
        \end{split}
        \]
    \end{enumerate}
    \tb
  \item $x \in \Ob(A'), \quad y, z \in \Ob(B\sauf A), \quad f =
    \overline{(f_2, f_1)}$\ .
  \ts

    On a alors 
    \[ H'(0 \to 1, g\overline{(f_2, f_1})) = H'(0 \to 1, \overline{(gf_2,
    f_1)}) = vH(0 \to 1, gf_2)f_1\ . \]
    \tb
    \begin{enumerate}[label=(\alph*), wide=2\parindent]
      \item De plus, on a
        \[
        \begin{split}
          H'(0 \to 1, g)H'(0, \overline{(f_2, f_1)}) 
          & = vH(0 \to 1, g)H'(0, v(f_2)f_1) \\
          & = vH(0 \to 1, g)H'(0, v(f_2))H'(0, f_1) \\
          & \phantom{=1} \text{(car le $2$-foncteur $H'\,|\,\{0\}\times B' =
            i'r'$ est strict)} \\
          & = vH(0 \to 1, g)vH(0, f_2)f_1 \\
          & \phantom{=1} \text{(en vertu de la compatibilité de $H'$ à $H$
          (\emph{cf.}~\ref{paragr:comp_H'_H_gr}))} \\
          & = v\big(H(0 \to 1, g)H(0, f_2)\big)f_1\ ,
        \end{split}
        \]
        et on pose
        \[
        H'((0 \to 1, g), (0, \overline{(f_2, f_1)})) = vH((0 \to 1, g), (0,
        f_2)) \comp f_1\ .
        \]
        Vérifions que cette formule est bien définie. Soit $(f'_2, f'_1)$ un
        second couple en relation avec $(f_2, f_1)$  \emph{via} une
        $1$-flèche $h$ de $A$. On a alors
        \[
        \begin{split}
        H((0 \to 1, g), (0, f_2)) 
        & = H((0 \to 1, g), (0, f'_2h)) \\
        & = H((0 \to 1, g), (0, f'_2)) \comp h\ , \\
        \end{split}
        \]
        où la dernière égalité résulte de l'identité~\eqref{eq:B1}.
        On en déduit immédiatement comme dans le cas précédent que
        \[
        vH((0 \to 1, g), (0, f_2)) \comp f_1 =
        vH((0 \to 1, g), (0, f'_2)) \comp f'_1\ .
        \]
        \tb
   \item On a
        \[
        \begin{split}
        H'(1, g)H'(0 \to 1, \overline{(f_2, f_1)})
        & = gvH(0 \to 1, f_2)f_1 \\
        & = vH(1, g)vH(0 \to 1, f_2)f_1 \\
        & = v\big(H(1, g)H(0 \to 1, f_2)\big)f_1
        \end{split}
        \]
        et on pose
        \[
        H'((1, g), (0 \to 1, \overline{(f_2, f_1)})) = vH((1, g), (0 \to 1,
        f_2)) \comp f_1\ .
        \]
        En vertu de \eqref{eq:B2}, si $(f'_2, f'_1)$ est un couple en relation avec
        $(f_2, f_1)$ \emph{via} une $1$-flèche $h$ de $A$, on a
        \[
        \begin{split}
        H((1, g), (0 \to 1, f_2)) = H((1, g), (0 \to 1, f'_2)) \comp h \\
        \end{split}
        \]
        et on en déduit que
        \[
        vH((1, g), (0 \to 1, f_2)) \comp f_1 =
        vH((1, g), (0 \to 1, f'_2)) \comp f'_1\ .
        \]
    \end{enumerate}
\end{enumerate}
\end{dparagr}

\begin{dparagr}\label{paragr:comp_H'_H}
Le paragraphe précédent achève la définition de $H'$. Il résulte immédiatement du fait que $H$
satisfait aux conditions de normalisation des $2$-foncteurs lax
(\emph{cf.}~définition~\ref{def:fonct_lax}) qu'il en est de même de $H'$. Il
est par ailleurs immédiat que $H'$~vérifie la condition~\eqref{item:RDF1}
(voir entre autres le cas~(1) du paragraphe précédent) et la
condition~\eqref{item:RDF2} (voir le cas (3.a) du paragraphe précédent). 
\tb
Vérifions la compatibilité de $H'$ à $H$, c'est-à-dire l'égalité
\[
H'(1_{\smp{1}}\times v)=vH\ .
\]
On a montré au paragraphe~\ref{paragr:comp_H'_H_gr} que
$H'(1_{\smp{1}}\times v)$ et $vH$ sont égaux en tant que morphismes de
$2$-graphes. Il s'agit donc de vérifier que leurs contraintes de
fonctorialité coïncident. C'est évident sauf éventuellement pour la
contrainte associée à
\[
  (0, x) \xrightarrow{(0 \rightarrow 1, f)} (1, y) \xrightarrow{\hpz(1,
  g)\hpz} (1, z), \qquad x, y \in \Ob(A), \quad z \in \Ob(B \sauf A)\ .
\]
Mais dans ce cas, on a
\[
\begin{split}
H'(1_{\smp{1}}\times v)((1, g), (0 \to 1, f))
& = H'((1, v(g)), (0 \to 1, v(f))) \\
& = H'((1, \overline{(g, \id{u(y)})}), (0 \to 1, u(f))) \\
& = vH((1, g), (0 \to 1, \id{u(y)})) \comp u(f) \\
& = vH((1, g), (0 \to 1, \id{u(y)})) \comp v(f) \\
& = v\big(H((1, g), (0 \to 1, \id{u(y)})) \comp f\big) \\
& = vH((1, g), (0 \to 1, f))\ ,
\end{split}
\]
où la dernière égalité résulte de l'identité \eqref{eq:B2}.
\end{dparagr}

\begin{dparagr}
Il nous suffit maintenant de montrer que $H'$ est bien un $2$-foncteur oplax.
Commençons par montrer que $H'$ vérifie la condition de cocycle.
Soient \[ x \xrightarrow{f} y \xrightarrow{g} z \xrightarrow{h} t \] trois
$1$-flèches composables de $B'$. De telles $1$-flèches déterminent cinq
suites de $1$\nbd-flèches composables de $\Delta_1 \times B'$ :
\begin{enumerate}[label=(\alph*)]
\item 
$(0, x) \xrightarrow{(0 \rightarrow 1, f)} (1, y) \xrightarrow{\hpz(1,
g)\hpz} (1, z) \xrightarrow{\hpz(1, h)\hpz} (1, t)$\ ;
\item 
$(0, x) \xrightarrow{\hpz(0, f)\hpz} (0, y) \xrightarrow{(0 \rightarrow 1,
g)} (1, z) \xrightarrow{\hpz(1, h)\hpz} (1, t)$\ ;
\item
$(0, x) \xrightarrow{\hpz(0, f)\hpz} (0, y) \xrightarrow{\hpz(0, g)\hpz} (0,
z) \xrightarrow{(0 \rightarrow 1, h)} (1, t)$\ ;
\item
$(0, x) \xrightarrow{\hpz(0, f)\hpz} (0, y) \xrightarrow{\hpz(0, g)\hpz} (0,
z) \xrightarrow{(0, h)} (0, t)$\ ;
\item
$(1, x) \xrightarrow{\hpz(1, f)\hpz} (1, y) \xrightarrow{\hpz(1, g)\hpz} (1,
z) \xrightarrow{(1, h)} (1, t)$\ ;
\end{enumerate}
et il s'agit de montrer que les conditions de cocycle associées à ces cinq
suites sont satisfaites par $H'$. Les deux derniers cas sont immédiats car
$H'\,|\,\{0\} \times B' = i'r' $ et $H'\,|\,\{1\} \times B' =
\id{B'}$ sont des $2$-foncteurs stricts. Traitons les trois premiers cas.
Les trois suites correspondantes sont caractérisées par la «~position de $0
\to 1$~» et les relations de cocycle associées s'écrivent respectivement
\begin{align*}
\MoveEqLeft
\tag*{\hskip 1em(\text{a})}
\big(H'(1,h) \comp H'((1,g),(0\to1,f))\big)H'((1,h),(0\to1,gf))\\
&=\big(H'((1,h),(1,g))\comp H'(0\to1,f)\big)H'((1,hg),(0\to1,f))\ , 
\\
\tag*{\hskip 1em(\text{b})}
\MoveEqLeft
\big(H'(1,h) \comp H'((0\to1,g),(0,f))\big)H'((1,h),(0\to1,gf))\\
&=\big(H'((1,h),(0\to1,g))\comp H'(0,f)\big)H'((0\to1,hg),(0,f))\ ,
\\
\tag*{\hskip 1em(\text{c})}
\MoveEqLeft
\big(H'(0\to1,h) \comp H'((0,g),(0,f))\big)H'((0\to1,h),(0,gf))\\
&=\big(H'((0\to1,h),(0,g))\comp H'(0,f)\big)H'((0\to1,hg),(0,f))\ .
\end{align*}
On distingue les configurations suivantes~:
\tb
\begin{enumerate}[label=(\arabic*), wide]
 \item $x, y, z, t \in \Ob(A')\ .$
 \tb
   Dans ce cas, les contraintes de fonctorialité de $H'$ sont triviales
   (quelque soit la position de $0 \to 1$) et la condition de cocycle est
   donc satisfaite.
 \tb
 \item $x, y, z, t \in \Ob(B\sauf A)\ .$ 
 \tb
      Ce cas est immédiat car on a
      \[ H'\,|\,\Delta_1 \times (B\sauf A) = vH\,|\,\Delta_1 \times (B \sauf A)\ . \]
  \tb
  \item $x, y, z \in \Ob(A'), \quad t \in \Ob(B\sauf A), \quad h =
   \overline{(h_2, a, h_1)}\ .$
  \tb
    \begin{enumerate}[label=(\alph*), wide=2\parindent]
      \item Si $0 \to 1$ est placé en première position, il s'agit de montrer,
en utilisant le fait que les contraintes $H'((1, g), (0 \to 1, f))$ et
$H'((1, h), (1, g))$ sont triviales, que
        \[
        H'((1, \overline{(h_2, h_1)}), (0 \to 1, gf)) 
        =
        H'((1, \overline{(h_2, h_1g)}), (0 \to 1, f))\ .
        \]
        Or ces $2$-flèches sont toutes les deux égales à
        \[ vH((1, h_2), (0 \to 1, \id{a})) \comp h_1gf\ . \]
      \tb
      \item Si $0 \to 1$ est placé en deuxième position, les contraintes
        \[ H'((0 \to 1, g), (0, f))\quad\text{et}\quad H'((0 \to 1, hg), (0, f)) \]
        sont triviales ($H'$ vérifie la condition~\eqref{item:RDF2}) et il
        s'agit de montrer, après simplification, que
        \[
        H'((1, \overline{(h_2, h_1)}), (0 \to 1, gf)) 
        =
        H'((1, \overline{(h_2, h_1)}), (0 \to 1, g)) \comp f\ .
        \]
        Or ces $2$-flèches sont toutes les deux égales à
        \[ vH((1, h_2), (0 \to 1, \id{a})) \comp h_1gf\ . \]
      \tb
      \item Si $0 \to 1$ est placé en troisième position, toutes les
        contraintes sont triviales.
    \end{enumerate}
 \tb
 \item $x, y \in \Ob(A'), \quad z, t \in \Ob(B\sauf A), \quad g =
   \overline{(g_2, a, g_1)}\ .$
 \ts
    \begin{enumerate}[label=(\alph*), wide=2\parindent]
      \item Si $0 \to 1$ est placé en première position, la contrainte
        $H'((1, h), (1, g))$ est triviale et il s'agit
        de montrer que
        \[
        \begin{split}
        \MoveEqLeft
        \big(h \comp H'((1, \overline{(g_2, g_1)}), (0 \to 1, f))\big)
        H'((1, h), (0 \to 1, \overline{(g_2, g_1f)})) \\
        & =
        H'((1, \overline{(hg_2, g_1)}), (0 \to 1, f))\ .
        \end{split}
        \]
        Or
        \[
        \begin{split}
        \MoveEqLeft
        \big(h \comp H'((1, \overline{(g_2, g_1)}), (0 \to 1, f))\big)
        H'((1, h), (0 \to 1, \overline{(g_2, g_1f)})) \\
        & =
        \big(h \comp vH((1, g_2), (0 \to 1, \id{a})) \comp g_1f\big)
        \big(vH((1, h), (0 \to 1, g_2)) \comp g_1f\big) \\
        & =
        v\big[\big(h \comp H((1, g_2), (0 \to 1, \id{a}))\big)
        H((1, h), (0 \to 1, g_2))\big] \comp g_1f \\
        & =
        v(H((1, hg_2), (0 \to 1, \id{a}))) \comp g_1f \\
        & \phantom{=1} \text{(par la condition de cocycle appliquée à
          \raisebox{0pt}[0pt][0pt]{$\xrightarrow{(0 \rightarrow 1, \id{a})}
          \xrightarrow{\hpz(1, g_2)\hpz} \xrightarrow{\hpz(1, h)\hpz}$})}
        \\
        & = H'((1, \overline{(hg_2, g_1)}), (0 \to 1, f))\ .
        \end{split}
        \]
      \tb
      \item Si $0 \to 1$ est placé en deuxième position, les contraintes
        \[ 
           H'((0 \to 1, g), (0, f))
           \quad\text{et}\quad
           H'((0 \to 1, hg), (0, f))
        \]
        sont triviales ($H'$ vérifie la condition \eqref{item:RDF2})
        et il s'agit de montrer que
        \[
        H'((1, h), (0 \rightarrow 1, \overline{(g_2, g_1f)}))
        =
        H'((1, h), (0 \rightarrow 1, \overline{(g_2, g_1)})) \comp f\ .
        \]
        Or ces $2$-flèches sont toutes les deux égales à
        \[ vH((1, h), (0 \to 1, g_2)) \comp g_1f\ . \]
      \tb
      \item Si $0 \to 1$ est placé en troisième position, les contraintes
        $H'((0, g), (0, f))$ et $H'((0 \to 1, hg), (0, f))$ sont triviales et
il s'agit de montrer que
        \[
        H'((0 \to 1, h), (0, \overline{(g_2, g_1f)}))
        =
        H'((0 \to 1, h), (0, \overline{(g_2, g_1)})) \comp f\ .
        \]
        Or ces $2$-flèches sont toutes les deux égales à
        \[ vH((0 \to 1, h), (0, g_2)) \comp g_1f\ . \]
\end{enumerate}
 \tb
 \item $x \in \Ob(A'), \quad y, z, t \in \Ob(B\sauf A), \quad f =
   \overline{(f_2, f_1)}\ .$
 \ts
    \begin{enumerate}[label=(\alph*), wide=2\parindent]
      \item Si $0 \to 1$ est placé en première position, la contrainte
        $H'((1, h), (1, g))$ est triviale et il s'agit de montrer que
        \[
        \begin{split}
        \MoveEqLeft
        \big(h \comp H'((1, g), (0 \to 1, \overline{(f_2, f_1)}))\big)
        H'((1, h), (0 \to 1, \overline{(gf_2, f_1)})) \\
        & =
        H'((1, hg), (0 \to 1, \overline{(f_2, f_1)}))\ ,
        \end{split}
        \]
        ou encore, en développant, que
        \[
        \begin{split}
        \MoveEqLeft
        \big(h \comp vH((1, g), (0 \to 1, f_2)) \comp f_1\big)
        \big(vH((1, h), (0 \to 1, gf_2)) \comp f_1\big) \\
        & =
        vH((1, hg), (0 \to 1, f_2)) \comp f_1\ .
        \end{split}
        \]
        Cela résulte, comme dans le cas (4.a), de la condition de cocycle
        de $H$ appliquée à
        \[
        \xrightarrow{(0 \rightarrow 1, f_2)} \xrightarrow{\hpz(1,
        g)\hpz} \xrightarrow{\hpz(1, h)\hpz}\ .
        \]
      \tb
      \item Si $0 \to 1$ est en deuxième position, il s'agit de montrer que
        \[
        \begin{split}
        \MoveEqLeft
        \big(h \comp H'((0 \to 1, g), (0, \overline{(f_2, f_1)}))\big)
        H'((1, h), (0 \to 1, \overline{(gf_2, f_1)})) \\
        & =
        \big(H'((1, h), (0 \to 1, g)) \comp H'(0, \overline{(f_2, f_1)})\big)
        H'((0 \to 1, hg), (0, \overline{(f_2, f_1)}))\ ,
        \end{split}
        \]
        ou encore, en développant (en utilisant en particulier l'identité
        \eqref{eq:C1} du paragraphe~\ref{paragr:eq_C}), que
        \[
        \begin{split}
        \MoveEqLeft
        \big(h \comp vH((0 \to 1, g), (0, f_2)) \comp f_1\big)
        \big(vH((1, h), (0 \to 1, gf_2)) \comp f_1\big) \\
        & =
        \big(vH((1, h), (0 \to 1, g)) \comp vH(0, f_2)f_1\big)
        \big(vH((0 \to 1, hg), (0, f_2)) \comp f_1\big)\ .
        \end{split}
        \]
        Cela résulte, comme dans le cas (4.a), de la condition de cocycle
        de $H$ appliquée à
        \[
        \xrightarrow{\hpz(0, f_2)\hpz} \xrightarrow{(0 \rightarrow 1, g)}
        \xrightarrow{\hpz(1, h)\hpz}\ .
        \]
      \tb
      \item Si $0 \to 1$ est en troisième position, la contrainte
        $H'((0, g), (0, f))$ est triviale et il s'agit de montrer que
        \[
        \begin{split}
        \MoveEqLeft
        H'((0 \to 1, h), (0, \overline{(gf_2, f_1)})) \\
        & =
        \big(H'((0 \to 1, h), (0, g)) \comp H'(0, \overline{(f_2, f_1)})\big)
        H'((0 \to 1, hg), (0, \overline{(f_2, f_1)}))\ ,
        \end{split}
        \]
        ou encore, en développant (en utilisant en particulier l'identité
        \eqref{eq:C1} du paragraphe~\ref{paragr:eq_C}), que
        \[
        \begin{split}
        \MoveEqLeft
        vH((0 \to 1, h), (0, gf_2)) \comp f_1 \\
        & =
        \big(vH((0 \to 1, h), (0, g)) \comp vH(0, f_2)f_1\big)
        \big(vH((0 \to 1, hg), (0, f_2)) \comp f_1\big)\ .
        \end{split}
        \]
        Cela résulte, comme dans le cas (4.a), de la condition de cocycle
        de $H$ appliquée à
        \[
        \xrightarrow{\hpz(0, f_2)\hpz} \xrightarrow{\hpz(0, g)\hpz}
        \xrightarrow{(0 \rightarrow 1, h)}\ .
        \]
    \end{enumerate}
\end{enumerate}
\end{dparagr}

\begin{dparagr}
Vérifions enfin que $H'$ est compatible à la composition horizontale. Soient
\[
\UseAllTwocells
\xymatrix@C=3pc{
x \rtwocell^f_g{\,\alpha}
&
y \rtwocell^k_l{\,\,\beta}
&
z
}
\]
deux $2$-flèches composables horizontalement de $B'$.  De telles $2$-flèches
déterminent quatre suites de $2$\nbd-flèches composables horizontalement de
$\Delta_1 \times B'$ :
\begin{enumerate}[label=(\alph*)]
\item 
$(0, x) \xrightarrowz{(0 \rightarrow 1, \alpha)} (1, y) \xrightarrowz{\hpz(1,
\beta)\hpz} (1, z)$\ ;
\item 
$(0, x) \xrightarrowz{\hpz(0, \alpha)\hpz} (0, y) \xrightarrowz{(0 \rightarrow 1,
\beta)} (1, z)$\ ;
\item
$(0, x) \xrightarrowz{\hpz(0, \alpha)\hpz} (0, y) \xrightarrowz{\hpz(0, \beta)\hpz} (0,
z)$\ ;
\item
$(1, x) \xrightarrowz{\hpz(1, \alpha)\hpz} (1, y) \xrightarrowz{\hpz(1, \beta)\hpz} (1,
z)$\ ;
\end{enumerate}
et il s'agit de vérifier la compatibilité de $H'$ aux compositions
horizontales associées. Les deux derniers cas sont immédiats car
$H'\,|\,\{0\} \times B' = i'r' $ et $H'\,|\,\{1\} \times B' = \id{B'}$
sont des $2$-foncteurs stricts. Traitons les deux premiers cas. On distingue
les configurations suivantes~:
\tb
\begin{enumerate}[label=(\arabic*), wide]
 \item $x, y, z \in \Ob(A')\ .$
 \tb
   Dans ce cas, la compatibilité à la composition verticale résulte du fait
   que $H'$ vérifie la condition \eqref{item:RDF1}.
 \tb
 \item $x, y, z \in \Ob(B\sauf A)\ .$
 \tb
   Ce cas est immédiat car on a
   \[ H'\,|\,\Delta_1 \times (B\sauf A) = vH\,|\,\Delta_1 \times (B \sauf A)\ . \]
 \tb
 \item $x, y \in \Ob(A'), \quad z \in \Ob(B\sauf A)\ .$
 \tb
        Il suffit de vérifier la compatibilité dans le cas où $\beta$ est un
        générateur de $\Homi_{B'}(y, z)$. On a alors
        \[
         k = \overline{(k_2, a, k_1)}, \quad l = \overline{(l_2, a, l_1)}
         \quad\text{et}\quad
         \beta = \overline{(\beta_2, a, \beta_1)}\ .
         \]
    \tb
    \begin{enumerate}[label=(\alph*), wide=2\parindent]
      \item Si $0 \to 1$ est placé en première position, il s'agit de
        montrer que
        \[
         \begin{split}
         \MoveEqLeft \big(H'(1, \overline{(\beta_2, \beta_1)}) \comp H'(0 \to 1,
         \alpha)\big)
         H'((1, \overline{(k_2, k_1)}), (0 \to 1, f)) \\
         & =
         H'((1, \overline{(l_2, l_1)}), (0 \to 1, g))
         H'(0 \to 1, \overline{(\beta_2, \beta_1 \comp \alpha)})\ .
        \end{split}
        \]
        Or on a
        {\allowdisplaybreaks
        \begin{align*}
         \MoveEqLeft \big(H'(1, \overline{(\beta_2, \beta_1)}) \comp H'(0 \to 1,
         \alpha)\big)
         H'((1, \overline{(k_2, k_1)}), (0 \to 1, f)) \\
         & =
         \big(\overline{(\beta_2, \beta_1)} \comp \alpha\big)
         \big(vH((1, k_2), (0 \to 1, \id{a})) \comp k_1f\big) \\
         & =
         \big(v(\beta_2) \comp \beta_1 \comp \alpha\big)
         \big(vH((1, k_2), (0 \to 1, \id{a})) \comp k_1f\big) \\
         & =
         \big(v(\beta_2)vH((1, k_2), (0 \to 1, \id{a}))\big)
         \comp \beta_1 \comp \alpha \\
         & =
         v\big(H(1, \beta_2)H((1, k_2), (0 \to 1, \id{a}))\big)
         \comp \beta_1 \comp \alpha \\
         & =
         v\big[\big(H(1, \beta_2)\comp H(0 \to 1, \id{a})\big) H((1, k_2), (0
           \to 1, \id{a}))\big]
         \comp \beta_1 \comp \alpha \\*
         & \phantom{=1} \text{(car $H(0 \to 1, \id{a}) = \id{a}$ par
         \eqref{item:RDF1})} \\
         & =
         v\big[H((1, l_2), (0 \to 1, \id{a})) H((1, \beta_2) \comp (0 \to
           1, \id{a}))\big] \comp \beta_1 \comp \alpha \\*
         & \phantom{=1} \text{(par compatibilité de $H$ à la composition
         horizontale de $(0 \rightarrow 1, \id{a})$ et $(1, \beta_2)$)} \\
         & =
         v\big[H((1, l_2), (0 \to 1, \id{a})) H(0 \to 1, \beta_2)\big]
         \comp \beta_1 \comp \alpha \\
         & =
         \big(vH((1, l_2), (0 \to 1, \id{a})) \comp l_1g\big)
         \big(vH(0 \to 1, \beta_2) \comp \beta_1 \comp \alpha \big) \\
         & =
         H'((1, \overline{(l_2, l_1)}), (0 \to 1, g))
         H'(0 \to 1, \overline{(\beta_2, \beta_1 \comp \alpha)})\ .
        \end{align*}
        }
      \tb
      \item Si $0 \to 1$ est placé en seconde position, les contraintes de
        fonctorialité sont triviales ($H'$ vérifie la condition~\eqref{item:RDF2}) et
        il s'agit de montrer que
        \[
         H'(0 \to 1, \overline{(\beta_2, \beta_1)}) \comp H'(0, \alpha)
         =
         H'(0 \to 1, \overline{(\beta_2, \beta_1 \comp \alpha)})\ .
        \]
        Or ces $2$-flèches sont toutes les deux égales à
        \[
        vH(0 \to 1, \beta_2) \comp \beta_1 \comp \alpha\ .
        \]
    \end{enumerate}
  \tb
  \item $x \in \Ob(A'), \quad y, z \in \Ob(B\sauf A)\ .$
  \ts

        Il suffit de vérifier la compatibilité dans le cas où $\alpha$ est un
        générateur de $\Homi_{B'}(x, y)$. On a alors
        \[
        f = \overline{(f_2, f_1)}, \quad g = \overline{(g_2, g_1)}
        \quad\text{et}\quad
        \alpha = \overline{(\alpha_2, \alpha_1)}\ .
        \]
    \tb
    \begin{enumerate}[label=(\alph*), wide=2\parindent]
 \item Si $0 \to 1$ est placé en première position, il s'agit de
        montrer que
        \[
         \begin{split}
         \MoveEqLeft \big(H'(1, \beta) \comp H'(0 \to 1,
         \overline{(\alpha_2, \alpha_1)})\big)
         H'((1, k), (0 \to 1, \overline{(f_2, f_1)})) \\
         & =
         H'((1, l), (0 \to 1, \overline{(g_2, g_1)}))
         H'(0 \to 1, \overline{(\beta \comp \alpha_2, \alpha_1)})\ .
        \end{split}
        \]
        Or on a
        \[
         \begin{split}
         \MoveEqLeft \big(H'(1, \beta) \comp H'(0 \to 1,
         \overline{(\alpha_2, \alpha_1)})\big)
         H'((1, k), (0 \to 1, \overline{(f_2, f_1)})) \\
         & =
         \big(vH(1, \beta) \comp vH(0 \to 1, \alpha_2) \comp \alpha_1\big)
         \big(vH((1, k), (0 \to 1, f_2)) \comp f_1\big) \\
         & =
         v\big[
         \big(H(1, \beta) \comp H(0 \to 1, \alpha_2)\big)
         H((1, k), (0 \to 1, f_2))
         \big]
         \comp \alpha_1 \\
         & =
         v\big[
         H((1, l), (0 \to 1, g_2))
         H(0 \to 1, \beta \comp \alpha_2)
         \big]
         \comp \alpha_1 \\
         & \phantom{=1} \text{(par compatibilité de $H$ à la composition
         horizontale de $(0 \rightarrow 1, \alpha_2)$ et $(1, \beta)$)} \\
         & =
         \big(vH((1, l), (0 \to 1, g_2)) \comp g_1\big)
         \big(vH(0 \to 1, \beta \comp \alpha_2) \comp \alpha_1\big) \\
         & =
         H'((1, l), (0 \to 1, \overline{(g_2, g_1)}))
         H'(0 \to 1, \overline{(\beta \comp \alpha_2, \alpha_1)})\ .
         \end{split}
         \]
      \tb 
      \item Si $0 \to 1$ est placé en seconde position, il s'agit de montrer
        que
        \[
         \begin{split}
         \MoveEqLeft \big(H'(0 \to 1, \beta) \comp H'(0,
         \overline{(\alpha_2, \alpha_1)})\big)
         H'((0 \to 1, k), (0, \overline{(f_2, f_1)})) \\
         & =
         H'((0 \to 1, l), (0, \overline{(g_2, g_1)}))
         H'(0 \to 1, \overline{(\beta \comp \alpha_2, \alpha_1)})\ .
        \end{split}
        \]
        Or on a
        \[
         \begin{split}
         \MoveEqLeft \big(H'(0 \to 1, \beta) \comp H'(0,
         \overline{(\alpha_2, \alpha_1)})\big)
         H'((0 \to 1, k), (0, \overline{(f_2, f_1)})) \\
         & =
         \big(vH(0 \to 1, \beta) \comp vH(0, \alpha_2) \comp \alpha_1\big)
         \big(vH((0 \to 1, k), (0, f_2)) \comp f_1\big) \\
         & \phantom{=1} \text{(en vertu de l'identité \eqref{eq:C2} du
         paragraphe~\ref{paragr:eq_C})} \\
         & =
         v\big[
         \big(H(0 \to 1, \beta) \comp H(0, \alpha_2)\big)
         H((0 \to 1, k), (0, f_2))
         \big]
         \comp \alpha_1 \\
         & =
         v\big[
         H((0 \to 1, l), (0, g_2))
         H(0 \to 1, \beta \comp \alpha_2)
         \big]
         \comp \alpha_1 \\
         & \phantom{=1} \text{(par compatibilité de $H$ à la composition
         horizontale de $(0, \alpha_2)$ et $(0 \to 1, \beta)$)} \\
         & =
         \big(vH((0 \to 1, l), (0, g_2)) \comp g_1\big)
         \big(vH(0 \to 1, \beta \comp \alpha_2) \comp \alpha_1\big) \\
         & =
         H'((0 \to 1, l), (0, \overline{(g_2, g_1)}))
         H'(0 \to 1, \overline{(\beta \comp \alpha_2, \alpha_1)})\ .
         \end{split}
         \]
    \end{enumerate}
\end{enumerate}
\end{dparagr}

\begin{dparagr}
Le paragraphe précédent achève de montrer que $H'$ est un foncteur oplax et
donc que $r'$ et $H'$ définissent une structure de rétracte par déformation
oplax fort sur~$i'$ compatible à $r$ et $H$. Ceci termine la démonstration
de la proposition~\ref{prop:2rdf_img_dir}.
\end{dparagr}

\section{Objets cofibrants de la structure à la Thomason sur
\texorpdfstring{$\nCat{2}$}{2-Cat}}

Le but de cette section est de présenter une généralisation à $\nCat{2}$ du théorème de Thomason affirmant que les objets cofibrants de la structure de catégorie de modèles qu'il a définie sur $\cat$ sont des ensembles ordonnés (\cite[proposition~5.7]{Th}, dont la preuve légèrement incorrecte est corrigée dans~\cite{Ci0}).

\begin{paragr}\label{trbetint}
Dans ce qui suit, on note $\trb:\nCat{2}\to\cat$ le foncteur de troncation bête (\emph{cf}.~\ref{tronq}), adjoint à droite  de l'inclusion pleine $\cat\to\nCat{2}$. Le foncteur $\trb$ admet lui-même un adjoint à droite, associant à une petite catégorie $C$ la $2$\nbd-catégorie ayant les mêmes objets et les mêmes $1$\nbd-flèches que $C$, et exactement une $2$-flèche de source~$f_1$ et de but $f_2$, pour chaque couple $f_1,f_2$ de $1$\nbd-flèches parallèles. En particulier, le foncteur $\trb$ commute à la fois aux limites projectives et aux limites inductives. On note $\tri:\nCat{2}\to\cat$ le foncteur de troncation intelligente (\emph{cf}.~\ref{tronq}), adjoint à gauche de l'inclusion pleine $\cat\to\nCat{2}$.
\end{paragr}

\begin{thm}\label{thm:cof2Thom}
Soit $C$ une $2$\nbd-catégorie. Si $C$ est un objet cofibrant pour la structure de catégorie de modèles à la Thomason sur $\nCat{2}$ \emph{(\emph{cf}.~théorème~\ref{thm:cmf_2cat}),} alors:
\begin{enumerate}
\item $\trb(C)$ est une catégorie librement engendrée par un graphe;
\item $\tri(C)$ est un ensemble ordonné;
\item pour tous objets $x,y$ de $C$, la catégorie $\sHom_C(x,y)$ est un ensemble ordonné, autrement dit, $C$ est une catégorie enrichie en ensembles ordonnés.
\end{enumerate}
\end{thm}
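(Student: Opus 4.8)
The plan is to mimic the proof of Thomason and Cisinski (\cite{Th,Ci0}): reduce to cellular objects and induct along the cells. By the small object argument~\ref{petitobjet} applied to the model structure of Theorem~\ref{thm:cmf_2cat}, which is generated by $(c_2\Sd^2(I),c_2\Sd^2(J))$, any cofibrant $2$-category $C$ is a retract of a \emph{cellular} object, i.e.\ of a transfinite composite of pushouts of the generating cofibrations $c_2\Sd^2(i_m)$ over the initial $2$-category $\varnothing$. I would first observe that each of the three conclusions is stable under retracts. For~(b) and~(c) this is clear: $\tri$ and $\trb$ preserve retracts, a retract in $\cat$ of an ordered set is again an ordered set, and if $C$ is a retract of $D$ then each $\sHom_C(x,y)$ is a retract of a hom-category of $D$. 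For~(a), a retract of a free category is free: the function $f\mapsto\ell\bigl(s(f)\bigr)$, pulled back along a section $s$ of the retraction $r$, is an additive length vanishing only on identities, and any atomic factorisation in the retract must coincide with the ordered list of the non-identity images under $r$ of the edges of $s(f)$, whence uniqueness of factorisations.

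Next I would describe the cells. Combining Lemma~\ref{Sd2can} with Proposition~\ref{orntgadj}, the generating cofibration $c_2\Sd^2(i_m)$ is isomorphic to $\Orntg{\xi(\sigma_m)}\colon\Orntg{\xi\bordCS m}\to\Orntg{\xi^2\smp m}$, where $\sigma_m$ is a full inclusion of ordered sets; by Lemma~\ref{Dwyer} the map $\xi(\sigma_m)$ is a crible of $\ord$, hence, by Proposition~\ref{prop:cN_abc}~(c), every generating cofibration is a crible of $\nCat2$. Moreover, for every ordered set $E$ the $2$-category $\Orntg E$ satisfies the three conclusions: it is enriched in ordered sets by its very definition~(\ref{orntg}); its intelligent truncation is $E$, since any two parallel $1$-arrows $S,S'$ are connected by the $2$-arrows $S\subset S\cup S'\supset S'$; and its bête truncation is freely generated by the graph having one edge $x\to y$ per pair $x<y$, a chain $S\in\xi E$ being uniquely the composite of its consecutive pairs. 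Conclusion~(b) then follows at once: $\tri$ preserves colimits (\ref{tronq}) and $\tri\,c_2=c$ (intelligent truncations compose and $c_n$ is the $n$-truncation of $c_\infty$, \ref{trnerfdeStreet}), so $\tri\,c_2\Sd^2=c\Sd^2$ carries the generating (trivial) cofibrations of $\simpl$ to those of the classical Thomason structure on $\cat$; thus $\tri$ is left Quillen, $\tri(C)$ is cofibrant in $\cat$, and it is an ordered set by \cite[prop.~5.7]{Th} (corrected in~\cite{Ci0}).

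Conclusions~(a) and~(c) I would establish by induction along the cellular filtration, exploiting that the generating cofibrations are cribles together with the explicit description of the direct image of a crible obtained in Section~\ref{sec:2rdf}. For~(a), since $\trb$ preserves both colimits and limits~(\ref{trbetint}), it suffices to see that free categories are stable under the pushouts of $\trb\bigl(c_2\Sd^2(i_m)\bigr)$; these are induced by sieve inclusions of the generating graphs, and the $1$-categorical direct-image construction (beginning of Section~\ref{sec:2rdf}) exhibits an explicit generating graph of the pushout, consisting of the generators of the base category, the interior edges, and the boundary edges reinterpreted through the attaching functor. For~(c), the $2$-categorical construction~(\ref{sec:cribles2Cat}) reduces the claim to showing that the new hom-categories $B'_{x,y}$, presented by the generators $(\alpha_2,a,\alpha_1)$ and the relations \eqref{item:R1}--\eqref{item:R3}, are ordered sets as soon as the homs of $A$, $B$ and of the preceding stage are: all these being thin, the generating $2$-arrows are determined by their boundaries, and one checks that the presentation collapses to a thin category.

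The hard part will be exactly this last combinatorial verification: that the passage to $\pi_0$ in the direct-image construction creates neither unexpected identifications of $1$-arrows (which would spoil freeness in~(a)) nor unexpected parallel $2$-arrows or relations (which would spoil thinness in~(c)). This is precisely the step where the naive argument breaks down — the analogue, one dimension up, of the gap in Thomason's original proof repaired by Cisinski — so the care must go into controlling the generators-and-relations presentation of the hom-categories of the pushout of a crible.
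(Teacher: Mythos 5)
Your treatment of (b) coincides with the paper's route (Proposition~\ref{eq-Quillen-incl} and Corollary~\ref{coford}), and the cellular half of (a) is in substance Proposition~\ref{trbcell}; but the proposal has two genuine gaps. First, in (a), the retract step does not work as written: additivity of $f\mapsto\ell(s(f))$ is fine, but your assertion that any atomic factorisation in the retract \og must coincide with the ordered list of the non-identity images under $r$ of the edges of $s(f)$\fg{} is unjustified --- $r$ may send an edge of the ambient free category to a \emph{decomposable} morphism of the retract, so that list is merely some factorisation of $f$, and neither the existence of factorisations into indecomposables nor their uniqueness follows from the length function alone. The paper never proves \og a retract of a free category is free\fg{} by hand: Corollary~\ref{coflrK} puts $\trb$ of every Thomason cofibration in $lr(K)$, so Thomason-cofibrant objects are cofibrant for Lack's structure (Corollary~\ref{cofThomcofLack}), and Theorem~\ref{Lack} already characterises the latter as the $2$-categories whose b\^ete truncation is free, retract closure included. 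Invoking Theorem~\ref{Lack} would repair your step (this detour is also what yields Corollary~\ref{bieqeqThom}).

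Second, and more seriously, for (c) you defer exactly the step that carries all the content, and the route you indicate is the wrong one. From the general presentation of Section~\ref{sec:cribles2Cat} --- objects of $B'_{x,y}$ given by a $\pi_0$, $2$-cells generated by triples $(\alpha_2,a,\alpha_1)$ modulo \eqref{item:R1}--\eqref{item:R3} --- there is no visible reason the quotient category is thin: relation~\eqref{item:R3} identifies generators across varying $a$, the objects themselves are equivalence classes, and \og generators determined by their boundaries\fg{} fails in any naive reading; \og one checks that the presentation collapses\fg{} is precisely what is not known how to check at this level of generality. What the paper actually does (paragraphe~\ref{varamalg}) is to discard the general presentation and construct, for the special cribles at hand --- every generating cofibration being isomorphic to $\Orntgfonct$ of a crible of ordered sets, by the lemmes~\ref{Sd2can} and~\ref{Dwyer} and the proposition~\ref{orntgadj} --- a relation-free model of the pushout: the objects of $B'_{x,y}$ are triples $(S,a,h)$ with $S\cap E=\{a\}$ (canonical factorisation of a chain through the largest point where it meets $E$), and a morphism $(S_0,a_0,h_0)\to(S_1,a_1,h_1)$ exists only if $a_0\leq a_1$ and $S_0\sauf\{a_0\}\subset S_1\sauf\{a_1\}$, in which case it is given by a \emph{single} $2$-cell $u(\{a_0,a_1\})h_0\Rightarrow h_1$ of $A'$. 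Poset-enrichment of $B'$ is then immediate from that of $A'$ (lemme~\ref{imdirOcrible}), and the transfinite induction closes using stability of poset-enriched $2$-categories under filtered colimits and retracts. Without such an explicit model --- or some substitute argument you have not supplied --- your induction for (c) does not go through.
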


Le reste de cette section est principalement consacré à la démonstration de ce théorème qui résultera des corollaires~\ref{coflibre}, \ref{coford} et~\ref{cofcatenrord}. 

\begin{lemme}\label{trblibre}
Pour tout ensemble ordonné $E$, la catégorie $\trb c_2N(E)$ est la catégorie libre engendrée par le graphe des flèches de $E$ qui ne sont pas des identités, autrement dit, le graphe dont les sommets sont les éléments de l'ensemble ordonné $E$ et les arêtes les parties $\{x,y\}\subset E$ telles que $x<y$, la source \emph{(resp.}~le but\emph{)} d'une telle arête étant le sommet $x$ \emph{(resp.}~$y$\emph{)}.
\end{lemme}

\begin{proof}
Le lemme est conséquence immédiate de la description de la $2$\nbd-catégorie $\Orntg{E}$ donnée au paragraphe~\ref{orntg} et de la proposition~\ref{orntgadj}.
\end{proof}

\begin{paragr}
On note $K$ l'ensemble des flèches de $\cat$ formé de l'inclusion $\iota:\varnothing\to\smp{0}$ de la catégorie vide dans la catégorie ponctuelle, et de l'inclusion \hbox{$\partial:\{0,1\}\to\smp{1}$} de la catégorie discrète d'ensemble d'objets $\{0,1\}$ dans $\smp{1}$.
\end{paragr}

\begin{prop}\label{trbcell}
Pour tout monomorphisme $E\to F$ de la catégorie $\ord$, le foncteur $\trb c_2N(E)\to\trb c_2N(F)$ appartient à $\cell(K)$, autrement dit, il est un composé transfini d'images directes de flèches appartenant à $K=\{\iota,\partial\}$.
\end{prop}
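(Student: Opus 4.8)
Le plan est de montrer la proposition~\ref{trbcell} en deux temps, reflétant le fait que l'on doit contrôler séparément les objets et les $1$-flèches ajoutés en passant de $E$ à $F$. D'abord, comme tout monomorphisme d'ensembles ordonnés est une inclusion pleine à isomorphisme près, on se ramène au cas où $E$ est un sous-ensemble ordonné de $F$, muni de l'ordre induit. En vertu du lemme~\ref{trblibre}, la catégorie $\trb c_2N(E)$ (resp.\ $\trb c_2N(F)$) est la catégorie libre engendrée par le graphe $G_E$ (resp.\ $G_F$) dont les sommets sont les éléments de $E$ (resp.\ $F$) et les arêtes les parties $\{x,y\}$ avec $x<y$. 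Le foncteur $\trb c_2N(E)\to\trb c_2N(F)$ est alors le foncteur libre induit par l'inclusion de graphes $G_E\hookrightarrow G_F$. Je réduis donc le problème à une assertion sur les foncteurs libres : \emph{si $G\hookrightarrow G'$ est un monomorphisme de graphes, alors le foncteur induit entre les catégories libres correspondantes appartient à $\cell(K)$}.

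Ensuite, je démontre cette dernière assertion en ajoutant les sommets puis les arêtes un par un (ou, pour traiter le cas où $F$ est infini, par une composition transfinie bien ordonnée par un bon ordre sur $F\sauf E$ et sur les arêtes nouvelles). L'ajout d'un sommet $x\in F\sauf E$ à un graphe correspond, au niveau des catégories libres, à une image directe de l'inclusion $\iota:\varnothing\to\smp0$, le carré cocartésien pertinent dans $\cat$ étant obtenu par application du foncteur libre — lequel, étant un adjoint à gauche, commute aux sommes amalgamées — au carré cocartésien évident de graphes. De même, l'ajout d'une arête $\{x,y\}$ (avec $x,y$ déjà présents) correspond à une image directe de $\partial:\{0,1\}\to\smp1$, puisque adjoindre librement une flèche entre deux objets fixés $x$ et $y$ est précisément la somme amalgamée de $\smp1$ le long de $\partial$ au-dessus du foncteur $\{0,1\}\to\trb c_2N(\cdots)$ sélectionnant $x$ et $y$. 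Le foncteur cherché est donc le composé transfini de ces images directes, ce qui établit l'appartenance à $\cell(K)$.

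Le point le plus délicat sera de justifier rigoureusement que l'application du foncteur libre (réalisation catégorique des graphes) transforme les carrés cocartésiens de graphes ajoutant un sommet ou une arête en les carrés cocartésiens de $\cat$ attendus, c'est-à-dire en images directes de $\iota$ et $\partial$. Ceci repose sur le fait que le foncteur libre $\mathsf{Graph}\to\cat$ est un adjoint à gauche (donc commute aux colimites) et que $\smp0$ (resp.\ $\smp1$) est la catégorie libre sur le graphe à un sommet sans arête (resp.\ sur le graphe $0\to1$), tandis que $\varnothing$ et $\{0,1\}$ sont les catégories libres sur le graphe vide et sur le graphe discret à deux sommets. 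Il faut aussi prendre garde à l'ordre de la composition transfinie : je traiterai tous les nouveaux sommets avant toutes les nouvelles arêtes, de sorte que les sources et buts des arêtes ajoutées soient bien disponibles comme objets au moment de leur adjonction. Une fois ces vérifications faites, la conclusion est immédiate par stabilité de $\cell(K)$ par composition transfinie.
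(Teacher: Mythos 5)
Your overall strategy is in substance the paper's: invoke Lemma~\ref{trblibre}, use the cocontinuity of the free-category functor, and realize the passage from $G_E$ to $G_F$ by adjoining vertices and edges. The paper compresses your explicit construction into its opening remark that a functor belongs to $\cell(K)$ if (and only if) its target is obtained from its source by formally adjoining objects and arrows; you prove exactly the direction of this remark that is needed, by an explicit transfinite composition of pushouts of $\iota$ and $\partial$ computed at the level of graphs and transported to $\cat$ by the left adjoint, with the correct precaution of adjoining all vertices before any edges. That unfolding is sound.

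One step, however, is false as stated: a monomorphism of $\ord$ is \emph{not} in general a full inclusion up to isomorphism. A mono of $\ord$ is merely an injective monotone map, and the order on the source may be strictly coarser than the order induced from the target: the map from the two-element antichain to $\smp{1}$ sending the two points to $0$ and $1$ is a counterexample. Your reduction ``au cas o\`u $E$ est un sous-ensemble ordonn\'e de $F$, muni de l'ordre induit'' is therefore illegitimate, and it would moreover change the functor under consideration, since $N(E)$, hence the graph $G_E$, depends on the order of $E$ itself and not on the induced order of its image. Fortunately the reduction is also unnecessary: for any injective monotone map $f:E\to F$, if $x<y$ in $E$ then $f(x)<f(y)$ in $F$, so $f$ induces a monomorphism of graphs $G_E\to G_F$, injective on vertices and on edges, and this is all that your subsequent argument uses. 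Fullness plays no role: any edge of $G_F$ joining two vertices in the image of $E$ but absent from $G_E$ is simply among the edges to be adjoined. With the false sentence replaced by this observation, your proof is complete.
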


\begin{proof}
On remarque qu'un foncteur $C\to D$ appartient à $\cell(K)$ si et seulement si $D$ est obtenue à partir de $C$ en ajoutant formellement des objets et des flèches, et si de plus $C\to D$ est le foncteur canonique correspondant. Or, en vertu du lemme précédent, les catégories $\trb c_2N(E)$ et $\trb c_2N(F)$ sont librement engendrées par des graphes qu'on notera $G_E$ et $G_F$. Par définition, $G_F$ est obtenu à partir de $G_E$ en ajoutant des sommets et des arêtes. Puisque le foncteur catégorie libre commute aux limites inductives, la catégorie $\trb c_2N(F)$ est obtenue à partir de la catégorie $\trb c_2N(E)$ en ajoutant formellement des objets et des flèches. De plus, $\trb c_2N(E) \to \trb c_2N(F)$ est le foncteur canonique correspondant, ce qui démontre la proposition.
\end{proof}

\begin{cor}\label{coflrK}
Si $A\to B$ est une cofibration de la structure de catégorie de modèles à la Thomason sur $\nCat{2}$, alors le foncteur $\trb(A)\to\trb(B)$ appartient à~$lr(K)$.
\end{cor}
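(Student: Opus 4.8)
Le plan consiste à se ramener aux générateurs, puis à exploiter le fait, fourni par le paragraphe~\ref{trbetint}, que le foncteur $\trb$ commute aux limites inductives. En vertu du théorème~\ref{thm:cmf_2cat}, la structure de catégorie de modèles à la Thomason sur $\nCat{2}$ est engendrée par $(c_2\Sd^2(I),c_2\Sd^2(J))$, de sorte que la classe des cofibrations est $lr(c_2\Sd^2(I))$. L'argument du petit objet~\ref{petitobjet} (applicable puisque $\nCat{2}$ est localement présentable et que $c_2\Sd^2(I)$ est un ensemble) montre que toute cofibration $A\to B$ est un rétracte d'une flèche $\phi$ appartenant à $\cell(c_2\Sd^2(I))$, c'est-à-dire d'un composé transfini d'images directes d'éléments de $c_2\Sd^2(I)$. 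Par ailleurs, la classe $lr(K)=l(r(K))$ est stable par images directes, composés transfinis et rétractes. Comme tout foncteur respecte les rétractes, et comme $\trb$ commute aux images directes et aux composés transfinis, il suffira donc de prouver que $\trb\bigl(c_2\Sd^2(i_m)\bigr)$ appartient à $lr(K)$ pour tout $m\geq0$.

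Pour cela, on commencerait par calculer $c_2\Sd^2(i_m)$. D'après le lemme~\ref{Sd2can}, la flèche $\Sd^2(i_m):\Sd^2\bord{m}\to\Sd^2\smp{m}$ s'identifie à $N\xi(\sigma_m)$, où $\xi(\sigma_m):\xi\bordCS{m}\to\xi^2\smp{m}$ est l'image par le foncteur $\xi$ de l'inclusion pleine d'ensembles ordonnés $\sigma_m:\bordCS{m}\hookrightarrow\xi\smp{m}$, et est par conséquent elle-même un monomorphisme de $\ord$. Ainsi $c_2\Sd^2(i_m)$ s'identifie à $c_2N$ appliqué à ce monomorphisme, et la proposition~\ref{trbcell} fournit immédiatement
\[ \trb\bigl(c_2\Sd^2(i_m)\bigr)\in\cell(K)\subset lr(K)\ . \]

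Il ne resterait plus qu'à recoller. Puisque $\trb$ commute aux limites inductives (paragraphe~\ref{trbetint}), l'image par $\trb$ d'une image directe (resp. d'un composé transfini) est une image directe (resp. un composé transfini) des images par $\trb$\,; appliqué à $\phi\in\cell(c_2\Sd^2(I))$, ceci montre que $\trb(\phi)$ est un composé transfini d'images directes des flèches $\trb(c_2\Sd^2(i_m))$, donc appartient à $lr(K)$ grâce aux propriétés de stabilité de cette classe. Enfin, $\trb(A\to B)$ étant un rétracte de $\trb(\phi)$, il appartient lui aussi à $lr(K)$, ce qui conclurait. La partie réellement substantielle du raisonnement se trouve ainsi entièrement concentrée dans la proposition~\ref{trbcell} et dans la commutation de $\trb$ aux limites inductives\,; ces deux faits étant déjà acquis, la démonstration est purement formelle, et le seul point à surveiller est de vérifier que $c_2\Sd^2(i_m)$ relève bien de l'énoncé de la proposition~\ref{trbcell}, ce qui est exactement garanti par le lemme~\ref{Sd2can}.
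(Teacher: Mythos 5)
Votre d\'emonstration est correcte et suit essentiellement la m\^eme voie que celle de l'article : r\'eduction aux cofibrations g\'en\'eratrices $c_2\Sd^2(I)$ par l'argument du petit objet, en utilisant la stabilit\'e de $lr(K)$ par images directes, compos\'es transfinis et r\'etractes ainsi que la commutation de $\trb$ aux limites inductives, puis identification de $c_2\Sd^2(i_m)$ \`a $c_2N\xi(\sigma_m)$ via le lemme~\ref{Sd2can} pour appliquer la proposition~\ref{trbcell}. Votre r\'edaction ne fait qu'expliciter (utilement) la v\'erification que $\xi(\sigma_m)$ est bien un monomorphisme de $\ord$, point que l'article laisse implicite.
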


\begin{proof}
Vu que le foncteur $\trb$ commute aux images directes, aux composés transfinis et aux rétractes, et que $lr(K)$ est stable par ces opérations, il suffit de prouver que l'image par $\trb$ de tout $2$-foncteur appartenant à l'ensemble $c_2\Sd^2(I)$ des cofibrations génératrices (\emph{cf}.~théorème~\ref{thm:cmf_2cat}) est dans $lr(K)$, ce qui résulte de la proposition précédente et du lemme~\ref{Sd2can}. 
\end{proof}

\begin{paragr}\label{defbieq}
On dit qu'une $1$\nbd-flèche $f:x\to y$ d'une $2$\nbd-catégorie $C$ est \emph{faiblement inversible} s'il existe une $1$\nbd-flèche $g:y\to x$ de $C$ et des $2$\nbd-flèches inversibles pour la composition verticale $gf\Rightarrow1_x$ et $fg\Rightarrow1_y$. On dit qu'un $2$\nbd-foncteur $u:A\to B$ est une \emph{biéquivalence} si pour tout couple d'objets $x_0,x_1$ de $A$, le foncteur 
$$\sHom(x_0,x_1)\toto\sHom(u(x_0),u(x_1))\ ,$$
induit par $u$, est une équivalence de catégories, et si pour tout objet $y$ de $B$, il existe un objet $x$ de $A$ et une $1$\nbd-flèche faiblement inversible $u(x)\to y$. On rappelle le théorème suivant dû à S.~Lack~\cite{Lack1,Lack2}:
\end{paragr}

\begin{thm}\label{Lack}
Il existe une structure de catégorie de modèles combinatoire propre sur $\nCat{2}$ dont les équivalences faibles sont les biéquivalences et dont la classe des cofibrations est formée des $2$\nbd-foncteurs dont l'image par le foncteur de troncation bête appartient à $lr(K)$. De plus, pour cette structure, tout objet est fibrant et les objets cofibrants sont les $2$\nbd-catégories dont l'image par le foncteur de troncation bête est une catégorie librement engendrée par un graphe.
\end{thm}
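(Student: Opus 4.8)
Le plan est de renvoyer à la construction de Lack (\cite{Lack1,Lack2}), mais indiquons la stratégie que l'on suivrait pour établir l'énoncé. L'idée est de produire la structure à l'aide d'un théorème de reconnaissance des catégories de modèles combinatoires, ce qui est légitime puisque $\nCat{2}$ est localement présentable (\emph{cf.}~section~\ref{nCat}, ou \cite{Be} pour le théorème de Smith). On commencerait par exhiber des ensembles explicites de cofibrations et de cofibrations triviales génératrices, les premières étant les images par le foncteur \og $2$\nbd-catégorie libre \fg{} des inclusions engendrant formellement objets, $1$\nbd-flèches et $2$\nbd-flèches. On définirait alors les fibrations comme les $2$\nbd-foncteurs ayant la propriété de relèvement à droite relativement aux cofibrations triviales, et les équivalences faibles comme les $2$\nbd-foncteurs se factorisant en une cofibration triviale suivie d'une fibration triviale. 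Par construction, un $2$\nbd-foncteur a la propriété de relèvement à droite relativement à toutes les cofibrations génératrices précisément lorsqu'il est surjectif sur les cellules et pleinement fidèle en un sens $2$\nbd-catégorique adéquat, ce qui force la classe des cofibrations à être celle qui est annoncée, à savoir les $2$\nbd-foncteurs dont l'image par $\trb$ appartient à $lr(K)$.

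Le point délicat, et c'est là que réside l'essentiel de la preuve de Lack, est l'identification de la classe des équivalences faibles ainsi obtenue avec celle des biéquivalences (\emph{cf.}~\ref{defbieq}). Pour cela, on construirait un objet chemin explicite à partir de la \og $2$\nbd-catégorie de l'équivalence adjointe libre \fg{} ; un calcul de cohérence (whiskerings et \emph{mates}) dans cet objet chemin permet de montrer que les cofibrations triviales sont exactement les cofibrations qui sont des biéquivalences, et que les fibrations sont les $2$\nbd-foncteurs satisfaisant à la propriété de relèvement $2$\nbd-catégorique attendue (relèvement des $2$\nbd-flèches et des $1$\nbd-flèches faiblement inversibles issues de l'image d'un objet). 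Les axiomes restants se vérifient alors formellement ; en particulier, tout objet étant fibrant, la structure est automatiquement propre à droite, la propreté à gauche résultant d'un examen direct des images directes le long des cofibrations.

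Enfin, la description des objets cofibrants se déduit de celle des cofibrations. Un objet $C$ est cofibrant si et seulement si $\varnothing\to C$ est une cofibration, c'est-à-dire si $\varnothing\to\trb(C)$ appartient à $lr(K)$. Comme le foncteur $\trb$ commute aux limites inductives (\emph{cf.}~\ref{trbetint}), l'image par $\trb$ d'un complexe cellulaire construit sur les cofibrations génératrices est une catégorie obtenue en adjoignant formellement objets et flèches, donc librement engendrée par un graphe ; il reste à vérifier que cette propriété est stable par les rétractes en jeu, ce qui achève l'identification des objets cofibrants. Le principal obstacle de toute cette démonstration demeure l'étape proprement $2$\nbd-catégorique, c'est-à-dire la construction de l'objet chemin et la vérification de la cohérence, qui constitue le c\oe ur technique des articles de Lack ; c'est pourquoi l'on se contente ici de renvoyer à ceux-ci.
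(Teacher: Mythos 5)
Votre preuve est correcte et proc\`ede essentiellement comme l'article, dont la d\'emonstration se r\'eduit \`a renvoyer aux r\'esultats de Lack~\cite{Lack1,Lack2}, et votre esquisse de la strat\'egie de Lack (cofibrations g\'en\'eratrices explicites, objet chemin construit \`a partir de l'\'equivalence adjointe libre, tout objet fibrant entra\^inant la propret\'e \`a droite, identification des cofibrations \emph{via} la troncation b\^ete) est fid\`ele \`a l'original. Signalons seulement que la propret\'e \`a gauche, que vous pr\'esentez comme r\'esultant d'un \og examen direct \fg{} des images directes le long des cofibrations, est pr\'ecis\'ement le point dont la preuve dans~\cite{Lack1} \'etait erron\'ee et qui est corrig\'e par~\cite[th\'eor\`eme~4]{Lack2} --- d'o\`u la n\'ecessit\'e du double renvoi dans l'article.
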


\begin{proof}
Voir~\cite[théorèmes 3.3, 4.8 et 6.3, et proposition 4.14]{Lack1}, corrigé par~\cite[théorème 4]{Lack2}.
\end{proof}

\begin{cor}\label{cofThomcofLack}
Toute cofibration de la structure de catégorie de modèles à la Thomason sur $\nCat{2}$ est une cofibration pour la structure de Lack. En particulier, tout objet cofibrant pour la structure de catégorie de modèles à la Thomason sur $\nCat{2}$ est un objet cofibrant pour la structure de Lack.
\end{cor}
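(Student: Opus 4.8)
La preuve est essentiellement immédiate : tout le travail technique a déjà été accompli, d'une part dans le corollaire~\ref{coflrK} et, d'autre part, dans le théorème de Lack (théorème~\ref{Lack}). Le plan est donc simplement de mettre ces deux énoncés côte à côte et d'observer que leurs caractérisations se recollent tautologiquement. Plus précisément, soit $A\to B$ une cofibration de la structure de catégorie de modèles à la Thomason sur $\nCat{2}$. D'abord, j'invoquerais le corollaire~\ref{coflrK} pour en déduire que le foncteur $\trb(A)\to\trb(B)$ appartient à $lr(K)$. Ensuite, j'observerais qu'en vertu du théorème~\ref{Lack}, la classe des cofibrations de la structure de Lack est \emph{exactement} formée des $2$\nbd-foncteurs dont l'image par le foncteur de troncation bête $\trb$ est dans $lr(K)$ ; par conséquent, $A\to B$ est bien une cofibration pour la structure de Lack.

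Pour la seconde assertion, j'appliquerais ce qui précède au morphisme $\varnothing\to C$ de source l'objet initial de $\nCat{2}$. En effet, dire qu'une $2$\nbd-catégorie $C$ est un objet cofibrant pour une structure de catégorie de modèles donnée signifie précisément que ce morphisme est une cofibration de la structure en question. Ainsi, si $C$ est cofibrant pour la structure à la Thomason, alors $\varnothing\to C$ est une cofibration à la Thomason, donc une cofibration pour la structure de Lack par le premier point, ce qui signifie exactement que $C$ est cofibrant pour la structure de Lack.

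Il n'y a, à vrai dire, aucun obstacle dans cette démonstration : la difficulté réelle est entièrement concentrée en amont, dans la preuve du corollaire~\ref{coflrK} (qui repose elle-même sur la description explicite des cofibrations génératrices $c_2\Sd^2(I)$ via le lemme~\ref{Sd2can} et la proposition~\ref{trbcell}, puis sur la stabilité de $lr(K)$ par images directes, composés transfinis et rétractes), ainsi que dans le théorème de Lack. Le seul point à vérifier ici est la coïncidence formelle des deux caractérisations des cofibrations « via $\trb$ et $lr(K)$ », qui est immédiate une fois les énoncés rapprochés.
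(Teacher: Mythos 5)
Votre preuve est correcte et suit exactement la d\'emonstration du texte, qui d\'eduit le corollaire imm\'ediatement du th\'eor\`eme~\ref{Lack} (caract\'erisation des cofibrations de Lack comme les $2$\nbd-foncteurs dont l'image par $\trb$ est dans $lr(K)$) et du corollaire~\ref{coflrK}; vous ne faites qu'expliciter ce recollement, ainsi que l'application au morphisme $\varnothing\to C$ pour la seconde assertion.
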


\begin{proof}
C'est une conséquence immédiate du théorème précédent et du corollaire~\ref{coflrK}.
\end{proof}

\begin{cor}\label{coflibre}
L'image par le foncteur de troncation bête d'un objet cofibrant pour la structure de catégorie de modèles à la Thomason sur $\nCat{2}$ est une catégorie librement engendrée par un graphe.
\end{cor}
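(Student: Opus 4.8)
The statement follows almost immediately by combining the two preceding results, so the plan is simply to chain them. Let $C$ be a $2$-category that is cofibrant for the Thomason-style model structure on $\nCat{2}$ (théorème~\ref{thm:cmf_2cat}). The first step is to transport this cofibrancy property to Lack's model structure: by corollaire~\ref{cofThomcofLack}, every object cofibrant for the Thomason-style structure is a fortiori cofibrant for the structure of Lack. This is the only genuine input specific to the Thomason structure that the argument needs.

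The second step is to read off the desired conclusion from the explicit description of the Lack-cofibrant objects recorded in théorème~\ref{Lack}, according to which the cofibrant objects for Lack's structure are exactly the $2$-categories whose image under the bête truncation functor $\trb$ is a category freely generated by a graph. Applying this characterization to $C$ yields at once that $\trb(C)$ is freely generated by a graph, which is precisely the assertion of corollaire~\ref{coflibre}.

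I expect no real obstacle to remain at this stage, since all the substantive work has already been carried out upstream. The content is concentrated in corollaire~\ref{cofThomcofLack}, which itself rests on corollaire~\ref{coflrK} — the statement that $\trb$ carries the generating Thomason cofibrations in $c_2\Sd^2(I)$ into $lr(K)$ — together with the fact that $\trb$ commutes with pushouts, transfinite composition and retracts, operations under which $lr(K)$ is stable. Given these inputs and the comparison of the two cofibration classes, the proof of corollaire~\ref{coflibre} is a one-line deduction, and I would present it as such rather than reproving anything.
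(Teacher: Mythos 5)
Your proof is correct and follows exactly the paper's own argument: the paper likewise deduces the corollary immediately from corollaire~\ref{cofThomcofLack} (Thomason-cofibrant implies Lack-cofibrant) combined with the characterization of Lack-cofibrant objects in th\'eor\`eme~\ref{Lack}. Your closing remark correctly identifies where the substantive work lives, namely corollaire~\ref{coflrK} and the stability of $lr(K)$ under pushouts, transfinite composition and retracts.
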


\begin{proof}
C'est une conséquence immédiate du théorème~\ref{Lack} et du corollaire précédent.
\end{proof}

\begin{cor}\label{bieqeqThom}
Toute biéquivalence est une équivalence faible de Thomason.
\end{cor}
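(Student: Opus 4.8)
Corollaire \ref{bieqeqThom} affirme que toute biéquivalence est une équivalence faible de Thomason. Voici comment je structurerais la preuve.

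Le plan est de déduire l'énoncé du corollaire~\ref{coro:eq_hom_faible}, qui constitue le moteur de la preuve: il affirme qu'un $2$-foncteur strict admettant un inverse homotopique oplax est une équivalence faible de Thomason. Comme la classe des équivalences faibles de Thomason est l'image inverse par $\wt{N}_2\,|\,\dcat=N_2$ (\emph{cf}.~\ref{paragr:Ntilde}) de la classe des équivalences faibles d'ensembles simpliciaux, elle satisfait à la propriété du deux sur trois, ce dont je me servirai librement. Je commencerais par observer que le cas d'une biéquivalence $u:A\to B$ induisant une bijection sur les objets est déjà acquis: pour un tel $u$, la condition de biéquivalence (\emph{cf}.~\ref{defbieq}) signifie que chaque foncteur $\sHom_A(x,y)\to\sHom_B(u(x),u(y))$ est une équivalence de catégories, donc une équivalence faible de Thomason (le nerf transformant une équivalence de catégories en une équivalence faible simpliciale), et le lemme~\ref{delHoyo} permet de conclure. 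Toute la difficulté réside donc dans le traitement de l'essentielle surjectivité et du défaut d'injectivité sur les objets, pour lesquels une approche argument par argument \emph{via} le lemme~\ref{lemmebisimpl} échoue (déjà en degré $0$, où $\Nbisimpl(u)_{0,\bullet}$ est l'application $\Ob(A)\to\Ob(B)$, qui n'est pas une bijection en général).

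Pour traiter le cas général d'un seul coup, le plan est de montrer directement que toute biéquivalence $u:A\to B$ est un \emph{inverse homotopique oplax}, c'est-à-dire d'exhiber un $2$-foncteur oplax normalisé $G:B\to A$ tel que $Gu$ et $uG$ soient oplax homotopes à $\id{A}$ et $\id{B}$ respectivement. Pour construire $G$, j'utiliserais les données de la biéquivalence. Pour tout objet $b$ de $B$, l'essentielle surjectivité fournit un objet $G(b)$ de $A$ et une $1$-flèche faiblement inversible $\theta^{}_b:u(G(b))\to b$, choisie normalisée et telle que $\theta^{}_{u(a)}=\id{u(a)}$ avec $G(u(a))=a$ chaque fois que possible. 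Pour une $1$-flèche $g:b\to b'$ de $B$, la $1$-flèche de $B$ reliant $u(G(b))$ à $u(G(b'))$ construite à partir de $g$ et des quasi-inverses des $\theta$ se relève, au moyen d'une quasi-inverse fixée de l'équivalence de catégories $\sHom_A(G(b),G(b'))\to\sHom_B(uG(b),uG(b'))$, en une $1$-flèche $G(g)$ de $A$; on procède de même sur les $2$-flèches. Les contraintes de fonctorialité de $G$ mesurent le défaut de stricte fonctorialité de ces relèvements; étant inversibles, elles font de $G$ un pseudofoncteur, donc en particulier un $2$-foncteur oplax normalisé au sens de la définition~\ref{def:fonct_lax}.

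Ensuite, je réorganiserais les $1$-flèches $\theta^{}_b$ en une homotopie oplax normalisée $H:\smp{1}\times B\to B$ de $uG$ vers $\id{B}$ (\emph{via} la transformation pseudonaturelle $uG\simeq\id{B}$ qu'elles définissent), et de même une homotopie oplax de $Gu$ vers $\id{A}$. Les $2$-foncteurs $u$ et $G$ seraient alors des inverses homotopiques oplax l'un de l'autre, et le corollaire~\ref{coro:eq_hom_faible} appliqué au $2$-foncteur \emph{strict} $u$ donnerait aussitôt que $u$ est une équivalence faible de Thomason.

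La principale difficulté sera la partie « cohérence »: vérifier que les relèvements choisis satisfont effectivement aux axiomes d'un $2$-foncteur oplax normalisé (conditions de normalisation, condition de cocycle, compatibilités aux compositions verticale et horizontale de la définition~\ref{def:fonct_lax}), et surtout traduire fidèlement les équivalences pseudonaturelles obtenues en \emph{homotopies oplax normalisées} au sens du présent article, c'est-à-dire en $2$-foncteurs oplax $\smp{1}\times(-)\to(-)$ restreignant correctement aux extrémités. Ce dernier point demande en particulier de contrôler la variance (oplax \emph{versus} lax) et la normalisation des $2$-flèches de structure, et c'est là que se concentre le travail technique. Une fois cette traduction établie, la conclusion est immédiate.
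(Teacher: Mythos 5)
Votre stratégie est correcte, mais elle suit une route sensiblement différente de la démonstration de l'article. Celle-ci ne construit aucun quasi-inverse : elle observe que toute cofibration à la Thomason est une cofibration de Lack (corollaire~\ref{cofThomcofLack}, conséquence du corollaire~\ref{coflrK} et du théorème~\ref{Lack}), d'où, par un argument standard d'adjonction sur les propriétés de relèvement, que les fibrations triviales de la structure de Lack sont des fibrations triviales à la Thomason, donc en particulier des équivalences faibles de Thomason ; comme tout objet est fibrant pour la structure de Lack, le lemme de Ken Brown (sous sa forme duale) entraîne alors que toute biéquivalence --- équivalence faible de Lack entre objets fibrants --- est une équivalence faible de Thomason. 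Votre route, elle, est précisément celle que l'article esquisse dans la remarque qui suit le corollaire : choisir un quasi-inverse pseudo-fonctoriel normalisé $G$ de $u$, convertir les équivalences pseudonaturelles $uG\simeq\id{B}$ et $Gu\simeq\id{A}$ en homotopies oplax normalisées (l'inversibilité des $2$-cellules de structure d'une transformation pseudonaturelle permet effectivement d'ajuster la variance oplax, comme vous le signalez), puis appliquer le corollaire~\ref{coro:eq_hom_faible} au $2$-foncteur strict $u$. Les deux approches s'échangent ainsi : la preuve de l'article est très courte et délègue toute la cohérence au théorème de Lack (dont elle a de toute façon besoin ailleurs, pour les corollaires~\ref{cofThomcofLack} et~\ref{coflibre}), tandis que la vôtre reste interne à la technologie oplax développée dans l'article et ne requiert pas la structure de Lack, au prix de vérifications de cohérence non négligeables (construction du pseudo-foncteur normalisé $G$, conditions de normalisation et de cocycle, traduction des transformations en homotopies) que vous identifiez honnêtement mais ne menez pas à bien ; notez enfin que votre réduction préliminaire au cas bijectif sur les objets \emph{via} le lemme~\ref{delHoyo} est correcte mais superflue pour l'argument final.
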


\begin{proof}
Le corollaire~\ref{cofThomcofLack} et un argument standard d'adjonction impliquent que les fibrations triviales de la structure de Lack sont des fibrations triviales pour la structure à la Thomason. Le lemme de Ken Brown~\cite[lemme~1.1.12]{Ho} implique alors l'assertion, vu que tous les objets sont fibrants pour la structure de Lack (\emph{cf}.~théorème~\ref{Lack}).
\end{proof}

\begin{rem}
La raison pour laquelle le corollaire précédent n'est pas trivial est qu'une
biéquivalence n'admet pas nécessairement un quasi-inverse strict.
Néanmoins, un $2$-foncteur $u : A \to B$ est une biéquivalence si et
seulement s'il admet pour quasi-inverse (en un sens adéquat) un
pseudo-foncteur $v : B \to A$. On montre facilement qu'on peut même choisir
$v$ normalisé. On retrouve alors le corollaire précédent en observant qu'une
transformation entre foncteurs oplax normalisés induit une homotopie oplax
normalisée et en appliquant le corollaire~\ref{coro:eq_hom_faible}.
\end{rem}

\begin{lemme}\label{adjblabla}
Soit $(F,U)$ un couple de foncteurs adjoints entre deux catégories de modèles:
$$F : C\toto C'\ ,\qquad   U : C' \toto C\ .$$
On suppose que C est à engendrement cofibrant, engendrée par $(I,J)$, et que les ensembles $F(I)$ et $F(J)$ sont formés respectivement de cofibrations et de cofibrations triviales de $C'$. Alors $(F,G)$ est une adjonction de Quillen.
\end{lemme}

\begin{proof}
Le lemme résulte aussitôt de la description des cofibrations (resp. des cofibrations triviales) de $C$ comme rétractes de composés transfinis d'images directes de flèches appartenant à $I$ (resp. à $J$), et des propriétés d'exactitude du foncteur adjoint à gauche $F$.
\end{proof}

\begin{prop}\label{eq-Quillen-incl}
Le couple de foncteurs adjoints formé du foncteur de troncation intelligente et du foncteur d'inclusion
$$\tri:\nCat{2}\toto\cat\ ,\qquad\cat\toto\nCat{2}$$
est une équivalence de Quillen \emph{(}entre $\nCat{2}$ munie de la structure de catégorie de modèles à la Thomason et $\cat$ munie de la structure de Thomason~\cite{Th}\emph{)}.
\end{prop}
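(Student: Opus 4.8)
Le plan est de montrer d'abord, en appliquant le lemme~\ref{adjblabla}, que le couple form\'e de $\tri$ et du foncteur d'inclusion $\iota:\cat\to\nCat{2}$ (adjoint \`a droite de $\tri$, \emph{cf.}~\ref{tronq}) est une adjonction de Quillen, puis d'en d\'eduire qu'il s'agit d'une \'equivalence de Quillen en invoquant le th\'eor\`eme~\ref{thm:Chiche} de J.~Chiche. L'\'etape cl\'e de la partie \'el\'ementaire sera l'identit\'e de foncteurs
\[ \tri\,c_2 = c\ , \]
o\`u $c=c_1:\simpl\to\cat$ est la r\'ealisation cat\'egorique (\emph{cf.}~\ref{nerfdeAG}). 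Pour l'\'etablir, on rappellera (\emph{cf.}~\ref{trnerfdeStreet}) que $c_2$ est le compos\'e de $c_\infty$ avec la $2$\nbd-troncation intelligente $\nCat{\infty}\to\nCat{2}$, et que $c=c_1$ est le compos\'e de $c_\infty$ avec la $1$\nbd-troncation intelligente $\nCat{\infty}\to\cat$. Les foncteurs de troncation intelligente \'etant les adjoints \`a gauche des inclusions $\nCat{m}\to\nCat{\infty}$ (\emph{cf.}~\ref{tronq}) et l'inclusion $\cat\to\nCat{\infty}$ se factorisant en $\cat\to\nCat{2}\to\nCat{\infty}$, l'unicit\'e des adjoints entra\^\i ne que la $1$\nbd-troncation intelligente $\nCat{\infty}\to\cat$ co\"\i ncide avec le compos\'e de $\tri$ et de la $2$\nbd-troncation intelligente $\nCat{\infty}\to\nCat{2}$. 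En composant avec $c_\infty$, on obtient l'identit\'e voulue.

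De cette identit\'e r\'esulte aussit\^ot $\tri(c_2\Sd^2(I))=c\Sd^2(I)$ et $\tri(c_2\Sd^2(J))=c\Sd^2(J)$. Or la structure de Thomason sur $\cat$, qui est le cas $n=1$ du th\'eor\`eme~\ref{Thomabs} (\emph{cf.}~\ref{remnThom}), est engendr\'ee par $(c\Sd^2(I),c\Sd^2(J))$ ; en particulier $c\Sd^2(I)$ est form\'e de cofibrations et $c\Sd^2(J)$ de cofibrations triviales de $\cat$. Comme la structure \`a la Thomason sur $\nCat{2}$ est engendr\'ee par $(c_2\Sd^2(I),c_2\Sd^2(J))$ (th\'eor\`eme~\ref{thm:cmf_2cat}), le lemme~\ref{adjblabla}, appliqu\'e au couple $(\tri,\iota)$, montrera que celui-ci est une adjonction de Quillen.

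Pour conclure qu'il s'agit d'une \'equivalence de Quillen, il suffira, en vertu de~\cite[proposition~1.3.13]{Ho}, de prouver que le foncteur d\'eriv\'e \`a droite de $\iota$ induit une \'equivalence des cat\'egories homotopiques. On observera que $\iota$ respecte et refl\`ete les \'equivalences faibles : en effet, on a $N_2\,\iota=N$, de sorte qu'un foncteur $f$ de $\cat$ est une \'equivalence faible de Thomason si et seulement si $N(f)=N_2(\iota(f))$ est une \'equivalence faible d'ensembles simpliciaux, c'est-\`a-dire si et seulement si $\iota(f)$ est une \'equivalence faible de Thomason dans $\nCat{2}$. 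Puisque $\iota$ respecte les \'equivalences faibles, son foncteur d\'eriv\'e \`a droite s'identifie, \`a isomorphisme canonique pr\`es, au foncteur $\overline{\iota}:\Ho(\cat)\to\Ho(\nCat{2})$ induit sur les cat\'egories localis\'ees ; mais ce dernier n'est autre que le foncteur du th\'eor\`eme~\ref{thm:Chiche}, qui est une \'equivalence de cat\'egories.

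Le seul ingr\'edient non formel de cette d\'emonstration est le th\'eor\`eme~\ref{thm:Chiche}, que l'on admet ; tout le travail \'el\'ementaire se ram\`ene \`a l'identit\'e $\tri\,c_2=c$, point sur lequel il faudra \^etre soigneux, bien que sa v\'erification ne repose que sur la composition d'adjoints \`a gauche.
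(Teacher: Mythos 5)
Votre preuve est correcte et suit essentiellement la m\^eme route que celle du papier : identifier $c$ \`a $\tri c_2$, appliquer le lemme~\ref{adjblabla} pour obtenir une adjonction de Quillen entre $\nCat{2}$ et $\cat$, puis conclure par le th\'eor\`eme~\ref{thm:Chiche}. La seule diff\'erence, cosm\'etique, est que le papier obtient l'isomorphisme canonique $c\simeq\tri c_2$ par adjonction \`a partir de l'identit\'e $N = N_2\circ\iota$, tandis que vous l'obtenez en composant les troncations intelligentes \`a travers $\nCat{\infty}$ --- deux variantes du m\^eme argument d'unicit\'e des adjoints \`a gauche (vous explicitez en outre, \emph{via}~\cite[proposition~1.3.13]{Ho} et le fait que $\iota$ respecte les \'equivalences faibles, le dernier pas que le papier laisse implicite).
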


\begin{proof}
En vertu du théorème~\ref{thm:cmf_2cat}, la structure de catégorie de modèles sur $\nCat{2}$ est engendrée par $(c_2\Sd^2(I), c_2\Sd^2(J))$, où $I$ et $J$ sont les ensembles définis dans le paragraphe~\ref{enssimpl}. De même, la structure de catégorie de modèles de Thomason sur $\cat$ est engendrée par $(c\Sd^2(I), c\Sd^2(J))$, où $c=c_1$ désigne l'adjoint à gauche du foncteur nerf $N:\cat\to\simpl$ (\cite{Th} ou~\cite[théorème~5.2.12]{Ci}). Or, comme le foncteur~$N$ est égal au composé de l'inclusion $\cat\to\nCat{2}$ et de $N_2$, on a par adjonction un isomorphisme canonique $c\simeq\tri c_2$. Le lemme précédent implique donc que le couple de foncteurs, formé du foncteur de troncation intelligente et du foncteur d'inclusion $\cat\to\nCat{2}$, est une adjonction de Quillen. La proposition résulte alors du théorème~\ref{thm:Chiche}.
\end{proof}

\begin{cor}\label{coford}
L'image par le foncteur de troncation intelligente d'un objet cofibrant pour la structure de catégorie de modèles à la Thomason sur $\nCat{2}$ est un ensemble ordonné.
\end{cor}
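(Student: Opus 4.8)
The plan is to observe that the statement is an immediate consequence of the Quillen equivalence established in Proposition~\ref{eq-Quillen-incl} together with Thomason's theorem recalled at the opening of this section. First I would note that, by that proposition, the intelligent truncation $\tri:\nCat{2}\to\cat$ is the left adjoint of a Quillen adjunction between $\nCat{2}$ equipped with the Thomason-like structure and $\cat$ equipped with Thomason's structure; in particular $\tri$ is a left Quillen functor.

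The key point is then that a left Quillen functor preserves cofibrant objects. Concretely, since $\tri$ is a left adjoint it sends the initial object of $\nCat{2}$ (the empty $2$-category) to the initial object of $\cat$ (the empty category), and since it preserves cofibrations, the image under $\tri$ of the cofibration $\varnothing\to C$ witnessing the cofibrancy of $C$ is again a cofibration $\varnothing\to\tri(C)$. Hence $\tri(C)$ is cofibrant for Thomason's model structure on $\cat$. It then suffices to invoke Thomason's theorem (\cite[proposition~5.7]{Th}, corrected in~\cite{Ci0}), according to which every cofibrant object of this structure is an ordered set, to conclude that $\tri(C)$ is an ordered set.

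I do not expect any genuine obstacle here: all the substantial work has already been carried out, on the one hand in Proposition~\ref{eq-Quillen-incl} (whose proof rests on Lemma~\ref{adjblabla}, the isomorphism $c\simeq\tri c_2$, and Chiche's theorem~\ref{thm:Chiche}), and on the other hand in the cited theorem of Thomason. The only points requiring a modicum of care are checking that the Quillen adjunction is oriented so that $\tri$ is the \emph{left} adjoint, and that it is indeed Thomason's corrected statement, rather than a naive form, that is being applied.
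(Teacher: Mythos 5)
Your argument is exactly the paper's proof: the paper likewise deduces from Proposition~\ref{eq-Quillen-incl} that $\tri$, being a left Quillen functor, sends a cofibrant object of $\nCat{2}$ to a cofibrant object of $\cat$ for Thomason's structure, and then invokes \cite[proposition~5.7]{Th} (as corrected in~\cite{Ci0}) to conclude it is an ordered set. Your added care about the orientation of the adjunction is well placed and checks out, since the proof of Proposition~\ref{eq-Quillen-incl} applies Lemma~\ref{adjblabla} with $F=\tri$ via the isomorphism $c\simeq\tri c_2$.
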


\begin{proof}
L'image par le foncteur de troncation intelligente d'un tel objet est, en vertu de la proposition précédente, un objet cofibrant de $\cat$ pour la structure de catégorie de modèles de Thomason, et est donc un ensemble ordonné (\cite[proposition~5.7]{Th}, dont la preuve légèrement incorrecte est corrigée dans~\cite{Ci0}).
\end{proof}

\begin{paragr}\label{varamalg}
Afin de montrer l'assertion (\emph{c}) du théorème~\ref{thm:cof2Thom}, on aura besoin  d'une description de l'image directe d'un crible de $\nCat{2}$, différente de celle de la section~\ref{sec:2rdf}, dans le cas particulier où ce crible est l'image par le foncteur $\Orntgfonct$ (\emph{cf}.~\ref{orntg}) d'un crible d'ensembles ordonnés.
\tb

Dans la suite de ce paragraphe, on utilisera librement, sans mention explicite, la description du foncteur $\Orntgfonct$ donnée dans le paragraphe~\ref{orntg}. On se fixe un crible d'ensembles ordonnés $E\to F$ et on note $i:A\to B$ son image $\Orntgfonct(E)\to\Orntgfonct(F)$ par le foncteur $\Orntgfonct$. Celle-ci est un crible de $\ncat2$ en vertu de la proposition~\ref{orntgadj} et du point~(\emph{c}) de la propositions~\ref{prop:cN_abc}. On se fixe également une $2$\nbd-catégorie $A'$ et un $2$\nbd-foncteur $u:A\to A'$. Le but de ce paragraphe est de décrire l'image directe de $i$ le long de $u$, de façon plus simple que la description générale du paragraphe~\ref{sec:cribles2Cat}. Il s'agit donc de définir une $2$-catégorie $B'$ munie de $2$\nbd-foncteurs $i' : A' \to B'$ et $v : B \to B'$ tels que le carré
\[
\xymatrix{
A \ar[r]^u \ar[d]_i & A' \ar[d]^{i'}\\
B \ar[r]_v & B' \\
}
\]
soit un carré cocartésien de $\ncat2$.
\tb

Dans la suite, pour simplifier, nous supposerons que $i$ est une inclusion (ce qui est licite en vertu de la proposition~\ref{carcriblesncat}).

\begin{sparagr}\label{varamalg1}
Commençons par définir le graphe enrichi en catégories sous-jacent à $B'$.
\tb

L'ensemble des objets de $B'$ est donné par
\[ \Ob(B') = \Ob(A')\amalg\bigl(\Ob(B) \sauf \Ob(A)\bigr)=\Ob(A')\amalg(F \sauf E)\ . \]
Pour $x$ et $y$ deux objets de $B'$, on pose
\[
\Homi_{B'}(x, y) = 
         \begin{cases}
           \Homi_{A'}(x, y), & \text{si $x, y \in \Ob(A')$ ;} \\
           \Homi_B(x, y), & \text{si $x, y \in F \sauf E$ ;}\\
           \varnothing, & \text{si $x \in F \sauf E$ et $y \in \Ob(A')$ ;}\\
           B'_{x, y}, & \text{si $x \in \Ob(A')$ et $y \in
           F \sauf E$,}
         \end{cases}
\]
où $B'_{x, y}$ est la catégorie définie comme suit. Les objets de $B'_{x, y}$ sont les triplets $(S,a,h)$,
$$
\xymatrix{
x\ar[r]^-{h}
&u(a)\ ,
&a=i(a)\ar[r]^-{S}
&y\ ,
}
$$
où $a\in E$ est un objet de $A$, $h$ une $1$\nbd-flèche de $A'$ et $S\in\xi(F)$ une $1$\nbd-flèche de $B$ telle que $S\cap E=\{a\}$. Si $(S_0,a_0,h_0)$ et $(S_1,a_1,h_1)$ sont deux objets de $B'_{x, y}$, un morphisme du premier vers le second est un diagramme
$$
\xymatrixrowsep{.4pc}
\xymatrixcolsep{2.5pc}
\xymatrix{
&u(a_0)\ar[dddd]^{u(\{a_0,a_1\})}
&\kern 10pt
&a_0\ar[dddd]_{\{a_0,a_1\}}\ar[ddr]^{S_0}
\ddrlowertwocell<\omit>{<3.3>}
\\
&
\\
x\ar[uur]^{h_0}\ar[ddr]_{h_1}
\ddruppertwocell<\omit>{<-3.3>\kern 1pt \alpha}
&&&&y
\\
&
\\
&u(a_1)
&&a_1\ar[uur]_{S_1}
&\kern 20pt,
}
$$
où le triangle de gauche est dans $A'$ et le triangle de droite dans $B$. Ce dernier indique simplement que $a_0\leq a_1$ et que $S_0\subset S_1\cup\{a_0\}$ (vu que $S_0\cap E=\{a_0\}$ et $a_1\in E$, cette dernière condition équivaut à l'inclusion $S_0\sauf\{a_0\}\subset S_1\sauf\{a_1\}$). Ainsi, pour que l'ensemble des flèches de $(S_0,a_0,h_0)$ vers $(S_1,a_1,h_1)$ soit non vide, il faut que ces conditions soient satisfaites. On remarque que $(S_0,a_0,h_0)$ et $(S_1,a_1,h_1)$ étant \emph{fixés}, la flèche de $B'_{x, y}$ définie par le diagramme ci-dessus est entièrement déterminée par la $2$-flèche~$\alpha$; par abus, on la notera simplement $\undl{\alpha}$ quand aucune ambiguïté n'en résulte. On se gardera de confondre cette flèche de $\Homi_{B'}(x, y) =B'_{x, y}$ avec la flèche $\alpha$ de $\Homi_{B'}(x, u(a_1))=\Homi_{A'}(x, u(a_1))$.
\tb
Le composé de deux flèches composables de $B'_{x, y}$ 
$$
\xymatrix{
(S_0,a_0,h_0)\ar[r]^{\undl{\alpha_1}}
&(S_1,a_1,h_1)\ar[r]^{\undl{\alpha_2}}
&(S_2,a_2,h_2)
}\ ,
$$
où
$$\alpha_1:u(\{a_0,a_1\})h_0\toto h_1\qquad\hbox{et}\qquad\alpha_2:u(\{a_1,a_2\})h_1\toto h_2$$
sont des $2$-flèches de $A'$, est défini en posant
$$\undl{\alpha_2}\,\undl{\alpha_1}=\undl{\alpha\kern 1pt}\ ,\quad\hbox{où}\quad\alpha=\alpha_2\bigl(u(\{a_1,a_2\})\comp\alpha_1\bigr)\bigl(u(\{a_0,a_2\}\subset\{a_0,a_1,a_2\})\comp h_0\bigr)\ ,$$
et en observant que les inégalités $a_0\leq a_1$ et $a_1\leq a_2$ impliquent que $a_0\leq a_2$ et que les inclusions
$S_0\sauf\{a_0\}\subset S_1\sauf\{a_1\}$ et $S_1\sauf\{a_1\}\subset S_2\sauf\{a_2\}$ impliquent que $S_0\sauf\{a_0\}\subset S_2\sauf\{a_2\}$.
L'unité d'un objet $(S,a,h)$ de $B'_{x, y}$ est définie en posant $1_{(S,a,h)}=\undl{1_h}$. Les propriétés d'associativité et d'unité sont immédiates.
\end{sparagr}

\begin{sparagr}\label{varamalg2}
Poursuivons la construction de la $2$\nbd-catégorie $B'$ en définissant, pour tous objets $x$, $y$ et $z$ de $B'$, un foncteur de
composition horizontale 
\[ \Homi_{B'}(y, z) \times \Homi_{B'}(x, y) \to \Homi_{B'}(x, z)\ . \]
Soient donc $x$, $y$ et $z$ trois objets de $B'$.
\tb
\begin{enumerate}[label=(\arabic*), wide]
  \item Si $x$, $y$ et $z$ sont dans $A'$, la composition horizontale de
    $B'$ est héritée de celle de~$A'$.
    \tb
  \item Si $x$, $y$ et $z$ sont dans $F\sauf E$, la composition horizontale
    de $B'$ est héritée de celle de~$B$.
    \tb
  \item Si $x$ et $y$ sont dans $A'$ et $z$ est dans $F\sauf E$, la composition horizontale dans $B'$ est définie par
$$
\xymatrixcolsep{5.9pc}
\xymatrix{
z\rtwocell<\omit>{<-0>\kern 4pt \undl{\beta}}
&y\ar@/_2.pc/[l]_{(S_0,a_0,h_0)}\ar@/^2.pc/[l]^{(S_1,a_1,h_1)}\rtwocell<\omit>{<-0>\kern 3pt \alpha}
&x\ar@/_2.pc/[l]_{f_0}\ar@/^2.pc/[l]^{f_1}
}
\qquad\longmapsto\qquad
\xymatrix{
z\rtwocell<\omit>{<-0>\kern 10pt \undl{\beta\comp\alpha}}
&x\ar@/_2.pc/[l]_{(S_0,a_0,h_0f_0)}\ar@/^2.pc/[l]^{(S_1,a_1,h_1f_1)}
}\ ,
$$
où $a_0,a_1$ sont des éléments de $E$ tels que $a_0\leq a_1$, $f_0,f_1:x\to y$, $h_0:y\to u(a_0)$ et $h_1:y\to u(a_1)$ sont des $1$\nbd-flèches de $A'$, \hbox{$\alpha:f_0\to f_1$} et $\beta:u(\{a_0,a_1\})h_0\to h_1$ des $2$\nbd-flèches de $A'$, $S_0,S_1$ des éléments de $\xi(F)$ tels que $S_0\cap E=\{a_0\}$, $S_1\cap E=\{a_1\}$, $\max(S_0)=\max(S_1)=z$ et $S_0\sauf\{a_0\}\subset S_1\sauf\{a_1\}$, et $\beta\comp\alpha:u(\{a_0,a_1\})h_0f_0\to h_1f_1$ est le composé horizontal de $\beta$ et $\alpha$ dans $A'$.
\tb
\item Si $x$ est dans $A'$ et $y$ et $z$ sont dans $F\sauf E$, la composition horizontale dans $B'$ est définie par
$$
\xymatrixcolsep{5.9pc}
\xymatrix{
z\rtwocell<\omit>{<-0>\kern 4pt }
&y\ar@/_2.pc/[l]_{T_0}\ar@/^2.pc/[l]^{T_1}\rtwocell<\omit>{<-0>\kern 4pt \undl{\alpha\kern 2pt}}
&x\ar@/_2.pc/[l]_{(S_0,a_0,h_0)}\ar@/^2.pc/[l]^{(S_1,a_1,h_1)}
}
\qquad\longmapsto\qquad
\xymatrix{
z\rtwocell<\omit>{<-0>\kern 4pt \undl{\alpha\kern 2pt}}
&x\ar@/_2.pc/[l]_{(T_0\cup S_0,a_0,h_0)}\ar@/^2.pc/[l]^{(T_1\cup S_1,a_1,h_1)}
}\ ,
$$
où $a_0,a_1$ sont des éléments de $E$ tels que $a_0\leq a_1$, $h_0:x\to u(a_0)$ et $h_1:x\to u(a_1)$ des $1$\nbd-flèches de $A'$, $\alpha:u(\{a_0,a_1\})h_0\to h_1$ est une $2$\nbd-flèche de $A'$, et $S_0,S_1,T_0,T_1$ sont des éléments de $\xi(F)$ tels que $S_0\cap E=\{a_0\}$, $S_1\cap E=\{a_1\}$, $\max(S_0)=\max(S_1)=y$, $S_0\sauf\{a_0\}\subset S_1\sauf\{a_1\}$, $\min(T_0)=\min(T_1)=y$, $\max(T_0)=\max(T_1)=z$ et $T_0\subset T_1$.
\end{enumerate}
\tb
Montrons que le morphisme de graphes
\[ \Homi_{B'}(y, z) \times \Homi_{B'}(x, y) \to \Homi_{B'}(x, z)\  \]
qu'on vient de définir est bien un foncteur. La compatibilité aux identités étant évidente, il reste à prouver la compatibilité à la composition, autrement dit, la règle d'échange dans $B'$. Dans les cas (1) et (2) ci-dessus, elle résulte aussitôt de la règle d'échange dans $A'$ et $B$ respectivement. Examinons le cas (3):
$$
\xymatrixcolsep{7pc}
\xymatrix{
z\rtwocell<\omit>{<-4>\kern 6pt \undl{\beta_1}}\rtwocell<\omit>{<3>\kern 6pt \undl{\beta_2}}
&y\ar@/_2.8pc/[l]_{(S_0,a_0,h_0)}\ar@/^2.8pc/[l]^{(S_2,a_2,h_2)}\ar[l]_{(S_1,a_1,h_1)}
\rtwocell<\omit>{<-4>\kern 5pt \alpha_1}\rtwocell<\omit>{<3>\kern 5pt \alpha_2}
&x\ar@/_2.8pc/[l]_{f_0}\ar@/^2.8pc/[l]^{f_2}\ar[l]_{f_1}
}\kern 10pt.
$$
Le composé vertical $\undl{\beta_2}\,\undl{\beta_1}$ dans $B'$ est égal à $\undl{\beta}$, où
$$\beta=\beta_2\bigl(u(\{a_1,a_2\})\comp\beta_1\bigr)\bigl(u(\{a_0,a_2\}\subset\{a_0,a_1,a_2\})\comp h_0\bigr)\ ,$$
tandis que le composé vertical de $\alpha_2$ et $\alpha_1$ dans $B'$ est le même que dans $A'$. On en déduit que dans $B'$ on a
$(\undl{\beta_2}\,\undl{\beta_1})\comp(\alpha_2\alpha_1)= \undl{\gamma}$, où
$$\gamma=\bigl(\beta_2\bigl(u(\{a_1,a_2\})\comp\beta_1\bigr)\bigl(u(\{a_0,a_2\}\subset\{a_0,a_1,a_2\})\comp h_0\bigr)\bigr)\comp\alpha_2\alpha_1\ .$$
De même, dans $B'$ on a $(\undl{\beta_2}\comp\alpha_2)(\undl{\beta_1}\comp\alpha_1)=(\undl{\beta_2\comp\alpha_2})(\undl{\beta_1\comp\alpha_1})=\undl{\gamma'}$, où
$$\gamma'=\bigl(\beta_2\comp\alpha_2\bigr)\bigl(u(\{a_1,a_2\})\comp\beta_1\comp\alpha_1\bigr)\bigl(u(\{a_0,a_2\}\subset\{a_0,a_1,a_2\})\comp h_0f_0\bigr)\ .$$
On conclut en remarquant qu'en vertu de la règle d'échange dans $A'$, on a $\gamma=\gamma'$.
On laisse au lecteur le soin de constater que dans le cas (4) la vérification est évidente.
\tb
Enfin, si $x$ est un objet de $B'$, l'identité $1_x\in\sHom_{B'}(x,x)$ de $x$ est héritée de celle de~$A'$ ou de celle de $B$, selon que $x\in\ob(A')$ ou $x\in F\sauf E$. Les propriétés d'associativité et d'unité pour la composition horizontale étant immédiates, cela termine la définition de la catégorie $B'$.
\end{sparagr}

\begin{sparagr}\label{varamalg3}
Par définition, $A'$ est une sous-$2$-catégorie de $B'$. On notera $i' : A'\to B'$ le $2$\nbd-foncteur d'inclusion. On définit un $2$-foncteur $v : B \to B'$ de la manière suivante. Si $x$ est un objet de $B$, on pose
\[
v(x) =
\begin{cases}
  u(x), & \text{si $x \in E$,}\\
  x, & \text{si $x \in F\sauf E$.}
\end{cases}
\]
Si $S:x\to y$ est une $1$\nbd-flèche de $B$, $S\in\xi(F)$, $\min(S)=x$, $\max(S)=y$, on pose
\[
v(S)=
\begin{cases}
u(S), & \text{si $x,y\in E$,}\\
S, & \text{si $x,y\in F\sauf E$,}\\
(S'',a,u(S')), & \text{si $x\in E$, $y\in F\sauf E$,}
\end{cases}
\]
où $S'=S\cap E$, $a=\max(S')$ et $S''=(S\cap(F\sauf E))\cup\{a\}$, de sorte qu'on a bien $S''\cap E=\{a\}$, et des $1$\nbd-flèches
$$\xymatrix{
u(x)\ar[r]^{u(S')}
&u(a)
}\ ,\qquad
\xymatrix{
a\ar[r]^{S''}
&y
}$$
de $A'$ et $B$ respectivement. Il reste à définir l'image par $v$ d'une $2$-flèche de $F$, autrement dit, d'une inclusion $S_0\subset S_1$ où $S_0$ et $S_1$ sont des éléments de $\xi(F)$ tels que $\min(S_0)=\min(S_1)=x$ et $\max(S_0)=\max(S_1)=y$. On pose
\[
v(S_0\subset S_1)=
\begin{cases}
u(S_0\subset S_1), & \text{si $x,y\in E$,}\\
S_0\subset S_1, & \text{si $x,y\in F\sauf E$,}\\
\undl{u(S'_0\cup\{a_1\}\subset S'_1)}, & \text{si $x\in E$, $y\in F\sauf E$,}
\end{cases}
\]
où suivant la convention du paragraphe~\ref{varamalg1}, $\undl{u(S'_0\cup\{a_1\}\subset S'_1)}$ désigne la $2$\nbd-flèche de $B'$ de source et but 
$$v(S_0)=(S''_0,a_0,u(S'_0))\qquad\hbox{et}\qquad v(S_1)=(S''_1,a_1,u(S'_1))$$
respectivement, où
$$
\begin{aligned}
S'_0=S_0\cap E\,,\quad a_0=\max(S'_0)\,,\quad S''_0=(S_0\cap(F\sauf E))\cup\{a_0\}\ ,\\
S'_1=S_1\cap E\,,\quad a_1=\max(S'_1)\,,\quad S''_1=(S_1\cap(F\sauf E))\cup\{a_1\}\ ,
\end{aligned}
$$
définie par le diagramme
$$
\xymatrixrowsep{.4pc}
\xymatrixcolsep{2.5pc}
\xymatrix{
&u(a_0)\ar[dddd]^{u(\{a_0,a_1\})}
&\kern 10pt
&a_0\ar[dddd]_{\{a_0,a_1\}}\ar[ddr]^{S''_0}
\ddrlowertwocell<\omit>{<3.3>}
\\
&
\\
u(x)\ar[uur]^{u(S'_0)}\ar[ddr]_{u(S'_1)}
\ddruppertwocell<\omit>{<-3.3>\kern 1pt \alpha}
&&&&y
\\
&
\\
&u(a_1)
&&a_1\ar[uur]_{S''_1}
&\kern 20pt,
}
$$
où $\alpha=u(S'_0\cup\{a_1\}\subset S'_1)$, la $2$\nbd-flèche du triangle de droite correspondant à l'inclusion $S''_0\subset S''_1\cup\{a_0\}$.
\tb

Pour montrer que $v$ est un $2$\nbd-foncteur, commençons par vérifier sa compatibilité à la composition des $1$\nbd-flèches. Soient donc $x,y,z\in F$ et $S:x\to y$, $T:y\to z$ deux $1$\nbd-flèches composables de $A$, où $S,T\in\xi(F)$, $x=\min(S)$, $y=\max(S)=\min(T)$ et $z=\max(T)$. Il y a deux cas non triviaux à examiner:
\tb
\begin{enumerate}[label=(\arabic*), wide]
\item Si $x,y\in E$ et $z\in F\sauf E$, on a $v(S)=u(S)$ et $v(T)=(T'',a,u(T'))$, où
$$T'=T\cap E\,,\quad a=\max(T')\,,\quad T''=(T\cap(F\sauf E))\cup\{a\}\,.$$
On remarque qu'on a
$$(T\cup S)\cap E=T'\cup S\,,\quad\max(T'\cup S)=a\,,\quad((T\cup S)\cap(F\sauf E))\cup\{a\}= T''\,,$$
ce qui implique que $v(TS)=v(T\cup S)=(T'',a,u(T'\cup S))$. Or,
$$v(T)\,v(S)=(T'',a,u(T'))\,u(S)=(T'',a,u(T')\,u(S))=(T'',a,u(T'\cup S))\,,$$
ce qui prouve que $v(TS)=v(T)\,v(S)$.
\tb
\item Si $x\in E$ et $y,z\in F\sauf E$, on a $v(S)=(S'',a,u(S'))$, où
$$S'=S\cap E\,,\quad a=\max(S')\,,\quad S''=(S\cap(F\sauf E))\cup\{a\}\,,$$
et $v(T)=T$. On remarque qu'on a
$$(T\cup S)\cap E=S'\,,\quad((T\cup S)\cap(F\sauf E))\cup\{a\}= T\cup S''\,,$$
ce qui implique que $v(TS)=v(T\cup S)=(T\cup S'',a,u(S'))$. Or,
$$v(T)\,v(S)=T\,(S'',a,u(S'))=(T\cup S'',a,u(S'))\,,$$
ce qui prouve que $v(TS)=v(T)\,v(S)$.
\end{enumerate}
\tb

Vérifions la compatibilité de $v$ à la composition horizontale des $2$\nbd-flèches. Soit donc un diagramme
$$
\xymatrixcolsep{5.9pc}
\xymatrix{
z\rtwocell<\omit>{<-0>\kern 4pt }
&y\ar@/_2.pc/[l]_{T_0}\ar@/^2.pc/[l]^{T_1}\rtwocell<\omit>{<-0>}
&x\ar@/_2.pc/[l]_{S_0}\ar@/^2.pc/[l]^{S_1}
}$$
dans $B$. Il s'agit de montrer que $v(T_0\subset T_1)\comp v(S_0\subset S_1)=v(T_0\cup S_0\subset S_1\cup T_1)$.
Il n'y a, à nouveau, que deux cas non triviaux à examiner:
\tb
\begin{enumerate}[label=(\arabic*), wide]
\item Si $x,y\in E$ et $z\in F\sauf E$, on a $v(S_0\subset S_1)=u(S_0\subset S_1)$ et $v(T_0\subset T_1)=\undl{\alpha}$, où 
$$\alpha=u(T'_0\cup\{a_1\}\subset T'_1)\,,\quad T'_0=T_0\cap E\,,\quad T'_1=T_1\cap E\,,\quad a_1=\max(T'_1)\,,$$
et les égalités
$$(T_0\cup S_0)\cap E=T'_0\cup S_0\,,\quad (T_1\cup S_1)\cap E=T'_1\cup S_1\quad\hbox{et}\quad \max(T'_1\cup S_1)=a_1\,$$
impliquent que $u(T_0\cup S_0\subset T_1\cup S_1)=\undl{\beta}$, où $\beta=u(T'_0\cup S_0\cup\{a_1\}\subset T'_1\cup S_1)$.
Or, $v(T_0\subset T_1)\comp v(S_0\subset S_1)=\undl{\beta'}$, où 
$$\beta'=u(T'_0\cup\{a_1\}\subset T'_1)\comp u(S_0\subset S_1)=u(T'_0\cup\{a_1\}\cup S_0\subset T'_1\cup S_1)\,,$$ 
ce qui prouve l'assertion.
\tb
\item Si $x\in E$ et $y,z\in F\sauf E$, et si l'on pose
$$S'_0=S_0\cap E\,,\quad S'_1=S_1\cap E\,,\quad a_1=\max(S'_1)\,,$$
on a 
$$(T_0\cup S_0)\cap E=S'_0\quad\hbox{et}\quad(T_1\cup S_1)\cap E=S'_1\,,$$
ce qui implique facilement qu'on a à la fois 
$$v(T_0\subset T_1)\comp v(S_0\subset S_1)=\undl{\alpha}\quad\hbox{et}\quad v(T_0\cup S_0\subset S_1\cup T_1)=\undl{\alpha}\,,$$ 
où $\alpha=u(S'_0\cup\{a_1\}\subset S'_1)$. 
\end{enumerate}
\tb 
Montrons la compatibilité de $v$ à la composition verticale des $2$\nbd-flèches. Soit donc un diagramme
$$
\xymatrixcolsep{7pc}
\xymatrix{
y\rtwocell<\omit>{<-4>}\rtwocell<\omit>{<3>}
&x\ar@/_2.8pc/[l]_{S_0}\ar@/^2.8pc/[l]^{S_2}\ar[l]_{S_1}
}
$$
dans $B$. Il s'agit de montrer que $v(S_1\subset S_2)\,v(S_0\subset S_1)=v(S_0\subset S_2)$. Il y a un seul cas non trivial, celui où $x\in E$ et $y\in F\sauf E$. Dans ce cas, si l'on pose
$$S'_p=S_p\cap E\,,\quad a_p=\max(S'_p)\,,\quad p=0,1,2\,,$$
on a
$$v(S_p\subset S_q)=\undl{\alpha_{pq}}\,,\quad\hbox{où}\quad\alpha_{pq}=u(S'_p\cup\{a_q\}\subset S'_q)\,,\quad0\leq p<q\leq2\,.$$
On en déduit que $v(S_1\subset S_2)\,v(S_0\subset S_1)=\undl{\alpha}$, où
$$\begin{aligned}
\alpha=u(S'_1\cup\{a_2\}\subset S'_2)\,&\bigl(u(\{a_1,a_2\})\comp u(S'_0\cup\{a_1\}\subset S'_1)\bigr)\\
&\bigl(u(\{a_0,a_2\}\subset\{a_0,a_1,a_2\})\comp u(S'_0)\bigr)=u(S'_0\cup\{a_2\}\subset S'_2)\,,
\end{aligned}$$
ce qui prouve l'assertion. La compatibilité aux identités étant évidente, ceci achève la vérification que $v$ est un $2$\nbd-foncteur.
\tb
On a donc un carré de $2$\nbd-foncteurs
\[
\xymatrix{
A \ar[r]^u \ar[d]_i & A' \ar[d]^{i'}\\
B \ar[r]_v & B' \pbox{.} \\
}
\]
On vérifie immédiatement que ce carré est commutatif.
\end{sparagr}

\begin{sprop}
Le carré
\[
\xymatrix{
A \ar[r]^u \ar[d]_i & A' \ar[d]^{i'}\\
B \ar[r]_v & B'
}
\]
est un carré cocartésien de $\ncat2$.
\end{sprop}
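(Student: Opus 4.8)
The plan is to verify the universal property directly, following the same pattern as the proof of the dproposition in Section~\ref{sec:cribles2Cat}. So I fix a $2$-category $C$ together with strict $2$-functors $j:A'\to C$ and $w:B\to C$ satisfying $ju=wi$, and I must construct a unique strict $2$-functor $l:B'\to C$ with $li'=j$ and $lv=w$. Throughout I exploit the relation $ju=wi$ to rewrite $u$-images as $w$-images; in particular $w(\{a_0,a_1\})=ju(\{a_0,a_1\})$ for $a_0\le a_1$ in $E$, and $ju(a)=wi(a)=w(a)$ on objects.

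For existence, I use the explicit description of $B'$ from \ref{varamalg1}--\ref{varamalg3} and define $l$ cell by cell. On objects, $l(x)=j(x)$ if $x\in\Ob(A')$ and $l(x)=w(x)$ if $x\in F\sauf E$. On a $1$-cell lying entirely in $A'$ (resp.\ in $F\sauf E$) the functor $l$ agrees with $j$ (resp.\ with $w$); on a mixed $1$-cell $(S,a,h):x\to y$, with $x\in\Ob(A')$ and $y\in F\sauf E$, I set $l(S,a,h)=w(S)j(h)$, which is well typed since $j(h):j(x)\to ju(a)=w(a)$ and $w(S):w(a)\to w(y)$. On a mixed $2$-cell $\undl{\alpha}:(S_0,a_0,h_0)\Rightarrow(S_1,a_1,h_1)$, writing $\theta:S_0\Rightarrow S_1\cup\{a_0,a_1\}$ for the $2$-cell of $B$ recorded by the right-hand triangle of its defining diagram, I set
\[ l(\undl{\alpha})=(w(S_1)\comp j(\alpha))(w(\theta)\comp j(h_0))\ . \]
A short computation shows that the source and target of the right-hand side are $w(S_0)j(h_0)$ and $w(S_1)j(h_1)$, the intermediate object being $w(S_1)j(u(\{a_0,a_1\})h_0)$. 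Since in this concrete model the hom-categories $B'_{x,y}$ are described without any quotient (a mixed $2$-cell is literally a $2$-cell $\alpha$ of $A'$ with fixed source and target triples), no further well-definedness check is needed.

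Next I verify that $l$ is a strict $2$-functor. Compatibility with sources, targets, identities and vertical composition follows from the corresponding properties of $j$ and $w$ and from the composition formula of \ref{varamalg1}. The substantive point is compatibility with horizontal composition, which I organize, exactly as in \ref{paragr:def_v}, according to the distribution of the objects $x,y,z$ between $\Ob(A')$ and $F\sauf E$: the two configurations in which the three objects lie on the same side are inherited from $A'$ or from $B$, and the two genuinely mixed configurations (cases (3) and (4) of \ref{varamalg2}) are essentially symmetric. In each mixed case I substitute the explicit horizontal-composition formulas of \ref{varamalg2}, expand using the interchange law in $A'$ together with strictness of $j$ and $w$, and repeatedly invoke $ju=wi$; this is the longest computation. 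The identities $li'=j$ and $lv=w$ are then read off directly from the defining formulas. For uniqueness, every cell of $B'$ is expressible through $i'$ and $v$, so these formulas are forced: a mixed $1$-cell factors as $(S,a,h)=v(S)\comp i'(h)$, since when $S\cap E=\{a\}$ the definition of $v$ in \ref{varamalg3} gives $v(S)=(S,a,\id{u(a)})$, and a mixed $2$-cell factors as
\[ \undl{\alpha}=(v(S_1)\comp i'(\alpha))(v(\theta)\comp i'(h_0))\ , \]
with $\theta$ the inclusion $2$-cell above; applying the would-be $l$ and using $li'=j$ and $lv=w$ forces the value prescribed in the existence part.

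The main obstacle is the compatibility with horizontal composition in the two mixed configurations: one must keep track of the auxiliary $2$-cell $\theta$ coming from the inclusion $S_0\subset S_1\cup\{a_0,a_1\}$ and check that the interchange and cocycle manipulations close up using only strictness of $j$ and $w$ and the relation $ju=wi$. As an alternative, one could instead exhibit a canonical isomorphism between this concrete $B'$ and the general amalgamated sum constructed in Section~\ref{sec:cribles2Cat}, compatible with $i'$ and $v$, and then invoke the dproposition proved there; but identifying the two descriptions of the hom-categories $B'_{x,y}$ costs comparable effort, so the direct verification above seems preferable.
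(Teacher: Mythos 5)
Votre proposition suit essentiellement la m\^eme route que la preuve du texte : m\^emes formules explicites pour $l$ sur les objets, les $1$\nobreakdash-fl\`eches et les $2$\nobreakdash-fl\`eches mixtes (votre $\theta : S_0 \Rightarrow S_1\cup\{a_0,a_1\}$ co\"{\i}ncide avec l'inclusion $S_0\subset S_1\cup\{a_0\}$ du texte, puisque $a_1\in S_1$), m\^eme observation que la description concr\`ete de $B'_{x,y}$ ne comporte aucun quotient et dispense donc de v\'erification de bonne d\'efinition, m\^eme analyse par configurations, et m\^emes factorisations $(S,a,h)=v(S)\,i'(h)$ et $\undl{\alpha}=\bigl(v(S_1)\comp i'(\alpha)\bigr)\bigl(v(S_0\subset S_1\cup\{a_0\})\comp i'(h_0)\bigr)$ pour l'unicit\'e. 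Seule nuance : vous annoncez la composition horizontale comme le point substantiel et exp\'ediez la composition verticale, alors que dans le texte c'est pr\'ecis\'ement la compatibilit\'e de $l$ \`a la composition \emph{verticale} des $2$\nobreakdash-fl\`eches mixtes qui demande le calcul le plus long (cha\^{\i}ne d'\'echanges dans $C$ sur le compos\'e $\alpha=\alpha_2\bigl(u(\{a_1,a_2\})\comp\alpha_1\bigr)\bigl(u(\{a_0,a_2\}\subset\{a_0,a_1,a_2\})\comp h_0\bigr)$), calcul qu'il faudrait effectivement r\'ediger mais qui ne pr\'esente pas d'obstacle.
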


\begin{proof}
Soient $C$ une $2$-catégorie, et $j : A' \to C$
et $w : B \to C$ deux $2$\nbd-foncteurs tels que $ju = wi$. Il
s'agit de montrer qu'il existe un unique $2$\nbd-foncteur $l : B' \to C$ rendant commutatif le diagramme
\[
\raise 29pt
\vbox{
\xymatrix@R=.4pc@C=.6pc{
A \ar[rrr]^u \ar[ddd]_i & & & A' \ar[ddd]_{i'} \ar@/^1em/[dddddrr]^{j}\\
\\
\\
B \ar[rrr]^v \ar@/_1em/[ddrrrrr]_{w} & & & B' \ar@{-->}[ddrr]^{l} \\
\\
& & & & & C \pbox{.}
}
}
\leqno(\star)
\]
\textsc{Existence.} Définissons un tel $2$\nbd-foncteur $l$. Pour $x$ un objet de $B'$, on pose
\[
l(x) = 
\begin{cases}
j(x), & \text{si $x \in \Ob(A')$,}\\ 
w(x), & \text{si $x \in F\sauf E$.}\\ 
\end{cases}
\]
Pour $f : x \to y$ une $1$-flèche de $B'$, on pose
\[
l(f) =
\begin{cases}
j(f), & \text{si $x, y \in \Ob(A')$,}\\
w(f), & \text{si $x, y \in F\sauf E$,}\\
w(S)j(h), & \text{si $x \in \Ob(A')$, $y \in F\sauf E$} \\
\phantom{w(S)j(h),} & \text{et $f=(S,a,h)$.}
\end{cases}
\]
Pour $\beta$ une $2$\nbd-flèche de $B'$, 
$$
\xymatrixcolsep{3.9pc}
\xymatrix{
y\rtwocell<\omit>{<-0>\kern 3pt \beta}
&x\ar@/_1.pc/[l]_{f_0}\ar@/^1.pc/[l]^{f_1}
}\kern 10pt,
$$
on pose
\[
l(\beta)=
\begin{cases}
j(\beta), & \text{si $x, y \in \Ob(A')$,}\\
w(\beta), & \text{si $x, y \in F\sauf E$,}\\
\end{cases}
\]
et 
$$l(\beta)=\bigl(w(S_1)\comp j(\alpha)\bigr)\bigl(w(S_0\subset S_1\cup\{a_0\})\comp j(h_0)\big)\,,$$ 
$$
\xymatrixcolsep{-.2pc}
\xymatrixrowsep{1.8pc}
\xymatrix{
&{\phantom{aaaa\kern 40pt aaaaaa}}
&w(a_0)\dlltwocell<8>_{w(S_0)}^{\kern 30pt w(S_1\cup\{a_0\})}{\omit}
\rrtwocell<\omit>{<8>\kern 3pt=}
&=
&ju(a_0)
\\
w(y)\urtwocell<\omit>{<1.5>\kern 50ptw\vrule depth 4pt width0pt(S_0\subset S_1\cup\{a_0\})}
&&&&&{\phantom{aaaa\kern 40pt aaaaaa}}
&j(x)\ar@/^-1.5em/[llu]_{j(h_0)}\ar@/^2em/[lld]^{j(h_1)}\ar@/^-2em/[lld]_(.5){w(\{a_0,a_1\})j(h_0)}
&,
\\
&&w(a_1)\ar@/^1.5em/[llu]^{w(S_1)}
&=
&ju(a_1)\urrtwocell<\omit>{<-.0>j(\alpha)\vrule height 10pt width 0pt{\phantom{aa\kern 15pt aa}}}
}
$$
si $x \in \Ob(A')$, $y \in F\sauf E$, $f_0=(S_0,a_0,h_0)$, $f_1=(S_1,a_1,h_1)$ et
$$\beta=\undl{\alpha}=\kern 25pt
\raise 30pt
\vbox{
\xymatrixrowsep{.4pc}
\xymatrixcolsep{2.5pc}
\xymatrix{
&u(a_0)\ar[dddd]^{u(\{a_0,a_1\})}
&\kern 10pt
&a_0\ar[dddd]_{\{a_0,a_1\}}\ar[ddr]^{S_0}
\ddrlowertwocell<\omit>{<3.3>}
\\
&
\\
x\ar[uur]^{h_0}\ar[ddr]_{h_1}
\ddruppertwocell<\omit>{<-3.3>\kern 1pt \alpha}
&&&&y
\\
&
\\
&u(a_1)
&&a_1\ar[uur]_{S_1}
&\kern 20pt.
}
}
$$

Pour montrer que $l$ est un $2$\nbd-foncteur, commençons par vérifier sa compatibilité à la composition des $1$\nbd-flèches. Soient donc $x,y,z\in\ob{B'}$ et $f:x\to y$, $g:y\to z$ deux $1$\nbd-flèches composables de $B'$. Il y a deux cas non triviaux à examiner:
\tb
\begin{enumerate}[label=(\arabic*), wide]
\item Si $x,y\in E$, $z\in F\sauf E$, et $g=(S,a,h)$, on a
$$l(gf)=l(S,a,hf)=w(S)j(hf)=w(S)j(h)j(f)=l(g)l(f)\,.$$
\item Si $x\in E$, $y,z\in F\sauf E$, et $f=(S,a,h)$ et $g=T\in\xi(F)$, on a
$$l(gf)=l(T\cup S,a,h)=w(T\cup S)j(h)=w(T)w(S)j(h)=l(g)l(f)\,.$$
\end{enumerate}
\tb
Vérifions la compatibilité de $l$ à la composition horizontale des $2$\nbd-flèches. 
Il n'y a, à nouveau, que deux cas non triviaux à examiner:
\tb
\begin{enumerate}[label=(\arabic*), wide]
\item Si on a $x,y\in E$ et $z\in F\sauf E$, et un diagramme dans $B'$ de la forme
$$
\xymatrixcolsep{5.9pc}
\xymatrix{
z\rtwocell<\omit>{<-0>\kern 4pt \undl{\beta}}
&y\ar@/_2.pc/[l]_{(S_0,a_0,h_0)}\ar@/^2.pc/[l]^{(S_1,a_1,h_1)}\rtwocell<\omit>{<-0>\kern 3pt \alpha}
&x\ar@/_2.pc/[l]_{f_0}\ar@/^2.pc/[l]^{f_1}
}\kern 10pt,$$
alors
$$
\begin{aligned}
l(\undl{\beta}\comp\alpha)&=\bigl(w(S_1)\comp j(\beta\comp\alpha)\bigr)\bigl(w(S_0\subset S_1\cup\{a_0\})\comp j(h_0f_0)\bigr)\\
&=\bigl(w(S_1)\comp j(\beta)\comp j(\alpha)\bigr)\bigl(w(S_0\subset S_1\cup\{a_0\})\comp j(h_0)j(f_0)\bigr)\\
&=\bigl[\bigl(w(S_1)\comp j(\beta)\bigr)\bigl(w(S_0\subset S_1\cup\{a_0\})\comp j(h_0)\bigr)\bigr]\comp j(\alpha)\\
&=l(\undl{\beta})\comp l(\alpha)\ .
\end{aligned}
$$
\item Si on a $x\in E$ et $y,z\in F\sauf E$, et un diagramme dans $B'$ de la forme
$$
\xymatrixcolsep{5.9pc}
\xymatrix{
z\rtwocell<\omit>{<-0>\kern 4pt }
&y\ar@/_2.pc/[l]_{T_0}\ar@/^2.pc/[l]^{T_1}\rtwocell<\omit>{<-0>\kern 4pt \undl{\alpha\kern 2pt}}
&x\ar@/_2.pc/[l]_{(S_0,a_0,h_0)}\ar@/^2.pc/[l]^{(S_1,a_1,h_1)}
}\kern 10pt,
$$
alors
$$
\begin{aligned}
l\bigl((T_0\subset T_1)\comp\undl{\alpha}\bigr)&=\bigl(w(T_1\cup S_1)\comp j(\alpha)\bigr)\bigl(w(T_0\cup S_0\subset T_1\cup S_1\cup\{a_0\})\comp j(h_0)\bigr)\\
&=w(T_0\subset T_1)\,\comp\bigr[\bigl(w(S_1)\comp j(\alpha)\bigr)\bigl(w(S_0\subset S_1\cup\{a_0\})\comp j(h_0)\bigr)\bigr]\\
&=l(T_0\subset T_1)\comp l(\undl{\alpha})\ .
\end{aligned}
$$
\end{enumerate}
\tb
Pour montrer la compatibilité de $l$ à la composition verticale des $2$\nbd-flèches, le seul cas non trivial à considérer est celui d'un diagramme dans $B'$ de la forme
$$
\xymatrixcolsep{7pc}
\xymatrix{
y\rtwocell<\omit>{<-4>\kern 6pt \undl{\alpha_1}}\rtwocell<\omit>{<3>\kern 6pt \undl{\alpha_2}}
&x\ar@/_2.8pc/[l]_{(S_0,a_0,h_0)}\ar@/^2.8pc/[l]^{(S_2,a_2,h_2)}\ar[l]_{(S_1,a_1,h_1)}
}\kern 10pt,
$$
avec $x\in E$ et $y\in F\sauf E$. Alors on a
{\allowdisplaybreaks
\begin{align*}
l(&\undl{\alpha_2}\,\undl{\alpha_1})=\bigl(w(S_2)\comp j\bigl[\alpha_2(u(\{a_1,a_2\})\comp\alpha_1)(u(\{a_0,a_2\}\subset\{a_0,a_1,a_2\})\comp h_0)\bigr]\bigr)\\
&\kern 50pt\bigl(w(S_0\subset S_2\cup\{a_0\})\comp j(h_0)\bigr)\\
\noalign{\vskip 5pt}
&=\bigl(w(S_2)\comp\bigl[j(\alpha_2)(w(\{a_1,a_2\})\comp j(\alpha_1))(w(\{a_0,a_2\}\subset\{a_0,a_1,a_2\})\comp j(h_0))\bigr]\bigr)\\
&\kern 50pt\bigl(w(S_0\subset S_2\cup\{a_0\})\comp j(h_0)\bigr)\\
\noalign{\vskip 5pt}
&=\bigl(w(S_2)\comp j(\alpha_2)\bigr)\bigl(w(S_2\cup\{a_1\})\comp j(\alpha_1)\bigr)\bigl(w(S_2\cup\{a_0\}\subset S_2\cup\{a_0,a_1\})\comp j(h_0)\bigr)\\
&\kern 50pt\bigl(w(S_0\subset S_2\cup\{a_0\})\comp j(h_0)\bigr)\\
\noalign{\vskip 5pt}
&=\bigl(w(S_2)\comp j(\alpha_2)\bigr)\bigl(w(S_2\cup\{a_1\})\comp j(\alpha_1)\bigr)\bigl(w(S_1\cup\{a_0\}\subset S_2\cup\{a_0,a_1\})\comp j(h_0)\bigr)\\
&\kern 50pt\bigl(w(S_0\subset S_1\cup\{a_0\})\comp j(h_0)\bigr)\\
\noalign{\vskip 5pt}
&=\bigl(w(S_2)\comp j(\alpha_2)\bigr)\bigl(w(S_2\cup\{a_1\})\comp j(\alpha_1)\bigr)\bigl(w(S_1\subset S_2\cup\{a_1\})\comp w(\{a_0,a_1\})\,j(h_0)\bigr)\\
&\kern 50pt\bigl(w(S_0\subset S_1\cup\{a_0\})\comp j(h_0)\bigr)\\
\noalign{\vskip 5pt}
&=\bigl(w(S_2)\comp j(\alpha_2)\bigr)\bigl(w(S_1\subset S_2\cup\{a_1\})\comp j(h_1)\bigr)\bigl(w(S_1)\comp j(\alpha_1)\bigr)\\
&\kern 50pt\bigl(w(S_0\subset S_1\cup\{a_0\})\comp j(h_0)\bigr)\\
\noalign{\vskip 5pt}
&=l(\undl{\alpha_2})\,l(\undl{\alpha_1})\ .
\end{align*}
}%
La compatibilité de $l$ aux identités étant évidente, cela achève de prouver l'existence d'un $2$\nbd-foncteur $l$ rendant le diagramme~($\star$) commutatif.
\tb

\noindent\textsc{Unicité.} Montrons l'unicité d'un $2$\nbd-foncteur $l$ rendant commutatif le diagramme~($\star$). L'unicité sur les objets est évidente. Soit $f:x\to y$ une $1$\nbd-flèche de $B'$. Si les objets~$x$ et~$y$ sont tous deux dans $\Ob(A')$ ou dans $F\sauf E$, l'unicité résulte aussitôt de la commutativité du diagramme~($\star$). Supposons donc que $x\in\ob(A')$, $y\in F\sauf E$ et que $f=(S,a,h)$. On remarque qu'on a les égalités
$$(S,a,h)=(S,a,1_{u(a)})\,h=(S,a,u(\{a\}))\,h=v(S)\,i'(h)\ ,$$
et la commutativité du diagramme~($\star$) implique que $l(f)=w(S)j(h)$, ce qui prouve l'assertion. De même, pour vérifier l'unicité sur les $2$\nbd-flèches, le seul cas non trivial à examiner est celui d'une $2$\nbd-flèche de la forme
$$
\xymatrixcolsep{3.9pc}
\xymatrix{
y\rtwocell<\omit>{<-0>\kern 4pt \undl{\alpha\kern 2pt}}
&x\ar@/_1.3pc/[l]_{(S_0,a_0,h_0)}\ar@/^1.3pc/[l]^{(S_1,a_1,h_1)}
}\ ,\kern 15pt
\undl{\alpha}=\kern 10pt
\raise 30pt
\vbox{
\xymatrixrowsep{.3pc}
\xymatrixcolsep{2.2pc}
\xymatrix{
&u(a_0)\ar[dddd]^{u(\{a_0,a_1\})}
&\kern 10pt
&a_0\ar[dddd]_{\{a_0,a_1\}}\ar[ddr]^{S_0}
\ddrlowertwocell<\omit>{<3.3>}
\\
&
\\
x\ar[uur]^{h_0}\ar[ddr]_{h_1}
\ddruppertwocell<\omit>{<-3.3>\kern 1pt \alpha}
&&&&y
\\
&
\\
&u(a_1)
&&a_1\ar[uur]_{S_1}
&\kern 20pt,
}
}
$$
avec $x\in E$ et $y\in F\sauf E$. Or, on a alors l'égalité
$$\undl{\alpha}=\bigl(v(S_1)\comp i'(\alpha)\bigr)\bigl(v(S_0\subset S_1\cup\{a_0\})\comp i'(h_0)\bigr)\ ,$$
qui se démontre en déroulant les définitions et en considérant le diagramme:
$$
\xymatrixrowsep{1.25pc}
\xymatrixcolsep{1.pc}
\xymatrix{
&&&&&&u(a_0)\ar[dd]^{u(\{a_0,a_1\})}
&&&&a_0\ar[dd]_{\{a_0,a_1\}}\ar[ddrrrr]^{S_0}
\\
&&&&\drruppertwocell<\omit>{<7.3>\kern 1pt \alpha}\drruppertwocell<\omit>{<-1.9>\kern 3pt =}
&&&&&&\drrtwocell<\omit>{<-1.5>\kern 3pt}
&\drrtwocell<\omit>{<7.5>\kern 3pt =}
\\
x\ar[rrr]^{h_0}\ar[rrrrrrdd]_{h_1}
&&&u(a_0)\ar[rrr]^{u(\{a_0,a_1\})}\ar[uurrr]^{1_{u(\kern -1pt a_0\kern -1pt)}}
&&&u(a_1)\ar[dd]^{1_{u(\kern -1pt a_1\kern -1pt)}}
&&&&a_1\ar[dd]_{1_{a_1}}\ar[rrrr]^{S_1}
&&&&y
&
\\
\\
&&&&&&u(a_1)
&&&&a_1\ar[uurrrr]_{S_1}
&&&&.
}
$$
La commutativité du diagramme ($\star$) implique alors que
$$l(\undl{\alpha})=\bigl(w(S_1)\comp j(\alpha)\bigr)\bigl(w(S_0\subset S_1\cup\{a_0\})\comp j(h_0)\big)\,,$$
ce qui achève la démonstration.
\end{proof}
\end{paragr}

\begin{lemme}\label{imdirOcrible}
Soient $E\to F$ un crible d'ensembles ordonnés, $i:A\to B$ le crible de $\nCat{2}$ image de $E\to F$ par le foncteur $\Orntgfonct$, et
\[
\xymatrix{
A \ar[r]^u \ar[d]_i & A' \ar[d]^{i'}\\
B \ar[r]_v & B' \\
}
\]
un carré cocartésien de $\ncat2$. Si $A'$ est une catégorie enrichie en ensembles ordonnés, il en est de même de $B'$. 
\end{lemme}

\begin{proof}
Le lemme résulte aussitôt de la description d'une telle somme amalgamée $B'$ présentée au paragraphe précédent.
\end{proof}

\begin{prop}
Soit $A\to B$ une cofibration de $\ncat2$ pour la structure de catégorie de modèles à la Thomason. Si $A$ est une catégorie enrichie en ensembles ordonnés, il en est de même de $B$.
\end{prop}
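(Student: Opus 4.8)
La preuve proc�dera par l'argument du petit objet, combin� aux propri�t�s de stabilit� de la classe $\mathcal{P}$ des $2$\nbd-cat�gories enrich�es en ensembles ordonn�s. En vertu du th�or�me~\ref{thm:cmf_2cat}, la structure � la Thomason est engendr�e par $(c_2\Sd^2(I),c_2\Sd^2(J))$, de sorte que, par l'argument du petit objet~\ref{petitobjet}, toute cofibration $A\to B$ est un r�tracte d'un $2$\nbd-foncteur $A\to B''$ appartenant � $\cell(c_2\Sd^2(I))$, c'est-�-dire un compos� transfini d'images directes des cofibrations g�n�ratrices $c_2\Sd^2(i_m)$. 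Je commencerai par observer que $\mathcal{P}$ est stable par r�tractes: si $B$ est un r�tracte de $B'$ dans $\nCat{2}$ \emph{via} $B\to B'\to B$, alors pour tout couple d'objets $x,y$ de $B$, la cat�gorie $\sHom_B(x,y)$ est un r�tracte dans $\cat$ de la cat�gorie $\sHom_{B'}(x,y)$; or un r�tracte d'un ensemble ordonn� est un ensemble ordonn� (une cat�gorie r�tracte d'une cat�gorie ayant au plus une fl�che entre deux objets donn�s, et dont les seuls isomorphismes sont les identit�s, partage ces deux propri�t�s). Il suffira donc de prouver que $B''\in\mathcal{P}$.

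Pour identifier les cofibrations g�n�ratrices, on utilise le lemme~\ref{Sd2can}, qui identifie $\Sd^2(i_m)$ au $2$\nbd-foncteur $N\xi(\sigma_m)$, o� $\sigma_m:\bordCS{m}\to\xi\smp{m}$ est une inclusion pleine d'ensembles ordonn�s. En vertu du lemme~\ref{Dwyer}, $\xi(\sigma_m):\xi\bordCS{m}\to\xi^2\smp{m}$ est un crible d'ensembles ordonn�s. Comme, d'apr�s la proposition~\ref{orntgadj}, le foncteur $c_2N$ co�ncide avec $\Orntgfonct$, la cofibration g�n�ratrice $c_2\Sd^2(i_m)=\Orntgfonct(\xi(\sigma_m))$ est l'image par $\Orntgfonct$ d'un crible d'ensembles ordonn�s; c'est en particulier un crible de $\nCat{2}$ (proposition~\ref{prop:cN_abc}~(\emph{c})), donc un monomorphisme.

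On �crit alors $A\to B''$ comme une cha�ne $A=X_0\to\dots\to X_\lambda\to\dots$, o� chaque $X_{\lambda+1}$ est l'image directe d'une cofibration g�n�ratrice $\Orntgfonct(E\to F)$ le long d'un $2$\nbd-foncteur de but $X_\lambda$. On montre $X_\lambda\in\mathcal{P}$ par r�currence transfinie. Le cas $X_0=A\in\mathcal{P}$ est l'hypoth�se. Pour le passage au successeur, $X_{\lambda+1}$ est l'image directe de $\Orntgfonct$ d'un crible d'ensembles ordonn�s le long d'un $2$\nbd-foncteur de but $X_\lambda\in\mathcal{P}$, et le lemme~\ref{imdirOcrible} donne imm�diatement $X_{\lambda+1}\in\mathcal{P}$. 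Pour un ordinal limite $\lambda$, les fl�ches de transition $X_\mu\to X_\lambda$ sont des cribles, comme compos�s transfinis d'images directes de cribles (proposition~\ref{imdircribles}), donc des inclusions de sous\nbd-$2$\nbd-cat�gories; la $2$\nbd-cat�gorie $X_\lambda$ s'identifie ainsi � la r�union croissante des $X_\mu$. Pour tout couple d'objets $x,y$ de $X_\lambda$, apparaissant � un certain stade, $\sHom_{X_\lambda}(x,y)$ est la r�union croissante des $\sHom_{X_\mu}(x,y)$, et une r�union croissante d'ensembles ordonn�s le long d'inclusions de sous-cat�gories (pleinement fid�les sur les fl�ches pertinentes) est un ensemble ordonn�. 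On a donc $X_\lambda\in\mathcal{P}$, ce qui ach�ve la r�currence et montre $B''\in\mathcal{P}$.

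Par stabilit� de $\mathcal{P}$ par r�tractes, on conclut que $B\in\mathcal{P}$. Le c�ur de l'argument est le lemme~\ref{imdirOcrible}, d�j� �tabli gr�ce � la description explicite des images directes de cribles de la forme $\Orntgfonct(E\to F)$ donn�e au paragraphe~\ref{varamalg}; les seules v�rifications restantes, de nature purement formelle, sont la stabilit� de $\mathcal{P}$ par r�tractes et par les r�unions croissantes intervenant dans les compos�s transfinis. La principale subtilit� � surveiller est que ces compos�s transfinis soient bien des r�unions croissantes de sous\nbd-$2$\nbd-cat�gories, ce qui repose de fa�on essentielle sur le fait que les cofibrations g�n�ratrices, donc tous les morphismes de la cha�ne, sont des cribles.
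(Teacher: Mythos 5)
Votre preuve est correcte et suit essentiellement la m�me d�marche que celle de l'article : d�composition de la cofibration, via le th�or�me~\ref{thm:cmf_2cat} et l'argument du petit objet, en r�tracte d'un compos� transfini d'images directes des cofibrations g�n�ratrices, identification de celles-ci aux images par le foncteur $\Orntgfonct$ de cribles d'ensembles ordonn�s gr�ce aux lemmes~\ref{Sd2can} et~\ref{Dwyer} et � la proposition~\ref{orntgadj}, puis r�currence transfinie fond�e sur le lemme~\ref{imdirOcrible}. Vous explicitez simplement les v�rifications de stabilit� que l'article laisse au lecteur (r�tractes, �tapes limites trait�es comme r�unions croissantes le long de cribles, ce qui pr�cise l'invocation par l'article de la stabilit� par limites inductives filtrantes), sans que cela change la nature de l'argument.
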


\begin{proof}
En vertu du théorème~\ref{thm:cmf_2cat}, le $2$\nbd-foncteur $A\to B$ est un rétracte d'un composé transfini d'images directes de flèches appartenant à $c_2\Sd^2(I)$, où $I$ est l'ensemble de flèches de $\simpl$ défini dans le paragraphe~\ref{enssimpl}. Or, il résulte des lemmes~\ref{Sd2can} et~\ref{Dwyer} et de la proposition~\ref{orntgadj} que toute flèche appartenant à $c_2\Sd^2(I)$ est isomorphe à l'image par le foncteur $\Orntgfonct$ d'un crible d'ensembles ordonnés. La proposition se déduit alors par récurrence transfinie du lemme précédent, et de la stabilité des catégories enrichies en ensembles ordonnés par rétractes et limites inductives filtrantes dans~$\nCat{2}$. 
\end{proof}

\begin{cor}\label{cofcatenrord}
Soit $A$ un objet cofibrant de $\ncat2$ pour la structure de catégorie de modèles à la Thomason. Alors $A$ est une catégorie enrichie en ensembles ordonnés.
\end{cor}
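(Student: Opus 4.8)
Le plan est de d\'eduire ce corollaire directement de la proposition pr\'ec\'edente, en l'appliquant au morphisme canonique issu de l'objet initial de $\ncat2$.

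On rappellerait d'abord qu'un objet $A$ de $\ncat2$ est cofibrant si et seulement si l'unique morphisme $\varnothing\to A$, de source l'objet initial, est une cofibration pour la structure de cat\'egorie de mod\`eles \`a la Thomason. On observerait ensuite que cet objet initial $\varnothing$ n'est autre que la $2$\nbd-cat\'egorie vide, laquelle est trivialement une cat\'egorie enrichie en ensembles ordonn\'es (la condition portant sur les cat\'egories de $1$\nbd-fl\`eches \'etant vide, faute d'objets). Il suffirait alors d'appliquer la proposition pr\'ec\'edente \`a la cofibration $\varnothing\to A$ pour conclure.

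\`A ce stade, aucune v\'eritable difficult\'e ne subsiste: tout le travail technique a \'et\'e accompli en amont, dans la preuve de la proposition pr\'ec\'edente, laquelle repose sur la description explicite des images directes de cribles de la forme $\Orntgfonct(E)\to\Orntgfonct(F)$ (\emph{cf}.~paragraphe~\ref{varamalg}) et sur le lemme~\ref{imdirOcrible}.
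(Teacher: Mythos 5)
Votre preuve est correcte et suit exactement la m\^eme d\'emarche que celle de l'article : vous appliquez la proposition pr\'ec\'edente \`a la cofibration $\varnothing\to A$, en observant que la $2$\nbd-cat\'egorie vide est trivialement enrichie en ensembles ordonn\'es. Rien \`a ajouter.
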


\begin{proof}
Le corollaire est un cas particulier de la proposition précédente, appliquée à la cofibration $\varnothing\to A$.
\end{proof}

\appendix

\section{Le théorème de Smith et les $\W$-cofibrations}

Dans cet appendice, on présente quelques applications d'un théorème de
Jeffrey Smith faisant intervenir la notion de $\W$\nbd-cofibration de
Grothendieck. En particulier, on démontre une variante de la
proposition~\ref{sansSmith}. Pour énoncer le théorème de Smith, on a besoin
de quelques compléments concernant les catégories accessibles (\emph{cf.}~\ref{acc}).

\begin{paragr}\label{complacc}
La catégorie des foncteurs d'une petite catégorie vers une catégorie
accessible est accessible~\cite[théorème 2.39]{AR}. En particulier, la
catégorie des flèches d'une catégorie accessible est accessible. On dit
qu'un foncteur $F:\C\to\D$ est \emph{accessible} s'il existe un cardinal
régulier~$\kappa$ tel que les catégories $\C$ et $\D$ soient
$\kappa$\nbd-accessibles et tel que $F$ commute aux petites limites
inductives $\kappa$\nbd-filtrantes. Un foncteur entre catégories accessibles
admettant un adjoint à gauche ou à droite est accessible
\cite[proposition~2.23]{AR}. On dit qu'une classe d'objets d'une catégorie
accessible $\C$ est \emph{accessible} si la sous-catégorie pleine
correspondante est accessible \emph{et} si le foncteur d'inclusion est
accessible. L'image inverse par un foncteur accessible d'une classe
accessible d'objets d'une catégorie accessible est une classe accessible
d'objets~\cite[remarque 2.50]{AR}. Si on admet un axiome de grands cardinaux
connu sous le nom de principe de Vop\u enka, toute classe d'objets telle que
la sous-catégorie pleine correspondante soit accessible est une classe
accessible d'objets~\cite[théorème~6.9]{AR}. On dit qu'une classe de flèches
d'une catégorie accessible $\C$ est \emph{accessible} si elle l'est comme
classe d'objets de la catégorie des flèches de $\C$.
\end{paragr}

\begin{thm}[Smith] 
Soient $\C$ une catégorie localement présentable, $\W$ une classe accessible
de flèches de $\C$, et $I$ un (petit) ensemble de
flèches de $\C$. On note $\Cof{}$ la classe $lr(I)$. On suppose que
\begin{enumerate}[label=\emph{\roman*)}]
\item la classe de flèches $\W$ satisfait à la condition du deux sur trois;
\item on a l'inclusion $r(I)\subset\W$;
\item la classe $\Cof{}\cap\W$ est stable par images directes et composés transfinis.
\end{enumerate}
Alors il existe une structure de catégorie de modèles combinatoire sur $\C$,
ayant comme classe d'équivalences faibles $\W$, et comme classe de
cofibrations $\Cof{}$.
\end{thm}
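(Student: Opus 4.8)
The plan is to set $\mathrm{Fib}=r(\Cof{}\cap\W)$ and to show that the three classes $\Cof{}$, $\W$, $\mathrm{Fib}$ assemble into a model structure, cofibrantly generated by $I$ together with an auxiliary set $J$ to be constructed. First I would invoke the small object argument (\ref{petitobjet}): since $\Cof{}=lr(I)$ by definition, the pair $(\Cof{},r(I))$ is a weak factorization system, every arrow factoring as a member of $\cell(I)\subset\Cof{}$ followed by a member of $r(I)$. I would then establish the equality $r(I)=\mathrm{Fib}\cap\W$. The inclusion $\subset$ follows from hypothesis~(ii) together with $r(I)=r(\Cof{})\subset r(\Cof{}\cap\W)=\mathrm{Fib}$; for $\supset$, factor $p\in\mathrm{Fib}\cap\W$ as $p=qi$ with $i\in\cell(I)$ and $q\in r(I)\subset\W$, use the two-out-of-three property~(i) to get $i\in\Cof{}\cap\W$, and then a retract argument (valid because $p\in\mathrm{Fib}$ lifts against $i$) exhibits $p$ as a retract of $q$. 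It is crucial that this step nowhere uses closure of $\W$ under retracts.

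The heart of the proof, and its main obstacle, will be the construction of the set $J$ of generating trivial cofibrations. The idea is to exploit accessibility: the class $\Cof{}=lr(I)$ is an accessible, accessibly embedded full subcategory of the category of arrows of $\C$ (\emph{cf.}~\cite{Be}), and $\W$ is such by hypothesis; a standard closure property of accessible categories under $2$-limits (\emph{cf.}~\cite{AR}) then guarantees that the intersection $\Cof{}\cap\W$ is again accessible and accessibly embedded. Combined with the closure under pushouts and transfinite composition of hypothesis~(iii), this accessibility yields, by a rank argument (choosing a regular cardinal $\kappa$ for which $\Cof{}\cap\W$ is closed under $\kappa$\nbd-filtered colimits and generated by its $\kappa$\nbd-presentable objects), a genuine \emph{set} $J\subset\Cof{}\cap\W$ with $r(J)=r(\Cof{}\cap\W)=\mathrm{Fib}$. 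It is precisely this passage from an accessible description of $\Cof{}\cap\W$ to a solution-set condition for a small set $J$ that I expect to be the delicate point.

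Once $J$ is in hand, I would conclude as follows. The small object argument applied to $J$ produces a functorial factorization $f=p_f\,i_f$ with $i_f\in\cell(J)$ and $p_f\in r(J)=\mathrm{Fib}$, and hypothesis~(iii) forces $\cell(J)\subset\Cof{}\cap\W$, so $i_f\in\W$. Two-out-of-three then gives the chain $f\in\W\Leftrightarrow p_f\in\W\Leftrightarrow p_f\in\mathrm{Fib}\cap\W=r(I)$, that is, the characterization
\[ \W=\{\,f\mid p_f\in r(I)\,\}. \]
Since $r(I)$ is closed under retracts and the functorial factorization sends retract diagrams to retract diagrams, this would yield closure of $\W$ under retracts for free, supplying the one property that the hypotheses did not list outright. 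With retract-closure of $\W$ available, a standard retract argument gives $l(\mathrm{Fib})=\Cof{}\cap\W$ and $l(r(I))=lr(I)=\Cof{}$, so that $(\Cof{}\cap\W,\mathrm{Fib})$ is a second weak factorization system. The two weak factorization systems $(\Cof{},\mathrm{Fib}\cap\W)$ and $(\Cof{}\cap\W,\mathrm{Fib})$, together with the two-out-of-three property for $\W$, then immediately yield all the model category axioms; the structure is combinatorial since it is generated by the sets $I$ and $J$ and $\C$ is locally presentable.
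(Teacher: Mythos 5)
Your proposal is not the paper's proof: on this statement the paper proves essentially nothing itself, but reduces it to the literature, quoting \cite[th\'eor\`eme~1.7]{Be} together with \cite[corollaire~2.45]{AR} and the remark that an accessible class of arrows is automatically closed under retracts (an idempotent splitting being a $\kappa$-filtered colimit for every $\kappa$). What you write is instead a reconstruction of the Smith--Beke argument, and most of its skeleton is sound: the weak factorization system $(\Cof{},r(I))$ from the small object argument, the equality $r(I)=r(\Cof{}\cap\W)\cap\W$ obtained without ever using retract-stability of $\W$, the assembly of the model structure once $J$ exists --- with hypothesis \emph{iii)} used exactly where it must be, namely to ensure $\cell(J)\subset\Cof{}\cap\W$ --- and the a posteriori retract-stability of $\W$ extracted from the functorial factorization (which the paper's remark in any case gives for free).

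The genuine gap is the step you flag and then treat as settled: producing a set $J\subset\Cof{}\cap\W$ with $r(J)=r(\Cof{}\cap\W)$. This is the whole content of the theorem, and the mechanism you propose does not deliver it. Even granting that $\Cof{}\cap\W$ is accessible and accessibly embedded (note that accessibility of $\Cof{}=lr(I)$ is not in \cite{Be} and is itself a nontrivial theorem, usually obtained from the fat small object argument of Makkai, Rosick\'y and Vok\v{r}\'inek), letting $J$ be its $\kappa$-presentable members does not yield $r(J)=r(\Cof{}\cap\W)$ by any ``rank argument'': right lifting properties are not inherited along $\kappa$-filtered colimits computed in the arrow category, so writing $f\in\Cof{}\cap\W$ as such a colimit of members of $J$ produces lifts against each stage that need not be compatible, and nothing forces a lift against $f$ itself. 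What must be shown is that every $f\in\Cof{}\cap\W$ is a retract of a $J$-cellular map, and this is precisely Smith's key lemma. The standard proof needs far less than accessibility of the intersection: accessibility of $\W$ alone yields a solution-set condition at $I$ (every square from an element of $I$ to an element of $\W$ factors through a small family of members of $\W$); one then builds $J$ by pushing out each $i\in I$ along the top of such a square and factoring the induced comparison map to the solution-set member through $(\cell(I),r(I))$ --- the left factors land in $\Cof{}\cap\W$ by \emph{i)} and \emph{ii)} --- and one checks $r(J)\cap\W=r(I)$ by a diagram chase through this construction. With that $J$ in hand, your two concluding paragraphs apply verbatim and give $r(J)=r(\Cof{}\cap\W)$; without it, the keystone of your proof is asserted rather than proved.
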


\begin{proof}
En vertu de~\cite[corollaire 2.45]{AR}, et en tenant compte du fait qu'une
classe de flèches accessible est stable par rétractes, ce théorème résulte
de~\cite[théorème~1.7]{Be}.
\end{proof}

\begin{rem}\label{remWacc}
Réciproquement, on peut montrer que la classe des équivalences faibles d'une
catégorie de modèles combinatoire est accessible~(\cite[corollaire
A.2.6.6]{Lurie}, ou~\cite[théorème 4.1]{Ros}, ou~\cite[corollaire
0.2]{Raptis}).
\end{rem}

\begin{cor}
Soient $\C$ une catégorie localement présentable, $\W$ une classe accessible
de flèches de $\C$ satisfaisant à la propriété du deux sur trois et stable
par petites limites inductives filtrantes, et $I$ un (petit) ensemble de
$\W$-cofibrations \emph{(\emph{cf.}~\ref{defcofGroth})}.
On note $\Cof{}$ la classe $lr(I)$. Si $r(I)\subset\W$,
alors il existe une structure de catégorie de modèles combinatoire sur $\C$,
propre à gauche, ayant comme classe d'équivalences faibles $\W$, et comme
classe de cofibrations $\Cof{}$.
\end{cor}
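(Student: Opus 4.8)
Le plan est de v\'erifier les trois hypoth\`eses du th\'eor\`eme de Smith, puis d'en d\'eduire la propret\'e \`a gauche. Les hypoth\`eses (i) (propri\'et\'e du deux sur trois) et (ii) ($r(I)\subset\W$) figurent d\'ej\`a parmi les hypoth\`eses du corollaire, et la classe $\W$ y est suppos\'ee accessible ; il reste donc essentiellement \`a \'etablir la condition (iii), \`a savoir la stabilit\'e de $\Cof{}\cap\W$ par images directes et compos\'es transfinis.

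Pour cela, je commencerais par observer que $\Cof{}=lr(I)$, \'etant de la forme $l(\F)$ avec $\F=r(I)$, est stable par images directes, compos\'es transfinis et r\'etractes. J'\'etablirais ensuite l'inclusion $\Cof{}\subset\Cof{\W}$. Comme $\W$ est stable par petites limites inductives filtrantes, la proposition~\ref{prprcofGroth}, jointe aux stabilit\'es rappel\'ees en~\ref{defcofGroth}, assure que $\Cof{\W}$ est stable par images directes, compos\'es transfinis et r\'etractes ; puisque $I\subset\Cof{\W}$ par hypoth\`ese, la description de $lr(I)$ comme classe des r\'etractes des fl\`eches de $\cell(I)$ fournie par l'argument du petit objet~(\ref{petitobjet}) donne alors $\cell(I)\subset\Cof{\W}$, d'o\`u $lr(I)\subset\Cof{\W}$.

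Il resterait alors \`a prouver que $\Cof{\W}\cap\W$ est stable par images directes et compos\'es transfinis. L'argument du petit objet d\'ecompose toute fl\`eche $f$ de $\C$ en $f=pi$ avec $i\in\cell(I)\subset\Cof{\W}$ et $p\in r(I)\subset\W$ (c'est ici qu'intervient l'hypoth\`ese~(ii)) ; on dispose donc d'une factorisation $f=si$ avec $i\in\Cof{\W}$ et $s\in\W$. La proposition~\ref{lemmeGroth} de Grothendieck fournit la stabilit\'e par images directes, et la proposition~\ref{prprcofGroth} (avec la stabilit\'e de $\W$ par compos\'es transfinis, cons\'equence de sa stabilit\'e par limites inductives filtrantes) celle par compos\'es transfinis. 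Comme $\Cof{}\subset\Cof{\W}$, on a $\Cof{}\cap\W=\Cof{}\cap(\Cof{\W}\cap\W)$, intersection de deux classes stables par images directes et compos\'es transfinis, donc elle-m\^eme stable : c'est la condition (iii), et le th\'eor\`eme de Smith fournit la structure de cat\'egorie de mod\`eles combinatoire voulue. Enfin, les cofibrations de cette structure \'etant exactement les fl\`eches de $\Cof{}=lr(I)\subset\Cof{\W}$, l'exemple~\ref{excofGroth} montre qu'elle est propre \`a gauche. L'\'etape la plus d\'elicate est la v\'erification de~(iii), et plus pr\'ecis\'ement l'obtention de la factorisation qui alimente la proposition~\ref{lemmeGroth} ; le reste suit de pr\`es la preuve de la proposition~\ref{sansSmith}.
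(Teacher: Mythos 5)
Your proposal is correct and follows essentially the same route as the paper's own proof: the same inclusions $\cell(I)\subset\Cof{\W}$ and $\Cof{}\subset\Cof{\W}$ obtained from the small object argument and the stability properties of $\Cof{\W}$ (\ref{defcofGroth}, proposition~\ref{prprcofGroth}), the same factorization $f=pi$ with $i\in\cell(I)$ and $p\in r(I)\subset\W$ feeding proposition~\ref{lemmeGroth} to get pushout-stability of $\Cof{\W}\cap\W$, the identification $\Cof{}\cap\W=\Cof{}\cap\Cof{\W}\cap\W$, Smith's theorem, and left properness via $\Cof{}\subset\Cof{\W}$ and example~\ref{excofGroth}. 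One micro-correction: stability of $\W$ under transfinite composition uses, besides stability under filtered colimits, stability under binary composition, which your parenthetical omits but which is supplied by the two-out-of-three hypothesis, exactly as the paper notes.
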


\begin{proof}
La classe~$\W$ étant stable par petites limites inductives filtrantes et par
composition, elle l'est aussi par composition transfinie. Comme $\Cof{}$ est
également stable par composition transfinie, il en est de même de
$\Cof{}\cap\W$. En vertu de l'argument du petit objet, toute flèche~$f$ de
$\C$ se décompose en $f=pi$, avec $p\in r(I)\subset\W$ et $i\in \cell(I)$.
Or, par hypothèse $I\subset\Cof{\W}$, et vu que $\Cof{\W}$ est stable par
images directes~(\emph{cf.}~\ref{defcofGroth}) et par composés transfinis
(\emph{cf.}~proposition~\ref{prprcofGroth}), on a $\cell(I)\subset\Cof{\W}$.
Il résulte donc de la proposition~\ref{lemmeGroth} que la classe
$\Cof{\W}\cap\W$ est stable par images directes. La classe $\Cof{}=lr(I)$
étant aussi stable par images directes, il en est de même de
$\Cof{}\cap\Cof{\W}\cap\W$. Comme $I\subset\Cof{\W}$ et $\Cof{\W}$ est
stable par images directes, composés transfinis et rétractes
(\emph{cf.}~\ref{defcofGroth} et proposition~\ref{prprcofGroth}), l'argument
du petit objet implique que $\Cof{}\subset\Cof{\W}$. On a donc
$\Cof{}\cap\Cof{\W}\cap\W=\Cof{}\cap\W$, et les assertions du corollaire
autres que la propreté à gauche résultent du théorème de Smith. La propreté
à gauche résulte de l'inclusion $\Cof{}\subset\Cof{\W}$
(\emph{cf.}~exemple~\ref{excofGroth}).
\end{proof}

\begin{prop}\label{avecSmith}
Soient $\C$ une catégorie localement présentable, $\M$ une catégorie de modèles combinatoire, engendrée par $(I,J)$, dont les équivalences faibles sont stables par petites limites inductives filtrantes,
$$F:\M\toto\C\ ,\quad U:\C\toto\M$$
un couple de foncteurs adjoints tel que $U$ commute aux petites limites
inductives filtrantes, $\W$ la classe des $U$-équivalences de $\C$
\emph{(\emph{cf.}~\ref{UeqUcof})}, et $\Cof{}$ la classe $lr(F(I))$. On suppose que
\begin{enumerate}
\item $F(I)\subset\Cof{\W}$.
\end{enumerate}
Alors il existe une structure de catégorie de modèles combinatoire sur $\C$, propre à gauche, ayant comme classe d'équivalences faibles $\W$, et comme classe de cofibrations~$\Cof{}$. Si de plus on a
\begin{enumerate}
\item[\emph{(b)}] $F(J)\subset\W$,
\end{enumerate}
alors la structure de catégorie de modèles ainsi définie sur $\C$ est
engendrée par $(F(I),F(J))$, et ses fibrations sont les flèches de $\C$ dont
l'image par $U$ est une fibration de $\M$. En outre, cette structure est
propre si $\M$ est propre à droite.
\end{prop}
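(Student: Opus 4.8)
Le plan est d'appliquer le corollaire précédent (conséquence du théorème de Smith) à la classe $\W$ des $U$-équivalences et à l'ensemble de flèches $F(I)$, puis, sous l'hypothèse supplémentaire~(b), de recourir au lemme de transfert~\ref{transfert} pour identifier la structure obtenue. Je vérifierais d'abord les hypothèses du corollaire. La propriété du deux sur trois pour $\W$ résulte de ce que $U$ est un foncteur et que les équivalences faibles de $\M$ y satisfont. La stabilité de $\W$ par petites limites inductives filtrantes se déduit de la stabilité analogue dans $\M$ et de la commutation de $U$ à ces limites, exactement comme dans la preuve de la proposition~\ref{sansSmith}. L'inclusion $F(I)\subset\Cof{\W}$ est l'hypothèse~(a), et l'inclusion $r(F(I))\subset\W$ s'obtient par adjonction : si $f\in r(F(I))$, alors $U(f)\in r(I)$ est une fibration triviale, donc une équivalence faible de $\M$.

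Le point crucial, qui distingue cette proposition de la proposition~\ref{sansSmith}, est la vérification que $\W$ est \emph{accessible}. Les catégories $\C$ et $\M$ étant localement présentables, donc accessibles, leurs catégories des flèches le sont également (\emph{cf.}~\ref{complacc}). Le foncteur induit par $U$ entre ces catégories des flèches admet pour adjoint à gauche le foncteur induit par $F$, et est donc accessible (\emph{cf.}~\ref{complacc}). Comme la classe des équivalences faibles de la catégorie de modèles combinatoire $\M$ est accessible (remarque~\ref{remWacc}), son image inverse par ce foncteur, qui n'est autre que la classe $\W$ des $U$-équivalences, est une classe accessible de flèches de $\C$ (\emph{cf.}~\ref{complacc}). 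Le corollaire précédent fournit alors sur $\C$ une structure de catégorie de modèles combinatoire, propre à gauche, ayant $\W$ comme classe d'équivalences faibles et $\Cof{}=lr(F(I))$ comme classe de cofibrations, ce qui établit la première assertion.

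Sous l'hypothèse~(b), je montrerais ensuite l'inclusion $lr(F(J))\subset\W$. Comme dans la preuve de la proposition~\ref{sansSmith}, il suffit de vérifier que $F(J)\subset\Cof{\W}\cap\W$ : l'inclusion $F(J)\subset\W$ est l'hypothèse~(b), tandis que l'inclusion $F(J)\subset\Cof{\W}$ résulte de ce que les éléments de $J$ sont des cofibrations de $\M$, donc des rétractes de composés transfinis d'images directes d'éléments de $I$, de la commutation de $F$ à ces opérations, des propriétés de stabilité de $\Cof{\W}$ (\emph{cf.}~\ref{defcofGroth} et proposition~\ref{prprcofGroth}) et de l'hypothèse~(a). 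La classe $\Cof{\W}\cap\W$ étant stable par images directes (proposition~\ref{lemmeGroth}), composés transfinis et rétractes (proposition~\ref{prprcofGroth}), l'argument du petit objet donne $lr(F(J))\subset\Cof{\W}\cap\W\subset\W$. Le lemme de transfert~\ref{transfert} s'applique alors et produit sur $\C$ une structure de catégorie de modèles combinatoire, engendrée par $(F(I),F(J))$, d'équivalences faibles les $U$-équivalences, dont les fibrations sont les flèches dont l'image par $U$ est une fibration de $\M$, et propre à droite dès que $\M$ l'est.

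Pour conclure, j'observerais que la structure issue du lemme de transfert et celle issue du corollaire précédent ont mêmes équivalences faibles $\W$ et mêmes cofibrations $lr(F(I))=\Cof{}$ ; une structure de catégorie de modèles étant entièrement déterminée par ses équivalences faibles et ses cofibrations, ces deux structures coïncident. La structure construite dans la première partie est donc engendrée par $(F(I),F(J))$, ses fibrations sont les flèches annoncées, et elle est propre à droite lorsque $\M$ l'est ; étant déjà propre à gauche, elle est alors propre. L'obstacle principal sera la vérification de l'accessibilité de $\W$, tout le reste consistant à recombiner les arguments de la preuve de la proposition~\ref{sansSmith} et à invoquer le lemme de transfert~\ref{transfert}.
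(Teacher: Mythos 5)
Votre preuve est correcte. La premi\`ere assertion est trait\'ee exactement comme dans l'article : m\^eme v\'erification de $r(F(I))\subset\W$ par adjonction, m\^eme argument d'accessibilit\'e de $\W$ (classe des \'equivalences faibles de $\M$ accessible par la remarque~\ref{remWacc}, foncteur induit par $U$ sur les cat\'egories des fl\`eches accessible en tant qu'adjoint \`a droite, image inverse accessible), puis application du corollaire pr\'ec\'edent. C'est pour la seconde assertion que vous divergez : vous \'etablissez $lr(F(J))\subset\W$ et invoquez le lemme de transfert~\ref{transfert}, puis identifiez la structure transf\'er\'ee \`a celle issue du corollaire en observant qu'elles ont m\^emes \'equivalences faibles et m\^emes cofibrations $lr(F(I))$. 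C'est pr\'ecis\'ement la voie que l'article signale comme possible (\og \`A ce stade on peut conclure la preuve de la proposition par le lemme de transfert \fg) mais qu'il \'evite d\'elib\'er\'ement : les auteurs d\'emontrent directement l'\'egalit\'e $\Cof{}\cap\W=lr(F(J))$, l'inclusion non triviale $\Cof{}\cap\W\subset lr(F(J))$ \'etant obtenue en factorisant $i=qj$ par l'argument du petit objet, en montrant par deux sur trois et adjonction que $q\in r(F(I))$, puis en concluant par le lemme du r\'etracte. Votre route est plus courte puisqu'elle recycle le lemme~\ref{transfert} d\'ej\`a d\'emontr\'e ; celle de l'article rend la seconde assertion ind\'ependante de ce lemme et exhibe concr\`etement les cofibrations triviales, ce qui est coh\'erent avec le but de l'appendice de faire reposer toute la construction sur le th\'eor\`eme de Smith. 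Un seul point de votre r\'edaction m\'erite d'\^etre explicit\'e : l'application de la proposition~\ref{lemmeGroth} (stabilit\'e de $\Cof{\W}\cap\W$ par images directes) suppose que toute fl\`eche de $\C$ se factorise en une $\W$-cofibration suivie d'une fl\`eche de $\W$ ; cette factorisation s'obtient, comme dans la preuve de la proposition~\ref{sansSmith}, par l'argument du petit objet appliqu\'e \`a $F(I)$, gr\^ace \`a l'hypoth\`ese~(a), \`a l'inclusion $r(F(I))\subset\W$ et aux propri\'et\'es de stabilit\'e de $\Cof{\W}$ -- votre renvoi \og comme dans la preuve de la proposition~\ref{sansSmith} \fg{} couvre ce point, mais il vaut la peine de le d\'eplier.
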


\begin{proof}
On observe d'abord que si $f\in r(F(I))$, on a par adjonction $U(f)\in r(I)$, autrement dit, $U(f)$ est une fibration triviale de $\M$, et en particulier une équivalence faible. On en déduit que $f$ est une $U$\nbd-équivalence. On a donc $r(F(I))\subset\W$.
\tb

D'autre part, comme la catégorie de modèles $\M$ est combinatoire, sa classe d'équivalences faibles est une classe de flèches de $\M$ accessible (\emph{cf.}~remarque~\ref{remWacc}), et comme le foncteur $U$ est un adjoint à droite, il est accessible~\cite[proposition 2.23]{AR}. On en déduit que $\W$ est une classe de flèches de $\C$ accessible~\cite[remarque 2.50 et théorème 2.39]{AR}. De plus, comme la classe des équivalences faibles de $\M$ est stable par petites limites inductives filtrantes, il en est de même de la classe $\W$ des $U$\nbd-équivalences, puisque le foncteur $U$ commute auxdites limites. Grâce à l'hypothèse (\emph{a}), la première assertion de la proposition résulte donc du corollaire précédent.
\tb

Montrons que si de plus $F(J)\subset\W$, alors $\Cof{}\cap\W=lr(F(J))$. En vertu de l'argument du petit objet et des propriétés de stabilité de $\Cof{}\cap\W$, pour montrer l'inclusion $lr(F(J))\subset\Cof{}\cap\W$, il suffit de montrer que $F(J)\subset\Cof{}\cap\W$. Comme les éléments de $J$ sont des cofibrations de $\M$, autrement dit, des rétractes de composés transfinis d'images directes d'éléments de $I$, et comme le foncteur $F$ commute à ces opérations, cela résulte des propriétés de stabilité de $\Cof{}$ et de l'hypothèse (\emph{b}). À ce stade on peut conclure la preuve de la proposition par le \og lemme de transfert\fg~\ref{transfert}. Néanmoins, il n'est pas difficile de terminer la preuve sans utiliser ce lemme: 
\tb

Pour montrer l'inclusion $\Cof{}\cap\W\subset lr(F(J))$, soit $i\in\Cof{}\cap\W$. En vertu de l'argument du petit objet, il existe une décomposition $i=qj$, avec $q\in r(F(J))$ et $j$ composé transfini d'images directes d'éléments de $F(J)$, et en particulier dans $lr(F(J))$. Par adjonction, $U(q)$ est dans $r(J)$, autrement dit, $U(q)$ est une fibration. Or, on a vu que $lr(F(J))\subset\Cof{}\cap\W$, donc $j$ est dans $\W$, et par deux sur trois $q$~aussi. On en déduit que $U(q)$ est une équivalence faible, donc une fibration triviale, autrement dit, $U(q)\in r(I)$. À nouveau par adjonction, on en déduit que $q\in r(F(I))$.
Comme $i\in\Cof{}=lr(F(I))$, le lemme du rétracte (\cite[proposition 7.2.2]{Hir} ou~\cite[lemme~1.1.9]{Ho}) implique alors que $i$ est un rétracte de $j$, ce qui prouve que $i\in lr(F(J))$.
\tb

En vertu de ce qui précède, les fibrations de $\C$ sont les flèches qui ont la propriété de relèvement à droite relativement aux flèches appartenant à $F(J)$, et encore une fois par un argument d'adjonction, elles sont celles dont l'image par $U$ a la propriété de relèvement à droite relativement aux flèches appartenant à $J$, autrement dit, dont l'image par $U$ est une fibration de $\M$. Enfin, l'affirmation concernant la propreté à droite résulte aussitôt du fait que le foncteur $U$ respecte les fibrations et les carrés cartésiens et reflète les équivalences faibles.
\end{proof}

\begin{rem}
C'est surtout la première assertion de la proposition ci-dessus qui est intéressante. La deuxième n'est qu'une version affaiblie de la proposition~\ref{sansSmith}.
\end{rem}

\backmatter

\bibliography{nthom}
\bibliographystyle{smfplain}

\end{document}